\def\input@path{{figures/}}\makeatother
\newtheorem{theorem}{Theorem}[section]
\newtheorem{corollary}[theorem]{Corollary}
\newtheorem{proposition}[theorem]{Proposition}
\newtheorem{lemma}[theorem]{Lemma}
\newtheorem{conjecture}[theorem]{Conjecture}
\newtheorem*{theorem*}{Theorem}%[section]
\theoremstyle{definition}
\newtheorem{definition}[theorem]{Definition}
\newtheorem{example}[theorem]{Example}
\newtheorem{remark}[theorem]{Remark}
\newtheorem{notation}[theorem]{Notation}
\crefname{equation}{Equation}{Equations}
\newcommand{\R}{\mathbb{R}} % reals
\newcommand{\Q}{\mathbb{Q}} % rationals
\newcommand{\N}{\mathbb{N}} % naturals
\newcommand{\Z}{\mathbb{Z}} % integers
\newcommand{\HH}{\mathbb{H}} % hyperplane
\newcommand{\cal}[1]{{\mathcal{#1}}} % call letters
\renewcommand{\b}[1]{{\boldsymbol{#1}}} % bold letters
\newcommand{\set}[2]{\left\{ #1 \;\middle|\; #2 \right\}} % set notation
\newcommand{\bigset}[2]{\big\{ #1 \;\big|\; #2 \big\}} % big set notation
\newcommand{\Bigset}[2]{\Big\{ #1 \;\Big|\; #2 \Big\}} % Big set notation
\newcommand{\ssm}{\smallsetminus} % small set minus
\newcommand{\dotprod}[2]{\left\langle \, #1 \; \middle| \; #2 \, \right\rangle} % dot product
\newcommand{\bigdotprod}[2]{\big\langle \, #1 \; \big| \; #2 \, \big\rangle} % dot product
\newcommand{\eqdef}{\mbox{\,\raisebox{0.2ex}{\scriptsize\ensuremath{\mathrm:}}\ensuremath{=}\,}} % :=
\newcommand{\simplex}{\triangle} % simplex
\newcommand{\transpose}[1]{{#1}^T} % transpose matrix
\DeclareMathOperator{\rank}{rank} % rank
\newcommand{\ie}{\textit{i.e.}~} % id est
\newcommand{\eg}{\textit{e.g.}~} % exempli gratia
\newcommand{\apriori}{\textit{a priori}} % a priori
\definecolor{darkblue}{rgb}{0,0,0.7} % darkblue color
\definecolor{green}{RGB}{57,181,74} % green color
\definecolor{violet}{RGB}{147,39,143} % violet color
\newcommand{\red}{\color{red}} % red command
\newcommand{\blue}{\color{blue}} % blue command
\newcommand{\orange}{\color{orange}} % orange command
\newcommand{\green}{\color{green}} % green command
\newcommand{\darkblue}{\color{darkblue}} % darkblue command
\newcommand{\defn}[1]{\textsl{\darkblue #1}} % emphasis of a definition
\newcommand{\para}[1]{\medskip\noindent\uline{\textit{#1.}}} % paragraph
\newcommand*\circled[1]{\tikz[baseline=(char.base)]{\node[shape=circle, draw, inner sep=1.5pt, scale=.7] (char) {#1};}}
\newcommand{\compactVectorD}[2]{\begin{bmatrix} #1 \\ #2 \end{bmatrix}}
\newcommand{\compactVectorT}[3]{\begin{bmatrix} #1 \\[-.1cm] #2 \\[-.1cm] #3 \end{bmatrix}}
\newcommandx{\Asso}[2][1=n,2={}]{\mathsf{Asso}^{#2}(#1)} % associahedron
\newcommandx{\Zono}[2][1=n,2={}]{\mathsf{Zono}^{#2}(#1)} % zonotope
\newcommandx{\Fan}[1][1=F]{\mathcal{#1}} % fan
\newcommand{\multiplicityVector}{\b{m}} % multiplicity vector
\newcommandx{\ray}[1][1=r]{\b{#1}} % ray
\newcommandx{\rays}[1][1=R]{\b{#1}} % rays
\newcommand{\gvector}[1]{\b{g}(#1)} % g-vector of #1
\newcommand{\gvectorFull}[2]{\b{g}(#1,#2)} % g-vector of #2 wrt #1
\newcommand{\gvectors}[1]{\b{g}(#1)} % g-vectors of #1
\newcommand{\gvectorsFull}[2]{\b{g}(#1,#2)} % g-vectors of #2 wrt #1
\newcommandx{\gvectorFan}[1][1=\quiver]{\mathcal{F}(#1)} % g-vector fan
\newcommand{\cvector}[2]{\mathbf{c}(#2 \in #1)} % c-vector of the cluster variable #2 in the cluster #1
\newcommand{\cvectorFull}[3]{\mathbf{c}(#1,#3 \in #2)} % c-vector of the cluster variable #3 in the cluster #2 with respect to the initial cluster #1
\newcommand{\cvectors}[1]{\mathbf{c}(#1)} % c-vectors of the cluster #1
\newcommand{\cvectorsFull}[2]{\mathbf{c}(#1,#2)} % c-vectors of the cluster #2 with respect to the initial cluster #1
\newcommand{\typeCone}{\mathbb{TC}} % type cone
\newcommand{\ctypeCone}{\smash{\overline{\mathbb{TC}}}} % type cone
\newcommandx{\coefficient}[3][1={\ray[s]}, 2=\ray, 3=\ray']{\alpha_{#2,#3}(#1)} % coefficient in linear dependence
\newcommand{\Trop}[1]{\mathbb{P}_{#1}} % tropical semifield
\newcommand{\positiveExponents}[1]{\left\{#1\right\}_+} % positive exponents in tropical semifield
\newcommand{\negativeExponents}[1]{\left\{#1\right\}_-} % negative exponents in tropical semifield
\newcommand{\seed}{\Sigma} % a cluster
\newcommand{\cluster}{\mathrm{X}} % a cluster
\newcommandx{\clusters}{\mathcal{X}} % all clusters
\newcommand{\coefficients}{\mathrm{P}} % a coefficients tuple
\newcommandx{\variables}[2][1=\B_\circ, 2 = \coefficients_\circ]{\mathcal{V}(#1,#2)} % all variables
\newcommandx{\clusterAlgebra}[2][1=\B_\circ, 2=\coefficients_\circ]{\mathcal{A}\!\left(#1,#2\right)} % cluster algebra
\newcommandx{\principalClusterAlgebra}[1][1=\B_\circ]{\mathcal{A}\left(#1\right)} % principal coefficients cluster algebra
\newcommandx{\principalVariables}[1][1=\B_\circ]{\mathcal{V}(#1)} % all variables, principal case
\newcommandx{\coefficientFreeClusterAlgebra}[1][1=\B_\circ]{\mathcal{A}_{\mathrm{fr}}(#1)} % coefficient free cluster algebra
\newcommandx{\clusterComplex}[1][1=\B_\circ]{\Delta(#1)} % cluster complex
\newcommand{\B}{\mathrm{B}} % b-matrix
\newcommand{\A}[1]{\mathrm{A}({#1})} % b-matrix
\newcommand{\D}{\mathrm{D}} % symmetrizer
\newcommand{\simpleRoot}{\b{\alpha}} % simple root
\newcommand{\fundamentalWeight}{\b{\omega}} % fundamental weights
\newcommandx{\meshes}[1][1=\B_\circ]{\mathcal{M}(#1)} % meshes
\newcommand{\quiver}{\bar Q} % quiver
\newcommand{\blossom}{^\text{\ding{96}}} % blossom
\newcommandx{\strings}[1][1=\quiver]{\mathcal{S}(#1)} % strings
\newcommandx{\distinguishableStrings}[1][1=\quiver]{\mathcal{S}_\mathrm{dist}(#1)} % distinguishable strings
\newcommandx{\walks}[1][1=\quiver]{\mathcal{W}(#1)} % walks
\newcommandx{\properWalks}[1][1=\quiver]{\mathcal{W}_\mathrm{prop}(#1)} % proper walks
\renewcommand{\top}{\mathrm{top}} % top
\newcommand{\bottom}{\mathrm{bot}} % bottom
\newcommandx{\NKC}[1][1=\quiver]{\mathcal{NK}(#1)} % non-kissing complex
\newcommandx{\RNKC}[1][1=\quiver]{\mathcal{RNK}(#1)} % reduced non-kissing complex
\newcommand{\peaks}[1]{\mathsf{peaks}(#1)} % peaks
\newcommand{\deeps}[1]{\mathsf{deeps}(#1)} % deeps
\newcommand{\KN}{\textsc{kn}} % Kissing Number
\newcommand{\hL}{\text{\rotatebox[origin=c]{180}{\checked}}}
\newcommand{\hR}{\text{\rotatebox[origin=c]{180}{\reflectbox{\checked}}}}
\newcommand{\cL}{\text{\reflectbox{\checked}}}
\newcommand{\cR}{\text{\checked}}
\newcommand{\hh}[1]{\hL#1\hR} % hook - hook
\newcommand{\cc}[1]{\cL#1\cR} % cohook - cohook
\newcommand{\hc}[1]{\hL#1\cR} % hook - cohook
\newcommand{\ch}[1]{\cL#1\hR} % cohook - hook
\newcommand{\distinguishedString}[2]{\mathsf{ds}(#1,#2)} % distinguished string
\newcommand{\distinguishedSign}[2]{\varepsilon(#1,#2)} % distinguished sign
\newcommandx{\graphG}[1][1=G]{#1} % graph
\newcommandx{\tube}[1][1=t]{\mathsf{#1}} % tube
\newcommandx{\tubes}[1][1=\graphG]{\mathcal{T}(#1)} % all tubes
\newcommandx{\tubing}[1][1=T]{\mathsf{#1}} % tubing
\newcommand{\field}{\mathbb{K}}
\newcommand{\cat}{\mathcal{C}}
\newcommand{\Hom}[1]{\operatorname{Hom}_{#1}}
\newcommand{\susp}{\Sigma}
\newcommand{\add}{\operatorname{add}}
\newcommand{\MOD}{\operatorname{mod}}
\newcommand{\End}[1]{\operatorname{End}_{#1}}
\newcommand{\spl}{\operatorname{sp}}
\newcommand{\Ksp}{K_0^{\spl}}
\newcommand{\ind}{\operatorname{ind}}
\newcommand{\coker}{\operatorname{coker}}
\newcommand{\CC}{\phi}
\newcommand{\proj}{\operatorname{proj}}
\newcommand{\bsm}{\begin{smallmatrix}}
\newcommand{\esm}{\end{smallmatrix}}
\newcommand{\tc}{\mathcal{T}}
\newcommand{\ec}{\mathcal{E}}
\newcommand{\infl}{\rightarrowtail}
\newcommand{\defl}{\twoheadrightarrow}
\newcommand{\Modt}{\operatorname{Mod}\tc}
\newcommand{\modt}{\MOD\tc}
\newcommand{\kzero}[1]{K_0(#1)}
\newcommand{\eps}{\varepsilon}
\def\part{\@startsection{part}{1}%
\z@{.7\linespacing\@plus\linespacing}{.8\linespacing}%
{\LARGE\sffamily\centering}}
\def\l@section{\@tocline{1}{5pt}{0pc}{}{}}
\let\oldtocpart=\tocpart
\renewcommand{\tocpart}[2]{\sc\large\oldtocpart{#1}{#2}}
\let\oldtocsection=\tocsection
\renewcommand{\tocsection}[2]{\bf\oldtocsection{#1}{#2}}
\let\oldtocsubsubsection=\tocsubsubsection
\renewcommand{\tocsubsubsection}[2]{\quad\oldtocsubsubsection{#1}{#2}}
\title[Generalized associahedra and minimal relations between $\b{g}$-vectors]{Associahedra for finite type cluster algebras \\ and minimal relations between $\b{g}$-vectors}
\thanks{YP, VP and PGP were partially supported by the French ANR grant SC3A~(15\,CE40\,0004\,01). AP and VP were partially supported by the French ANR grant CAPPS~(17\,CE40\,0018).}
\subjclass[2010]{13F60, 16G20, 18E10, 18E30, 52B11 (primary), 52B35, 05E15 (secondary)}
\author{Arnau Padrol}
\address[Arnau Padrol]{Institut de Math\'{e}matiques de Jussieu - Paris Rive Gauche, Sorbonne Universit\'e, Paris}
\email{arnau.padrol@imj-prg.fr}
\urladdr{\url{https://webusers.imj-prg.fr/~arnau.padrol/}}
\author{Yann Palu}
\address[Yann Palu]{LAMFA, Universit\'e Picardie Jules Verne, Amiens}
\email{yann.palu@u-picardie.fr}
\urladdr{\url{http://www.lamfa.u-picardie.fr/palu/}}
\author{Vincent Pilaud}
\address[Vincent Pilaud]{CNRS \& LIX, \'Ecole Polytechnique, Palaiseau}
\email{vincent.pilaud@lix.polytechnique.fr}
\urladdr{\url{http://www.lix.polytechnique.fr/~pilaud/}}
\author{Pierre-Guy Plamondon}
\address[Pierre-Guy Plamondon]{Laboratoire de Math\'ematiques de Versailles, UVSQ, CNRS, Universit\'e Paris-Saclay, Versailles}
\email{pierre-guy.plamondon@uvsq.fr}
\urladdr{\url{https://www.imo.universite-paris-saclay.fr/~plamondon/}}
\begin{document}

\begin{abstract}
We show that the mesh mutations are the minimal relations among the $\b{g}$-vectors with respect to any initial seed in any finite type cluster algebra.
We then use this algebraic result to derive geometric properties of the $\b{g}$-vector fan: we show that the space of all its polytopal realizations is a simplicial cone, and we then observe that this property implies that all its realizations can be described as the intersection of a high dimensional positive orthant with well-chosen affine spaces.
This sheds a new light on and extends earlier results of N.~Arkani-Hamed, Y.~Bai, S.~He, and G.~Yan in type~$A$ and of V.~Bazier-Matte, G.~Douville, K.~Mousavand, H.~Thomas and E.~Y\i ld\i r\i m for acyclic initial seeds.

Moreover, we use a similar approach to study the space of polytopal realizations of the \mbox{$\b{g}$-vector} fans of another generalization of the associahedron: non-kissing complexes (a.k.a. support $\tau$-tilting complexes) of gentle algebras. We show that the space of realizations of the non-kissing fan is simplicial when the gentle bound quiver is brick and $2$-acyclic, and we describe in this case its facet-defining inequalities in terms of mesh mutations.

Along the way, we prove algebraic results on~$2$-Calabi--Yau triangulated categories, and on extriangulated categories that are of independent interest.
In particular, we prove, in those two setups, an analogue of a result of M.~Auslander on minimal relations for Grothendieck groups of module categories.
\end{abstract}

\maketitle

%\vspace{1.5cm}
%\centerline{\includegraphics[scale=.8]{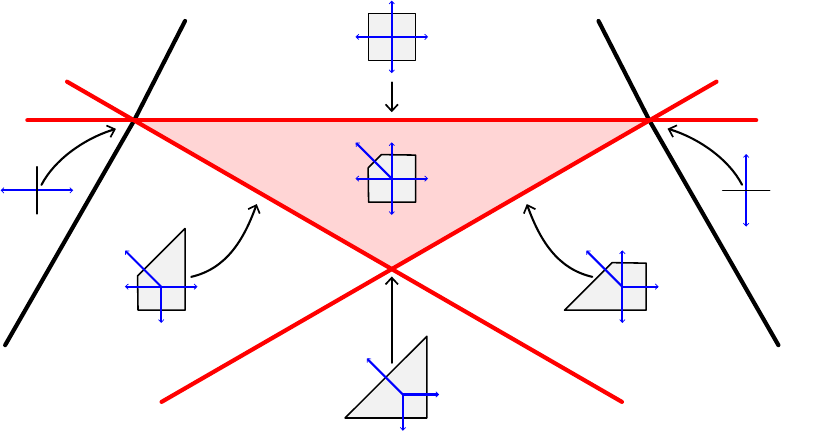}}
%\vspace{.5cm}
%\centerline{The type cone of the cluster fan of type~$A_2$.}
%
%\newpage
\enlargethispage{.3cm}
\tableofcontents
\vspace{-.5cm}

%%%%%%%%%%%%%%%%%%%%%%%%%%%%%%%%%%%%%%%%%%%%%%%%%%%

%\newpage
\section*{Introduction}

The associahedron is a ``mythical polytope''~\cite{Haiman} whose facial structure encodes Catalan families: its vertices correspond to parenthesizations of a non-associative product, triangulations of a convex polygon, or binary trees; its edges correspond to applications of the associativity rule, diagonal flips, or edge rotations; and in general its faces correspond to partial parenthesizations, diagonal dissections, or Schr\"oder trees.
Its combinatorial and topological structure was introduced in early works of D.~Tamari~\cite{Tamari} and J.~Stasheff~\cite{Stasheff}, and a first $3$-dimensional polytopal model was realized by J.~Milnor for the PhD defense of J.~Stasheff.
The first systematic polytopal realizations were constructed by M.~Haiman~\cite{Haiman} and C.~Lee~\cite{Lee}.
Since then, the associahedron has been largely ``demystified'' with several polytopal constructions and generalizations (some are discussed below).
Its realizations can be classified into three species: the secondary polytopes~\cite{GelfandKapranovZelevinsky, BilleraFillimanSturmfels}, the $\b{g}$-vectors realizations~\cite{Loday, HohlwegLange, LangePilaud, HohlwegLangeThomas, HohlwegPilaudStella, Postnikov, PilaudSantos-brickPolytope, PilaudStump-brickPolytope} and the $\b{d}$-vector realizations~\cite{ChapotonFominZelevinsky, CeballosSantosZiegler}. See~\cite{CeballosSantosZiegler} for a discussion of some of these realizations and their connections.

The associahedron appears as a fundamental structure in several mathematical theories, such as moduli spaces and topology~\cite{Stasheff, Keller-AinfinityAlgebras}, operads and rewriting theory~\cite{Street, MasudaThomasTonksVallette}, combinatorial Hopf algebras~\cite{LodayRonco, ChatelPilaud, Pilaud-brickAlgebra}, diagonal harmonics~\cite{BergeronPrevilleRatelle, PrevilleRatelleViennot}, mathematical physics~\cite{ArkaniHamedBaiHeYan}, etc.
Another striking illustration of the ubiquity of the associahedron lies in the theory of cluster algebras introduced by S.~Fomin and A.~Zelevinsky in~\cite{FominZelevinsky-ClusterAlgebrasI} with motivation coming from total positivity and canonical bases.
Cluster algebras are commutative rings generated by variables obtained from an initial seed by the discrete dynamical process of seed mutations.
Finite type cluster algebras, whose mutation graph is finite, were classified in~\cite{FominZelevinsky-ClusterAlgebrasII} using the Cartan-Killing classification for crystallographic root systems.
Finite type cluster complexes were geometrically realized first by F.~Chapoton, S.~Fomin and A.~Zelevinsky~\cite{ChapotonFominZelevinsky} based on the $\b{d}$-vector fans~\cite{FominZelevinsky-YSystems, FominZelevinsky-ClusterAlgebrasII} of the bipartite initial seed, then by~C.~Hohlweg, C.~Lange and H.~Thomas~\cite{HohlwegLangeThomas} based on the Cambrian fans~\cite{Reading-CambrianLattices, ReadingSpeyer} of acyclic initial seeds, and finally by C.~Hohlweg, V.~Pilaud and S.~Stella~\cite{HohlwegPilaudStella} based on the $\b{g}$-vector fans~\cite{FominZelevinsky-ClusterAlgebrasIV} with respect to an arbitrary initial seed.
The resulting polytopes are known as generalized associahedra, and the classical associahedra corresponds to the type~$A$ cluster algebras.

This paper focuses on a surprising construction of the associahedron that recently appeared in mathematical physics.
Motivated by the prediction of the behavior of scattering particles, N.~Arkani-Hamed, Y.~Bai, S.~He, and G.~Yan recently described in~\cite[Sect.~3.2]{ArkaniHamedBaiHeYan} the kinematic associahedron.
It is a class of polytopal realizations of the classical associahedron obtained as sections of a high-dimensional positive orthant with well-chosen affine subspaces.
This construction provides a large degree of freedom in the choice of the parameters defining these affine subspaces, and actually produces all polytopes whose normal fan is affinely equivalent to that of J.-L.~Loday's associahedron~\cite{Loday} (see \cref{subsec:typeConeAsso}).
These realizations were then extended by V.~Bazier-Matte, G.~Douville, K.~Mousavand, H.~Thomas and E.~Y\i ld\i r\i m~\cite{BazierMatteDouvilleMousavandThomasYildirim} in the context of finite type cluster algebras using tools from representation theory of quivers.
More precisely, they fix a finite type cluster algebra~$\cal{A}$ and consider the real euclidean space~$\R^\cal{V}$ indexed by the set~$\cal{V}$ of cluster variables of~$\cal{A}$.
Starting from an acyclic initial seed~$\seed_\circ$, they consider a set~$\cal{M}$ of mesh mutations and compute the corresponding linear dependences between the $\b{g}$-vectors, which can be interpreted as linear spaces in~$\R^\cal{V}$.
Finally, they perturb these mesh linear spaces by a collection~$\b{\ell} \in \R_{>0}^{\cal{M}}$ of positive parameters and intersect the resulting perturbed mesh affine spaces with the positive orthant~$\R_{\ge 0}^\cal{V}$ (see \cref{subsec:typeConeCA}).
They show that the resulting polytope is always a generalized associahedron, and that its normal fan is the $\b{g}$-vector fan of~$\cal{A}$ with respect to the initial acyclic seed~$\seed_\circ$.
An implicit by-product of their construction is that the space of all polytopal realizations of the $\b{g}$-vector fan of~$\cal{A}$ with respect to the initial acyclic \mbox{seed~$\seed_\circ$ is a simplicial cone}.

\medskip
In this paper, we revisit, extend and explore further this construction using a reversed approach.
Given a complete simplicial fan~$\Fan$, we consider the space~$\typeCone(\Fan)$ of all its polytopal realizations.
This space was called type cone in~\cite{McMullen-typeCone} and deformation cone in~\cite{Postnikov, PostnikovReinerWilliams}, who studied the case when~$\Fan$ is the braid arrangement leading to the rich theory of generalized permutahedra (recently extended to all finite Coxeter arrangements in~\cite{ArdilaCastilloEurPostnikov}).
The type cone is known to be a polyhedral cone defined by a collection of inequalities corresponding to the linear dependences among the rays of~$\Fan$ contained in pairs of adjacent maximal cones of~$\Fan$ (see \cref{def:typeCone}).
Our approach is based on an elementary but powerful observation: for any fan~$\Fan$, all polytopal realizations of~$\Fan$ can be described as sections of a high dimensional positive orthant with a collection of affine subspaces parametrized by the type cone~$\typeCone(\Fan)$ (see \cref{prop:alternativePolytopalRealization}); if moreover the type cone~$\typeCone(\Fan)$ is a simplicial cone, it leads to a simple parametrization of all polytopal realizations of~$\Fan$ by a positive orthant corresponding to the facets of the type cone~$\typeCone(\Fan)$ (see \cref{coro:simplicialTypeCone}).
To prove that the type cone~$\typeCone(\Fan)$ is simplicial, we just need to identify which pairs of adjacent maximal cones of~$\Fan$ correspond to the facets of~$\typeCone(\Fan)$ and to show that the corresponding linear dependences among their rays positively span the linear dependence among the rays of any pair of adjacent maximal cones of~$\Fan$.
When applied to the $\b{g}$-vector fans of cluster algebras (see \cref{subsec:typeConeCA}), the type cone approach yields all polytopal realizations of the $\b{g}$-vector fans and thus efficiently revisits and extends the construction of~\cite{BazierMatteDouvilleMousavandThomasYildirim}.
This new perspective has several advantages, as our proof uniformly~applies~to:

\smallskip
\begin{itemize}
\item \textbf{any initial seed}, regardless of whether it is acyclic or not (see \cref{exm:typeConeCA,fig:clusterFans,fig:labelFacetDefiningInequalititiesCA1,fig:labelFacetDefiningInequalititiesCA2} for examples in types~$A_3$ and~$C_3$~cyclic). In contrast, the proof of~\cite{BazierMatteDouvilleMousavandThomasYildirim} only treats acyclic initial seeds (although H.~Thomas announced in a lecture given at RIMS in June 2019 that this restriction will disappear in a future version of~\cite{BazierMatteDouvilleMousavandThomasYildirim}).

\smallskip
\item \textbf{all finite type cluster algebras}, regardless of whether it is simply-laced or not (see \cref{exm:typeConeCA} for an example in type~$C_3$). In contrast, even with the acyclicity restriction, the result of~\cite{BazierMatteDouvilleMousavandThomasYildirim} is first proved in simply-laced cases ($A$, $D$ and~$E$) using representation theory of quivers. Extension to the non simply-laced cases is then argued by a folding argument. Although not presented in detail, this argument is subtle and technical.

\smallskip
\item \textbf{any positive real-valued parameters}, regardless of whether they are rational or not. In contrast, the proof of~\cite{BazierMatteDouvilleMousavandThomasYildirim} naturally applies to rational parameters and then requires an approximation argument.
\end{itemize}

\smallskip
\noindent
These advantages all follow from one essential feature of the type cone approach.
Namely, it enables to completely separate the algebraic aspects from the geometric aspects of the problem:
\begin{itemize}
\item On the \textbf{algebraic side}, we study the relations between the $\b{g}$-vectors of a finite type cluster algebra and we show that the mesh mutations minimally generate these relations.
For this, we use representation theory in the setting of~$2$-Calabi--Yau triangulated categories, and we adapt M.~Auslander's proof that the relations in the Grothendieck group of an Artin algebra are generated by the ones given by almost-split sequences precisely when the algebra is of finite representation type~\cite{Auslander1984} (see \cref{sec:clusterCategories}).

\smallskip
\item On the \textbf{geometric side}, we use elementary manipulations to show that a fan whose type cone is simplicial is affinely equivalent to the normal fan of sections of a high dimensional positive orthant with well-chosen affine subspaces (see \cref{sec:typeCone}). We then observe that our algebraic result precisely states that all type cones of $\b{g}$-vector fans of cluster algebras are simplicial from which we derive all our geometric consequences (see \cref{sec:applications}).
\end{itemize}

\smallskip
\noindent
These two aspects are interesting in their own right and might be useful in different communities: the geometric side enables to simply describe all polytopal realizations of a fan with simplicial type cone (which is sometimes easier to check than to find an explicit realization), while the algebraic side provides fundamental new results on the $\b{g}$-vectors of cluster algebras and on (an extriangulated variant of) the Grothendieck groups of~$2$-Calabi--Yau triangulated categories with cluster tilting objects.
We have therefore deliberately split our presentation into two clearly marked parts that can be read independently.
The geometric side, which uses elementary techniques, is presented in \cref{part:geometry} together with all its consequences on polytopal realizations.
The algebraic side, which is of independent interest, is presented in \cref{part:algebra}. 

\medskip
\enlargethispage{.4cm}
Besides revisiting the construction of~\cite{ArkaniHamedBaiHeYan, BazierMatteDouvilleMousavandThomasYildirim} and extending it to any initial seed (acyclic or not) in any finite type cluster algebra (simply-laced or not), our type cone approach is also successful when applied to the $\b{g}$-vector fans of other families of generalizations of the associahedron.
In the present paper, we explore specifically \textbf{gentle associahedra}. These complexes where constructed in~\cite{PaluPilaudPlamondon-nonkissing} as polytopal realizations of finite non-kissing complexes of gentle bound quivers. They encompass two families of simplicial complexes studied independently in the literature: on the one hand the grid associahedra introduced by T.~K.~Petersen, P.~Pylyavskyy and D.~Speyer in~\cite{PetersenPylyavskyySpeyer} for a staircase shape, studied by F.~Santos, C.~Stump and V.~Welker~\cite{SantosStumpWelker} for rectangular shapes, and extended by T.~McConville in~\cite{McConville} for arbitrary grid shapes; and on the other hand the Stokes polytopes and accordion associahedra studied by Y.~Baryshnikov~\cite{Baryshnikov}, F.~Chapoton~\cite{Chapoton-quadrangulations}, A.~Garver and T.~McConville~\cite{GarverMcConville} and T.~Manneville and V.~Pilaud~\cite{MannevillePilaud-accordion}. The latter, sometimes called accordiohedra, have also~recently appeared in the mathematical physics litterature concerning scattering amplitudes~\cite{BanerjeeLaddhaRaman, Raman}, with realizations inspired by those in~\cite{ArkaniHamedBaiHeYan}. It was shown in~\cite{PaluPilaudPlamondon-nonkissing, BrustleDouvilleMousavandThomasYildirim} that the non-kissing complex of a gentle bound quiver provides a combinatorial model for the support $\tau$-tilting complex~\cite{AdachiIyamaReiten} of the associated gentle algebra. Even for finite non-kissing complexes, it turns out that the type cone of the non-kissing fan is not always simplicial (see \cref{exm:nonSimplicialTypeConeNKC}). We prove however that the type cone is always simplicial when the gentle bound quiver is brick (at least $2$ relations in any cycle) and $2$-acyclic (no oriented cycle of length~$2$). Moreover, we precisely describe the facet-defining inequalities of the type cone in terms of linear dependences of $\b{g}$-vectors involved in certain mesh relations. We thus automatically derive a construction similar to that of~\cite{ArkaniHamedBaiHeYan, BazierMatteDouvilleMousavandThomasYildirim} describing all polytopal realizations of the $\b{g}$-vector fans of these brick and $2$-acyclic gentle bound quivers as sections of a positive orthant by well-chosen affine spaces (see \cref{subsec:typeConeCA}). Again, this result mainly relies on finding which relations minimally generate the relations between the $\b{g}$-vectors in the non-kissing complex. We present both a purely combinatorial proof (see \cref{subsec:typeConeCA}) and a purely algebraic proof for (non-necessarily gentle) brick algebras making use of extriangulated categories (see \cref{sec:extricats}).
Our algebraic proof is based on an analogue of a result of M.~Auslander~\cite{Auslander1984} relating to minimal generators of Grothendieck groups.
Along the way, we need to generalize, for extriangulated categories, several results from cluster-tilting theory on indices~\cite{DehyKeller} and on abelian quotients~\cite{BuanMarshReiten, KellerReiten} that were known for triangulated categories. We note, however, that our assumptions are not quite the expected ones. This is related to the fact that the Grothendieck groups of cluster categories are badly behaved (\eg the Grothendieck group of a cluster category of type~$A_2$ is trivial). So as to overcome this difficulty, we replace the triangulated structure of cluster categories by some relative extriangulated structure, before considering Grothendieck groups. This explains why we consider projective objects rather than cluster-tilting objects for extriangulated categories.

\medskip
\enlargethispage{.5cm}
Further families of generalizations of the associahedron certainly deserve to be studied with our type cone approach.
We are investigating in particular graph associahedra\footnote{A preliminary version of this paper contained the full facet description of the type cones of graph associahedra. As they are not simplicial, they lie slightly beyond the scope of this paper, and will be studied elsewhere.}~\cite{CarrDevadoss, Postnikov, FeichtnerSturmfels, Zelevinsky}, brick polytopes~\cite{PilaudSantos-brickPolytope, PilaudStump-brickPolytope} and quotientopes~\cite{Reading-latticeCongruences, Reading-CambrianLattices, PilaudSantos-quotientopes}.
In fact, as already mentioned, it seems sometimes easier to find all polytopal realizations of a fan (by describing its type cone in terms of certain specific pairs of adjacent maximal cones) than to identify one specific explicit realization.
This is clearly the case when the type cone is simplicial as illustrated by the results of this paper: we naturally obtained all polytopal realizations of the $\b{g}$-vector fans of cluster algebras from cyclic seeds and of gentle algebras whose polytopality was only established very recently~\cite{HohlwegPilaudStella, PaluPilaudPlamondon-nonkissing}.
We believe that this approach gives reasonable hope that the polytopality of further fans could in the future be established using our type cone approach.
Tempting candidates are quotientopes for hyperplane arrangements beyond type~$A$, which are particularly interesting since the coarsenings of a fan essentially correspond to the faces of its type cone (see \cref{subsec:facesTypeCone}).

\medskip
Last but not least, the type cone approach naturally opens many research questions.
First, it raised the fundamental problem of describing the minimal relations among $\b{g}$-vectors in $2$-Calabi--Yau and extriangulated categories that we treat in \cref{part:algebra}.
It also motivates the study of relevant objects revealed by the type cone dictionary between geometry and algebra:

\smallskip
\begin{itemize}
\item The facet-defining inequalities of the type cone of~$\Fan$ correspond to very specific pairs of adjacent maximal cones of~$\Fan$ whose corresponding linear dependences minimally generate all such dependences. While they correspond to mesh mutations for cluster algebras and brick and $2$-acyclic gentle algebras, they are not understood in general (see for instance \cref{fig:labelFacetDefiningInequalititiesNKC}\,(right)). It would be particularly interesting to fully understand these specific mutations for arbitrary (non-kissing finite) gentle algebras.

\smallskip
\item The rays of the type cone of~$\Fan$ correspond to specific polytopes which form a positive Minkowski basis of all realizations of~$\Fan$ (see \cref{subsec:MinkowskiSums}). In particular, when the type cone is simplicial, any polytopal realization of~$\Fan$ has a unique representation (up to translation) as a Minkowski sum of positive dilates of these polytopes. For cluster algebras, it follows from~\cite[Sect.~6]{BazierMatteDouvilleMousavandThomasYildirim} that these polytopes are Newton polytopes of the $F$-polynomials. It would be interesting to obtain similar algebraic interpretations for the rays of the type cones of the non-kissing fans of gentle~algebras.

\smallskip
\item Once we get a representation of a polytope~$P$ as a signed Minkowski sum~$P = \sum_{i \in [k]} \alpha_i Q_i$, we get a formula for the volume of $P$ as a multivariate polynomial in $\alpha_1, \dots, \alpha_k$ whose coefficients are the mixed volumes of $Q_1, \dots, Q_k$~\cite{ArdilaBenedettiDoker, McMullen-Valuations}. These encode rich combinatorial information when representing generalized permutahedra as positive Minkowski sums of coordinate simplices or hypersimplices~\cite{Postnikov}, and in particular for matroid polytopes~\cite{ArdilaBenedettiDoker}. It would be interesting to get such volume formulas for polytopal realizations of $\b{g}$-vector fans of cluster algebras and gentle algebras, and combinatorial interpretations for their coefficients.
\end{itemize}

\smallskip
\noindent
Another question related to this paper is to understand the complete realization space of the combinatorics of the associahedra.
The associahedra constructed in~\cite{ArkaniHamedBaiHeYan, BazierMatteDouvilleMousavandThomasYildirim} and the present paper describe all polytopal realizations of the $\b{g}$-vector fans, but there are other polytopal realizations of the combinatorial type of the associahedron with different normal fans.
Is it possible to similarly understand the complete realization space of the cluster complex, including all realizations not having the~$\b{g}$-vector fan as their normal fan?

\medskip
\enlargethispage{.2cm}
To conclude, it is our hope that the present paper will participate to the interactions between combinatorial geometry, representation theory, and mathematical physics; in particular through further developments of the geometric approach to scattering amplitudes that has flourished during the last years~\cite{ArkaniHamedTrnka-Amplituhedron,ArkaniHamedBourjailyCachazoGoncharovPostnikovTrnka-GrassmannianGeometryScatteringAmplitudes}. The kinematic associahedron from~\cite{ArkaniHamedBaiHeYan} is one of the most recent ``positive geometries''~\cite{ArkaniHamedBaiLam-PositiveGeometries} that has emerged in the study of scattering amplitudes.

%%%%%%%%%%%%%%%%%%%%%%%%%%%%%%%%%%%%%%%%%%%%%%%%%%%

\section*{Structure of the paper: main results and logical dependencies}

We now give a more detailed overview of our main results and the structure of the article. In particular, some of the geometric realizations in~\cref{part:geometry} depend on algebraic results whose proofs are delayed until~\cref{part:algebra}. The goal of this section is to clarify their logical dependencies, summarized in the schematic diagram depicted at the end. 

\para{Type cone approach}
(\cref{sec:typeCone})

\noindent
Let~$\Fan$ be an essential complete polyhedral fan in~$\R^n$ (all definitions are recalled in~\cref{ssec:polytopes and fans}).
The type cone~$\typeCone(\Fan)$ of~$\Fan$ (\cref{ssec:type cone}), introduced by P. McMullen~\cite{McMullen-typeCone}, is a polyhedral cone that parametrizes the set of all possible polytopal realizations of~$\Fan$. These realizations can be described explicitly as affine sections of the non-negative orthant~$\R^N_{\geq0}$, where~$N$ is the number of rays of~$\Fan$ (\cref{prop:alternativePolytopalRealization}).  
If $\Fan$ is a simplicial fan and has the unique exchange relation property (\cref{def:uerp}), then its type cone is easier to describe and study.
And if moreover the type cone~$\typeCone(\Fan)$ is simplicial (\cref{rem:dimTypeCone}), then all possible polytopal realizations of~$\Fan$ can be described explicitely in terms of~$N-n$ positive real parameters as follows

\newtheorem*{coro:simplicialTypeCone}{\cref{coro:simplicialTypeCone}}
\begin{coro:simplicialTypeCone}
Assume that the type cone~$\typeCone(\Fan)$ is simplicial and let~$\b{K}$ be the $(N-n) \times N$-matrix whose rows are the inner normal vectors of the facets of~$\typeCone(\Fan)$. Then the polytope
\[
R_\b{\ell} \eqdef \set{\b{z} \in \R^N}{\b{K}\b{z} = \b{\ell} \text{ and } \b{z} \ge 0}
\]
is a realization of the fan~$\Fan$ for any positive vector~$\b{\ell} \in \R_{>0}^{N-n}$.
Moreover, the polytopes~$R_\b{\ell}$ for~$\b{\ell} \in \R_{>0}^{N-n}$ describe all polytopal realizations of~$\Fan$.
\end{coro:simplicialTypeCone}

This gives a new point of view on~\cite{ArkaniHamedBaiHeYan,BazierMatteDouvilleMousavandThomasYildirim}, illustrated in \cref{subsec:typeConeAsso} for classical associahedra, that we then apply to two families of~$\b{g}$-vector fans generalizing the~$\b{g}$-vector fans of classical associahedra: the~$\b{g}$-vector fans of cluster algebras of finite type (generalized associahedra) in~\cref{subsec:typeConeCA}, and the~$\b{g}$-vector fans of gentle algebras (non-kissing associahedra, non-crossing associahedra, generalized accordiohedra) in~\cref{subsec:typeConeNKC}.

\para{Cluster algebras of finite type and generalized associahedra}
(\cref{subsec:typeConeCA})

\noindent
We first consider the~$\b{g}$-vector fan~$\Fan(\B_\circ)$ of a skew-symmetrizable cluster algebra of finite type, with respect to any initial exchange matrix~$\B_\circ$ (acyclic or not). We provide the following polytopal realizations of~$\Fan(\B_\circ)$.

\newtheorem*{thm:allPolytopalRealizationsCA}{\cref{thm:allPolytopalRealizationsCA}}
\begin{thm:allPolytopalRealizationsCA}
For any~$\b{\ell} \in \R_{>0}^{\meshes}$, the polytope
\[
R_{\b{\ell}}(\B_\circ) \eqdef \Bigset{\b{z} \in \R^{\principalVariables}}{\b{z} \ge 0 \text{ and } \b{z}_x + \b{z}_{x'} - \sum_{y \in \principalVariables} \coefficient[y][x][x'] \, \b{z}_{y} = \b{\ell}_{\{x,x'\}} \text{ for all } \{x,x'\} \in \meshes}
\]
is a generalized associahedron, whose normal fan is the cluster fan~$\gvectorFan[\B_\circ]$.
Moreover, the polytopes~$R_\b{\ell}(\B_\circ)$ for~$\b{\ell} \in \R_{>0}^{\meshes}$ describe all polytopal realizations of~$\gvectorFan[\B_\circ]$.
\end{thm:allPolytopalRealizationsCA}

Here, $\meshes$ denotes the set of all pairs~$\{x,x'\}$ related by non-initial mesh mutations (\cref{def:meshMutation}), and~$\principalVariables$ the set of cluster variables.
For $\{x,x'\} \in \meshes$ and $y \in \principalVariables$, the coefficient $\coefficient[y][x][x']$ is defined as follows: $\coefficient[y][x][x']$ is set to be~$|b_{xy}|$ if~$y\in X\cap X'$ and $0$ otherwise, where~$X,X'$ are any two clusters such that~$X \ssm \{x\} = X' \ssm \{x'\}$. By the unique exchange relation property, the coefficients~$\coefficient[y][x][x']$ do not depend on the specific choice of clusters~$X,X'$.

\medskip

Some crucial pieces of the proof of~\cref{thm:allPolytopalRealizationsCA} use cluster categories (2-Calabi--Yau triangulated categories) and are deferred to~\cref{sec:clusterCategories}. Namely, a reformulation of~\cite[Thm.~7.5]{BuanMarshReinekeReitenTodorov}, proved in~\cref{sec:clusterCategories}, gives the first step towards applying our type cone strategy.

\newtheorem*{prop:uniqueExchangePropertyCA}{\cref{prop:uniqueExchangePropertyCA}}
\begin{prop:uniqueExchangePropertyCA}
The cluster fan~$\Fan(\B_\circ)$ has the unique exchange relation property.
\end{prop:uniqueExchangePropertyCA}

And the second step follows from our main result in \cref{sec:clusterCategories}, which makes use of representation theory of finite dimensional algebras and additive categorification of cluster algebras.

\newtheorem*{coro:simplicialTypeConeCA}{\cref{coro:simplicialTypeConeCA}}
\begin{coro:simplicialTypeConeCA}
The type cone~$\typeCone \big( \Fan(\B_\circ) \big)$ is simplicial.
\end{coro:simplicialTypeConeCA}

\para{Gentle algebras, non-kissing complexes, and generalized accordiohedra}
(\cref{subsec:typeConeNKC})

\noindent
The same strategy is applied to a class of finite dimensional algebras, called gentle algebras, whose representations are combinatorially well understood.
We note that the~$\tau$-tilting theory of gentle algebras is the algebraic counterpart of the combinatorics of accordions on surfaces~\cite{PaluPilaudPlamondon-nonkissing,BrustleDouvilleMousavandThomasYildirim,PaluPilaudPlamondon-surfaces}.
We identify two conditions, called brick and 2-acyclicity (\cref{def:brick2acyclic}), that enable us to make use of the type cone approach.

Let~$\quiver = (Q,I)$ be a gentle bound quiver, and let~$\Fan(\quiver)$ be the fan of its~$\b{g}$-vectors (\cref{subsubsec:nonkissingComplex}), also called non-kissing fan.
We provide the following polytopal realizations of~$\Fan(\quiver)$.

\newtheorem*{thm:allPolytopalRealizationsNKC}{\cref{thm:allPolytopalRealizationsNKC}}
\begin{thm:allPolytopalRealizationsNKC}
For any brick and $2$-acyclic gentle quiver~$\quiver$ and any~$\b{\ell} \in \R_{>0}^{\strings}$, the polytope
\[
R_\b{\ell}(\quiver) \eqdef \set{\b{z} \in \R^{\walks}}{\begin{array}{@{}l@{}} \b{z} \ge 0, \quad \b{z}_\omega = 0 \text{ for any straight or self-kissing walk } \omega, \\ \text{and } \b{z}_{\cc{\sigma}} + \b{z}_{\hh{\sigma}} - \b{z}_{\hc{\sigma}} - \b{z}_{\ch{\sigma}} = \b{\ell}_\sigma \text{ for all } \sigma \in \strings\end{array}}
\]
is a realization of the non-kissing fan~$\gvectorFan[\quiver]$.
Moreover, the polytopes~$R_\b{\ell}(\quiver)$ for~$\b{\ell} \in \R_{>0}^{\strings}$ describe all polytopal realizations of~$\gvectorFan[\quiver]$.
\end{thm:allPolytopalRealizationsNKC}

Here, $\strings$ is the set of all strings of~$\quiver$, and~$\walks$ the set of all walks on~$\quiver$ (\ie of all strings of the blossoming bound quiver of~$\quiver$ that join two blossom vertices).
If~$\sigma$ is a string of~$\quiver$, we make use of the evocative notations~$\cc{\sigma}, \hh{\sigma}, \hc{\sigma}, \ch{\sigma}$ for the walks obtained by adding, in the blossoming bound quiver, hooks or cohooks at each end of~$\sigma$.

\medskip

The two main ingredients are similar to those for cluster algebras.
As a first step, we obtain the following consequence of \cref{prop:exchangeablePairsNKC} (ii), which is proved in \cref{sec:Kbproj} by representation-theoretic methods.

\newtheorem*{prop:exchangeablePairsNKC}{\cref{prop:exchangeablePairsNKC}\,{\normalfont (iv)}}
\begin{prop:exchangeablePairsNKC}
The non-kissing fan~$\Fan(\quiver)$ has the unique exchange relation property.
\end{prop:exchangeablePairsNKC}

The second step is the following statement, for which we provide a purely combinatorial proof in \cref{subsubsec:simplicialTypeConeNKC} and an algebraic proof in \cref{sec:Kbproj}, using the main result of \cref{sec:extricats}.

\newtheorem*{coro:simplicialTypeConeNKC}{\cref{coro:simplicialTypeConeNKC}}
\begin{coro:simplicialTypeConeNKC}
If the gentle bound quiver~$\quiver$ is brick and 2-acyclic, then the type cone~$\typeCone \big( \Fan(\quiver) \big)$ is simplicial.
\end{coro:simplicialTypeConeNKC}

\para{Cluster categories}
(\cref{sec:clusterCategories})

\noindent
We prove an analogue for cluster categories of a result of M. Auslander for module categories of Artin algebras~\cite{Auslander1984}. 
We are mainly motivated by the fact that it implies that the type cones of the cluster fans of finite type are simplicial.
However, this result is of independent interest and sheds new lights on the Grothendieck groups of cluster categories.

Let~$\cat$ be a cluster category with finitely many isomorphism classes of indecomposable objects (see \cref{sec:setting} for the precise and more general setting in which the theorem is proven).
Let~$\Ksp(\cat)$ be the split Grothendieck group of~$\cat$.
Fix a cluster-tilting object~$T\in\cat$, and let $K_0(\cat ; T)$ be the quotient of~$\Ksp(\cat)$ by the relations~$[X]+[Z]-[Y]$ for all triangles ${X\xrightarrow{} Y \xrightarrow{} Z \xrightarrow{h} \susp X}$ with~$h \in (\susp T)$.
Denote by~$\b{g}:\Ksp(\cat) \to K_0(\cat ; T)$ the canonical projection.
For any indecomposable~$X\in\cat$, write~$\ell_X = [X]+[\susp^{-1}X]-[E]$, where~$E$ is the middle term of an almost split triangle starting at~$X$.
For any objects~$X,Y\in\cat$, we write~$\langle X, Y \rangle$ for~$\dim_\field\Hom{\Lambda}(FX,FY)$, where~$\Lambda$ is the cluster-tilted algebra~$\End{\cat}(T)$ and~$F$ is the equivalence of categories~$\cat(T,-):\cat/(\susp T) \to \MOD \Lambda$.

\newtheorem*{thm:relations-g-vecteurs}{\cref{thm:relations-g-vecteurs}}
\begin{thm:relations-g-vecteurs}
The set~$L_\cat \eqdef \set{\ell_X}{X \in \ind(\cat) \ssm \add(\susp T)}$ is a basis of the kernel of~$\b{g}$ and, for any~$x \in \ker(\b{g})$, we have
\[
x = \sum_{A \in \ind(\cat) \ssm \add(\susp T)} \frac{\langle x, A \rangle}{\langle \ell_A, A \rangle} \ell_A.
\]
\end{thm:relations-g-vecteurs}

\para{Extriangulated categories}
(\cref{sec:extricats})

\noindent
We prove a more general version of \cref{thm:relations-g-vecteurs} that applies not only to cluster fans, but also to non-kissing fans.
Once more, our main motivation is to prove that the type cone of the non-kissing fan of a brick and 2-acyclic gentle algebra is simplicial, but the results are of independent interest, and might be useful for studying the type cones of other families of simplicial fans.

Let~$\cat$ be an extriangulated category (\cref{sec:recollections extricats}) with a fixed projective object~$T$ such that the morphism~$T\to 0$ is an inflation and, for each~$X\in\cat$, there is an extriangle~$T_1^X\infl T_0^X\defl X\overset{\delta_X}{\dashrightarrow}$.
Let~$\susp T$ be the cone of the inflation~$T\infl 0$.

\newtheorem*{prop:KRextricat}{\cref{prop:KRextricat}}
\begin{prop:KRextricat}
The functor~$\cat(T,-)$ induces an equivalence~$F:\cat/(\susp T) \to \MOD \End{\cat}(T)$.
\end{prop:KRextricat}

Our assumptions allow for a well-defined notion of index:~$\ind_T(X)=[T_0^X]-[T_1^X] \in \kzero{\add T}$.

\newtheorem*{prop:index iso extricat}{\cref{prop:index iso extricat}}
\begin{prop:index iso extricat}
The index induces an isomorphism of abelian groups from~$\kzero{\cat}$ to~$\kzero{\add T}$.
\end{prop:index iso extricat}

The previous two statements also hold when~$T$ is an additive subcategory rather than a mere object.
Assume moreover that~$\cat$ is~$\field$-linear, Ext-finite, Krull--Schmidt and has Auslander--Reiten--Serre duality.
Then, with notations analogous to those used for cluster categories above, we obtain the following statement.

\newtheorem*{thm:extricats}{\cref{thm:extricats}}
\begin{thm:extricats}
The extriangulated category~$\cat$ has only finitely many isomorphism classes of indecomposable objects if and only if the set~$L_\cat$ generates the kernel of the canonical projection ${\b{g}:\Ksp(\cat)\to\kzero{\cat}}$.
In that case~$L_\cat$ is a basis of the kernel of~$\b{g}$ and any~$x\in\ker(\b{g})$ decomposes~as
\[
 x = \sum_{A \in \ind(\cat) \ssm \add(\susp T)} \frac{\langle x, A \rangle}{\langle \ell_A, A \rangle} \ell_A.
\]
\end{thm:extricats}

%\newpage
\para{Schematic diagram of logical dependencies}

\noindent
We conclude with a schematic diagram representing the logical dependencies between the different sections of the paper.

\vspace{.5cm}
\tikzset{
  double arrow/.style={
    -latex, line width=3pt, black, % first arrow
    postaction={draw,line width=2pt, white, shorten <= 1,shorten >= 2}, % second arrow
  }
}
\centerline{
\begin{tikzpicture}[every node/.style={inner sep=6, outer sep=8}]
	\node[diamond,aspect=3, text width=5cm, minimum height=1cm, align=center, draw=blue, fill=blue!10, inner sep=-2] (I1) at (0,1.5) {\ref{part:geometry}.\ref{sec:typeCone} -- Simplicial type cone strategy};
	\node[rectangle, draw=black, fill=black!10, align=center, minimum width = 5.3cm, minimum height=1.2cm] (I21) at (-6,-1.5) {\ref{part:geometry}.\ref{subsec:typeConeAsso} -- Toy example \\ Classical associahedra};
	\node[rectangle, draw=orange, fill=orange!10, align=center, text width=5.3cm, minimum width = 5cm, minimum height=1.2cm] (I22) at (0,-1.5) {\ref{part:geometry}.\ref{subsec:typeConeCA} -- Cluster algebras \\ \& generalized associahedra};
	\node[rectangle, draw=green, fill=green!10, align=center, text width=5.3cm, minimum width = 5cm, minimum height=1.2cm] (I23) at (6,-1.5) {\ref{part:geometry}.\ref{subsec:typeConeNKC} -- Gentle algebras \\ \& generalized accordiohedra};
	\node[ellipse, draw=orange, fill=orange!10, align=center, text width=5cm, inner sep=2, minimum height=1cm] (II1) at (-4.5,-4.2) {\ref{part:algebra}.\ref{sec:clusterCategories} -- Cluster categories};
	\node[ellipse, draw=green, fill=green!10, align=center, text width=5cm, inner sep=0, minimum height=1cm] (II2) at (4.5,-4.2) {\ref{part:algebra}.\ref{sec:extricats} -- Extriangulated categories};
	\draw[decorate, decoration={shape backgrounds, shape=dart, shape size=2mm, shape evenly spread=8}, draw=blue, fill=blue!10] (I1.180) to [bend right, looseness=1.1] node[label={[shift={(-1.6,-1.5)}]\textit{applied to}}]{} (I21.90);
	\draw[decorate, decoration={shape backgrounds, shape=dart, shape size=2mm, shape evenly spread=3}, draw=blue, fill=blue!10] (I1.270) to node[label={[shift={(1,-1.)}]\textit{applied to}}]{} (I22.90);
	\draw[decorate, decoration={shape backgrounds, shape=dart, shape size=2mm, shape evenly spread=8}, draw=blue, fill=blue!10] (I1.360) to [bend left, looseness=1.1] node[label={[shift={(1.6,-1.5)}]\textit{applied to}}]{} (I23.90);
	\draw[thick, double distance=2pt, -implies, in=270] (I22.270)+(-1.5,-1.2) to node[semithick, right, outer sep=0, pos=0.55]{\textit{used for}} ([yshift=.2cm] I22.270);
	\draw[thick, double distance=2pt, -implies, in=270] (I23.270)+(-1.5,-1.2) to node[semithick, right, outer sep=0, pos=0.55]{\textit{used for}} ([yshift=.2cm] I23.270);
	\draw[thick, ->] (II1.0) to node[semithick, below, outer sep=0]{\textit{generalizes to}} (II2.180);
	\draw let \p{top}=(I1.90), \p{left}=(I21.180), \p{bottom}=([shift={(0,-.15)}]I22.270), \p{right}=(I23.0) in (\x{left}, \y{top}) -- (\x{left}, \y{bottom}) -- (\x{right}, \y{bottom}) -- (\x{right}, \y{top}) -- (\x{left}, \y{top});
	\draw let \p{top}=([shift={(0,.2)}]II1.90), \p{left}=(I21.180), \p{bottom}=(II1.270), \p{right}=(I23.0) in (\x{left}, \y{top}) -- (\x{left}, \y{bottom}) -- (\x{right}, \y{bottom}) -- (\x{right}, \y{top}) -- (\x{left}, \y{top});
	\path let \p{top}=(I1.90) in node[rectangle, draw, fill=white, align=left, inner sep=4] (I) at (-6.15, \y{top}) {\ref{part:geometry} -- Discrete geometry};
	\path let \p{top}=([shift={(0,.2)}]II1.90) in node[rectangle, draw, fill=white, align=left, inner sep=4] (I) at (-5.8, \y{top}) {\ref{part:algebra} -- Representation theory};
\end{tikzpicture}
}

%%%%%%%%%%%%%%%%%%%%%%%%%%%%%%%%%%%%%%%%%%%%%%%%%%%

\newpage
\addtocontents{toc}{\vspace{-.2cm}}
\part{Type cones of $\b{g}$-vector fans}
\label{part:geometry}

%%%%%%%%%%%%%%%%%%%%%%%%%%%%%%%%%%%%%%%%%%%%%%%%%%%

\section{Polytopal realizations and type cone of a simplicial fan}
\label{sec:typeCone}

%%%%%%%%%%%

\subsection{Polytopes and fans}
\label{ssec:polytopes and fans}

We briefly recall basic definitions and properties of polyhedral fans and polytopes, and refer to~\cite{Ziegler-polytopes} for a classical textbook on this topic.

A hyperplane~$H \subset \R^n$ is a \defn{supporting hyperplane} of a set~$X \subset \R^n$ if~$H \cap X \ne \varnothing$ and~$X$ is contained in one of the two closed half-spaces of~$\R^n$ defined by~$H$.

We denote by~$\R_{\ge0}\rays \eqdef \set{\sum_{\ray \in \rays} \lambda_{\ray} \, \ray}{\lambda_{\ray} \in \R_{\ge0}}$ the \defn{positive span} of a set~$\rays$ of vectors of~$\R^n$.
A \defn{polyhedral cone} is a subset of~$\R^n$ defined equivalently as the positive span of finitely many vectors or as the intersection of finitely many closed linear halfspaces.
The \defn{faces} of a cone~$C$ are the intersections of~$C$ with the supporting hyperplanes of~$C$.
The $1$-dimensional (resp.~codimension~$1$) faces of~$C$ are called~\defn{rays} (resp.~\defn{facets}) of~$C$.
A cone is \defn{simplicial} if it is generated by a set of linearly independent vectors.

A \defn{polyhedral fan} is a collection~$\Fan$ of polyhedral cones such that
\begin{itemize}
\item if~$C \in \Fan$ and~$F$ is a face of~$C$, then~$F \in \Fan$,
\item the intersection of any two cones of~$\Fan$ is a face of both.
\end{itemize}
A fan is \defn{simplicial} if all its cones are simplicial, \defn{complete} if the union of its cones covers the ambient space~$\R^n$, and \defn{essential} if it contains the cone~$\{\b{0}\}$. Note that every complete fan is the product of an essential fan with its lineality space (the largest linear subspace contained in all the cones).
For two fans~$\Fan, \cal{G}$ in~$\R^n$, we say that~$\Fan$ \defn{refines}~$\cal{G}$ (and that~$\cal{G}$ \defn{coarsens}~$\Fan$) if every cone of~$\Fan$ is contained in a cone of~$\cal{G}$.

A \defn{polytope} is a subset~$P$ of~$\R^n$ defined equivalently as the convex hull of finitely many points or as a bounded intersection of finitely many closed affine halfspaces.
The \defn{dimension}~$\dim(P)$ is the dimension of the affine hull of~$P$.
The \defn{faces} of~$P$ are the intersections of~$P$ with its supporting hyperplanes.
The dimension~$0$ (resp.~dimension~$1$, resp.~codimension~$1$) faces are called \defn{vertices} (resp.~\defn{edges}, resp.~\defn{facets}) of~$P$.
A polytope is \defn{simple} if each vertex is incident to $\dim(P)$ facets (or equivalently to $\dim(P)$ edges).

The (outer) \defn{normal cone} of a face~$F$ of~$P$ is the cone generated by the outer normal vectors of the facets of~$P$ containing~$F$.
In other words, it is the cone of vectors~$\b{c}$ such that the linear form~${\b{x} \mapsto \dotprod{\b{c}}{\b{x}}}$ on~$P$ is maximized by all points of the face~$F$.
The (outer) \defn{normal fan} of~$P$ is the collection of the (outer) normal cones of all its faces.
We say that a complete polyhedral fan~$\Fan$ in~$\R^n$ is \defn{polytopal} when it is the normal fan of a polytope~$P$ of~$\R^n$, and that~$P$ is a \defn{polytopal realization} of~$\Fan$.

%%%%%%%%%%%

\subsection{Type cone}
\label{ssec:type cone}

Fix an essential complete simplicial fan~$\Fan$ in~$\R^n$, with an (arbitrary) indexing of its rays by $[N] \eqdef \{1,\dots,N\}$. Let~$\b{G}$ be the $N \times n$-matrix whose rows are representative vectors of the rays of~$\Fan$. Let~$\b{K}$ be a $(N-n) \times N$-matrix that spans the left kernel of~$\b{G}$ (\ie $\b{K}\b{G} = 0$ and $\rank(\b{K})=N-n$). For any height vector~$\b{h} \in \R^N$, we define the polytope
\[
P_\b{h} \eqdef \set{\b{x} \in \R^n}{\b{G}\b{x} \le \b{h}},
\]
where $\b{G}\b{x} \le \b{h}$ is the standard shorthand for the corresponding system of inequalities.

We say that $\b{h}$ is \defn{$\Fan$-admissible} if $P_\b{h}$ is a polytopal realization of~$\Fan$.
The following classical statement characterizes the $\Fan$-admissible height vectors.
It is a reformulation of regularity of triangulations of vector configurations, introduced in the theory of secondary polytopes~\cite{GelfandKapranovZelevinsky}, see also~\cite{DeLoeraRambauSantos}.
We present here a convenient formulation from~\cite[Lem.~2.1]{ChapotonFominZelevinsky}.

\begin{proposition}
\label{prop:characterizationPolytopalFan}
Let~$\Fan$ be an essential complete simplicial fan in~$\R^n$. Then the following are equivalent for any height vector~$\b{h}\in \R^N$:
\begin{enumerate}
\item The fan~$\Fan$ is the normal fan of the polytope~$P_\b{h} \eqdef \set{\b{x} \in \R^n}{\b{G}\b{x} \le \b{h}}$.
\item For any two adjacent maximal cones~$\R_{\ge0}\rays$ and~$\R_{\ge0}\rays'$ of~$\Fan$ with~$\rays \ssm \{\ray\} = \rays' \ssm \{\ray'\}$,
\[
\alpha \, \b{h}_{\ray} + \alpha' \, \b{h}_{\ray'} + \sum_{\ray[s] \in \rays \cap \rays'} \beta_{\ray[s]} \, \b{h}_{\ray[s]} > 0,
\]
where
\[
\alpha \, \ray + \alpha' \, \ray' + \sum_{\ray[s] \in \rays \cap \rays'} \beta_{\ray[s]} \, \ray[s] = 0
\]
is the unique (up to rescaling) linear dependence with~$\alpha, \alpha' > 0$ between the rays of~$\rays \cup \rays'$.
\end{enumerate}
\end{proposition}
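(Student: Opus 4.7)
The plan is to argue via support functions. For any height vector $\b{h} \in \R^N$, define a continuous piecewise-linear function $\phi_\b{h}: \R^n \to \R$ that is linear on each maximal cone $\R_{\ge0}\rays$ of $\Fan$ and satisfies $\phi_\b{h}(\ray[s]) = h_{\ray[s]}$ for every $\ray[s] \in \rays$. Simpliciality of $\Fan$ makes this well-defined on each maximal cone, and continuity across the walls follows because adjacent maximal cones share $n-1$ rays on which the values agree. The classical theory of support functions then says that $P_\b{h}$ admits $\Fan$ as its normal fan precisely when $\phi_\b{h}$ is strictly convex, with domains of linearity exactly the maximal cones of $\Fan$.

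For the direction (1)$\Rightarrow$(2), assume that $\Fan$ is the normal fan of $P_\b{h}$, so each maximal cone $\R_{\ge0}\rays$ corresponds to a vertex $v_\rays$ of $P_\b{h}$ characterized by $\dotprod{\ray[s]}{v_\rays} = h_{\ray[s]}$ for every $\ray[s] \in \rays$. Given two adjacent maximal cones with $\rays \ssm \{\ray\} = \rays' \ssm \{\ray'\}$, the vertices $v_\rays$ and $v_{\rays'}$ are distinct (otherwise the two cones would merge in the normal fan), so $v_\rays$ does not lie on the facet of $P_\b{h}$ with outer normal $\ray'$, and hence $\dotprod{\ray'}{v_\rays} < h_{\ray'}$. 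Pairing the linear dependence $\alpha\,\ray + \alpha'\,\ray' + \sum_{\ray[s] \in \rays \cap \rays'} \beta_{\ray[s]}\,\ray[s] = 0$ with $v_\rays$ and substituting $\dotprod{\ray}{v_\rays} = h_\ray$ and $\dotprod{\ray[s]}{v_\rays} = h_{\ray[s]}$ together with the strict inequality above produces exactly the inequality of~(2), using $\alpha' > 0$.

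For the direction (2)$\Rightarrow$(1), running the same computation in reverse shows that the wall-crossing inequality of~(2) for a given pair $\rays, \rays'$ is equivalent to strict convexity of $\phi_\b{h}$ across their shared wall. Indeed, extending the linear piece of $\phi_\b{h}$ on $\R_{\ge0}\rays$ linearly to all of $\R^n$, the linear dependence forces its value at $\ray'$ to be $-(\alpha\,h_\ray + \sum_{\ray[s] \in \rays \cap \rays'} \beta_{\ray[s]}\,h_{\ray[s]})/\alpha'$, which is strictly less than $\phi_\b{h}(\ray') = h_{\ray'}$ (read off from the $\rays'$ side) exactly when~(2) holds.

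The main obstacle is then the passage from strict convexity across every individual wall to global strict convexity of $\phi_\b{h}$. This is a classical fact about piecewise-linear functions on complete fans, which I would establish by a connectedness argument: fixing a maximal cone $C_0$, one shows that the globally linear extension of $\phi_\b{h}|_{C_0}$ lies strictly below $\phi_\b{h}$ on every other maximal cone by walking from $C_0$ to that cone through a sequence of pairwise adjacent maximal cones and combining the local wall-crossing strict inequalities. Once global strict convexity is in hand, $\phi_\b{h}$ is the support function of $P_\b{h}$ with domains of linearity exactly the maximal cones of $\Fan$, so $\Fan$ is the normal fan of $P_\b{h}$.
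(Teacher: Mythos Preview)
The paper does not prove this proposition; it presents it as a classical statement, citing the theory of secondary polytopes \cite{GelfandKapranovZelevinsky}, \cite{DeLoeraRambauSantos}, and specifically the formulation of \cite[Lem.~2.1]{ChapotonFominZelevinsky}. So there is no in-paper proof to compare against.

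Your argument via support functions is the standard one and is correct. The $(1)\Rightarrow(2)$ direction is clean. For $(2)\Rightarrow(1)$, your reduction to local strict convexity across each wall is right; the only step that deserves a bit more precision is the ``connectedness argument'' for passing from local to global strict convexity. The cleanest way to make it rigorous is not to walk through an abstract sequence of adjacent cones, but to take a generic straight line segment from an interior point of $C_0$ to the target point: the segment meets each maximal cone in a (possibly empty) interval, the restriction of $\phi_\b{h}$ is piecewise linear along it, and the wall-crossing inequalities say the slope strictly increases at each breakpoint. Since the linear extension of $\phi_\b{h}|_{C_0}$ agrees with $\phi_\b{h}$ on the first interval, convexity along the segment forces the linear extension to lie strictly below afterwards. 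This avoids any worry about the order in which cones are visited. One also implicitly uses that $P_\b{h}$ is bounded, which follows from completeness of $\Fan$ once $\phi_\b{h}$ is strictly convex.
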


\begin{notation}
For any adjacent maximal cones~$\R_{\ge0}\rays$ and~$\R_{\ge0}\rays'$ of~$\Fan$ with ${\rays \ssm \{\ray\} = \rays' \ssm \{\ray'\}}$, we denote by~$\coefficient[{\ray[s]}][\rays][\rays']$ the coefficient of~$\ray[s]$ in the unique linear dependence between the rays of~$\rays \cup \rays'$, \ie such that
\[
\sum_{\ray[s] \in \rays \cup \rays'} \coefficient[{\ray[s]}][\rays][\rays'] \, \ray[s] = 0.
\]
These coefficients are \apriori{} defined up to rescaling, but we additionally fix the rescaling so that~${\coefficient[\ray][\rays][\rays'] + \coefficient[\ray'][\rays][\rays'] = 2}$ (this convention is arbitrary, but will be convenient in \cref{sec:applications}).
\end{notation}

In this paper, we are interested in the set of all possible realizations of~$\Fan$ as the normal fan of a polytope~$P_\b{h}$. This was studied by P.~McMullen in~\cite{McMullen-typeCone} (see~\cite[Sect.~9.5]{DeLoeraRambauSantos} for a formulation in terms of chambers of triangulations of vector configurations).

\begin{definition}
\label{def:typeCone}
The \defn{type cone} of~$\Fan$ is the cone~$\typeCone(\Fan)$ of all $\Fan$-admissible height vectors~$\b{h} \in \R^N$:
\begin{align*}
\typeCone(\Fan) & \eqdef \set{\b{h} \in \R^N}{\Fan \text{ is the normal fan of } P_\b{h}} \\
& = \Bigset{\b{h} \in \R^N}{\sum_{\ray[s] \in \rays \cup \rays'} \coefficient[{\ray[s]}][\rays][\rays'] \, \b{h}_{\ray[s]} > 0 \; \begin{array}{l} \text{for any adjacent maximal} \\ \text{cones~$\R_{\ge0}\rays$ and~$\R_{\ge0}\rays'$ of~$\Fan$} \end{array}}.
\end{align*}
\end{definition}

Note that the type cone is an open cone and contains a lineality subspace of dimension~$n$ (it is invariant by translation in~$\b{G} \R^n$). 
It is sometimes useful to get rid of the lineality space by considering the projection~$\b{K}\typeCone(\Fan)$. 

We denote by $\ctypeCone(\Fan)$ the closure of $\typeCone(\Fan)$, and call it the \defn{closed type cone} of~$\Fan$. It is the closed polyhedral cone defined by the inequalities $\sum_{\ray[s] \in \rays \cup \rays'} \coefficient[{\ray[s]}][\rays][\rays'] \, \b{h}_{\ray[s]} \ge 0$ for any adjacent maximal cones~$\R_{\ge0}\rays$ and~$\R_{\ge0}\rays'$. If $\Fan$ is the normal fan of the polytope~$P$, then $\ctypeCone(\Fan)$ is called the \defn{deformation cone} of $P$ in~\cite{Postnikov}, see also~\cite{PostnikovReinerWilliams}.

\begin{example}
\label{exm:typeCone}
\begin{figure}[b]
	\capstart
	\centerline{{\includegraphics[scale=1]{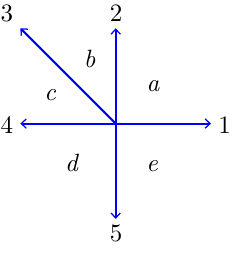}} \quad {\includegraphics[scale=1]{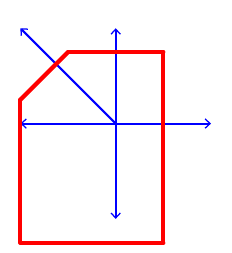}} \quad \raisebox{.5cm}{\includegraphics[scale=.45]{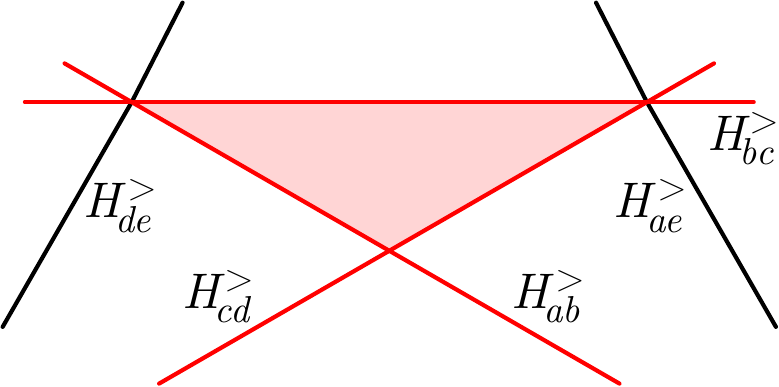}}}
	\caption{A $2$-dimensional fan~$\Fan$ with five rays $1, \dots, 5$ and five maximal cones~$a, \dots, e$ (left), its polytopal realization corresponding to the height vector~$(\nicefrac{1}{2}, \nicefrac{3}{4}, 1, 1, \nicefrac{5}{4})$ (middle), and the intersection of $\b{K}\typeCone(\Fan)$ with a hyperplane to get a $2$-dimensional slice (right).}
	\label{fig:typeCone}
\end{figure}
Consider the $2$-dimensional fan~$\Fan$ depicted in \cref{fig:typeCone} (left) (we will encounter this fan again in \cref{exm:typeConeCoarsenings,exm:typeConeMinkowskiSum,exm:typeConeAsso}).
It has five rays labeled~$1, \dots, 5$ and five maximal cones labeled~$a,\dots, e$.
For the matrices $\b{G}$ and $\b{K}$, we consider
\[
%\transpose{\b{G}} = \begin{bmatrix} 1 & 0 & -1 & -1 & 0 \\ 0 & 1 & 1 & 0 & -1 \end{bmatrix}
\b{G} = \begin{bmatrix} 1 & 0 \\ 0 & 1 \\ -1 & 1 \\ -1 & 0 \\ 0 & -1 \end{bmatrix}
\qquad\text{and}\qquad
\b{K} = \begin{bmatrix} 1 & 0 & 0 & 1 & 0 \\ 0 & 1 & 0 & 0 & 1 \\ 1 & 0 & 1 & 0 & 1 \end{bmatrix}.
\]
The type cone~$\typeCone(\Fan)$ lies in~$\R^5$, but has a $2$-dimensional lineality space.
The five pairs of adjacent maximal cones of~$\Fan$ give rise to following five defining inequalities for~$\typeCone(\Fan)$:
\begin{gather*}
H^>_{ab}: \; \b{h}_1 + \b{h}_3 - \b{h}_2 > 0
\qquad
H^>_{bc}: \; \b{h}_2 + \b{h}_4 - \b{h}_3 > 0
\qquad
H^>_{cd}: \; \b{h}_3 + \b{h}_5 - \b{h}_4 > 0
\\
H^>_{de}: \; \b{h}_1 + \b{h}_4 > 0
\qquad
H^>_{ae}: \; \b{h}_2 + \b{h}_5 > 0,
\end{gather*}
where $H^>_{ij}$ denotes the halfspace defined by the inequality corresponding to the two adjacent maximal cones~$i$ and~$j$.
Note that the inequalities~$H^>_{ae}$ and~$H^>_{de}$ are redundant.
For example, the height vector~$(\nicefrac{1}{2}, \nicefrac{3}{4}, \nicefrac{5}{4}, 1, \nicefrac{5}{4})$ belongs to the type cone~$\typeCone(\Fan)$, and the corresponding polytope is represented in \cref{fig:typeCone} (middle).
To represent this type cone, it suffices to represent~$\b{K}\typeCone(\Fan)$ which is an essential simplicial cone in~$\R^3$, given as the intersection of the following five open halfspaces:
\begin{gather*}
\b{K}H^>_{ab}: \; {x} - {y} +2 {z} > 0
\qquad
\b{K}H^>_{bc}: \; {x} + {y} - {z} > 0
\qquad
\b{K}H^>_{cd}: \; -{x} + {y} +2 {z} > 0
\\
\b{K}H^>_{de}: \; 2{x} + {z} > 0
\qquad
\b{K}H^>_{ae}: \; 2{y} + {z} > 0.
\end{gather*}
One can further reduce the dimension by intersecting with a transversal hyperplane to get a $2$-dimensional representation.
This is the red triangle depicted in \cref{fig:typeCone} (right).
\end{example}

\begin{definition}
An \defn{extremal adjacent pair} of~$\Fan$ is a pair of adjacent maximal cones~$\{\R_{\ge0}\rays, \R_{\ge0}\rays'\}$ of~$\Fan$ such that the corresponding inequality $\sum_{\ray[s] \in \rays \cup \rays'} \coefficient[{\ray[s]}][\rays][\rays'] \, \b{h}_{\ray[s]} \ge 0$ in the definition of the type cone~$\ctypeCone(\Fan)$ actually defines a facet of~$\ctypeCone(\Fan)$.
\end{definition}

In other words, extremal adjacent pairs define the extremal rays of the polar of the closed type cone~$\ctypeCone(\Fan)$.
Understanding the extremal adjacent pairs of~$\Fan$ enables to describe its type cone~$\typeCone(\Fan)$ and thus all its polytopal realizations.
For instance, for the $2$-dimensional fan of \cref{exm:typeCone}, the extremal adjacent pairs are~$\{a,b\}$, $\{b,c\}$ and~$\{c,d\}$.

\begin{remark}
\label{rem:dimTypeCone}
Since the type cone is an~$N$-dimensional cone with a lineality subspace of dimension~$n$, it has at least~$N-n$ facets (thus~$N-n$ extremal adjacent pairs).
The type cone is \defn{simplicial} when it has precisely $N-n$ facets.
\end{remark}

Although we will only deal with simplicial fans, note for completeness that \cref{prop:characterizationPolytopalFan,def:typeCone} can be easily adapted to non-simplicial fans. To describe the type cone of an arbitrary complete fan, it suffices to consider any simplicial refinement, and to set some of the strict inequalities of the definition of the type cone to equality (see~\cite[Prop.~3]{PilaudSantos-quotientopes} for a proof).

\begin{proposition}
\label{prop:non-simplicial}
Let $\Fan$ be a complete fan that coarsens the essential complete simplicial fan~$\Fan[G]$.
The \defn{type cone}~$\typeCone(\Fan)$ of all $\Fan$-admissible height vectors~$\b{h} \in \R^N$ is
\begin{align*}
\typeCone(\Fan) 
& = \left\{\b{h} \in \R^N\;\middle|\;
\begin{array}{ll}
\sum_{\ray[s] \in \rays \cup \rays'} \coefficient[{\ray[s]}][\rays][\rays'] \, \b{h}_{\ray[s]} = 0
& \begin{array}{l} \text{for any adjacent maximal cones} \\ \text{$\R_{\ge0}\rays$ and~$\R_{\ge0}\rays'$ of~$\Fan[G]$ belonging}\\\text{to \textbf{the same} maximal cone of $\Fan$,}\end{array}
\\[.7cm]
\sum_{\ray[s] \in \rays \cup \rays'} \coefficient[{\ray[s]}][\rays][\rays'] \, \b{h}_{\ray[s]} > 0
& \begin{array}{l} \text{for any adjacent maximal cones} \\ \text{$\R_{\ge0}\rays$ and~$\R_{\ge0}\rays'$ of~$\Fan[G]$ belonging}\\\text{to \textbf{distinct} maximal cones of $\Fan$}\end{array}
\end{array}
\right\}.
\end{align*}
\end{proposition}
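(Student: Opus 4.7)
The approach is to use the simplicial refinement $\Fan[G]$ as a bridge back to \cref{prop:characterizationPolytopalFan}. The crucial observation is that for any $\b{h} \in \R^N$, the polytope $P_\b{h}$ is defined using the rays of $\Fan[G]$ as facet normals; therefore its normal fan, when well-defined, is always a coarsening of $\Fan[G]$. The extent of this coarsening is controlled by which of the inequalities from \cref{prop:characterizationPolytopalFan} are strict and which are tight: a strict inequality means that the wall between two adjacent simplicial maximal cones persists in the normal fan of $P_\b{h}$, while an equality collapses those two cones into a single cone of the coarser normal fan.

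For the sufficiency direction, I would assume $\b{h}$ satisfies the stated equalities and strict inequalities, and then construct the vertices of $P_\b{h}$ explicitly. For each maximal cone $C$ of $\Fan$, let $\R_{\ge0}\rays_1, \dots, \R_{\ge0}\rays_k$ denote the maximal simplicial cones of $\Fan[G]$ subdividing $C$. Each $\rays_i$ determines a candidate point $\b{x}_i$ by the defining equations $\dotprod{\ray}{\b{x}_i} = \b{h}_\ray$ for $\ray \in \rays_i$. For any adjacent bases $\rays_i, \rays_j$ within the subdivision of $C$, the unique linear dependence among their combined rays together with the equality hypothesis forces $\b{x}_i = \b{x}_j$; by connectivity of the adjacency graph of the simplicial subdivision of $C$, all the $\b{x}_i$ coincide to a single point $\b{x}_C$. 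The strict inequalities for adjacent simplicial cones lying in distinct maximal cones of $\Fan$ then guarantee both that the points $\b{x}_C$ are pairwise distinct and that each $\b{x}_C$ satisfies every remaining facet inequality of $P_\b{h}$. Hence the vertex set of $P_\b{h}$ is exactly $\{\b{x}_C\}_{C}$, with combinatorics encoded by $\Fan$.

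For the necessity direction, I would assume $P_\b{h}$ has normal fan $\Fan$. Since $\Fan$ coarsens $\Fan[G]$, a standard perturbation argument places $\b{h}$ in $\ctypeCone(\Fan[G])$, so all inequalities of \cref{prop:characterizationPolytopalFan} hold at least weakly. For two adjacent simplicial cones $\R_{\ge0}\rays$ and $\R_{\ge0}\rays'$ that lie inside the same maximal cone $C$ of $\Fan$, the candidate vertices attached to $\rays$ and $\rays'$ must collapse to the single vertex of $P_\b{h}$ indexed by $C$, which by uniqueness of the linear dependence forces the inequality to be an equality. For adjacent simplicial cones sitting in distinct maximal cones of $\Fan$, the corresponding vertices remain distinct in $P_\b{h}$, so the inequality is strict.

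The main technical obstacle lies in the sufficiency direction, specifically verifying that each candidate point $\b{x}_C$ satisfies \emph{all} facet inequalities $\dotprod{\ray}{\b{x}_C} \le \b{h}_\ray$, including those indexed by rays $\ray$ that do not appear in any of the $\rays_i$ subdividing $C$. This will reduce, via repeated use of the strict inequalities along walks in the adjacency graph of $\Fan[G]$ that cross boundaries between maximal cones of $\Fan$, to an inductive sign-tracking argument propagating outward from $C$; this is the step where the choice of normalization~$\coefficient[\ray][\rays][\rays'] + \coefficient[\ray'][\rays][\rays'] = 2$ and the positivity of both coefficients are used systematically.
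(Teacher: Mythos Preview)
The paper does not give its own proof of this proposition; it simply refers to~\cite[Prop.~3]{PilaudSantos-quotientopes}. So there is no in-paper argument to compare against, and your sketch is in fact more detailed than what the paper provides.

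Your sufficiency direction is the standard argument and is correct. The ``inductive sign-tracking along walks'' can be packaged more cleanly via the piecewise-linear support function: define $\psi_{\b{h}}$ on~$\R^n$ to be the unique function that is linear on each maximal cone of~$\Fan[G]$ and takes the value~$\b{h}_{\ray}$ at each ray~$\ray$. The equalities say that $\psi_{\b{h}}$ is linear on each maximal cone of~$\Fan$, and the strict inequalities say that it is strictly convex across every wall of~$\Fan$. Then $P_{\b{h}} = \{\b{x} : \dotprod{\b{y}}{\b{x}} \le \psi_{\b{h}}(\b{y}) \text{ for all } \b{y}\}$ by convexity and homogeneity, and its normal fan is read off directly from the domains of linearity of~$\psi_{\b{h}}$. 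This bypasses the walk argument entirely.

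Your necessity direction, however, has a genuine gap that reflects a subtle imprecision in the statement itself. If a ray~$\ray$ of~$\Fan[G]$ lies in the interior of a maximal cone~$C$ of~$\Fan$, then one may increase~$\b{h}_{\ray}$ freely: the inequality $\dotprod{\ray}{\b{x}} \le \b{h}_{\ray}$ becomes strictly redundant, the polytope~$P_{\b{h}}$ is unchanged, and its normal fan is still~$\Fan$, yet the corresponding wall-equality now fails. Concretely, your step ``the candidate vertices attached to~$\rays$ and~$\rays'$ must collapse to the single vertex of~$P_{\b{h}}$ indexed by~$C$'' presumes that those candidate points lie in~$P_{\b{h}}$, which is exactly what breaks when an interior-ray height is inflated. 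The set described in the proposition is the relatively open face of~$\ctypeCone(\Fan[G])$ attached to the coarsening~$\Fan$ (as used in the theorem that follows it); every point of it is $\Fan$-admissible, but not every $\Fan$-admissible~$\b{h} \in \R^N$ lies in it. The cited reference works with this face convention, and you should read the proposition that way rather than attempt to prove the literal converse.
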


%%%%%%%%%%%

\subsection{Unique exchange relation property}
\label{subsec:uniqueExchangeProperty}

Two rays~$\ray$ and~$\ray'$ of~$\Fan$ are \defn{exchangeable} if there are two adjacent maximal cones~$\R_{\ge0}\rays$ and~$\R_{\ge0}\rays'$ of~$\Fan$ with ${\rays \ssm \{\ray\} = \rays' \ssm \{\ray'\}}$.

\begin{definition}
\label{def:uerp}
We say that two exchangeable rays~$\ray, \ray'$ of~$\Fan$ admit a \defn{unique exchange relation} when the linear dependence
\[
\sum_{\ray[s] \in \rays \cup \rays'} \coefficient[{\ray[s]}][\rays][\rays'] \, \ray[s] = 0.
\]
does not depend on the pair~$\{\rays, \rays'\}$ of adjacent maximal cones of~$\Fan$ with $\rays \ssm \{\ray\} = \rays' \ssm \{\ray'\}$, but only on the pair of rays~$\{\ray, \ray'\}$.
This implies in particular that the rays~$\ray[s]$ for which~$\coefficient[{\ray[s]}][\rays][\rays'] \ne 0$ belong to~$\rays \cup \rays'$ for any pair~$\{\rays, \rays'\}$ of adjacent maximal cones of~$\Fan$ with $\rays \ssm \{\ray\} = \rays' \ssm \{\ray'\}$. 

We say that the fan~$\Fan$ has the \defn{unique exchange relation property} if any two exchangeable rays~$\ray, \ray'$ of~$\Fan$ admit a unique exchange relation.
\end{definition}

When~$\Fan$ has the unique exchange relation property, we change the notation~$\coefficient[{\ray[s]}][\rays][\rays']$ to~$\coefficient$ and we obtain that the type cone of~$\Fan$ is expressed as
\[
\typeCone(\Fan) = \Bigset{\b{h} \in \R^N}{\sum_{\ray[s]} \coefficient \, \b{h}_{\ray[s]} > 0 \text{ for any exchangeable rays~$\ray$ and~$\ray'$ of~$\Fan$}}.
\]

\begin{definition}
In a fan~$\Fan$ with the unique exchange relation property, an \defn{extremal exchangeable pair} is a pair of exchangeable rays~$\{\ray,\ray'\}$ such that the corresponding inequality~${\sum_{\ray[s]} \coefficient \, \b{h}_{\ray[s]} \ge  0}$ defines a facet of the closed type cone~$\ctypeCone(\Fan)$.
\end{definition}

In this paper, we will only consider fans with the unique exchange relation property, and our objective will be to describe their extremal exchangeable pairs.

%%%%%%%%%%%

\subsection{Alternative polytopal realizations}

In this section, we provide alternative polytopal realizations of the fan~$\Fan$.
We also discuss the behavior of these realizations in the situation when the type cone~$\typeCone(\Fan)$ is simplicial.

We still consider an essential complete simplicial fan~$\Fan$ in~$\R^n$, the $N \times n$-matrix~$\b{G}$ whose rows are the rays of~$\Fan$, and an $(N-n) \times N$-matrix~$\b{K}$ that spans the left kernel of~$\b{G}$ (\ie $\b{K}\b{G} = 0$ and $\rank(\b{K})=N-n$).

\begin{proposition}
\label{prop:alternativePolytopalRealization}
The affine map~$\Psi: \R^n \to \R^N$ defined by~$\Psi(\b{x}) = \b{h} - \b{G}\b{x}$ sends the polytope
\[
P_\b{h} \eqdef \set{\b{x} \in \R^n}{\b{G}\b{x} \le \b{h}}
\]
to the polytope
\[
Q_\b{h} \eqdef \set{\b{z} \in \R^N}{\b{K}\b{z} = \b{K}\b{h} \text{ and } \b{z} \ge 0}.
\]
\end{proposition}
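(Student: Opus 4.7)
The plan is to check that $\Psi$ is an affine bijection between $P_\b{h}$ and $Q_\b{h}$ by verifying the two inclusions. The argument is essentially pure linear algebra, and the only subtle point is a dimension count showing that the left kernel of $\b{G}$ and the kernel of $\b{K}$ match up correctly, so I expect no real obstacle.

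First I would check the forward inclusion $\Psi(P_\b{h}) \subseteq Q_\b{h}$. Given $\b{x} \in P_\b{h}$, set $\b{z} \eqdef \Psi(\b{x}) = \b{h} - \b{G}\b{x}$. The defining inequalities $\b{G}\b{x} \le \b{h}$ of $P_\b{h}$ translate directly into $\b{z} \ge 0$, and the identity $\b{K}\b{G} = 0$ yields $\b{K}\b{z} = \b{K}\b{h} - \b{K}\b{G}\b{x} = \b{K}\b{h}$, so $\b{z} \in Q_\b{h}$.

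For the reverse inclusion, the key observation is that because $\Fan$ is essential and complete, its rays positively span $\R^n$, so the matrix $\b{G}$ has full column rank $n$. Equivalently, its left kernel has dimension $N - n$, which is precisely the rank of $\b{K}$ by the assumption that the rows of $\b{K}$ span this left kernel. Consequently, viewing $\b{K}$ as a linear map $\R^N \to \R^{N-n}$, we have $\ker(\b{K}) = \operatorname{im}(\b{G})$, since the inclusion $\operatorname{im}(\b{G}) \subseteq \ker(\b{K})$ is immediate from $\b{K}\b{G} = 0$ and both subspaces are $n$-dimensional. Now, given any $\b{z} \in Q_\b{h}$, the equation $\b{K}\b{z} = \b{K}\b{h}$ says that $\b{h} - \b{z} \in \ker(\b{K}) = \operatorname{im}(\b{G})$, so there exists $\b{x} \in \R^n$ with $\b{G}\b{x} = \b{h} - \b{z}$ (unique, since $\b{G}$ is injective). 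The condition $\b{z} \ge 0$ then reads $\b{G}\b{x} \le \b{h}$, giving $\b{x} \in P_\b{h}$, and clearly $\Psi(\b{x}) = \b{z}$.

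Putting the two inclusions together proves that $\Psi$ restricts to a surjection $P_\b{h} \twoheadrightarrow Q_\b{h}$, which is moreover injective since $\b{G}$ has trivial kernel, so $\Psi(P_\b{h}) = Q_\b{h}$ as claimed.
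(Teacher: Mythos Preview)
Your proof is correct and follows essentially the same approach as the paper: both verify that $\Psi$ maps $P_\b{h}$ into $Q_\b{h}$ using $\b{K}\b{G}=0$, establish surjectivity by identifying $\ker(\b{K})$ with $\operatorname{im}(\b{G})$, and deduce injectivity from the fact that $\b{G}$ has full column rank since $\Fan$ is essential and complete. Your version merely spells out the dimension count for $\ker(\b{K}) = \operatorname{im}(\b{G})$ a bit more explicitly than the paper does.
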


\begin{proof}
This result is standard and is proved for instance in~\cite[Coro.~9.5.7]{DeLoeraRambauSantos}.
We include a short argument here for the convenience of the reader.
For~$\b{x}$ in~$P_\b{h}$, we have $\Psi(\b{x}) \ge 0$ by definition and~$\b{K}\Psi(\b{x}) = \b{K}\b{h} - \b{K}\b{G}\b{x} = \b{K}\b{h}$ since~$\b{K}$ is the left kernel of~$\b{G}$. Therefore~$\Psi(\b{x}) \in Q_\b{h}$.
Moreover, the map~$\Psi : P_\b{h} \to Q_\b{h}$ is:
\begin{itemize}
\item injective: Indeed, $\Psi(\b{x}) = \Psi(\b{x}')$ implies~$\b{G}(\b{x} - \b{x}') = 0$ and~$\b{G}$ has full rank since~$\Fan$ is essential and complete.
\item surjective: Indeed, for~$\b{z} \in Q_\b{h}$, we have~$\b{K}(\b{h}-\b{z}) = 0$ so that~$\b{h}-\b{z}$ belongs to the right kernel of~$\b{K}$, which is the image of~$\b{G}$ (because $\b{G}$ is of full rank).
% : the $n$~columns of $\b{G}$ are linearly independent because $\b{G}$ is of full rank, and belong to the right kernel of~$\b{K}$ which is of dimension~$n$. 
Hence, there exists~$\b{x} \in \R^n$ such that~$\b{h}-\b{z} = \b{G}\b{x}$. We have $\b{z} = \Psi(\b{x})$ and~$\b{x} \in P_\b{h}$ since~$\b{h} - \b{G}\b{x} = \b{z} \ge 0$.\qedhere
\end{itemize}
\end{proof}

Note that~$\b{K}$ is only defined up to linear transformation, as it depends on the choice of a basis of the left kernel of~$\b{G}$. Note also that, by construction, the coefficients of the normal vectors of the facets of~$\typeCone(\Fan)$ arise from a linear dependence among the rays of~$\Fan$, and hence belong to this kernel. When~$\typeCone(\Fan)$ is simplicial, its $(N-n)$ facets are necessarily linearly independent, and hence form a basis. They provide a particularly interesting choice of~$\b{K}$.  

\begin{corollary}
\label{coro:simplicialTypeCone}
Assume that the type cone~$\typeCone(\Fan)$ is simplicial and let~$\b{K}$ be the $(N-n) \times N$-matrix whose rows are the inner normal vectors of the facets of~$\typeCone(\Fan)$. Then the polytope
\[
R_\b{\ell} \eqdef \set{\b{z} \in \R^N}{\b{K}\b{z} = \b{\ell} \text{ and } \b{z} \ge 0}
\]
is a realization of the fan~$\Fan$ for any positive vector~$\b{\ell} \in \R_{>0}^{N-n}$.
Moreover, the polytopes~$R_\b{\ell}$ for~$\b{\ell} \in \R_{>0}^{N-n}$ describe all polytopal realizations of~$\Fan$.
\end{corollary}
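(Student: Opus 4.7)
The plan is to deduce the corollary directly from \cref{prop:alternativePolytopalRealization}, once we verify that the matrix $\b{K}$ defined in the statement is of the form to which that proposition applies. The key claim to establish is that the rows of $\b{K}$ (the inner normals to the facets of $\typeCone(\Fan)$) span the left kernel of $\b{G}$; once this is done, the rest follows by elementary linear algebra.

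First I would argue that the linearity space of $\typeCone(\Fan)$ equals $\b{G}\R^n$. The inclusion $\b{G}\R^n \subseteq \mathrm{lin}(\typeCone(\Fan))$ is immediate: replacing $\b{h}$ by $\b{h} + \b{G}\b{x}_0$ translates $P_\b{h}$ by $-\b{x}_0$ and therefore does not change its normal fan; equivalently, every defining linear form $\sum_{\ray[s] \in \rays \cup \rays'} \coefficient[{\ray[s]}][\rays][\rays'] \, \b{h}_{\ray[s]}$ vanishes on $\b{G}\R^n$ by definition of the coefficients $\coefficient[{\ray[s]}][\rays][\rays']$. For the reverse inclusion, I use the simpliciality hypothesis together with the fact that $\b{G}$ has rank $n$ (since $\Fan$ is essential and complete). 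The $N-n$ facet normals of $\typeCone(\Fan)$ are linearly independent, lie in $\mathrm{lin}(\typeCone(\Fan))^\perp$, and this orthogonal complement has dimension $N - \dim \mathrm{lin}(\typeCone(\Fan)) \ge N - n$ only with equality, forcing $\mathrm{lin}(\typeCone(\Fan)) = \b{G}\R^n$. Consequently the rows of $\b{K}$ span $(\b{G}\R^n)^\perp = \ker \b{G}^T$, so \cref{prop:alternativePolytopalRealization} does apply to this particular $\b{K}$.

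Next, observe that $\b{K}:\R^N \to \R^{N-n}$ is surjective with kernel $\b{G}\R^n$, so it descends to a linear isomorphism $\bar{\b{K}}:\R^N/\b{G}\R^n \xrightarrow{\sim} \R^{N-n}$. Under this isomorphism, the simplicial cone $\typeCone(\Fan)/\b{G}\R^n$, whose facet inner normals are precisely the rows of $\b{K}$, is mapped bijectively onto the open positive orthant $\R_{>0}^{N-n}$. In particular, for every $\b{\ell} \in \R_{>0}^{N-n}$ there exists $\b{h} \in \typeCone(\Fan)$ (unique modulo $\b{G}\R^n$) with $\b{K}\b{h} = \b{\ell}$. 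By \cref{prop:alternativePolytopalRealization}, the map $\Psi:\b{x} \mapsto \b{h} - \b{G}\b{x}$ is an affine isomorphism from $P_\b{h}$ onto
\[
Q_\b{h} = \set{\b{z} \in \R^N}{\b{K}\b{z} = \b{K}\b{h} \text{ and } \b{z} \ge 0} = R_\b{\ell}.
\]
Since $\b{h} \in \typeCone(\Fan)$, the polytope $P_\b{h}$ has normal fan $\Fan$, and therefore so does $R_\b{\ell}$ under the natural identification of the $n$-dimensional affine subspace $\{\b{z} \in \R^N : \b{K}\b{z} = \b{\ell}\}$ with $\R^n$ via $\Psi$.

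Finally, for the second assertion, any polytopal realization $P$ of $\Fan$ is by definition equal to $P_\b{h}$ for some $\b{h} \in \typeCone(\Fan)$, and setting $\b{\ell} \eqdef \b{K}\b{h} \in \R_{>0}^{N-n}$ yields $R_\b{\ell} = \Psi(P)$ affinely equivalent to $P$. Hence the family $\{R_\b{\ell}\}_{\b{\ell} \in \R_{>0}^{N-n}}$ exhausts all polytopal realizations of $\Fan$. The main technical obstacle is the identification of $\mathrm{lin}(\typeCone(\Fan))$ with $\b{G}\R^n$ carried out in the first step; everything else is a straightforward application of \cref{prop:alternativePolytopalRealization} once the correct parametrization by $\b{\ell}$ is set up.
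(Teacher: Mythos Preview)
Your proof is correct and follows essentially the same approach as the paper's. The only difference is that you spell out explicitly why the facet normals of $\typeCone(\Fan)$ span the left kernel of $\b{G}$ (via the identification $\mathrm{lin}(\typeCone(\Fan)) = \b{G}\R^n$), whereas the paper treats this as implicit from the fact that each defining inequality of the type cone comes from a linear dependence among the rays.
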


\begin{proof}
Let~$\b{\ell} \in \R_{>0}^{N-n}$.
Since~$\b{K}$ has full rank there exists~$\b{h} \in \R^N$ such that~$\b{K}\b{h} = \b{\ell}$.
Since~$\b{K}\b{h} \ge 0$ and the rows of~$\b{K}$ are precisely all inner normal vectors of the facets of the type cone~$\typeCone(\Fan)$, we obtain that~$\b{h}$ belongs to~$\typeCone(\Fan)$.
Since $R_\b{\ell} = Q_\b{h} \sim P_\b{h}$ by \cref{prop:alternativePolytopalRealization}, we conclude that~$R_\b{\ell}$ is a polytopal realization of~$\Fan$.
Since~$\typeCone(\Fan)$ is simplicial, we have~$\b{K}\typeCone(\Fan) = \R_{>0}^{N-n}$, so that we obtain all polytopal realizations of~$\Fan$ in this way.
\end{proof}

%%%%%%%%%%%

\subsection{Faces of the type cone}
\label{subsec:facesTypeCone}

We now relate the faces of the closed type cone~$\ctypeCone(\Fan)$ with the coarsenings of the fan~$\Fan$.

\begin{theorem}
Let $\Fan$ be a polytopal fan. Then the relatively open faces of the closed type cone~$\ctypeCone(\Fan)$ are the type cones of all (polytopal) coarsenings of~$\Fan$. 
\end{theorem}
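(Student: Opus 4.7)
My plan is to fix a simplicial refinement $\Fan[G]$ of $\Fan$ and to match the relatively open faces of $\ctypeCone(\Fan)$ with the polytopal coarsenings of $\Fan$ via the explicit description of type cones of coarsenings furnished by \cref{prop:non-simplicial}.

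Every coarsening $\cal{G}$ of $\Fan$ is also a coarsening of $\Fan[G]$, so \cref{prop:non-simplicial} identifies $\typeCone(\cal{G})$ with the relative interior of the face $F_{\cal{G}}$ of $\ctypeCone(\Fan[G])$ obtained by setting to equality precisely those defining inequalities of $\ctypeCone(\Fan[G])$ that correspond to pairs of adjacent maximal cones of $\Fan[G]$ lying in a common cone of $\cal{G}$. In particular, taking $\cal{G} = \Fan$, one obtains that $\ctypeCone(\Fan)$ itself is the face $F_{\Fan}$ of $\ctypeCone(\Fan[G])$, so that the faces of $\ctypeCone(\Fan)$ are exactly the faces of $\ctypeCone(\Fan[G])$ contained in $F_{\Fan}$.

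The key combinatorial step is then to observe that $F_{\cal{G}} \subset F_{\Fan}$ if and only if every defining inequality of $\ctypeCone(\Fan[G])$ tight on $F_{\Fan}$ is also tight on $F_{\cal{G}}$, which happens if and only if every pair of adjacent maximal cones of $\Fan[G]$ merged by $\Fan$ is also merged by $\cal{G}$, i.e.\ if and only if $\cal{G}$ coarsens $\Fan$. This sets up an inclusion-reversing bijection between polytopal coarsenings of $\Fan$ and faces of $\ctypeCone(\Fan)$, each $\cal{G}$ being sent to the face $F_{\cal{G}}$ whose relative interior is~$\typeCone(\cal{G})$, as desired.

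The main technical point to verify is that $\typeCone(\cal{G})$ is truly the \emph{relative interior} of $F_{\cal{G}}$ rather than just contained in it. For this, I would check that every wall-crossing inequality of $\ctypeCone(\Fan[G])$ tight at some $\b{h} \in \typeCone(\cal{G})$ necessarily corresponds to a pair of adjacent maximal cones of $\Fan[G]$ merged by $\cal{G}$. This follows from the fact that at such $\b{h}$, the normal fan of $P_{\b{h}}$ is exactly $\cal{G}$, so that the only walls of $\Fan[G]$ that degenerate in $P_{\b{h}}$ are precisely those merged by~$\cal{G}$, and all other wall-crossing inequalities are strict at $\b{h}$.
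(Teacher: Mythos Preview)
Your approach is essentially the same as the paper's: fix a simplicial refinement, invoke \cref{prop:non-simplicial} to describe each $\typeCone(\cal{G})$ as a relatively open cell, and match these cells with the faces of $\ctypeCone(\Fan)$. The paper's argument is terser but follows the same line.

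There is, however, a genuine gap. You establish an injective map $\cal{G} \mapsto F_{\cal{G}}$ from polytopal coarsenings of $\Fan$ into the faces of $\ctypeCone(\Fan)$, and you verify that $\typeCone(\cal{G})$ is the relative interior of $F_{\cal{G}}$. But you never check \emph{surjectivity}: why is every face of $\ctypeCone(\Fan)$ of the form $F_{\cal{G}}$ for some polytopal coarsening $\cal{G}$? Your ``key combinatorial step'' only tells you which of the $F_{\cal{G}}$'s (for $\cal{G}$ coarsening $\Fan[G]$) happen to lie inside $F_{\Fan}$; it does not tell you that the $F_{\cal{G}}$'s exhaust all faces of $\ctypeCone(\Fan[G])$ in the first place. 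The claimed bijection is therefore unjustified.

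The paper fills this gap with one sentence: for any $\b{h} \in \ctypeCone(\Fan)$, the normal fan of $P_{\b{h}}$ is the coarsening of $\Fan$ obtained by merging exactly those adjacent maximal cones whose wall-crossing inequality is tight at $\b{h}$. Given a face $F$, pick $\b{h}$ in its relative interior, let $\cal{G}$ be the normal fan of $P_{\b{h}}$; then $\b{h} \in \typeCone(\cal{G}) = \operatorname{relint}(F_{\cal{G}})$, and since two faces whose relative interiors meet must coincide, $F = F_{\cal{G}}$. You have all the ingredients for this in your last paragraph (you even invoke the normal fan of $P_{\b{h}}$), but you use them only to argue the inclusion $\typeCone(\cal{G}) \subseteq \operatorname{relint}(F_{\cal{G}})$, not to produce a coarsening from an arbitrary face.
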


\begin{proof}
Let $\cal{G}$ be a simplicial refinement of $\Fan$. By \cref{prop:non-simplicial}, the type cone of any coarsening of $\Fan$ is a relatively open cell of the hyperplane arrangement defined by adjacent pairs of~$\cal{G}$. The type cone of the coarsening is not empty whenever it is polytopal. When non-empty, distinct coarsenings give rise to distinct cells (one will have an equality that is a strict inequality in the other), and all lie in the closure of~$\typeCone(\Fan)$ (since they are all in the closure of all the defining half-spaces).
Thus every coarsening describes a relatively open face of~$\ctypeCone(\Fan)$.
 
For the converse, observe that for $\b{h}\in\ctypeCone(\Fan)$, the normal fan of $P_{\b{h}}$ is the polytopal complete fan obtained by merging all pairs of adjacent maximal cones~$\R_{\ge0}\rays$ and~$\R_{\ge0}\rays'$ of~$\Fan$ such that $\sum_{\ray[s] \in \rays \cup \rays'} \coefficient[{\ray[s]}][\rays][\rays'] \, \b{h}_{\ray[s]} = 0$.
\end{proof}

\begin{remark}
Some observations are in place:

\begin{itemize}
\item Note that any extremal adjacent pair $\{\R_{\ge0}\rays, \R_{\ge0}\rays'\}$ of~$\Fan$ gives rise to a facet of $\ctypeCone(\Fan)$, but that this pair might not be unique for this facet. There might be several extremal adjacent pairs giving rise to the same facet. This happens often when we have the unique exchange relation property and there are many pairs of maximal cones separating the same exchangeable rays.
 
\item Mergings induced by extremal adjacent pairs are not the only ones that can take place in the boundary of~$\ctypeCone(\Fan)$. Indeed, if $\cal{G}$ is the coarsening defining a lower-dimensional face~$\typeCone(\cal{G})$ of~$\ctypeCone(\Fan)$, then to get $\Fan$ we have to perform all the coarsenings given by facets of~$\ctypeCone(\Fan)$ containing~$\typeCone(\cal{G})$. But we might have to perform more coarsenings arising from exchangeable pairs  $\{\R_{\ge0}\rays, \R_{\ge0}\rays'\}$ whose inequality is redundant for~$\ctypeCone(\Fan)$ but tight for all points in~$\typeCone(\cal{G})$. This situation arises when, in order to have a polytopal polyhedral fan, more pairs of adjacent cones must be merged.

\item Even if $\Fan$ is essential, not all its polytopal coarsenings need to be. Indeed, it can happen that there is a certain polytopal coarsening $\cal{G}$ in which all the cells have a common lineality space. In this case, $\cal{G}$ is the normal fan of a lower dimensional polytope (whose codimension is the dimension of the lineality space of $\cal{G}$). For this to happen, it is necessary (but not sufficient) that there is a positive linear dependence between some rays of $\Fan$. 
\end{itemize}
\end{remark}

\begin{example}
\label{exm:typeConeCoarsenings}
Consider the $2$-dimensional fan~$\Fan$ of \cref{fig:typeCone}\,(left) whose type cone has been described in \cref{exm:typeCone} and represented in \cref{fig:typeCone}\,(right).
The relative interior of the three facets of~$\ctypeCone(\Fan)$ are the type cones of three coarsenings of~$\Fan$, obtained by merging certain adjacent maximal cones (those that are extremal). One of the rays generating~$\b{K}\ctypeCone(\Fan)$ corresponds to a triangle, and its normal fan is obtained by merging two pairs of extremal adjacent maximal cones. The two remaining rays lie not only in hyperplanes~$H_{ij}$ defining facets of the type cone, but also in one extra such hyperplane. Their normal fans have only two cones, one arising as a merging of three cones and one arising as a merging of two cones. These fans are not essential, they have a one-dimensional lineality space. Hence, the corresponding polytopes are not full-dimensional, they are of codimension~$1$.

\begin{figure}[htpb]
	\capstart
	\centerline{\includegraphics[scale=.7]{typeConeCoarsenings}}
	\caption{The type cone~$\ctypeCone(\Fan)$ of the fan~$\Fan$ of \cref{fig:typeCone}\,(left). On top of each face of $\ctypeCone(\Fan)$, we depict an example of polytope with the corresponding normal fan.}
	\label{fig:typeConeCoarsenings}
\end{figure}
\end{example}

%%%%%%%%%%%

\subsection{Minkowski sums}
\label{subsec:MinkowskiSums}

We now discuss the connection between convex combinations in the type cone and Minkowski sums of polytopes.
Recall that the \defn{Minkowski sum} of two polytopes~$P$ and~$Q$ is the polytope $P + Q \eqdef \set{p + q}{p \in P\text{ and } q \in Q}$. 

\begin{lemma}
\label{lem:MinkowskiSum}
For~$\b{h}, \b{h}' \! \in \R^N$, the polytope~$P_{\b{h} + \b{h'}}$ is the Minkowski sum of the polytopes~$P_\b{h}$~and~$P_{\b{h}'}$.
\end{lemma}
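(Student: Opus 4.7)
The plan is to prove the two inclusions separately. The inclusion $P_\b{h} + P_{\b{h}'} \subseteq P_{\b{h}+\b{h}'}$ is immediate from the linearity of~$\b{G}$: if $\b{x} \in P_\b{h}$ and $\b{x}' \in P_{\b{h}'}$, then $\b{G}(\b{x}+\b{x}') = \b{G}\b{x} + \b{G}\b{x}' \le \b{h}+\b{h}'$, so $\b{x}+\b{x}' \in P_{\b{h}+\b{h}'}$. This direction uses nothing beyond the definition of~$P_\b{h}$.

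The content lies in the reverse inclusion $P_{\b{h}+\b{h}'} \subseteq P_\b{h} + P_{\b{h}'}$. I would first invoke the context of this subsection, in which the relevant vectors satisfy $\b{h}, \b{h}' \in \typeCone(\Fan)$; since $\typeCone(\Fan)$ is a convex cone, also $\b{h}+\b{h}' \in \typeCone(\Fan)$. By \cref{prop:characterizationPolytopalFan}, the three polytopes $P_\b{h}$, $P_{\b{h}'}$ and $P_{\b{h}+\b{h}'}$ then all have normal fan~$\Fan$. Since $\Fan$ is simplicial, each maximal cone $\R_{\ge 0}\rays$ of $\Fan$ selects a unique vertex $\b{v}_\rays^\b{h}$ of $P_\b{h}$, characterized by the system $\langle \ray, \b{v}_\rays^\b{h} \rangle = h_\ray$ for $\ray \in \rays$; analogous characterizations hold for $P_{\b{h}'}$ and $P_{\b{h}+\b{h}'}$. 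Adding the two systems yields at once $\b{v}_\rays^{\b{h}+\b{h}'} = \b{v}_\rays^\b{h} + \b{v}_\rays^{\b{h}'}$. Hence every vertex of $P_{\b{h}+\b{h}'}$ decomposes as a sum of corresponding vertices of $P_\b{h}$ and $P_{\b{h}'}$, and the desired inclusion follows by convexity.

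A conceptually cleaner but equivalent rephrasing of the second step uses support functions $h_P(\b{c}) \eqdef \max_{\b{x} \in P} \langle \b{c}, \b{x} \rangle$: support functions are additive under Minkowski sums, and for any polytope $P_\b{k}$ whose normal fan is $\Fan$ the support function is the unique piecewise-linear function on $\Fan$ taking the value $k_\ray$ at each ray~$\ray$. Thus $h_{P_\b{h}+P_{\b{h}'}} = h_{P_\b{h}} + h_{P_{\b{h}'}}$ agrees with $h_{P_{\b{h}+\b{h}'}}$ on every ray of $\Fan$, and by piecewise-linearity on the whole of $\R^n$, forcing equality of the two polytopes.

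The only real subtlety is ensuring that all three polytopes share the normal fan~$\Fan$, so that the vertex-by-vertex (or equivalently cone-by-cone support function) comparison is legitimate. This is exactly where the type-cone hypothesis enters; without it the reverse inclusion can genuinely fail, for instance because $P_\b{h}$ or $P_{\b{h}'}$ might be empty while $P_{\b{h}+\b{h}'}$ is not.
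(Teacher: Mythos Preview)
Your proof is correct and, in fact, more careful than the paper's. The paper's argument is a one-line chain of equalities
\[
P_\b{h} + P_{\b{h}'} = \set{\b{x}+\b{x}'}{\b{G}\b{x}\le\b{h},\ \b{G}\b{x}'\le\b{h}'} = \set{\b{y}}{\b{G}\b{y}\le\b{h}+\b{h}'} = P_{\b{h}+\b{h}'},
\]
but the middle equality is precisely the non-trivial reverse inclusion you isolate, and the paper offers no justification for it. As you observe, without the hypothesis $\b{h},\b{h}'\in\typeCone(\Fan)$ (or at least $\ctypeCone(\Fan)$) the statement is simply false: if $P_\b{h}$ is empty while $P_{\b{h}+\b{h}'}$ is not, the Minkowski sum is empty but the right-hand side is not. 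So the lemma as literally stated in the paper is incorrect for arbitrary $\b{h},\b{h}'\in\R^N$, and your added hypothesis is the right fix.

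Your argument---matching vertices cone by cone, or equivalently comparing support functions as piecewise-linear functions on~$\Fan$---is the standard way to establish the reverse inclusion once all three polytopes share the normal fan~$\Fan$. This is a genuinely different (and complete) route from the paper's terse presentation; what you lose in brevity you gain in correctness.
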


\begin{proof}
This follows from the definition of~$P_\b{h}$ and of Minkowski sums:
\begin{align*}
P_\b{h} + P_{\b{h}'} & = \set{\b{x} + \b{x}'}{\b{x} \in P_\b{h} \text{ and } \b{x}' \in P_{\b{h}'}} = \set{\b{x} + \b{x}'}{\b{G}\b{x} \le \b{h} \text{ and } \b{G}\b{x}' \le \b{h}'} \\
& = \set{\b{y} \in \R^n}{\b{G}\b{y} \le \b{h} + \b{h}'} = P_{\b{h} + \b{h}'}.
\qedhere
\end{align*}
\end{proof}

We say that~$P$ is a \defn{Minkowski summand} of~$R$ if there is a polytope~$Q$ such that $P+Q = R$, and a \defn{weak Minkowski summand} if there is a $\lambda \ge 0$ and a polytope~$Q$ such that ${P + Q = \lambda R}$. 
\cref{lem:MinkowskiSum} ensures that convex combinations in the type cone correspond to Minkowski combinations of polytopes. Actually, the set of weak Minkowski summands of a polytope $P$ with normal fan~$\Fan$ is isomorphic to the closed type cone~$\ctypeCone(\Fan)$ modulo lineality (the cone of weak Minkowski summands was studied by W.~Meyer in~\cite{Meyer}, to see that it is equivalent to the type cone and other formulations, see for example the appendix of~\cite{PostnikovReinerWilliams}).

This provides natural Minkowski summands for all polytopal realizations of~$\Fan$: the rays of the closed type cone. These correspond to indecomposable polytopes, see~\cite{Meyer} or~\cite{McMullen-typeCone}. A polytope $P \subseteq \R^n$ is called \defn{indecomposable} if all its weak Minkowski summands are of the form $\lambda P+t$ for some $\lambda \ge 0$ and $t \in \R^n$.

\begin{corollary}
Any polytope in~$\ctypeCone(\Fan)$ is the Minkowski sum of at most~$N-n$ indecomposable polytopes corresponding to some rays of~$\b{K}\ctypeCone(\Fan)$.
\end{corollary}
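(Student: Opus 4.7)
The plan is to reduce the statement to a standard application of Carathéodory's theorem in the pointed cone $\b{K}\ctypeCone(\Fan) \subseteq \R^{N-n}$, and then to translate this back to a Minkowski decomposition via \cref{lem:MinkowskiSum}.

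First, I would observe that $\ctypeCone(\Fan)$ has a linearity space of dimension $n$ (namely $\b{G}\R^n$, reflecting translation invariance of $P_\b{h}$), so modding out by this linearity gives a pointed polyhedral cone $\b{K}\ctypeCone(\Fan)$ of dimension $N-n$ in $\R^{N-n}$. As a pointed polyhedral cone, it is the positive hull of its finitely many extremal rays $\b{r}_1,\dots,\b{r}_k$; pick height vectors $\b{h}_1,\dots,\b{h}_k \in \ctypeCone(\Fan)$ with $\b{K}\b{h}_i = \b{r}_i$.

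Next, given any $\b{h} \in \ctypeCone(\Fan)$, the projection $\b{K}\b{h}$ lies in $\b{K}\ctypeCone(\Fan)$, so by the conic Carathéodory theorem applied in the ambient dimension $N-n$, it can be written as a non-negative combination $\b{K}\b{h} = \sum_{i \in I} \lambda_i \b{r}_i$ for some index set $I$ with $|I| \le N-n$ and $\lambda_i \ge 0$. Setting $\b{h}' \eqdef \sum_{i \in I} \lambda_i \b{h}_i$, we have $\b{K}(\b{h} - \b{h}') = 0$, so $\b{h} - \b{h}'$ lies in the linearity space $\b{G}\R^n$ of $\ctypeCone(\Fan)$, and hence $P_{\b{h}}$ and $P_{\b{h}'}$ differ only by a translation.

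Finally, by \cref{lem:MinkowskiSum} applied inductively, $P_{\b{h}'} = \sum_{i \in I} \lambda_i P_{\b{h}_i}$ as a Minkowski sum of positive dilates. Each $P_{\b{h}_i}$ corresponds to a ray of $\ctypeCone(\Fan)$, and as noted in the paragraph preceding the statement (following \cite{Meyer, McMullen-typeCone}), such extremal weak Minkowski summands are precisely the indecomposable polytopes. Combining the last two observations yields the claimed Minkowski decomposition with at most $N-n$ summands. The only minor subtlety is ensuring that the $P_{\b{h}_i}$ are genuine polytopes (not just formal height vectors), which is automatic since $\b{h}_i \in \ctypeCone(\Fan)$ implies $P_{\b{h}_i}$ is a polytope whose normal fan coarsens $\Fan$; there is no real obstacle here beyond bookkeeping.
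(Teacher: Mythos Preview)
Your proof is correct and follows the natural approach the paper implicitly relies on: the corollary is stated without proof in the paper, as an immediate consequence of the preceding paragraph (identifying $\ctypeCone(\Fan)$ modulo its linearity with the cone of weak Minkowski summands) together with Carath\'eodory's theorem for cones in $\R^{N-n}$ and \cref{lem:MinkowskiSum}. Your handling of the translation via the linearity space $\b{G}\R^n$ is exactly what is needed.
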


If $P = Q + R$, then we say that~$R$ is the \defn{Minkowski difference} of~$P$ and~$Q$, denoted~${R = P - Q}$. Note that this is only defined when~$Q$ is a Minkowski summand of~$P$.
For instance, we have ${P_\b{h} - P_{\b{h}'} = P_{\b{h} - \b{h'}}}$ when $P_{\b{h}'}$ is a Minkowski summand of $P_\b{h}$.
There are two natural ways this construction generalizes to arbitrary pairs of polyhedra.
On the one hand, one can consider that ${P - Q = \set{\b{x} \in \R^n}{\b{x} + Q \subseteq P}}$.
On the other hand, considering arbitrary differences of support functions gives rise to \defn{virtual polyhedra}, which is the Grothendieck group of the semigroup of polytopes with Minkowski sums~\cite{PukhlikovKhovanskii}.

\begin{corollary}
Let $\b{h}^{(1)},\dots, \b{h}^{(N-n)} \in \ctypeCone(\Fan)$ form a basis of the left kernel of~$\b{G}$. Then for any~$\b{h} \in \ctypeCone(\Fan)$, the polytope~$P_\b{h}$ has a unique representation (up to translation) as a signed Minkowski sum of dilates of $P_{\b{h}^{(1)}}, \dots, P_{\b{h}^{(N-n)}}$.
\end{corollary}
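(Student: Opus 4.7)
The plan is to reduce the statement to a linear-algebraic decomposition in $\R^N$ and then transport it to polytopes via \cref{lem:MinkowskiSum}. Since $\Fan$ is essential and complete, the matrix $\b{G}$ has full column rank $n$, so $\R^N$ decomposes as the direct sum
\[
\R^N = \b{G}\R^n \oplus \ker(\b{G}^T),
\]
where $\b{G}\R^n$ is the linearity subspace of $\ctypeCone(\Fan)$ (of dimension~$n$) and $\ker(\b{G}^T)$ is the left kernel of~$\b{G}$ (of dimension~$N-n$). Using the hypothesis that the $\b{h}^{(i)}$ form a basis of $\ker(\b{G}^T)$, the given height vector $\b{h}$ therefore admits a \emph{unique} decomposition
\[
\b{h} = \sum_{i=1}^{N-n} \lambda_i \b{h}^{(i)} + \b{G}\b{t}, \qquad \lambda_i \in \R, \; \b{t} \in \R^n.
\]
These~$\lambda_i$ will be the coefficients of the signed Minkowski representation, and~$\b{t}$ the translation.

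To pass from this linear decomposition to polytopes, I will split $\lambda_i = \lambda_i^+ - \lambda_i^-$ into non-negative parts and rewrite the identity with all coefficients non-negative on each side:
\[
\b{h} + \sum_{i} \lambda_i^- \b{h}^{(i)} = \sum_{i} \lambda_i^+ \b{h}^{(i)} + \b{G}\b{t}.
\]
Both sides lie in $\ctypeCone(\Fan)$, since $\ctypeCone(\Fan)$ is a convex cone containing every $\b{h}^{(i)}$ and $\b{h}$, and is invariant by translation in~$\b{G}\R^n$. Iterating \cref{lem:MinkowskiSum} (together with the elementary identity $P_{\lambda\b{h}} = \lambda P_\b{h}$ for $\lambda > 0$, and the observation that $P_{\b{G}\b{t}} = \{\b{t}\}$ because $\b{G}$ is injective and its rows positively span $\R^n$ by completeness of~$\Fan$) converts the height identity above into the polytope identity
\[
P_\b{h} + \sum_{i} \lambda_i^- P_{\b{h}^{(i)}} = \sum_{i} \lambda_i^+ P_{\b{h}^{(i)}} + \{\b{t}\},
\]
which is by definition the statement that $P_\b{h}$ coincides with the signed Minkowski combination $\sum_i \lambda_i P_{\b{h}^{(i)}}$ up to the translation~$\b{t}$.

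Uniqueness follows by running the argument in reverse: any signed Minkowski representation of~$P_\b{h}$ (modulo translation) produces, via \cref{lem:MinkowskiSum}, a decomposition $\b{h} = \sum_i \mu_i \b{h}^{(i)} + \b{G}\b{t}'$, and subtracting two such decompositions places $\sum_i(\lambda_i-\mu_i)\b{h}^{(i)}$ in $\b{G}\R^n \cap \ker(\b{G}^T) = \{0\}$, which forces $\lambda_i = \mu_i$ by linear independence of the~$\b{h}^{(i)}$ and then $\b{t} = \b{t}'$ by injectivity of~$\b{G}$. The only potentially delicate point — and what I expect to be the main thing to articulate carefully — is the precise meaning of a signed Minkowski sum when some $\lambda_i$ are negative; this is handled intrinsically by the rewritten equality above, since all polytopes appearing there are honest weak Minkowski summands (they correspond to points of~$\ctypeCone(\Fan)$), so no virtual polyhedra are needed and the Minkowski differences implicit in the signed sum are well-defined.
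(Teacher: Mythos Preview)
Your proof is correct and follows essentially the same approach as the paper's: project~$\b{h}$ onto the left kernel of~$\b{G}$ along the linearity space~$\b{G}\R^n$, expand in the basis~$\b{h}^{(i)}$, and separate positive and negative coefficients to obtain the signed Minkowski decomposition via \cref{lem:MinkowskiSum}. The paper compresses this into a single sentence, whereas you spell out the details (in particular the care taken to keep both sides of the height identity inside~$\ctypeCone(\Fan)$ before invoking \cref{lem:MinkowskiSum}, and the uniqueness argument), but the underlying argument is the same.
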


\begin{proof}
It suffises to express the projection of $\b{h}$ onto the left kernel of~$\b{G}$ in terms of $\b{h}^{(1)}, \dots, \b{h}^{(N-n)}$ to obtain the coefficients of~$P_\b{h}$ as a signed Minkowski sum of dilates of $P_{\b{h}^{(1)}}, \dots, P_{\b{h}^{(N-n)}}$.
\end{proof}

This is used in~\cite{ArdilaBenedettiDoker} to show that every generalized permutahedron can be written uniquely as a signed Minkowski sum of simplices. 
In the particular case when the type cone is simplicial, then we can get rid of the Minkowski differences.

\begin{corollary}
If the type cone~$\typeCone(\Fan)$ of $\Fan$ is simplicial, then every $P_\b{h}$ with $\b{h} \in \typeCone(\Fan)$ (resp.~with $\b{h} \in \ctypeCone(\Fan)$) has a unique representation, up to translation, as a Minkowski sum of positive (resp.~non-negative) dilates of the $N-n$ indecomposable polytopes $P_{\b{h}^{(1)}}, \dots, P_{\b{h}^{(N-n)}}$ arising from the rays~$\b{h}^{(1)}, \dots, \b{h}^{(N-n)}$ of $\ctypeCone(\Fan)$.
\end{corollary}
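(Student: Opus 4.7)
The plan is to combine the previous corollary (which gives a signed Minkowski decomposition) with the simplicial structure of $\typeCone(\Fan)$ to remove the signs and get uniqueness at the level of polytopes modulo translation.

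First, I would recall two elementary facts. By \cref{lem:MinkowskiSum} the map $\b{h} \mapsto P_\b{h}$ is additive with respect to $+$, sending $\b{h} + \b{h}'$ to $P_\b{h} + P_{\b{h}'}$; and if $\b{h}' - \b{h} \in \b{G}\R^n$ then $P_{\b{h}'}$ is a translate of $P_\b{h}$, because writing $\b{h}' - \b{h} = \b{G}\b{x}$ yields $P_{\b{h}'} = P_\b{h} + \b{x}$. Conversely, since $\Fan$ is essential, the map $\b{G}:\R^n\to\R^N$ is injective, so the translates of $P_\b{h}$ correspond bijectively to the affine subspace $\b{h}+\b{G}\R^n$. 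In particular, the linearity space of $\ctypeCone(\Fan)$ coincides with the subspace of translational ambiguity for the polytope family $(P_\b{h})$.

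Second, since $\typeCone(\Fan)$ is simplicial, the quotient $\ctypeCone(\Fan)/\b{G}\R^n$ is a simplicial cone of dimension $N-n$ whose $N-n$ extremal rays are (the classes of) $\b{h}^{(1)},\dots,\b{h}^{(N-n)}$. Consequently, each $\b{h} \in \ctypeCone(\Fan)$ admits a unique decomposition modulo the linearity space as
\[
 \b{h} \;\equiv\; \sum_{i=1}^{N-n} \lambda_i\,\b{h}^{(i)} \pmod{\b{G}\R^n}
\]
with $\lambda_i \ge 0$; and $\b{h}$ lies in the relative interior $\typeCone(\Fan)$ if and only if $\lambda_i > 0$ for all $i$, again by simpliciality. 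Applying \cref{lem:MinkowskiSum} iteratively and absorbing the residual element of $\b{G}\R^n$ as a translation gives
\[
 P_\b{h} \;=\; \sum_{i=1}^{N-n} \lambda_i\,P_{\b{h}^{(i)}} \quad \text{up to translation,}
\]
and the uniqueness of the $\lambda_i$ is inherited from the uniqueness in the simplicial cone.

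Third, for the indecomposability of each $P_{\b{h}^{(i)}}$ I would invoke the classical identification (recalled in \cref{subsec:MinkowskiSums}) between weak Minkowski summands of $P_\b{h}$ and points of $\ctypeCone(\Fan)$ modulo linearity: if $Q$ is a weak Minkowski summand of $P_{\b{h}^{(i)}}$, then its class in $\ctypeCone(\Fan)/\b{G}\R^n$ lies in a ray, hence must be a nonnegative multiple of $\b{h}^{(i)}$, so $Q = \lambda P_{\b{h}^{(i)}} + t$ for some $\lambda\ge0$ and translation $t$.

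The argument is largely a bookkeeping exercise once the simplicial structure is in place; I do not expect a serious obstacle. The only delicate point is to keep clean track of the translational ambiguity, i.e. the identification of the linearity space of $\ctypeCone(\Fan)$ with $\b{G}\R^n$ and with translations of polytopes, so that uniqueness ``modulo the linearity space'' and uniqueness ``up to translation'' genuinely coincide.
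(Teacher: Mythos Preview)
Your proposal is correct and follows essentially the same approach as the paper. In fact the paper states this corollary without proof, treating it as an immediate consequence of the preceding corollary on signed Minkowski decompositions together with the simplicial structure of $\ctypeCone(\Fan)/\b{G}\R^n$; you have simply written out the details carefully, including the identification of the linearity space with translations and the indecomposability of the $P_{\b{h}^{(i)}}$ (which the paper already recorded a paragraph earlier).
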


Note that such a family of polytopes giving rise to unique positive/non-negative representations only exists when the type cone is simplicial. We get unicity with any basis, but in general we cannot guarantee non-negativity unless we are in the simplicial cone they span. If we take all rays of the type cone, we get non-negativity, but the representation is not unique unless the cone is simplicial.

\begin{example}
\label{exm:typeConeMinkowskiSum}
Consider the $2$-dimensional fan~$\Fan$ of \cref{fig:typeCone}\,(left) whose type cone has been described in \cref{exm:typeCone} and represented in \cref{fig:typeCone}\,(right).
As illustrated in \cref{fig:typeConeCoarsenings}, the rays of the closed type cone~$\ctypeCone(\Fan)$ are directed by the height vectors~$(1,2,1,2,1)$ (corresponding to a triangle), $(0,1,1,0,1)$ (corresponding to a vertical segment) and~$(1,0,1,1,0)$ (corresponding to a horizontal segment).
Therefore, any polytope in~$\ctypeCone(\Fan)$ is (up to translation) a positive Minkowski combination of a triangle and two edges.
\cref{fig:typeConeMinkowskiSum} illustrates this property for the polytope of \cref{fig:typeCone}\,(middle) corresponding to the height vector~$(\nicefrac{1}{2}, \nicefrac{3}{4}, \nicefrac{5}{4}, 1, \nicefrac{5}{4})$.
Note that~$\Fan$ is the normal fan of J.-L.~Loday's $2$-dimensional associahedron, whose realization as a Minkowski sum of faces of a simplex was given by A.~Postnikov in~\cite{Postnikov}.

\begin{figure}
	\capstart
	\centerline{
	\begin{tabular}{c@{\;$=$\;}c@{\,$+$\,}c@{\,$+$\,}c@{\,}l@{\,}l}
	$(\frac{1}{2}, \frac{3}{4}, \frac{5}{4}, 1, \frac{5}{4})$ & $\frac{1}{6}(1,2,1,2,1)$ & $\frac{3}{4}(0,1,1,0,1)$ & $\frac{1}{2}(1,0,1,1,0)$ & $-\,\frac{1}{6}(1,0,-1,-1,0)$ & $-\,\frac{1}{3}(0,1,1,0,-1)$ \\
	\raisebox{-0.5\height}{\includegraphics[scale=.5]{typeConePolytope}} & \raisebox{-0.5\height}{\includegraphics[scale=.5]{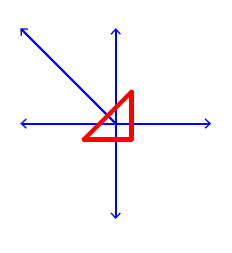}} & \raisebox{-0.5\height}{\includegraphics[scale=.5]{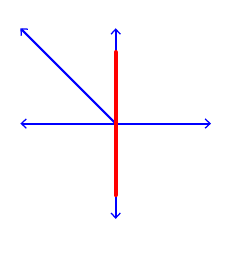}} & \raisebox{-0.5\height}{\includegraphics[scale=.5]{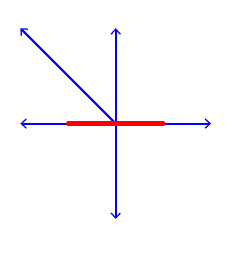}} & $+$ translation.
	\end{tabular}
	}
	\caption{The polytope of \cref{fig:typeCone}\,(middle) is a Minkowski sum of a triangle and two edges.}
	\label{fig:typeConeMinkowskiSum}
\end{figure}
\end{example}

%%%%%%%%%%%%%%%%%%%%%%%%%%%%%%%%%%%%%%%%%%%%%%%%%%%

\section{Applications to two generalizations of the associahedron}
\label{sec:applications}

In this section, we study the type cones of complete simplicial fans arising as normal fans of two families of generalizations of the associahedron: the generalized associahedra of finite type cluster algebras~\cite{FominZelevinsky-ClusterAlgebrasI, FominZelevinsky-ClusterAlgebrasII, FominZelevinsky-ClusterAlgebrasIV, HohlwegLangeThomas, HohlwegPilaudStella} and the gentle associahedra~\cite{PaluPilaudPlamondon-nonkissing}.
Both families contain the classical associahedra~$\Asso[n]$ constructed in~\cite{ShniderSternberg, Loday}.

%%%%%%%%%%%

\subsection{Classical associahedra}
\label{subsec:typeConeAsso}

We first describe the associahedra of~\cite{ShniderSternberg, Loday} and their type cones as they are the prototypes of our constructions.

%%%

\subsubsection{Associahedra and their normal fans}

We quickly recall the combinatorics and the geometric construction of~\cite{ShniderSternberg, Loday} for the associahedron~$\Asso[n]$.
The face lattice of~$\Asso[n]$ is the reverse inclusion lattice of dissections (\ie pairwise non-crossing subsets of diagonals) of a convex~$(n+3)$-gon.
In particular, its vertices correspond to triangulations of the $(n+3)$-gon and its facets correspond to internal diagonals of the $(n+3)$-gon.
Equivalently, its vertices correspond to rooted binary trees with $(n+1)$ internal nodes, and its facets correspond to proper intervals of~$[n+1]\eqdef\{1,\dots,n+1\}$ (\ie intervals distinct from~$\varnothing$ and~$[n+1]$).
These bijections become clear when the convex $(n+3)$-gon is drawn with its vertices on a concave curve labeled by~$0, \dots, n+2$.
The following statement provides three equivalent geometric constructions of~$\Asso[n]$.

\begin{theorem}
\label{thm:associahedronLoday}
The associahedron~$\Asso[n]$ can be described equivalently as:
\begin{itemize}
\item the convex hull of the points~$L(T) \in \R^{n+1}$ for all rooted binary trees~$T$ with~$n+1$ internal nodes, where the $i$th coordinate of~$L(T)$ is the product of the number of leaves in the left subtree by the number of leaves in the right subtree of the $i$th node of~$T$ in inorder~\cite{Loday},
\item the intersection of the hyperplane~$\bigset{\b{x} \in \R^{n+1}}{\sum_{\ell \in [n+1]} \b{x}_\ell = \binom{n+2}{2}}$ with the halfspaces $\bigset{\b{x} \in \R^{n+1}}{\sum_{a < \ell < b} \b{x}_\ell \ge \binom{b-a}{2}}$ for all internal diagonals~$(a,b)$ of \mbox{the $(n+3)$-gon~\cite{ShniderSternberg}},
\item the Minkowski sum of the faces~$\simplex_{[i,j]}$ of the standard $n$-dimensional simplex~$\simplex_{[n+1]}$ corresponding to all proper intervals~$[i,j]$ of~$[n+1]$,~\cite{Postnikov}.
\end{itemize}
\end{theorem}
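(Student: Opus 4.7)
The plan is to prove the three constructions coincide by identifying their common normal fan and verifying equality through the framework developed in \cref{sec:typeCone}. Let~$\Fan_n$ denote the Stasheff fan in~$\R^{n+1}$, the simplicial fan whose rays are indexed by the internal diagonals of the convex $(n+3)$-gon (equivalently, proper intervals of~$[n+1]$) and whose maximal cones are indexed by triangulations, with two maximal cones adjacent if and only if their triangulations differ by a single diagonal flip. The goal is to show that each of (1), (2), (3) produces a polytope whose normal fan is~$\Fan_n$, and that the resulting height vectors in the type cone~$\typeCone(\Fan_n)$ agree.

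First I would address the Shnider-Sternberg description (2). The halfspaces are indexed by internal diagonals, so the candidate outer normal vectors match the rays of~$\Fan_n$. To confirm that~$\Fan_n$ is its normal fan, I would apply \cref{prop:characterizationPolytopalFan}: for every pair of triangulations differing by a flip (Ptolemy exchange) that replaces diagonals~$(a,c)$ and~$(b,d)$ by~$(a,b)$ and~$(c,d)$ in a quadrilateral~$abcd$, one checks that the unique linear dependence between the corresponding rays is the Ptolemy relation, and that evaluating the SS heights~$\b{h}_{(a,b)} = \binom{b-a+2}{2}$ against it yields a strictly positive quantity. This reduces to the elementary identity~$\binom{b-a+2}{2} + \binom{d-c+2}{2} - \binom{c-a+2}{2} - \binom{d-b+2}{2} > 0$ whenever~$a<b<c<d$, which holds because the function~$x \mapsto \binom{x+2}{2}$ is strictly convex and~$(b-a)+(d-c) < (c-a)+(d-b)$ by the crossing condition.

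Next I would show (1)=(2) by direct vertex matching. For a rooted binary tree~$T$ with~$n+1$ internal nodes, its dual triangulation~$T^*$ of the $(n+3)$-gon assigns to each internal node an interior diagonal. I claim that Loday's point~$L(T)$ saturates every SS inequality corresponding to a diagonal in~$T^*$ and strictly satisfies every other one. Tightness amounts to a local counting identity: the sum of the products (left leaves)$\times$(right leaves) over the nodes in a subtree spanning inorder labels~$(a,b)$ equals~$\binom{b-a+2}{2}$, which is proven by induction on the subtree. Since~$L(T)$ lies on exactly~$n$ facets of the SS polytope that form a maximal cone of~$\Fan_n$, it is a vertex, and this gives the bijection between vertices of both polytopes.

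Finally, equivalence with Postnikov's description (3) follows from \cref{lem:MinkowskiSum}: the Minkowski sum~$\sum_{[i,j]} \simplex_{[i,j]}$ has height vector equal to the sum of the height vectors of the summands~$\simplex_{[i,j]}$. With respect to the ray of~$\Fan_n$ corresponding to a diagonal~$(a,b)$, the support of~$\simplex_{[i,j]}$ contributes exactly when $[i,j] \cap \{a+1,\dots,b-1\} \neq \varnothing$, and a short double-counting argument shows that the total contribution is precisely~$\binom{b-a+2}{2}$, matching the SS height vector. The main obstacle is this last counting identity together with the analogous one used in the~(1)=(2) step; both are elementary but require careful bookkeeping to get the combinatorics right. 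Once established, all three constructions yield the same height vector in~$\typeCone(\Fan_n)$, hence the same polytope.
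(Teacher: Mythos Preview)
The paper does not prove \cref{thm:associahedronLoday}; it is stated as a classical result with citations to \cite{Loday}, \cite{ShniderSternberg}, and \cite{Postnikov} for the three descriptions, and is used as background for the discussion in \cref{subsec:typeConeAsso}. So there is no paper-side proof to compare against.

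That said, your attempt contains genuine errors. Your description of the flip is wrong: a flip exchanges \emph{one} diagonal for another, not two for two. In a quadrilateral with vertices~$a<b<c<d$, the crossing diagonals are~$(a,c)$ and~$(b,d)$, and the linear dependence (as in \cref{prop:exchangeablePairsAsso}) is
\[
\gvector{a,c}+\gvector{b,d}=\gvector{a,d}+\gvector{b,c},
\]
so the wall-crossing check involves the heights at~$(a,c),(b,d),(a,d),(b,c)$, not at~$(a,b)$ and~$(c,d)$ as you wrote. Consequently your displayed inequality is the wrong one, and in fact it is false: for~$a=0,b=1,c=2,d=3$ your expression evaluates to~$3+3-6-6=-6<0$. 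Your convexity justification also fails because the two pairs of arguments you compare do not have equal sums, so Karamata/Jensen does not apply in the form you invoke.

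The overall strategy---show all three constructions have normal fan~$\Fan_n$ and compare height vectors via \cref{prop:characterizationPolytopalFan} and \cref{lem:MinkowskiSum}---is sound and is indeed a clean way to package the classical arguments. But you need to redo the flip relation correctly and recompute the inequality; once you use~$(a,c),(b,d)$ versus~$(a,d),(b,c)$, the check becomes the identity
\[
\binom{c-a}{2}+\binom{d-b}{2}-\binom{d-a}{2}-\binom{c-b}{2} = -(b-a)(d-c) < 0
\]
(with appropriate sign conventions for the $\ge$ versus $\le$ form of the halfspaces), which is what actually yields the correct strict inequality. The Minkowski-sum step~(3) also needs more care: the support function of~$\simplex_{[i,j]}$ in direction~$\gvector{a,b}$ is a maximum, not an indicator of nonempty intersection, so your ``contributes exactly when'' criterion is not the right bookkeeping.
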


\pagebreak
We now focus on the normal fan of~$\Asso[n]$.
Since~$\Asso[n]$ lives in a hyperplane of~$\R^{n+1}$, its normal fan has a one-dimensional lineality.
Let~$\HH \eqdef \bigset{\b{x} \in \R^{n+1}}{\sum_{\ell \in [n+1]} \b{x}_\ell = 0}$ and ${\pi : \R^{n+1} \to \HH}$ denote the orthogonal projection. We denote by~$\gvectorFan[n]$ the intersection of the normal fan of~$\Asso[n]$ with~$\HH$, which is an essential complete simplicial fan.

\begin{theorem}
\label{thm:normalFanLoday}
In the fan~$\gvectorFan[n]$, 
\begin{itemize}
\item the normal vector of the facet corresponding to an internal diagonal~$(a,b)$ of the $(n+3)$-gon is the vector~$\gvector{a,b} \eqdef \pi\big( \sum_{a < \ell < b} \b{e}_\ell \big) = (n+1) \sum_{a < \ell < b} \b{e}_\ell - (b-a-1) \sum_{1 \le \ell \le n+1} \b{e}_\ell$.
\item the normal cone of the vertex corresponding to a rooted binary tree~$T$ is the incidence cone~$\bigset{\b{x} \in \HH}{\b{x}_i \le \b{x}_j \text{ for all edges $i \to j$ in $T$}}$.
\end{itemize}
\end{theorem}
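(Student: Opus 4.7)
The plan is to read off both assertions directly from the explicit descriptions of $\Asso[n]$ given in \cref{thm:associahedronLoday}.

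For the first assertion, I would use description (b) (Shnider--Sternberg). The polytope $\Asso[n]$ lies in the affine hyperplane $\bigset{\b{x}\in\R^{n+1}}{\sum_\ell x_\ell = \binom{n+2}{2}}$, which is parallel to $\HH$, and is cut out inside this hyperplane by the halfspaces indexed by internal diagonals $(a,b)$. The facet associated to $(a,b)$ therefore has normal direction in $\HH$ given by a positive scalar multiple of the orthogonal projection $\pi\bigl(\sum_{a<\ell<b}\b{e}_\ell\bigr)$. The explicit formula in coordinates then follows by applying the general identity $\pi(\b{v}) = \b{v} - \frac{\langle\b{v},\b{1}\rangle}{n+1}\b{1}$ to $\b{v}=\sum_{a<\ell<b}\b{e}_\ell$ and rescaling by $n+1$.

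For the second assertion, I would use Loday's vertex coordinates (description (a)) together with the fact that $\Asso[n]$ is simple. The vertex $L(T)$ lies on exactly the $n$ facets indexed by the internal diagonals of the triangulation $D_T$ associated with $T$, so its normal cone is the simplicial cone positively spanned by $\bigset{\gvector{a,b}}{(a,b)\in D_T}$. To identify this cone with the incidence cone, I would argue through the $n$ edges of $\Asso[n]$ emanating from $L(T)$. These edges correspond to single diagonal flips in $D_T$, equivalently to single rotations of $T$ at tree edges. The key computation is that if $T'$ is obtained from $T$ by rotating the tree edge between nodes $i$ and $j$, then $L(T)-L(T')$ is a nonzero scalar multiple of $\b{e}_i - \b{e}_j$, with sign dictated by the orientation of the edge $i\to j$ of $T$. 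Granting this, a vector $\b{c}\in\HH$ lies in the normal cone at $L(T)$, equivalently maximizes $\langle\b{c},\cdot\rangle$ on $\Asso[n]$ at $L(T)$, if and only if $c_i\le c_j$ for every edge $i\to j$ of $T$, which is the defining condition of the incidence cone. Since both cones are $n$-dimensional simplicial cones in $\HH$ with exactly $n$ facets and $n$ rays, these two descriptions coincide.

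The main obstacle will be the bookkeeping step giving $L(T)-L(T')$ under a tree rotation. This is a standard but careful combinatorial verification: a rotation affects only the two coordinates indexed by $i$ and $j$ while preserving all other inorder positions and leaf-count products, and one must check that the change in the $i$th and $j$th coordinates is exactly a positive multiple of $(1,-1)$. Once this is secured, the duality between the $n$ adjacent edges and the $n$ facets of the simplicial normal cone at $L(T)$ closes the argument without any further computation.
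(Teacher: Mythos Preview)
The paper does not actually prove \cref{thm:normalFanLoday}: it is stated as a recollection of classical facts about Loday's associahedron, with implicit reference to~\cite{ShniderSternberg,Loday}. Your proposal is a correct and standard way to verify both assertions directly from the descriptions in \cref{thm:associahedronLoday}, so there is nothing to compare against.

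One minor remark on your first assertion: your projection computation gives~$(n+1)\pi(\b{v}) = (n+1)\sum_{a<\ell<b}\b{e}_\ell - (b-a-1)\sum_{\ell}\b{e}_\ell$, with coefficient~$b-a-1$ rather than the~$b-a+1$ printed in the statement; the latter appears to be a typo in the paper, and your formula is the correct one. For the second assertion, your edge-vector argument via rotations is exactly the right idea; the verification that~$L(T)-L(T')$ is supported on coordinates~$i,j$ with the claimed sign is indeed routine bookkeeping (only the two nodes involved in the rotation change inorder position or subtree leaf counts), and once you have it the identification of the simplicial normal cone with the incidence cone follows immediately as you describe.
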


\cref{thm:normalFanLoday} is illustrated in \cref{fig:lodayFans} and \cref{exm:typeConeAsso} in dimension~$2$ and~$3$.
Note that in \cref{fig:lodayFans} and \cref{exm:typeConeAsso} we express the $\b{g}$-vectors in the basis given by $\gvector{0,i+1}$ for $i\in [n]$ (to be coherent with the upcoming \cref{def:gvectorCA}).

\begin{figure}
	\capstart
	\centerline{\raisebox{.5cm}{\includegraphics[scale=.45]{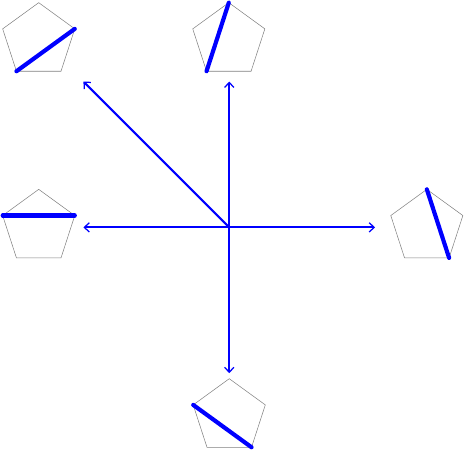}}\hspace{1.5cm}\includegraphics[scale=.45]{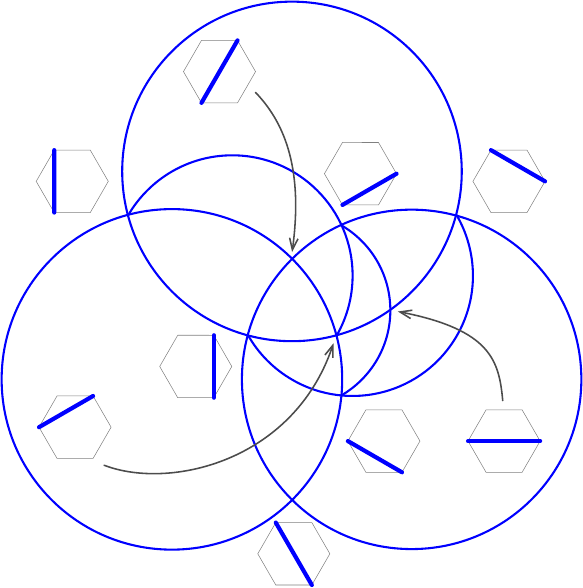}}
	\caption{The normal fans~$\gvectorFan[2]$ and~$\gvectorFan[3]$ of J.-L.~Loday's associahedra. As the rightmost fan is $3$-dimensional, we intersect it with the sphere and stereographically project it from the direction~$(-1,-1,-1)$.}
	\label{fig:lodayFans}
\end{figure}

Let us also recall the linear dependencies in this fan and observe that it has the unique exchange relation property discussed in \cref{subsec:uniqueExchangeProperty}.
From now on, we use the convention that~$\gvector{a,b} = 0$ when~$(a,b)$ is a boundary edge of the $(n+3)$-gon.

\begin{proposition}
\label{prop:exchangeablePairsAsso}
Let~$(a,b)$ and~$(a',b')$ be two crossing diagonals with~${0 \le a < a' < b < b' \le n+2}$, and let~$T$ and~$T'$ be any two triangulations such that~$T \ssm \{(a,b)\} = T' \ssm \{(a',b')\}$.
Then both triangulations~$T$ and~$T'$ contain the square $aa'bb'$, and the linear dependence between the $\b{g}$-vectors of~$T \cup T'$ is given by
\[
\gvector{a,b} + \gvector{a',b'} = \gvector{a,b'} + \gvector{a',b}.
\]
In particular, the fan~$\gvectorFan[n]$ has the unique exchange relation property.
\end{proposition}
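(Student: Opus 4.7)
The plan is to prove the three assertions in order: the structural claim about the square, the explicit linear dependence, and then the unique exchange relation property (which will be essentially automatic once the dependence is established).

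First, I would settle the structural claim. Recall that two triangulations $T$ and $T'$ of the $(n+3)$-gon differ by a flip precisely when there is a quadrilateral both of whose triangulations appear: if $T \ssm \{(a,b)\} = T' \ssm \{(a',b')\}$ with $(a,b) \ne (a',b')$, then removing $(a,b)$ from $T$ leaves a region that must be a quadrilateral (since reinserting a single diagonal $(a',b')$ triangulates it again), and $(a,b), (a',b')$ are its two diagonals. The only way two diagonals of the polygon can be the diagonals of a common quadrilateral is that they cross, and the four corners of that quadrilateral are exactly $\{a,a',b,b'\}$. Since $a<a'<b<b'$, the sides of the quadrilateral are $(a,a'),(a',b),(b,b'),(a,b')$, each of which is either an internal diagonal of the $(n+3)$-gon present in both $T$ and $T'$, or a boundary edge of the polygon.

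Second, I would verify the linear dependence by direct computation from the definition $\gvector{a,b} = \pi\bigl(\sum_{a<\ell<b} \b{e}_\ell\bigr)$. Since $\pi$ is linear, it suffices to establish the identity
\[
\sum_{a<\ell<b} \b{e}_\ell + \sum_{a'<\ell<b'} \b{e}_\ell \;=\; \sum_{a<\ell<b'} \b{e}_\ell + \sum_{a'<\ell<b} \b{e}_\ell
\]
in $\R^{n+1}$. This is a one-line check on coefficients: given $a<a'<b<b'$, the left-hand side assigns each $\ell$ the multiplicity $[a<\ell<b]+[a'<\ell<b']$, while the right-hand side assigns $[a<\ell<b']+[a'<\ell<b]$; both functions take the value $1$ on $(a,a'{]}$, the value $2$ on $(a',b)$, the value $1$ on $[b,b')$, and $0$ elsewhere, using our convention that $\gvector{a,b}=0$ when $(a,b)$ is a boundary edge to absorb the degenerate cases. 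Applying $\pi$ yields $\gvector{a,b}+\gvector{a',b'}=\gvector{a,b'}+\gvector{a',b}$. Rewriting this as $\gvector{a,b}+\gvector{a',b'}-\gvector{a,b'}-\gvector{a',b}=0$, the coefficients of $\gvector{a,b}$ and $\gvector{a',b'}$ sum to $2$, matching the normalization convention fixed before \cref{def:typeCone}.

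Third, the unique exchange relation property is immediate. The linear dependence just derived involves only the four $\b{g}$-vectors associated to the diagonals (or boundary edges) of the quadrilateral $aa'bb'$, and its coefficients depend solely on the pair of flipped diagonals $\{(a,b),(a',b')\}$, not on the remaining $n-1$ diagonals of $T$ and $T'$. Since the rays $\gvector{a,b}$, $\gvector{a',b'}$, $\gvector{a,b'}$, $\gvector{a',b}$ are pairwise linearly independent (they are projections of distinct $0/1$ indicator vectors of strict intervals), this dependence is the unique (up to rescaling) linear relation among the rays of $T \cup T'$, as granted by the simpliciality of the fan $\gvectorFan[n]$. Hence every flip produces the same relation, which is the unique exchange relation property in the sense of \cref{def:uerp}.

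The only mildly subtle point I anticipate is a clean treatment of the boundary-edge case when one of $(a,b'), (a',b)$ is a side of the polygon (which happens, for instance, when $a=0$ and $b'=n+2$ are adjacent vertices of the polygon); the convention $\gvector{a,b}=0$ for boundary edges is precisely what makes the formula go through uniformly, and I would state this convention explicitly at the start of the computation.
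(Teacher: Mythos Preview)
Your proof is correct, and in fact the paper does not supply its own proof of this proposition: it is stated as a recollection of classical facts about triangulation flips and the definition of the $\b{g}$-vectors. Your direct verification of the flip quadrilateral and the indicator-vector identity is exactly the standard argument.

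One small imprecision: the sentence ``Since the rays $\gvector{a,b}$, $\gvector{a',b'}$, $\gvector{a,b'}$, $\gvector{a',b}$ are pairwise linearly independent'' is both unnecessary and not literally true when $(a,b')$ or $(a',b)$ is a boundary edge (in which case the corresponding vector is zero). The uniqueness of the dependence follows already from simpliciality of $\gvectorFan[n]$: the $n$ rays of $T$ are linearly independent, so adjoining $\gvector{a',b'}$ yields $n+1$ vectors in an $n$-dimensional space and hence a unique dependence up to scaling; the one you exhibited is nonzero, so it is that dependence. You can simply drop the pairwise-independence remark.
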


%%%

\subsubsection{Type cones of associahedra}

From the linear dependencies of \cref{prop:exchangeablePairsAsso}, we obtain a redundant description of the type cone of the fan~$\gvectorFan[n]$.
To simplify the presentation and write the wall-crossing inequalities in a uniform way, we embed the type cone in a larger space, adding dummy variables for the boundary edges of the polygon.

\begin{corollary}
\label{coro:typeConeAsso}
Let~$n \in \N$ and~$X(n) \eqdef \set{(a,b)}{0 \le a < b \le n+2}$. Then the type cone of the normal fan~$\gvectorFan[n]$ of~$\Asso[n]$ is given by
\[
\typeCone \big( \gvectorFan[n] \big) = \set{\b{h} \in \R^{X(n)}}{\begin{array}{@{}l@{}} \b{h}_{(0,n+2)} = 0, \qquad\text{and}\qquad \b{h}_{(a,a+1)} = 0 \text{ for all } 0 \le a \le n+1 \\ \b{h}_{(a,b)} + \b{h}_{(a',b')} > \b{h}_{(a,b')} + \b{h}_{(a',b)} \text{ for all } 0 \le a < a' < b < b' \le n+2 \end{array}} \!.
\]
\end{corollary}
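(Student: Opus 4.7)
The plan is that this corollary is essentially a direct application of the general framework of Section 1 (in particular Definition 1.9 and the unique-exchange-relation reformulation of Section 1.3) to the specific data provided by Proposition 2.3. First, I would recall that the rays of $\gvectorFan[n]$ are indexed by the internal diagonals of the $(n+3)$-gon, that is, by pairs $(a,b)$ with $0 \le a < b \le n+2$, $b-a \ge 2$, and $(a,b)\ne(0,n+2)$. The ambient space $\R^{X(n)}$ in the statement enlarges this by formally allowing coordinates for the boundary edges; since by convention $\gvector{a,b}=0$ whenever $(a,b)$ is a boundary edge, these are not rays of the fan, and the natural embedding of the actual type cone into $\R^{X(n)}$ is obtained by setting the corresponding heights to zero. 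This accounts for the equalities $\b{h}_{(0,n+2)}=0$ and $\b{h}_{(a,a+1)}=0$ in the statement.

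Next, I would use Proposition 2.3: the fan $\gvectorFan[n]$ has the unique exchange relation property, and the exchangeable pairs of rays are precisely the pairs of crossing diagonals $(a,b),(a',b')$ with $0 \le a < a' < b < b' \le n+2$, with unique linear dependence
\[
\gvector{a,b} + \gvector{a',b'} - \gvector{a,b'} - \gvector{a',b} = 0.
\]
Note that the coefficients of the two exchanged rays are both equal to $1$, so that $1+1=2$ matches the normalization convention fixed right after Proposition 1.2, and that the other two vectors appearing are themselves internal diagonals (since $b'-a\ge 3$ and $b-a'\ge 2$), so no boundary vanishing is triggered.

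Plugging this dependence into the reformulated expression for $\typeCone(\Fan)$ from Section 1.3, namely
\[
\typeCone(\Fan) \;=\; \Bigset{\b{h}\in\R^N}{\textstyle\sum_{\ray[s]}\coefficient\,\b{h}_{\ray[s]}>0 \text{ for all exchangeable pairs } \ray,\ray'},
\]
yields one strict inequality $\b{h}_{(a,b)}+\b{h}_{(a',b')}-\b{h}_{(a,b')}-\b{h}_{(a',b)}>0$ per pair of crossing diagonals. Combining with the boundary equalities from the previous paragraph gives exactly the right-hand side of the claimed description.

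The only point requiring a line of justification is that every pair of adjacent maximal cones of $\gvectorFan[n]$ really does come from a pair of crossing diagonals in the sense of Proposition 2.3, so that no additional inequalities are missed; this is the standard fact that a flip in a triangulation of a convex polygon consists in replacing one diagonal of a quadrilateral subpolygon by the other, which is precisely the combinatorial content of Proposition 2.3. I do not anticipate any serious obstacle here: the proof is really an unpacking of Definition 1.9 in the presence of the unique exchange relation property, once the explicit dependencies of Proposition 2.3 are in hand.
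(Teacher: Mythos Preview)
Your approach is correct and is exactly what the paper does: the corollary is stated immediately after \cref{prop:exchangeablePairsAsso} with the remark that ``from the linear dependencies of \cref{prop:exchangeablePairsAsso}, we obtain a redundant description of the type cone,'' i.e.\ one simply plugs the explicit exchange relations into the unique-exchange-relation reformulation of the type cone from \cref{subsec:uniqueExchangeProperty}, after padding by the zero coordinates for boundary edges and for~$(0,n+2)$.

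One small slip to fix: your claim that ``the other two vectors appearing are themselves internal diagonals (since $b'-a\ge 3$ and $b-a'\ge 2$)'' is not correct. From $a<a'<b<b'$ one only gets $b-a'\ge 1$, so $(a',b)$ can be a boundary edge (e.g.\ $a=0$, $a'=1$, $b=2$, $b'=4$); likewise $(a,b')$ can equal $(0,n+2)$. This does not damage the argument, though: it is precisely why the equalities $\b{h}_{(a,a+1)}=0$ and $\b{h}_{(0,n+2)}=0$ are part of the description. With those conventions in force, the inequality $\b{h}_{(a,b)}+\b{h}_{(a',b')}>\b{h}_{(a,b')}+\b{h}_{(a',b)}$ agrees with the genuine type-cone inequality even when one of the ``middle'' terms corresponds to a non-internal diagonal with vanishing $\b{g}$-vector. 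So just replace your parenthetical justification by the observation that these degenerate cases are absorbed by the equality constraints.
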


\pagebreak

\begin{example}
\label{exm:typeConeAsso}
Consider the fan~$\gvectorFan[2]$ and~$\gvectorFan[3]$ illustrated in \cref{fig:lodayFans}.
The type cone of~$\gvectorFan[2]$ has dimension~$5$ and a lineality space of dimension~$2$ (this is the type cone studied in \cref{exm:typeCone,exm:typeConeCoarsenings,exm:typeConeMinkowskiSum}).
It has $3$ facet-defining inequalities (given below), which correspond to the flips described in \cref{prop:extremalExchangeablePairsAsso} and illustrated in \cref{fig:labelFacetDefiningInequalititiesAsso}\,(left).

\[
\begin{array}{r|cccccc}
\text{diagonals} & \raisebox{-.25cm}{\;\includegraphics[scale=.7]{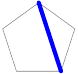}}  & \raisebox{-.25cm}{\;\includegraphics[scale=.7]{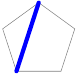}} & \raisebox{-.25cm}{\;\includegraphics[scale=.7]{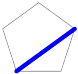}} & \raisebox{-.25cm}{\;\includegraphics[scale=.7]{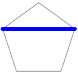}} & \raisebox{-.25cm}{\;\includegraphics[scale=.7]{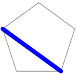}} \\[.3cm]
\text{$\b{g}$-vectors} & \compactVectorD{1}{0} & \compactVectorD{0}{1} & \compactVectorD{-1}{1} & \compactVectorD{-1}{0} & \compactVectorD{0}{-1} \\[.6cm]
\text{facet}		& 1 & -1 & 1 & 0 & 0 & \red \circled{A} \\
\text{defining} 	& 0 & 1 & -1 & 1 & 0 & \red \circled{B} \\
\text{inequalities}	& 0 & 0 & 1 & -1 & 1 & \red \circled{C} \\[.2cm]
\end{array}
\]

\medskip
\noindent
In all our tables, we just record the coefficients of the inequalities, which are then easily reconstructed.
For instance, the inequality~$\red \circled{A}$ above is given by~$\b{h}_{\includegraphics[scale=.3]{diagonalA1}} + \b{h}_{\includegraphics[scale=.3]{diagonalA3}} > \b{h}_{\includegraphics[scale=.3]{diagonalA2}}$.

\begin{figure}[h]
	\capstart
    \begin{adjustbox}{center}
        \begin{tikzpicture}
        	\matrix (m) [matrix of math nodes, row sep=.35cm, column sep=.2cm, nodes={anchor=center, align=center, inner sep=0pt}]{
        		& \;\includegraphics[scale=.7]{diagonalA2} & \node (b) {\red \circled{B}}; & \;\includegraphics[scale=.7]{diagonalA4} & \\
        		\;\includegraphics[scale=.7]{diagonalA1} & \node (a) {\red \circled{A}}; & \;\includegraphics[scale=.7]{diagonalA3} & \node (c) {\red \circled{C}}; & \;\includegraphics[scale=.7]{diagonalA5} \\};
        	\draw[->] (m-2-1) -- (m-1-2);
        	\draw[->] (m-1-2) -- (m-2-3);
        	\draw[->] (m-2-3) -- (m-1-4);
        	\draw[->] (m-1-4) -- (m-2-5);
        	\draw[red] (a) -- (m-2-1);
        	\draw[red] (a) -- (m-2-3);
        	\draw[red] (a) -- (m-1-2);
        	\draw[red] (b) -- (m-1-2);
        	\draw[red] (b) -- (m-1-4);
        	\draw[red] (b) -- (m-2-3);
        	\draw[red] (c) -- (m-2-3);
        	\draw[red] (c) -- (m-2-5);
        	\draw[red] (c) -- (m-1-4);
        \end{tikzpicture}
        \quad
        \begin{tikzpicture}
        	\matrix (m) [matrix of math nodes, row sep=.35cm, column sep=.2cm, nodes={anchor=center, align=center, inner sep=0pt}]{
        		&& \;\includegraphics[scale=.7]{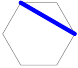} & \node (c) {\red \circled{C}}; & \;\includegraphics[scale=.7]{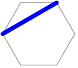} && \\
        		& \;\includegraphics[scale=.7]{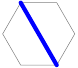} & \node (b) {\red \circled{B}}; & \;\includegraphics[scale=.7]{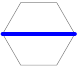} & \node (e) {\red \circled{E}}; & \;\includegraphics[scale=.7]{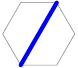} & \\
        		\;\includegraphics[scale=.7]{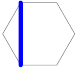} & \node (a) {\red \circled{A}}; & \;\includegraphics[scale=.7]{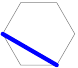} & \node (d) {\red \circled{D}}; & \;\includegraphics[scale=.7]{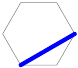} & \node (f) {\red \circled{F}}; & \;\includegraphics[scale=.7]{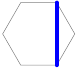} \\};
        	\draw[->] (m-3-1) -- (m-2-2);
        	\draw[->] (m-2-2) -- (m-1-3);
        	\draw[->] (m-2-2) -- (m-3-3);
        	\draw[->] (m-1-3) -- (m-2-4);
        	\draw[->] (m-3-3) -- (m-2-4);
        	\draw[->] (m-2-4) -- (m-1-5);
        	\draw[->] (m-2-4) -- (m-3-5);
        	\draw[->] (m-1-5) -- (m-2-6);
        	\draw[->] (m-3-5) -- (m-2-6);
        	\draw[->] (m-2-6) -- (m-3-7);
        	\draw[red] (a) -- (m-3-1);
        	\draw[red] (a) -- (m-3-3);
        	\draw[red] (a) -- (m-2-2);
        	\draw[red] (b) -- (m-2-2);
        	\draw[red] (b) -- (m-2-4);
        	\draw[red] (b) -- (m-1-3);
        	\draw[red] (b) -- (m-3-3);
        	\draw[red] (c) -- (m-1-3);
        	\draw[red] (c) -- (m-1-5);
        	\draw[red] (c) -- (m-2-4);
        	\draw[red] (d) -- (m-3-3);
        	\draw[red] (d) -- (m-3-5);
        	\draw[red] (d) -- (m-2-4);
        	\draw[red] (e) -- (m-2-4);
        	\draw[red] (e) -- (m-2-6);
        	\draw[red] (e) -- (m-1-5);
        	\draw[red] (e) -- (m-3-5);
        	\draw[red] (f) -- (m-3-5);
        	\draw[red] (f) -- (m-3-7);
        	\draw[red] (f) -- (m-2-6);
        \end{tikzpicture}
    \end{adjustbox}
	\caption{The facet-defining inequalities of the type cone~$\typeCone \big( \gvectorFan[n] \big)$ correspond to the flips described in \cref{prop:extremalExchangeablePairsAsso}. See \cref{subsec:typeConeCA,sec:clusterCategories} for a representation theoretic \mbox{interpretation}.}
	\label{fig:labelFacetDefiningInequalititiesAsso}
\end{figure}

The type cone of~$\gvectorFan[3]$ has dimension~$9$ and a lineality space of dimension~$3$.
It has $6$ facet-defining inequalities (given below), which correspond to the flips described in \cref{prop:extremalExchangeablePairsAsso} and illustrated in \cref{fig:labelFacetDefiningInequalititiesAsso}\,(right).

\[
\begin{array}{r|cccccccccc}
\text{diagonals} & \raisebox{-.25cm}{\;\includegraphics[scale=.7]{diagonalB1}}  & \raisebox{-.25cm}{\;\includegraphics[scale=.7]{diagonalB2}} & \raisebox{-.25cm}{\;\includegraphics[scale=.7]{diagonalB3}} & \raisebox{-.25cm}{\;\includegraphics[scale=.7]{diagonalB4}} & \raisebox{-.25cm}{\;\includegraphics[scale=.7]{diagonalB5}} & \raisebox{-.25cm}{\;\includegraphics[scale=.7]{diagonalB6}} & \raisebox{-.25cm}{\;\includegraphics[scale=.7]{diagonalB7}} & \raisebox{-.25cm}{\;\includegraphics[scale=.7]{diagonalB8}} & \raisebox{-.25cm}{\;\includegraphics[scale=.7]{diagonalB9}} \\[.3cm]
\text{$\b{g}$-vectors} & \compactVectorT{1}{0}{0} & \compactVectorT{0}{1}{0} & \compactVectorT{0}{0}{1} & \compactVectorT{-1}{1}{0} & \compactVectorT{0}{-1}{1} & \compactVectorT{-1}{0}{1} & \compactVectorT{-1}{0}{0} & \compactVectorT{0}{-1}{0} & \compactVectorT{0}{0}{-1} \\[.6cm]
\text{facet}		& 1 & -1 & 0 & 1 & 0 & 0 & 0 & 0 & 0 & \red \circled{A} \\
\text{defining} 	& 0 & 1 & -1 & -1 & 1 & 0 & 0 & 0 & 0 & \red \circled{B} \\
\text{inequalities}	& 0 & 0 & 1 & 0 & -1 & 0 & 1 & 0 & 0 & \red \circled{C} \\
					& 0 & 0 & 0 & 1 & -1 & 1 & 0 & 0 & 0 & \red \circled{D} \\
					& 0 & 0 & 0 & 0 & 1 & -1 & -1 & 1 & 0 & \red \circled{E} \\
					& 0 & 0 & 0 & 0 & 0 & 1 & 0 & -1 & 1 & \red \circled{F} \\[.2cm]
\end{array}
\]

%sage: B = Matrix([[0,1,0],[-1,0,1],[0,-1,0]])
%sage: CA = ClusterAlgebra(B)

%sage: CA.explore_to_depth(infinity)
%sage: gvectors = CA.cluster_fan().rays()
%sage: for gvector in gvectors:
%....:     print gvector, CA.cluster_variable(gvector)
%....:     
%N(-1, 1, 0) (x1 + 1)/x0
%N(1, 0, 0) x0
%N(0, 0, 1) x2
%N(0, -1, 1) (x0 + x2)/x1
%N(0, 0, -1) (x1 + 1)/x2
%N(-1, 0, 1) (x1*x2 + x0 + x2)/(x0*x1)
%N(0, -1, 0) (x0*x1 + x0 + x2)/(x1*x2)
%N(-1, 0, 0) (x0*x1 + x1*x2 + x0 + x2)/(x0*x1*x2)
%N(0, 1, 0) x1

%sage: TCCA = type_cone_CA(CA)
%sage: TCCA
%A 9-dimensional polyhedron in QQ^9 defined as the convex hull of 1 vertex, 6 rays, 3 lines
%sage: TCCA.lines()
%(A line in the direction (1, 0, 0, -1, 0, 0, -1, 0, 1),
% A line in the direction (1, 0, 1, 0, -1, 1, -1, 0, 1),
% A line in the direction (0, 1, 1, 0, -1, 0, -1, -1, 1))
%sage: TCCA.inequalities()
%(An inequality (-1, 0, -1, 0, 0, 1, 0, 0, 1) x + 0 >= 0,
% An inequality (1, 1, 0, 0, 0, 0, 0, 0, -1) x + 0 >= 0,
% An inequality (1, 0, 0, 1, 0, -1, 0, 0, 0) x + 0 >= 0,
% An inequality (0, 0, 0, -1, 0, 1, 1, -1, 0) x + 0 >= 0,
% An inequality (0, 0, 1, 0, 0, -1, 0, 1, 0) x + 0 >= 0,
% An inequality (0, 0, 0, 1, 1, 0, -1, 0, 0) x + 0 >= 0)

\end{example}

\pagebreak

Motivated by \cref{exm:typeConeAsso}, we now describe the facets of this type cone~$\typeCone \big( \gvectorFan[n] \big)$.

\begin{proposition}
\label{prop:extremalExchangeablePairsAsso}
Two internal diagonals~$(a,b)$ and~$(a',b')$ of the~$(n+3)$-gon form an extremal exchangeable pair for the fan~$\gvectorFan[n]$ if and only if~$a = a'+1$ and~$b = b'+1$, or the opposite.
\end{proposition}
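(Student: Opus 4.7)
The plan is to combine a telescoping identity with a dimension count. For any $0 \le a < a' < b < b' \le n+2$, write
\[
f(a, a', b, b') \eqdef \b{h}_{(a,b)} + \b{h}_{(a',b')} - \b{h}_{(a,b')} - \b{h}_{(a',b)}
\]
for the linear form of the type cone inequality attached in~\cref{coro:typeConeAsso} to the crossing pair $\{(a, b), (a', b')\}$, and call $f(i, i+1, j, j+1)$ with $0 \le i$ and $i+2 \le j \le n+1$ a \emph{slim} inequality. The proposition then amounts to showing that the slim inequalities are exactly the facet-defining inequalities of $\ctypeCone(\gvectorFan[n])$.

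The first step is the telescoping identity
\[
f(a, a', b, b') = \sum_{i = a}^{a'-1} \sum_{j = b}^{b' - 1} f(i, i+1, j, j+1),
\]
which follows from a routine two-stage collapse (first in~$i$ for each fixed~$j$, then in~$j$). Each summand is a legitimate slim inequality because $j \ge b > a' \ge i+1$ forces $j \ge i+2$, so $(i, j)$ and $(i+1, j+1)$ genuinely cross. The only subtlety is the degenerate case where $(i+1, j) = (i+1, i+2)$ is a boundary edge of the polygon, but this is harmless since $\b{h}_{(a, a+1)} = 0$ in the type cone. As a consequence, every defining inequality of $\ctypeCone(\gvectorFan[n])$ is a non-negative combination of slim inequalities, so every facet of the closed type cone arises from a slim pair.

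For the converse, I would invoke~\cref{rem:dimTypeCone}: since $\gvectorFan[n]$ admits a polytopal realization (\cref{thm:associahedronLoday}) and its linearity space has dimension $n$ in $\R^N$ with $N = \frac{n(n+3)}{2}$, the type cone has at least $N - n = \frac{n(n+1)}{2}$ facets. A direct count gives exactly $\sum_{i = 0}^{n-1}(n - i) = \frac{n(n+1)}{2}$ slim pairs. Together the two bounds force every slim pair to correspond to a distinct facet, completing the proof. I do not anticipate any real obstacle; the only substantive ingredient is the telescoping identity, after which the proposition drops out of a dimension match. An immediate by-product, which foreshadows the strategy of \cref{subsec:typeConeCA} and \cref{subsec:typeConeNKC}, is that $\typeCone(\gvectorFan[n])$ is simplicial.
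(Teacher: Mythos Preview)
Your proof is correct and follows essentially the same approach as the paper: the telescoping identity you state is exactly the paper's decomposition $\b{n}(a,b,a',b') = \sum \b{m}(c,d)$ (up to the reindexing $i=c$, $j=d-1$), and the dimension count matching $N-n = \tfrac{n(n+1)}{2}$ against the number of slim pairs is identical. The only cosmetic difference is that the paper works with normal vectors $\b{n}(a,b,a',b') \in \R^{X(n)}$ rather than linear forms $f(a,a',b,b')$ in $\b{h}$, and your remark about the boundary-edge degeneracy $(i+1,j)=(i+1,i+2)$ is unnecessary since \cref{coro:typeConeAsso} already states the inequalities over all of $X(n)$ with the boundary coordinates set to zero.
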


\begin{proof}
Let~$(\b{f}_{(a,b)})_{0 \le a < b \le n+2}$ be the canonical basis of~$\R^{\binom{n+3}{2}}$.
Consider two crossing internal diagonals~$(a,b)$ and~$(a',b')$ with~$0 \le a < a' < b < b' \le n+2$.
By \cref{prop:exchangeablePairsAsso}, the linear dependence between the corresponding $\b{g}$-vectors is given by
\[
\gvector{a,b} + \gvector{a',b'} = \gvector{a,b'} + \gvector{a',b}.
\]
Therefore, the inner normal vector of the corresponding inequality of the type cone~$\typeCone \big( \gvectorFan[n] \big)$ is
\[
\b{n}(a,b,a',b') \eqdef \b{f}_{(a,b)} + \b{f}_{(a',b')} - \b{f}_{(a,b')} - \b{f}_{(a',b)}.
\]
Denoting
\[
\b{m}(c,d) \eqdef \b{n}(c,d-1,c+1,d) = \b{f}_{(c,d-1)} + \b{f}_{(c+1,d)} - \b{f}_{(c,d)} - \b{f}_{(c+1,d-1)},
\]
we obtain that
\[
\b{n}(a,b,a',b') = \sum_{\substack{c \in {[a,a'[} \\ d \in {]b,b']}}} \b{m}(c,d).
\]
Indeed, on the right hand side, the basis vector~$\b{f}_{(c,d)}$ appears with a positive sign in~$\b{m}(c,d+1)$ for~$(c,d) \in {[a,a'[} \times {[b,b'[}$ and in~$\b{m}(c-1,d)$ for~$(c,d) \in {]a,a']} \times {]b,b']}$, and with a negative sign in~$\b{m}(c,d)$ for~$(c,d) \in {[a,a'[} \times {]b,b']}$ and in~$\b{m}(c-1,d+1)$ for~$(c,d) \in {]a,a']} \times {[b,b'[}$.
Therefore, these contributions all vanish except when~$(c,d)$ is one of the diagonals~$(a,b)$, $(a',b')$, $(a,b')$ or~$(a',b)$.
This shows that any exchange relation is a positive linear combination of the exchange relations corresponding to all pairs of diagonals~$(a,b)$ and~$(a',b')$ of the~$(n+3)$-gon such that~$a = a'+1$ and~$b = b'+1$ or the opposite.

Conversely, since~$\gvectorFan[n]$ has dimension~$n$ and~$n(n+3)/2$ rays (corresponding to the internal diagonals of the $(n+3)$-gon), we know from \cref{rem:dimTypeCone} that there are at least~$n(n+1)/2$ extremal exchangeable pairs. We thus conclude that all exchangeable pairs of diagonals~$\{(a,b-1), (a+1,b)\}$ for~$1 \le a < b-2 \le n$ are extremal.
\end{proof}

Our next statement follows from the end of the previous proof.

\begin{corollary}
\label{coro:simplicialTypeConeAsso}
The type cone~$\typeCone \big( \gvectorFan[n] \big)$ is simplicial.
\end{corollary}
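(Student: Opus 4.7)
My plan is to deduce the statement directly from \cref{prop:extremalExchangeablePairsAsso} by a dimension count, so the proof will be very short.

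First, I would record the relevant numerics. The fan $\gvectorFan[n]$ is an essential complete simplicial fan in the hyperplane $\HH \simeq \R^n$, and it has $N = n(n+3)/2$ rays, one per internal diagonal of the $(n+3)$-gon. By \cref{rem:dimTypeCone}, $\typeCone(\gvectorFan[n])$ sits in $\R^N$ with an $n$-dimensional linearity space, so it has at least $N - n = n(n+1)/2$ facets, with equality exactly when it is simplicial.

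Second, by \cref{prop:extremalExchangeablePairsAsso}, the extremal exchangeable pairs are precisely the ``minimal square'' flips $\{(a, b-1), (a+1, b)\}$. I would then enumerate these by parameterizing by $(a,b)$ with $0 \le a$ and $a + 3 \le b \le n+2$ (the conditions ensuring both diagonals are internal and distinct), which yields $\sum_{a=0}^{n-1}(n-a) = n(n+1)/2$ pairs. This matches the lower bound from \cref{rem:dimTypeCone} exactly, so $\typeCone(\gvectorFan[n])$ has precisely $n(n+1)/2$ facets and is therefore simplicial.

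The real work is packaged inside \cref{prop:extremalExchangeablePairsAsso}, specifically in the explicit positive decomposition $\b{n}(a,b,a',b') = \sum \b{m}(c,d)$ of a generic exchange relation into minimal ones. Granted that, \cref{coro:simplicialTypeConeAsso} reduces to the counting exercise above, and I do not anticipate any real obstacle beyond what has already been handled in the preceding proposition.
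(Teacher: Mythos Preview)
Your proposal is correct and is essentially identical to the paper's argument: the paper simply states that the corollary ``follows from the end of the previous proof,'' and that end-of-proof is precisely the dimension count you give, using \cref{rem:dimTypeCone} together with the enumeration of the $n(n+1)/2$ extremal pairs from \cref{prop:extremalExchangeablePairsAsso}.
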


Combining \cref{coro:simplicialTypeCone,coro:simplicialTypeConeAsso} and \cref{prop:extremalExchangeablePairsAsso}, we derive the following description of all polytopal realizations of the fan~$\gvectorFan[n]$, recovering all associahedra of~\cite[Sect.~3.2]{ArkaniHamedBaiHeYan}.
Note that the arguments of~\cite[Sect.~3.2]{ArkaniHamedBaiHeYan} were quite different from the present~approach.

\begin{corollary}[{\cite[Sect.~3.2]{ArkaniHamedBaiHeYan}}]
\label{coro:allPolytopalRealizationsAsso}
For~$n \in \N$, define~$X(n) \eqdef \set{(a,b)}{0 \le a < b \le n+2}$ and~$Y(n) \eqdef \set{(a,b)}{1 \le a < b \le n+1}$.
Then for any~$\b{\ell} \in \R_{>0}^{Y(n)}$, the polytope
\[
R_\b{\ell}(n) \eqdef \set{\b{z} \in \R^{X(n)}}{\begin{array}{l} \b{z} \ge 0, \quad \b{z}_{(0,n+2)} = 0 \quad\text{and}\quad \b{z}_{(a,a+1)} = 0 \text{ for all } 0 \le a \le n+1 \\ \b{z}_{(a-1,b)} + \b{z}_{(a,b+1)} - \b{z}_{(a,b)} - \b{z}_{(a-1,b+1)} = \b{\ell}_{(a,b)} \text{ for all } (a,b) \in Y(n) \end{array}}
\]
is an $n$-dimensional associahedron, whose normal fan is~$\gvectorFan[n]$.
Moreover, the polytopes~$R_\b{\ell}(n)$ for~$\b{\ell} \in \R_{>0}^{Y(n)}$ describe all polytopal realizations of the fan~$\gvectorFan[n]$.
\end{corollary}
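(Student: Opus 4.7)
The plan is to derive this corollary as a direct application of \cref{coro:simplicialTypeCone} to the fan $\gvectorFan[n]$, using the preceding results of the section. The key observation is that \cref{coro:simplicialTypeConeAsso} guarantees that $\typeCone \big( \gvectorFan[n] \big)$ is simplicial, so that \cref{coro:simplicialTypeCone} applies and produces an explicit parametrization of all realizations by positive vectors indexed by the facets of the type cone. By \cref{prop:extremalExchangeablePairsAsso}, these facets are in bijection with the set $Y(n)$ of pairs $(a,b)$ with $0 \le a < b-2 \le n$, each one corresponding to the extremal exchangeable pair $\{(a,b-1),(a+1,b)\}$.

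Next, I would identify the matrix $\b{K}$ of \cref{coro:simplicialTypeCone} in our setting. From the proof of \cref{prop:extremalExchangeablePairsAsso}, the inner normal vector of the facet of $\typeCone \big( \gvectorFan[n] \big)$ associated with $(a,b) \in Y(n)$ is the vector
\[
\b{m}(a,b) = \b{f}_{(a,b-1)} + \b{f}_{(a+1,b)} - \b{f}_{(a,b)} - \b{f}_{(a+1,b-1)},
\]
where $(\b{f}_{(a,b)})$ denotes the canonical basis indexed by the internal diagonals. Thus \cref{coro:simplicialTypeCone} asserts that for any $\b{\ell} \in \R_{>0}^{Y(n)}$, the polytope obtained by imposing $\dotprod{\b{m}(a,b)}{\b{z}} = \b{\ell}_{(a,b)}$ for all $(a,b) \in Y(n)$ together with $\b{z} \ge 0$ realizes $\gvectorFan[n]$, and all polytopal realizations arise this way.

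The only remaining bookkeeping is to account for the boundary edges of the $(n+3)$-gon: when $b - a = 3$, the term $\b{z}_{(a+1,b-1)}$ in the equation involves the boundary edge $(a+1,a+2)$, and similar boundary issues occur at the corners. Since $\gvector{a,b} = 0$ whenever $(a,b)$ is a boundary edge, these indices do not correspond to rays of $\gvectorFan[n]$. To obtain the uniform description of the statement, I would enlarge the coordinate space from internal diagonals to the full set $X(n)$ of all diagonals, and simply add the constraints $\b{z}_{(a,a+1)} = 0$ for $0 \le a \le n+1$ and $\b{z}_{(0,n+2)} = 0$, which makes the extra coordinates silent.

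I do not foresee any substantial obstacle, since all the hard algebraic and geometric content (unique exchange relation property, simpliciality of the type cone, identification of its facets) has already been established in \cref{prop:exchangeablePairsAsso}, \cref{prop:extremalExchangeablePairsAsso} and \cref{coro:simplicialTypeConeAsso}. The mildly delicate point is only to match the combinatorics of $Y(n)$ with the facets of $\typeCone \big( \gvectorFan[n] \big)$ and to handle the boundary diagonals cleanly; this is a routine verification of dimensions, with $|Y(n)| = n(n+1)/2 = N - n$ where $N = n(n+3)/2$ is the number of internal diagonals and $n$ is the dimension of the fan.
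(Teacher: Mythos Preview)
Your proposal is correct and matches the paper's approach exactly: the paper simply states that the corollary follows by combining \cref{coro:simplicialTypeCone}, \cref{coro:simplicialTypeConeAsso} and \cref{prop:extremalExchangeablePairsAsso}, which is precisely what you do. Your additional remarks on handling the boundary diagonals and the dimension count $|Y(n)| = N - n$ make explicit the routine bookkeeping that the paper leaves implicit.
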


%\vincent{Describe the rays of the type cone. They are the Minkowski basis.}

%%%%%%%%%%%

\subsection{Finite type cluster complexes and generalized associahedra}
\label{subsec:typeConeCA}

Cluster algebras were introduced by S.~Fomin and A.~Zelevinsky~\cite{FominZelevinsky-ClusterAlgebrasI} with motivation coming from total positivity and canonical bases.
Here, we will focus on finite type cluster algebras~\cite{FominZelevinsky-ClusterAlgebrasII} and more specifically on properties of their $\b{g}$-vectors~\cite{FominZelevinsky-ClusterAlgebrasIV}.
These $\b{g}$-vectors support a complete simplicial fan, which is called \defn{$\b{g}$-vector fan} or \defn{cluster fan}. It is known to be the normal fan of a polytope called generalized associahedron. These polytopal realizations were first constructed for bipartite initial seeds by F.~Chapoton, S.~Fomin and A.~Zelevinsky~\cite{ChapotonFominZelevinsky} using the $\b{d}$-vector fans of~\cite{FominZelevinsky-YSystems}, then for acyclic initial seeds by C.~Hohlweg, C.~Lange and H.~Thomas~\cite{HohlwegLangeThomas} using Cambrian lattices and fans of N.~Reading and D.~Speyer~\cite{Reading-CambrianLattices, ReadingSpeyer}, then revisited by S.~Stella~\cite{Stella}, and by V.~Pilaud and C.~Stump~\cite{PilaudStump-brickPolytope} via brick polytopes, and finally for arbitrary initial seeds by C.~Hohlweg, V.~Pilaud and S.~Stella~\cite{HohlwegPilaudStella}.
More recently in~\cite{BazierMatteDouvilleMousavandThomasYildirim}, V.~Bazier-Matte, G.~Douville, K.~Mousavand, H.~Thomas and E.~Y\i ld\i r\i m extended the construction of associahedra of~\cite[Sect.~3.2]{ArkaniHamedBaiHeYan} to acyclic seeds of simply-laced finite type cluster algebras.
Here, we extend the construction of~\cite[Sect.~3.2]{ArkaniHamedBaiHeYan} to any finite type (simply-laced or not) cluster algebra with respect to any seed (acyclic or not) by providing a direct proof that the type cone of the cluster fan of any finite type cluster algebra with respect to any initial seed is simplicial.

%%%

\subsubsection{Cluster algebras and cluster fans}

We present some definitions and properties of finite type cluster algebras and their cluster fans, following the presentation of~\cite{HohlwegPilaudStella}.

\para{Cluster algebras}
Let~$\Q(x_1, \dots, x_n, p_1, \dots, p_m)$ be the field of rational expressions in~$n+m$ variables with rational coefficients, and~$\Trop{m}$ denote its abelian multiplicative subgroup generated by~$\{p_i\}_{i \in [m]}$.
For~$p = \prod\limits_{i \in [m]} p_i^{a_i} \in \Trop{m}$, define
\(
\positiveExponents{p} \eqdef \prod\limits_{i \in [m]} p_i^{\max(a_i,0)}
\)
and
\(
\negativeExponents{p} \eqdef \prod\limits_{i \in [m]} p_i^{-\min(a_i,0)}
\).

\medskip
A \defn{seed}~$\seed$ is a triple~$(\B, \coefficients, \cluster)$ where
\begin{itemize}
\item the \defn{exchange matrix}~$\B$ is an integer~$n \times n$ skew-symmetrizable matrix, \ie such that there exists a diagonal matrix~$\D$ with~$-\B\D = (\B\D)^T$,
\item the \defn{coefficient tuple}~$\coefficients$ is any subset of~$n$ elements of~$\Trop{m}$,
\item the \defn{cluster}~$\cluster$ is a set of~$n$ \defn{cluster variables} in~$\Q(x_1, \dots, x_n, p_1, \dots, p_m)$ algebraically independent over~$\Q(p_1, \dots, p_m)$.
\end{itemize}
To simplify our notations, we use the convention to label~$\B = (b_{xy})_{x,y \in \cluster}$ and~$\coefficients = \{p_x\}_{x \in \cluster}$ by the cluster variables of~$\cluster$.

For a seed~$\seed = (\B, \coefficients, \cluster)$ and a cluster variable~$x \in \cluster$, the \defn{mutation} in direction~$x$ creates a new seed~$\mu_x(\seed) = \seed' = (\B', \coefficients', \cluster')$ where:
\begin{itemize}
\item the new cluster $\cluster'$ is obtained from $\cluster$ by replacing $x$ with the cluster variable $x'$ defined by the following \defn{exchange relation}:
\[
x x' = \positiveExponents{p_x} \prod_{{y \in \cluster, \; b_{xy}  > 0}} y^{b_{xy}} + \negativeExponents{p_x} \prod_{{y \in \cluster, \; b_{xy}  <0}} y^{-b_{xy}}
\]
and leaving the remaining cluster variables unchanged so that $\cluster \ssm \{x\} = \cluster' \ssm \{x'\}$.

\item the row (resp.~column) of~$\B'$ indexed by~$x'$ is the negative of the row (resp.~column) of~$\B$ indexed by~$x$, while all other entries satisfy
\(
b'_{yz} = b_{yz} + \frac{1}{2}\big(|b_{yx}| b_{xz} + b_{yx}|b_{xz}|\big),
\)

\item the elements of the new coefficient tuple $\coefficients'$ are 
\[
p'_y =
\begin{cases}
	p_x^{-1}  & \text{if } y = x', \\
	p_y\negativeExponents{p_x}^{b_{xy}}  & \text{if } y \ne x' \text{ and } b_{xy} \leq 0, \\
	p_y\positiveExponents{p_x}^{b_{xy}}  & \text{if } y \ne x' \text{ and } b_{xy} > 0.
\end{cases}
\]
\end{itemize}
An important point is that mutations are involutions: $\mu_{x'} \big( \mu_{x}(\seed) \big) = \seed$.
We say that two seeds are \defn{adjacent} (resp.\ \defn{mutationally equivalent}) when they can be obtained from each other by a mutation (resp.\ a sequence of mutations). We also use the same terminology for exchange matrices.
We denote by~$\variables$ the collection of all cluster variables in the seeds mutationally equivalent to an initial seed~$\seed_\circ = (\B_\circ, \coefficients_\circ, \cluster_\circ)$ with cluster variables~$\cluster_\circ = \{x_1, \dots, x_n\}$.
The (geometric type) \defn{cluster algebra} $\clusterAlgebra$ is the $\Z\Trop{m}$-subring of~$\Q(x_1, \dots, x_n, p_1, \dots, p_m)$ generated by the cluster variables in~$\variables$.
The \defn{cluster complex} of~$\clusterAlgebra$ is the simplicial complex whose vertices are the cluster variables of~$\clusterAlgebra$ and whose facets are the clusters of~$\clusterAlgebra$.

\para{Finite type}
In this paper, we only consider \defn{finite type} cluster algebras, \ie those with a finite number~$N$ of cluster variables.
It turns out that these finite type cluster algebras were classified by S.~Fomin and A.~Zelevinsky~\cite{FominZelevinsky-ClusterAlgebrasII} using the Cartan-killing classification for crystallographic root systems.
Define the \defn{Cartan companion} of an exchange matrix $\B$ as the matrix~$\A{\B}$ given by~$a_{xy} = 2$ if~$x = y$ and~$a_{xy} = -|b_{xy}|$ otherwise.

\pagebreak

\begin{theorem}[{\cite[Thm.~1.4]{FominZelevinsky-ClusterAlgebrasII}}]
\label{thm:finiteTypeClassification}
The cluster algebra~$\clusterAlgebra$ is of finite type if and only~if $\B_\circ$ is mutationally equivalent to a matrix~$\B$ whose Cartan companion~$\A{\B}$ is a Cartan matrix of finite type. Moreover the type of $\A{\B}$ is determined~by~$B_\circ$.
\end{theorem}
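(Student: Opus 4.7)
Since this is cited as Theorem~1.4 of~\cite{FominZelevinsky-ClusterAlgebrasII}, I will only sketch a plan of proof. The statement naturally splits into two directions of very different difficulty, plus a uniqueness claim.

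For the easier direction, suppose $\B_\circ$ is mutationally equivalent to some $\B$ whose Cartan companion $\A{\B}$ is a Cartan matrix of finite type. The plan is first to perform the corresponding sequence of mutations and replace the initial seed $\seed_\circ$ by one with exchange matrix $\B$; this changes neither the cluster algebra nor its cluster complex. One then exhibits an explicit bijection between the cluster variables of $\clusterAlgebra$ and the almost positive roots $\Phi_{\ge -1}$ of the root system attached to $\A{\B}$. The argument proceeds by tracking, via the Laurent phenomenon of~\cite{FominZelevinsky-ClusterAlgebrasI}, the \defn{denominator vector} of each cluster variable expressed as a Laurent polynomial in~$\cluster$: one shows by induction on mutation sequences that the cluster variable~$x_i$ has denominator $-\simpleRoot_i$, and that every new cluster variable produced by an exchange relation has denominator vector a positive root, obtained from the surrounding ones by the piecewise-linear root-system combinatorics compatible with mutation. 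Finiteness of $\Phi_{\ge -1}$ then implies finiteness of $\clusterAlgebra$.

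The harder direction is to show that finite type forces the existence of such a~$\B$. The key notion is that of a \defn{$2$-finite} exchange matrix, \ie one whose entire mutation class has entries in $\{-2,-1,0,1,2\}$. First one argues that finite type forces $2$-finiteness: if some entry $|b_{xy}|\ge 3$ appears anywhere in the mutation class, an analysis of repeated mutations within the $3\times 3$ submatrix indexed by $x,y$ and one neighbor produces an infinite sequence of pairwise distinct matrices in the mutation class, contradicting finite type. The main obstacle is then the purely combinatorial classification of $2$-finite mutation classes, which is the technical heart of~\cite{FominZelevinsky-ClusterAlgebrasII}: one encodes $\B$ by a weighted oriented graph (its \emph{diagram}), rewrites the mutation rule in these terms, identifies a list of forbidden subdiagrams, and shows that every surviving $2$-finite class contains an acyclic representative whose underlying unoriented graph is a finite-type Dynkin diagram. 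The Cartan companion of this representative is then of finite type, as required.

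Uniqueness of the Cartan--Killing type of $\A{\B}$ as a function of $\B_\circ$ follows from a mutation-invariant count: once the two implications above are established, the number of cluster variables of $\clusterAlgebra$ equals $n + \frac{nh}{2}$, where~$h$ is the Coxeter number of $\A{\B}$, and this, together with the rank~$n$, distinguishes the finite Cartan--Killing types. The single decisive obstacle in the whole plan is the $2$-finite classification: everything else consists of reductions and invariance arguments, but identifying exactly the Dynkin diagrams among all possible diagrams of $2$-finite mutation classes requires a lengthy case-by-case analysis of small rank subdiagrams that is the true content of~\cite{FominZelevinsky-ClusterAlgebrasII}.
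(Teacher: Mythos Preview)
The paper does not give a proof of this statement; it is quoted verbatim from \cite{FominZelevinsky-ClusterAlgebrasII} as background, with no argument supplied. Your sketch is a broadly faithful outline of the original Fomin--Zelevinsky proof: the reduction of finite type to $2$-finiteness, followed by the diagram classification of $2$-finite mutation classes, is indeed the technical heart of that paper, and the converse via the bijection between cluster variables and almost positive roots is correct in spirit.

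There is, however, a genuine gap in your uniqueness argument. The pair $(n, n + nh/2)$ recording the rank and the number of cluster variables does \emph{not} separate all finite Cartan--Killing types: $B_n$ and $C_n$ have the same rank and the same Coxeter number $h = 2n$, hence the same cluster-variable count, yet are distinct types for $n \ge 3$. In \cite{FominZelevinsky-ClusterAlgebrasII} the uniqueness of the type is not obtained by counting but falls out of the classification together with a finer mutation invariant---for instance, the skew-symmetrizing diagonal matrix $\D$ is preserved under mutation and distinguishes $B_n$ from $C_n$.
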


A finite type exchange matrix~$\B_\circ$ is \defn{acyclic} if~$\A{\B_\circ}$ is already a Cartan matrix, and \defn{cyclic} otherwise.
An acyclic exchange matrix~$\B_\circ$ is \defn{bipartite} if each row of~$\B_\circ$ consists either of non-positive or non-negative entries.
We use the same terminology for the seed~$\seed_\circ$.

From now on, we fix a finite type cluster algebra~$\clusterAlgebra$ and we consider the root system of type~$\A{\B_\circ}$ (again, this root system is finite only when the initial seed~$\B_\circ$ is acyclic). We use the following classical bases of the underlying vector spaces:
\begin{itemize}
\item the simple roots~$\{\simpleRoot_x\}_{x \in \cluster_\circ}$ and the fundamental weights~$\{\fundamentalWeight_x\}_{x \in \cluster_\circ}$ are two bases of the same vector space~$V$ related by the Cartan matrix~$\A{\B_\circ}$,
\item the simple coroots~$\{\simpleRoot^\vee_x\}_{x \in \cluster_\circ}$ and the fundamental coweights~$\{\fundamentalWeight^\vee_x\}_{x \in \cluster_\circ}$ are two basis of the dual space~$V^\vee$ related by the transpose of the Cartan matrix~$\transpose{\A{\B_\circ}}$,
\end{itemize}
and the basis of simple roots is dual to the basis of fundamental coweights, while the basis of fundamental weights is dual to the basis of simple coroots.

The cluster complex of a finite type cluster algebra~$\clusterAlgebra$ is independent of the choice of coefficients and of the choice of the initial seed (as long at it remains in the same mutation class), and therefore only depends on the cluster type of~$\clusterAlgebra$ (\ie the Cartan type of~$\A{\B}$ in \cref{thm:finiteTypeClassification}).
Moreover, for~$\B_\circ^\vee \eqdef -\transpose{\B_\circ}$, then the map sending a cluster variable~$x$ in a seed~$\seed$ of~$\clusterAlgebra$ to the cluster variable~$x^\vee$ in the seed~$\seed^\vee$ of~$\clusterAlgebra[\B_\circ^\vee]$ obtained by the same sequence of mutations defines a natural isomorphism between the cluster complexes of~$\clusterAlgebra$ and~$\clusterAlgebra[\B_\circ^\vee]$.

\para{Principal coefficients and $\b{g}$- and $\b{c}$-vectors}
We now consider principal coefficients to define the $\b{g}$- and $\b{c}$-vectors.
As defined in~\cite[Def.~3.1]{FominZelevinsky-ClusterAlgebrasIV}, the cluster algebra with \defn{principal coefficients} at~$\B_\circ$ is the cluster algebra~$\clusterAlgebra[\B_\circ][\coefficients_\circ]$ in~$\Q(x_1, \dots, x_n, p_1, \dots, p_n)$, where~$\coefficients_\circ = \{p_x\}_{x \in \cluster_\circ}$ are precisely the generators~$p_1, \dots, p_n$ of~$\Trop{n}$ relabeled by~$\cluster_\circ$.
For simplicity, we drop the mention of the coefficients for principal cluster algebras in the notations for seeds~$\seed = (\B, \cluster)$, for cluster algebras~$\principalClusterAlgebra$ and for variables~$\principalVariables$.
Principal coefficients cluster algebras are $\Z^n$-graded (in the fundamental weight basis~$\{\fundamentalWeight_x\}_{x \in \cluster_\circ}$ of~$V$) for the degree function~$\deg(\B_\circ,\cdot)$ on $\principalClusterAlgebra$ obtained by setting
\(
\deg(\B_\circ,  x) \eqdef \fundamentalWeight_x
\)
and
\(
\deg(\B_\circ, p_x) \eqdef \sum_{y \in \cluster_\circ} -b_{yx} \fundamentalWeight_y
\)
for any $x\in\cluster_\circ$.

\begin{definition}[\cite{FominZelevinsky-ClusterAlgebrasIV}]\label{def:gvectorCA}
The \defn{$\b{g}$-vector}~$\gvector{x} = \gvectorFull{\B_\circ}{x}$ of a cluster variable~$x \in \principalClusterAlgebra$ is its degree.
We denote by $\gvectors{\seed} = \gvectorFull{\B_\circ}{\seed} \eqdef \set{\gvectorFull{\B_\circ}{x}}{x \in \seed}$ the set of $\b{g}$-vectors of a seed~$\seed$.
\end{definition}

The next definition gives another family of integer vectors, introduced implicitly in~\cite{FominZelevinsky-ClusterAlgebrasIV}, that are relevant in the structure of~$\principalClusterAlgebra$.

\begin{definition}[\cite{FominZelevinsky-ClusterAlgebrasIV}]
The \defn{$\b{c}$-vector} of a cluster variable $x$ in a seed~$\seed$ of~$\principalClusterAlgebra$ is the vector
\(
{\cvector{\seed}{x} = \cvectorFull{\B_\circ}{\seed}{x} \eqdef \sum_{y \in \cluster_\circ} c_{yx} \, \simpleRoot_y}
\)
of exponents of~$p_x = \prod_{y \in \cluster_\circ} (p_y)^{c_{yx}}$.
We denote by ${\cvectors{\seed} = \cvectorsFull{\B_\circ}{\seed} \eqdef \set{\cvectorFull{\B_\circ}{\seed}{x}}{x \in \seed}}$ the set of $\b{c}$-vectors of a seed~$\seed$.
\end{definition}

These two families of vectors are connected via the isomorphism~$x \mapsto x^\vee$ between the cluster complexes of~$\principalClusterAlgebra$ and~$\principalClusterAlgebra[\B_\circ^\vee]$ described above.

\begin{theorem}[{\cite[Thm~1.2]{NakanishiZelevinsky}}]
\label{prop:gvectorscvectorsDualBasesCA}
For any seed~$\seed$ of~$\principalClusterAlgebra$, let $\seed^\vee$ be its dual in $\principalClusterAlgebra[\B_\circ^\vee]$.
Then the set of $\b{g}$-vectors~$\gvectorFull{\B_\circ}{\seed}$ and the set of $\b{c}$-vectors~$\cvectorsFull{\B_\circ^\vee}{\seed^\vee}$ form dual bases, that is
\(
{\bigdotprod{\gvectorFull{\B_\circ}{x}}{\cvectorFull{\B_\circ^\vee}{\seed^\vee}{y^\vee}} = \delta_{x=y}}
\)
for any two cluster variables~$x,y \in \seed$.
\end{theorem}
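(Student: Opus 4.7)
\medskip
\noindent
\textbf{Proof plan.} I would proceed by induction on the length of a mutation sequence taking $\seed_\circ$ to $\seed$, carried out simultaneously in $\principalClusterAlgebra$ and in $\principalClusterAlgebra[\B_\circ^\vee]$ (at dual cluster variables). For the base case, at the initial seed~$\seed_\circ$ the grading convention yields $\gvectorFull{\B_\circ}{x}=\fundamentalWeight_x$ for every $x\in\cluster_\circ$. Dually, the coefficients $p_{x^\vee}$ of $\seed_\circ^\vee$ are the free generators of $\Trop{n}$, so $\cvectorFull{\B_\circ^\vee}{\seed_\circ^\vee}{x^\vee}=\simpleRoot^\vee_x$. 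Since $\{\fundamentalWeight_x\}_{x\in\cluster_\circ}$ and $\{\simpleRoot^\vee_x\}_{x\in\cluster_\circ}$ are dual bases of $V$ and $V^\vee$ by definition, one has $\dotprod{\fundamentalWeight_x}{\simpleRoot^\vee_y}=\delta_{x=y}$.

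\medskip
\noindent
\textbf{Inductive step.} Arrange the $\b{g}$- and $\b{c}$-vectors of the seeds into matrices~$G_\seed$ (columns are the $\gvectorFull{\B_\circ}{x}$ in the basis of fundamental weights) and $C_{\seed^\vee}$ (columns are the $\cvectorFull{\B_\circ^\vee}{\seed^\vee}{y^\vee}$ in the basis of simple coroots); the induction hypothesis rewrites as the matrix identity $G_\seed^{T}\,C_{\seed^\vee}=I$. For a mutation direction $z\in\seed$, I would invoke the \emph{sign-coherence} of $\b{c}$-vectors: the coordinates of $\cvectorFull{\B_\circ^\vee}{\seed^\vee}{z^\vee}$ in the simple coroot basis share a common sign $\varepsilon\in\{\pm1\}$. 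Granting sign-coherence, the a priori piecewise mutation rules for $\b{g}$- and $\b{c}$-vectors collapse into linear identities
\[
G_{\mu_z\seed} \;=\; G_\seed\, J^{\varepsilon}_{\B,z},
\qquad
C_{\mu_{z^\vee}\seed^\vee} \;=\; J^{-\varepsilon}_{\B^\vee,z^\vee}\, C_{\seed^\vee},
\]
where $J^{\pm}_{\B,z}$ and $J^{\pm}_{\B^\vee,z^\vee}$ are explicit integer matrices built from $\B$ and $\B^\vee=-\B^T$. A direct computation (built into the choice of the signs) verifies that $(J^{\varepsilon}_{\B,z})^{T}$ and $J^{-\varepsilon}_{\B^\vee,z^\vee}$ are mutual inverses, so that
\[
G_{\mu_z\seed}^{T}\, C_{\mu_{z^\vee}\seed^\vee} \;=\; (J^{\varepsilon}_{\B,z})^{T}\, G_\seed^{T}\, C_{\seed^\vee}\, J^{-\varepsilon}_{\B^\vee,z^\vee} \;=\; (J^{\varepsilon}_{\B,z})^{T}\, J^{-\varepsilon}_{\B^\vee,z^\vee} \;=\; I,
\]
which is the required duality at $(\mu_z\seed,\mu_{z^\vee}\seed^\vee)$.

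\medskip
\noindent
\textbf{Main obstacle.} The whole induction hinges on sign-coherence of the $\b{c}$-vectors; without it, the mutation rules remain piecewise and the clean matrix argument breaks down. In the finite-type setting at hand, sign-coherence can be extracted from the identification of $\b{c}$-vectors with almost-positive roots of the underlying root system (and hence holds uniformly in all Cartan--Killing types). A secondary subtlety is the non-simply-laced case, where $\A{\B_\circ}$ and $\A{\B_\circ^\vee}$ are transposes rather than equal, so that the bookkeeping of the matrices $J^{\pm}_{\B,z}$ and $J^{\pm}_{\B^\vee,z^\vee}$ genuinely involves Langlands-duality between the two root systems; this is, however, a routine (if careful) computation once sign-coherence is in hand.
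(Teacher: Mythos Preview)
The paper does not prove this statement: it is stated with a citation to \cite[Thm.~1.2]{NakanishiZelevinsky} and used as a black box. Your sketch is essentially the argument carried out in that reference --- induction on the mutation distance from the initial seed, with sign-coherence of $\b{c}$-vectors turning the piecewise-linear mutation rules into honest linear maps, and a direct check that the $G$- and $C$-mutation matrices are mutual transposed inverses.

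Two small comments on the write-up. First, in the standard conventions of Nakanishi--Zelevinsky both $G$ and $C$ mutate by \emph{right} multiplication by matrices $J^{\pm}$ which are involutive, and the identity $G^T C = I$ is preserved because $(J^{\varepsilon})^T J^{\varepsilon} = I$; your formula places the $J$ on the left of $C$, which does not match how you set up the columns. Second, the phrase ``identification of $\b{c}$-vectors with almost-positive roots'' is off: it is $\b{d}$-vectors that are almost-positive roots, whereas $\b{c}$-vectors in finite type are identified with (positive or negative) roots, and it is this sign dichotomy that yields sign-coherence. Neither point affects the overall strategy, which is correct.
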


\para{Cluster fan and generalized associahedron}
The following statement is well known and admits several possible proofs as discussed in~\cite[Sect.~4]{HohlwegPilaudStella}.
Examples are illustrated in \cref{fig:clusterFans}.

\begin{figure}[t]
	\capstart
	\begin{adjustbox}{center}
    	\begin{overpic}[scale=.45]{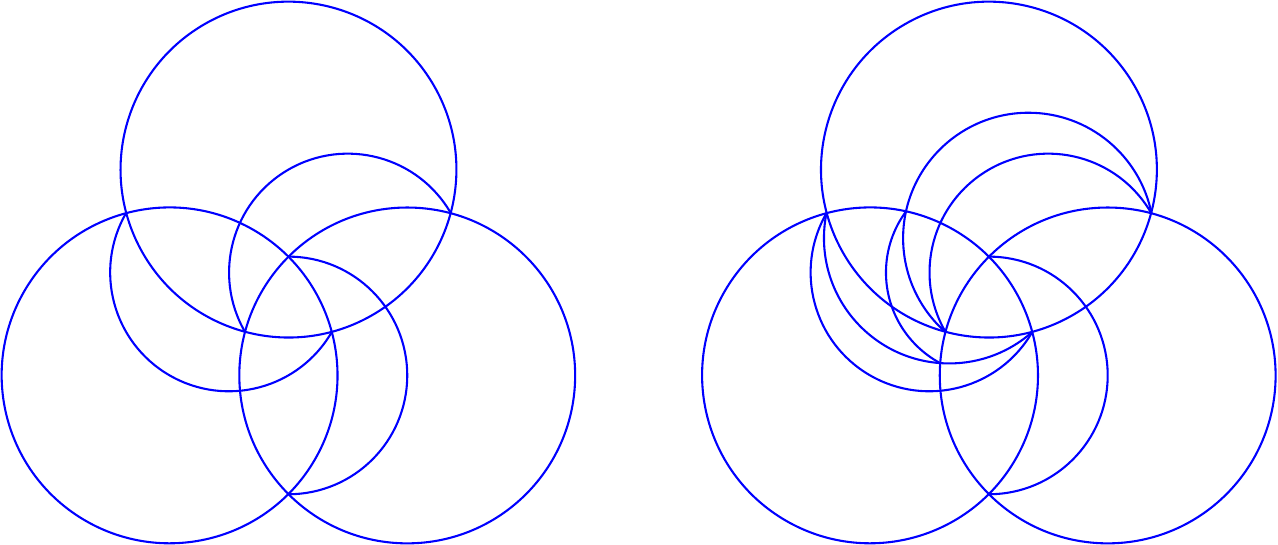}
    	\put(-19,30){$\left[\begin{array}{@{}c@{\;}c@{\;}c@{}} 0 & -1 & 1 \\ 1 & 0 & -1 \\ -1 & 1 & 0 \end{array}\right]$}
    	\put(98,30){$\left[\begin{array}{@{}c@{\;}c@{\;}c@{}} 0 & -1 & 2 \\ 1 & 0 & -2 \\ -1 & 1 & 0 \end{array}\right]$}
    	\end{overpic}
	\end{adjustbox}
	\caption{Two cluster fans~$\gvectorFan[\B_\circ]$ for the type~$A_3$~(left) and type~$C_3$~(right) cyclic initial exchange matrices. As the fans are $3$-dimensional, we intersect them with the sphere and stereographically project them from the direction~$(-1,-1,-1)$. Illustration from~\cite{HohlwegPilaudStella}.}
	\label{fig:clusterFans}
\end{figure}

\begin{theorem}
\label{thm:gvectorFanCA}
For any finite type exchange matrix~$\B_\circ$, the collection of cones
\[
\bigset{\R_{\ge0} \, \gvectorsFull{\B_\circ}{\seed}}{\seed \text{ seed of } \principalClusterAlgebra},
\]
together with all their faces, forms a complete simplicial fan~$\gvectorFan[\B_\circ]$, called the \defn{$\b{g}$-vector fan} or \defn{cluster fan} of~$\B_\circ$.
\end{theorem}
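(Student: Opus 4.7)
The plan is to verify the three defining properties of a complete simplicial fan in $V$: each cone $\R_{\ge 0}\gvectorsFull{\B_\circ}{\seed}$ is simplicial of dimension $n$, any two such cones intersect along a common face, and their union covers $V$. The central tool is the $\b{g}$-vector mutation formula combined with \cref{prop:gvectorscvectorsDualBasesCA}.

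For simpliciality, I would note that \cref{prop:gvectorscvectorsDualBasesCA} exhibits $\gvectorsFull{\B_\circ}{\seed}$ as the basis of $V$ dual to the basis $\cvectorsFull{\B_\circ^\vee}{\seed^\vee}$ of $V^\vee$; in particular the $n$ vectors $\gvectorFull{\B_\circ}{x}$ for $x \in \seed$ are linearly independent, so the cone they generate is a full-dimensional simplicial cone. For the intersection of two adjacent cones, corresponding to seeds $\seed$ and $\seed' = \mu_x(\seed)$ sharing all but one cluster variable, I would invoke the $\b{g}$-vector mutation formula of~\cite{FominZelevinsky-ClusterAlgebrasIV}, together with sign-coherence of $\b{c}$-vectors in finite type. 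The net effect is that the exchanged $\b{g}$-vector satisfies
\[
\gvectorFull{\B_\circ}{x'} = -\gvectorFull{\B_\circ}{x} + \sum_{y \in \seed \cap \seed'} \alpha_y \, \gvectorFull{\B_\circ}{y}
\]
for non-negative coefficients $\alpha_y$, so $\gvectorFull{\B_\circ}{x}$ and $\gvectorFull{\B_\circ}{x'}$ lie strictly on opposite sides of the hyperplane spanned by the common $\b{g}$-vectors $\{\gvectorFull{\B_\circ}{y}\}_{y \in \seed \cap \seed'}$. This forces the two maximal cones to meet along exactly this common facet, yielding in addition the ``exchange relation'' that will feed into \cref{def:uerp}.

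To extend the intersection property to arbitrary (not necessarily adjacent) pairs of maximal cones, I would combine connectivity of the exchange graph with a standard convexity lemma: once each adjacent pair meets along a shared facet with opposite orientations, the simplicial structure propagates through mutations and any two maximal cones automatically intersect along the face spanned by their shared rays. Completeness is the most delicate ingredient: I would rely on the fact that the cluster complex of a finite type cluster algebra is a simplicial sphere of dimension $n-1$, originally established in~\cite{FominZelevinsky-ClusterAlgebrasII, ChapotonFominZelevinsky}. Combined with the local injectivity from the previous step, the induced map from this sphere to the unit sphere in $V$ is a local homeomorphism between compact $(n-1)$-spheres, hence a covering map and therefore a homeomorphism. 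This tiles $V$ by the maximal cones and closes the proof.

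The main obstacle is the completeness step, which relies on knowing a priori that the cluster complex is a sphere: a substantial theorem in its own right. Alternative approaches bypass this hypothesis by constructing the fan directly, for instance via compatibility degrees for bipartite initial seeds~\cite{ChapotonFominZelevinsky}, via Cambrian fans for acyclic initial seeds~\cite{ReadingSpeyer}, uniformly for arbitrary initial seeds via the construction of~\cite{HohlwegPilaudStella}, or categorically through $2$-Calabi--Yau triangulated categories (to be developed in~\cref{sec:clusterCategories}). Any of these yields a self-contained proof, but each requires substantial additional machinery beyond the mutation formula used above.
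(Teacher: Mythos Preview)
The paper does not prove this theorem: it states it as well known and defers to \cite[Sect.~4]{HohlwegPilaudStella} for a discussion of the various proofs available in the literature. Your proposal therefore goes further than the paper itself, and your closing paragraph in fact lists the same approaches (compatibility degrees for bipartite seeds, Cambrian fans for acyclic seeds, the uniform construction of~\cite{HohlwegPilaudStella}, and categorification) that the cited reference surveys.

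Your sketch is sound in outline, but one step is under-specified. The passage from ``each adjacent pair meets along a shared facet with opposite orientations'' to ``any two maximal cones intersect along a common face'' is not a consequence of connectivity alone: two cones far apart in the exchange graph could a priori overlap in their interiors without this being detected by any single flip. The standard way to close this gap is to run the local-homeomorphism argument \emph{first}: the cluster complex is a simplicial $(n-1)$-sphere, the $\b{g}$-vector assignment gives a continuous map to the unit sphere in~$V$, local injectivity (which is exactly your adjacent-cone computation) makes it a local homeomorphism, hence a covering, hence a global homeomorphism. Both the fan property and completeness then fall out simultaneously. In other words, your steps 3 and 4 are really a single topological argument, and separating them makes step 3 look easier than it is.
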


Moreover, this fan is known to be polytopal.
More precisely, consider a vector~$\b{h} \in \R^{\principalVariables}$ such that
\[
\b{h}_x + \b{h}_{x'} > \max \Big( \sum_{{y \in \cluster \cap \cluster', \; b_{xy} < 0}} -b_{xy} \, \b{h}_y \;,  \sum_{{y \in \cluster \cap \cluster', \; b_{xy} > 0}} b_{xy} \, \b{h}_y \Big).
\]
for any adjacent seeds~$(\B, \cluster)$ and~$(\B', \cluster')$ with~$\cluster \ssm \{x\} = \cluster' \ssm \{x'\}$ (with the usual convention that an empty sum is~$0$).
Such a vector~$\b{h}$ exists, see the discussion in~\cite[Prop.~28]{HohlwegPilaudStella}.

\begin{theorem}[{\cite[Thm.~26]{HohlwegPilaudStella}}]
\label{thm:generalizedAsso}
For any finite type exchange matrix~$\B_\circ$, the cluster fan~$\gvectorFan[\B_\circ]$ is the normal fan of the \defn{$\B_\circ$-associahedron}~$\Asso[\B_\circ, \b{h}]$ defined equivalently as
\begin{enumerate}[(i)]
\item the convex hull of the points~$\sum_{x \in \seed} \b{h}_x \, \cvectorFull{\B_\circ^\vee}{\seed^\vee}{x^\vee}$ for all seeds~$\seed$ of~$\principalClusterAlgebra$, or
\item the intersection of the halfspaces~$\set{\b{v} \in V^\vee}{\dotprod{\gvectorFull{\B_\circ}{x}}{\b{v}} \le \b{h}_x}$ for all cluster variables~$x$ of~$\principalClusterAlgebra$.
\end{enumerate}
\end{theorem}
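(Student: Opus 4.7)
The plan is to apply Proposition~\ref{prop:characterizationPolytopalFan} to the cluster fan $\gvectorFan[\B_\circ]$, which is complete and simplicial by Theorem~\ref{thm:gvectorFanCA}. This reduces the polytopality of the half-space description~(ii) to verifying, for every pair of adjacent seeds $\seed = (\B, \cluster)$ and $\seed' = (\B', \cluster')$ with $\cluster \ssm \{x\} = \cluster' \ssm \{x'\}$, that the unique linear dependence among the $\b{g}$-vectors of $\gvectors{\seed} \cup \gvectors{\seed'}$ pairs positively with the height vector $\b{h}$.

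The key step is to read off this linear dependence from the exchange relation. In principal coefficients, the $\Z^n$-grading by the fundamental weights $\{\fundamentalWeight_x\}_{x \in \cluster_\circ}$ makes
\[
x\,x' \;=\; \positiveExponents{p_x}\!\!\prod_{y \in \cluster,\, b_{xy} > 0}\!\! y^{b_{xy}} \;+\; \negativeExponents{p_x}\!\!\prod_{y \in \cluster,\, b_{xy} < 0}\!\! y^{-b_{xy}}
\]
a homogeneous identity, so both monomials on the right-hand side must have degree $\gvector{x} + \gvector{x'}$. Applying $\deg(\B_\circ, \cdot)$ to each summand gives the two expressions
\[
\gvector{x} + \gvector{x'} \;=\; \deg(\B_\circ, \positiveExponents{p_x}) + \sum_{b_{xy} > 0} b_{xy}\,\gvector{y} \;=\; \deg(\B_\circ, \negativeExponents{p_x}) + \sum_{b_{xy} < 0} -b_{xy}\,\gvector{y}.
\]
By sign coherence of the $\b{c}$-vector $\cvectorFull{\B_\circ}{\seed}{x}$ (established in finite type in~\cite{FominZelevinsky-ClusterAlgebrasIV}), exactly one of $\positiveExponents{p_x}$ or $\negativeExponents{p_x}$ is trivial, so one of the two displayed expressions becomes a clean linear combination of $\gvector{y}$ for $y \in \cluster \cap \cluster'$. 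The hypothesis placed on $\b{h}$ takes the maximum over both right-hand sides, which guarantees positive pairing with the linear dependence regardless of which sign occurs. Proposition~\ref{prop:characterizationPolytopalFan} then yields the description~(ii).

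It remains to identify the vertex description~(i) with the vertices of the polytope produced in~(ii). The vertex associated to the maximal cone $\R_{\ge 0}\gvectors{\seed}$ is the unique point $\b{v}_\seed \in V^\vee$ satisfying $\dotprod{\gvector{x}}{\b{v}_\seed} = \b{h}_x$ for every $x \in \seed$. Expanding $\b{v}_\seed = \sum_{y \in \seed} \lambda_y\,\cvectorFull{\B_\circ^\vee}{\seed^\vee}{y^\vee}$ in the basis dual to $\gvectors{\seed}$ and invoking the $\b{g}$/$\b{c}$-vector duality of Proposition~\ref{prop:gvectorscvectorsDualBasesCA}, the orthogonality $\bigdotprod{\gvector{x}}{\cvectorFull{\B_\circ^\vee}{\seed^\vee}{y^\vee}} = \delta_{x = y}$ forces $\lambda_y = \b{h}_y$, giving the vertex as the claimed point.

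The main technical obstacle is the sign coherence of $\b{c}$-vectors: without it, both correction terms $\deg(\B_\circ, \positiveExponents{p_x})$ and $\deg(\B_\circ, \negativeExponents{p_x})$ would contribute non-trivially from the initial cluster and would obscure the comparison with the height inequality. Sign coherence is precisely what makes the positive and the negative half of the exchange relation yield two mutually exclusive clean expressions for the single linear dependence among the $\b{g}$-vectors, and it is what allows the admissibility condition on $\b{h}$ to be formulated as a single maximum.
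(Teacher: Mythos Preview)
Your argument is correct and follows essentially the same route as the paper. The paper treats this theorem as a citation from~\cite{HohlwegPilaudStella} and only remarks, after stating \cref{lem:linearDependencegvectorsCA}, that the lemma implies \cref{thm:generalizedAsso} because the hypothesis on~$\b{h}$ forces it into the type cone of~$\gvectorFan[\B_\circ]$; you unpack this by deriving \cref{lem:linearDependencegvectorsCA} from homogeneity and sign coherence and then invoking \cref{prop:characterizationPolytopalFan}, and you additionally supply the vertex identification~(i) via the $\b{g}$/$\b{c}$-duality of \cref{prop:gvectorscvectorsDualBasesCA}, which the paper leaves entirely to the cited reference.
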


\para{Mutations in the cluster fan}
We now discuss the linear dependences between $\b{g}$-vectors of adjacent seeds.
The following lemma was stated in~\cite[Lem.~19]{HohlwegPilaudStella}.

\begin{lemma}
\label{lem:linearDependencegvectorsCA}
For any finite type~exchange matrix~$\B_\circ$ and any adjacent seeds~${(\B, \cluster)}$ and~${(\B', \cluster')}$ in $\principalClusterAlgebra$ with~$\cluster \ssm \{x\} = \cluster' \ssm \{x'\}$, the $\b{g}$-vectors of~$\cluster \cup \cluster'$ with respect to~$\B_\circ$ satisfy precisely one of the following two linear dependences
\[
\gvectorFull{\B_\circ}{x} + \gvectorFull{\B_\circ}{x'} = \sum_{\substack{y \in \cluster \cap \cluster' \\ b_{xy} < 0}} -b_{xy} \, \gvectorFull{\B_\circ}{y}
\quad\text{or}\quad
\gvectorFull{\B_\circ}{x} + \gvectorFull{\B_\circ}{x'} = \sum_{\substack{y \in \cluster \cap \cluster' \\ b_{xy} > 0}} b_{xy} \, \gvectorFull{\B_\circ}{y}.
\]
\end{lemma}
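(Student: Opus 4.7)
The plan is to obtain both equalities directly from the exchange relation by applying the $\Z^n$-grading on $\principalClusterAlgebra$ recalled above the definition of $\b{g}$-vectors. Since every cluster monomial is homogeneous in this grading and the left-hand side $xx'$ has degree $\gvector{x}+\gvector{x'}$, the two monomials on the right-hand side of the exchange relation
\[
xx' \;=\; \positiveExponents{p_x}\prod_{y,\,b_{xy}>0} y^{b_{xy}} \;+\; \negativeExponents{p_x}\prod_{y,\,b_{xy}<0} y^{-b_{xy}}
\]
are each forced to be homogeneous of that same degree. Computing those degrees term by term, using $\deg(y) = \gvector{y}$ and the formula $\deg(p_{x_i}) = \sum_{y \in \cluster_\circ} -b_{yx_i}\fundamentalWeight_y$, yields
\[
\gvector{x}+\gvector{x'} \;=\; \deg(\positiveExponents{p_x}) + \sum_{y,\,b_{xy}>0} b_{xy}\gvector{y} \;=\; \deg(\negativeExponents{p_x}) + \sum_{y,\,b_{xy}<0} (-b_{xy})\gvector{y}.
\]

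The key additional ingredient is the \emph{sign-coherence of $\b{c}$-vectors}: for every cluster variable $x$ in every seed $\seed$ of $\principalClusterAlgebra$, all entries of $\cvector{\seed}{x}$ share a common sign. In finite type this is classical and already accessible from the work of Fomin and Zelevinsky~\cite{FominZelevinsky-ClusterAlgebrasIV}; in full generality it is due to Derksen-Weyman-Zelevinsky in the skew-symmetric case and to Gross-Hacking-Keel-Kontsevich in the skew-symmetrizable case. Sign-coherence means that either $\positiveExponents{p_x}=1$ or $\negativeExponents{p_x}=1$, so that exactly one of the two degree contributions $\deg(\positiveExponents{p_x}),\deg(\negativeExponents{p_x})$ vanishes. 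Substituting into the corresponding half of the displayed equation produces precisely one of the two dependences of the statement, the first when $\cvector{\seed}{x}\leq 0$ and the second when $\cvector{\seed}{x}\geq 0$.

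In short, the lemma is a one-line consequence of homogeneity of cluster monomials together with sign-coherence of $\b{c}$-vectors. I do not expect a real obstacle: the only nontrivial input is sign-coherence itself, which must be cited as a black box but is by now completely standard in cluster theory; the remainder is a routine bookkeeping of degrees in the exchange relation.
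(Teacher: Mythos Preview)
Your argument is correct and is the standard way to establish this fact: homogeneity of cluster variables for the $\Z^n$-grading on $\principalClusterAlgebra$ forces $\gvector{x}+\gvector{x'}$ to equal the degree of each monomial on the right of the exchange relation, and sign-coherence of $\b{c}$-vectors kills exactly one of $\positiveExponents{p_x}$, $\negativeExponents{p_x}$. The paper itself does not prove this lemma; it is simply quoted from \cite[Lem.~19]{HohlwegPilaudStella}, where the same ingredients are used.

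One small remark on ``precisely one'': your argument directly gives that \emph{at least} one of the two dependences holds. That both cannot hold simultaneously (unless they coincide) follows from the linear independence of the $\b{g}$-vectors of a cluster (\cref{prop:gvectorscvectorsDualBasesCA}): if both equalities held, subtracting them would give $\sum_y b_{xy}\,\gvector{y}=0$, hence $b_{xy}=0$ for all $y$, in which case the two dependences are identical anyway.
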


Note that \cref{lem:linearDependencegvectorsCA} implies \cref{thm:generalizedAsso} since it ensures that the type cone~$\typeCone \big( \gvectorFan[\B_\circ] \big)$~of~the cluster fan contains the cone of height vectors~${\b{h} \in \R^{\principalVariables}}$ such that
\(
\b{h}_x + \b{h}_{x'} > \sum_{{y \in \cluster \cap \cluster', \; b_{xy} < 0}} -b_{xy}
\)
and
\(
\b{h}_x + \b{h}_{x'} > \sum_{{y \in \cluster \cap \cluster', \; b_{xy} > 0}} b_{xy} \, \b{h}_y
\)
for any seeds~$(\B, \cluster)$ and~$(\B', \cluster')$ with~${\cluster \ssm \{x\} = \cluster' \ssm \{x'\}}$.

Unfortunately, \cref{lem:linearDependencegvectorsCA} is less precise than \cref{prop:exchangeablePairsAsso}. Indeed, which of the two possible linear dependences is satisfied by the $\b{g}$-vectors of~$\cluster \cup \cluster'$ depends on the initial exchange matrix~$\B_\circ$.
However, for cluster algebras of finite type, this linear dependence is independent of the choice of the adjacent seeds containing~$x$ and~$x'$.
As explained in \cref{corollary:UERPforCAproof}, this statement follows from~\cite[Thm.~7.5]{BuanMarshReinekeReitenTodorov}.
We state it here for future reference.
Note that the corresponding statement for cluster variables, instead of~$\b{g}$-vectors, holds by~\cite[Thm.~1.11]{FominZelevinsky-ClusterAlgebrasII}.

\begin{proposition}
\label{prop:uniqueExchangePropertyCA}
For any finite type exchange matrix~$\B_\circ$, the cluster fan has the unique exchange relation property.
\end{proposition}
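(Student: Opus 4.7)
The plan is to pass through the categorification of the cluster algebra by the cluster category $\cat$ associated to $\B_\circ$, following Buan--Marsh--Reineke--Reiten--Todorov. In finite type, $\cat$ is a $2$-Calabi--Yau triangulated category whose indecomposable rigid objects (up to isomorphism) are in bijection with the cluster variables of $\principalClusterAlgebra$, and whose (basic) cluster-tilting objects are in bijection with the clusters. Fix $T_\circ \in \cat$ corresponding to the initial seed $\seed_\circ$. Under this correspondence, for any cluster variable $x$, the $\b{g}$-vector $\gvectorFull{\B_\circ}{x}$ coincides (via the isomorphism $K_0(\add T_\circ) \cong \Z^{\cluster_\circ}$) with the index $\ind_{T_\circ}(X)$ of the corresponding indecomposable $X$, a fact that can be traced back to Dehy--Keller. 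Thus the statement to prove translates into: for any pair $X,X'$ of exchangeable indecomposable rigid objects, the index relation arising from their exchange triangles depends only on $\{X,X'\}$.

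Next, I would identify the exchangeability relation in categorical terms. Saying that $x$ and $x'$ are simultaneously summands of adjacent clusters $\cluster,\cluster'$ with $\cluster\ssm\{x\}=\cluster'\ssm\{x'\}$ amounts to saying that $X\oplus X'$ admits two completions $\bar T$ and $\bar T'$ to a basic cluster-tilting object, obtained from each other by replacing the summand $X$ by $X'$ (and \emph{vice versa}). The corresponding mutation data are the two exchange triangles
\[
X \longrightarrow E \longrightarrow X' \overset{\delta}{\longrightarrow} \susp X, \qquad
X' \longrightarrow E' \longrightarrow X \overset{\delta'}{\longrightarrow} \susp X',
\]
with $E,E' \in \add(\cluster \cap \cluster')$. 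The key structural input is then an appeal to \cite[Thm.~7.5]{BuanMarshReinekeReitenTodorov}, which asserts that in a $2$-Calabi--Yau triangulated category, the middle terms $E$ and $E'$ of these exchange triangles depend only on the unordered pair $\{X,X'\}$, and not on the choice of completion to a cluster-tilting object. Conceptually, $E$ and $E'$ arise as minimal approximations determined by $X$ and $X'$ alone.

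With this in hand, the last step is to take indices. Because the connecting morphisms $\delta$ and $\delta'$ of exchange triangles factor through $\add(\susp T_\circ)$ (this is built into the compatibility between the index and the cluster structure), the index map is additive along these triangles, giving
\[
\ind_{T_\circ}(X) + \ind_{T_\circ}(X') \;=\; \ind_{T_\circ}(E) \qquad \text{(or $\ind_{T_\circ}(E')$).}
\]
Translating back through the bijection $\gvectorFull{\B_\circ}{x}=\ind_{T_\circ}(X)$, the right-hand side decomposes into a sum $\sum_{y} c_y\,\gvectorFull{\B_\circ}{y}$ with $y$ ranging over the indecomposable summands of $E$ and with multiplicities $c_y$ read off from $E$. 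Since $E$ itself is intrinsic to $\{X,X'\}$, both the set of participating $y$'s and their coefficients are intrinsic to $\{x,x'\}$, and this is exactly the unique exchange relation property. Choosing the triangle involving $E$ versus $E'$ accounts for the two cases of \cref{lem:linearDependencegvectorsCA}, and the resulting sign is again determined by $\{X,X'\}$ since the two triangles are.

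The main obstacle I anticipate is not the invocation of \cite[Thm.~7.5]{BuanMarshReinekeReitenTodorov}, which is a black box, but rather the verification that the index map on $\cat$ behaves correctly on the particular exchange triangles that appear, and that the identification $\gvectorFull{\B_\circ}{x}=\ind_{T_\circ}(X)$ holds without ambiguity; this is the point at which one must check that the Grothendieck-group relations coming from exchange triangles are precisely the ones expressed in \cref{lem:linearDependencegvectorsCA}, which in turn will be the content of the more general analogue of Auslander's theorem to be proved in \cref{sec:clusterCategories} (the $L_\cat$-basis result for the kernel of $\b{g}$), from which \cref{prop:uniqueExchangePropertyCA} drops out as a corollary.
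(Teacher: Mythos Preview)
Your approach matches the paper's: categorify via the cluster category, invoke \cite[Thm.~7.5]{BuanMarshReinekeReitenTodorov} to pin down the middle term of the exchange triangle from $\{X,X'\}$ alone, and then pass to indices/$\b{g}$-vectors. Two points need correction, however.

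First, it is not true that both connecting morphisms $\delta$ and $\delta'$ factor through $\add(\susp T_\circ)$. Exactly one of them does (this is \cite[Lem.~3.3]{Palu}, restated in the paper as \cref{lemma:positive mutation triangles}); the other does not, and the index is \emph{not} additive along the corresponding triangle. This is precisely why only one of the two candidate dependences in \cref{lem:linearDependencegvectorsCA} actually holds. Which one holds is still determined by $\{X,X'\}$ and $T_\circ$, so your argument survives once you replace ``both factor through'' by ``exactly one factors through, and that one gives the relation''; but as written the claim is false (it would force $\ind_{T_\circ}(E)=\ind_{T_\circ}(E')$, which is generally not the case).

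Second, the unique exchange relation property does not require the Auslander-type theorem (the $L_\cat$-basis result for $\ker \b{g}$). That theorem is what the paper uses for \cref{prop:meshMutations}, i.e.\ to show that mesh relations positively generate all exchange relations. The unique exchange relation property follows already from the fact that $\dim_{\field_X}\cat(X',\susp X)=1$ (the content of \cite[Thm.~7.5]{BuanMarshReinekeReitenTodorov}), which makes the non-split triangle $X\to E\to X'\to \susp X$ unique up to isomorphism, together with the index computation. Your final paragraph conflates these two distinct ingredients.
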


%%%

\subsubsection{Type cone of finite type cluster fans}

The linear dependences of \cref{lem:linearDependencegvectorsCA} provide a redundant description of the type cone of the cluster fan~$\gvectorFan[\B_\circ]$.
We denote by~$\b{n}(\B_\circ, x, x')$ the normal vector of the inequality of the type cone corresponding to two exchangeable cluster variables~$x$ to~$x'$ (it is well defined by \cref{prop:uniqueExchangePropertyCA}).
In other words, depending on which of the two linear dependences of \cref{lem:linearDependencegvectorsCA} holds, we have
\[
\b{n}(\B_\circ, x, x') \eqdef \b{f}_x + \b{f}_{x'} - \sum_{\substack{y \in \cluster \cap \cluster' \\ b_{xy} < 0}} -b_{xy} \, \b{f}_y
\qquad\text{or}\qquad
\b{n}(\B_\circ, x, x') \eqdef \b{f}_x + \b{f}_{x'} - \sum_{\substack{y \in \cluster \cap \cluster' \\ b_{xy} > 0}} b_{xy} \, \b{f}_y,
\]
where~$(\b{f}_x)_{x \in \principalVariables}$ denotes the canonical basis of~$\R^{\principalVariables}$.
We obtain the following statement.

\begin{corollary}
\label{coro:typeConeCA}
For any finite type exchange matrix~$\B_\circ$, the type cone of the cluster fan~$\gvectorFan[\B_\circ]$ is given by
\[
\typeCone \big( \gvectorFan[\B_\circ] \big) = \set{\b{h} \in \R^{\principalVariables}}{\dotprod{\b{n}(\B_\circ, x, x')}{\b{h}} > 0 \text{ for all exchangeable cluster variables } x, x'}.
\]
\end{corollary}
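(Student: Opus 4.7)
The statement is essentially a translation of the general description of the type cone (Definition 2.6) into the specific language of cluster algebras, so my plan is to verify that the three ingredients already in place combine cleanly. First, I would unfold the definition of $\typeCone \big( \gvectorFan[\B_\circ] \big)$: since the cluster fan is an essential complete simplicial fan in the weight space $V$ (by Theorem 2.11), Definition 2.6 gives
\[
\typeCone \big( \gvectorFan[\B_\circ] \big) = \Bigset{\b{h} \in \R^{\principalVariables}}{\sum_{\ray[s] \in \rays \cup \rays'} \coefficient[{\ray[s]}][\rays][\rays'] \, \b{h}_{\ray[s]} > 0 \text{ for every adjacent pair } \R_{\ge 0}\rays, \R_{\ge 0}\rays'}.
\]
Here an adjacent pair corresponds precisely to two seeds $(\B,\cluster)$ and $(\B',\cluster')$ of $\principalClusterAlgebra$ with $\cluster \ssm \{x\} = \cluster' \ssm \{x'\}$, and the rays involved are the $\b{g}$-vectors of $\cluster \cup \cluster'$.

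Next, for each such adjacent pair I invoke Lemma 2.13, which gives the explicit form of the (up-to-scale) unique linear dependence satisfied by the $\b{g}$-vectors of $\cluster \cup \cluster'$. Under the normalization convention fixed in Section 2.2 (namely $\coefficient[\ray][\rays][\rays'] + \coefficient[\ray'][\rays][\rays'] = 2$), the coefficient of $\gvectorFull{\B_\circ}{x}$ and of $\gvectorFull{\B_\circ}{x'}$ is exactly $1$ in either of the two alternatives of Lemma 2.13, and the remaining coefficients are the $-b_{xy}$ or $b_{xy}$ that appear on the right-hand side. Consequently the inner normal vector of the corresponding inequality is exactly one of the two vectors written just before the statement, which is what the notation $\b{n}(\B_\circ, x, x')$ records.

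The only nontrivial step is to justify that this inner normal is indexed just by the pair $\{x,x'\}$ rather than by the whole adjacent pair $\{(\B,\cluster),(\B',\cluster')\}$. This is precisely Proposition 2.14: the unique exchange relation property ensures that whenever $x$ and $x'$ appear together as an exchangeable pair in two different adjacent pairs of seeds, the corresponding linear dependences among $\b{g}$-vectors coincide (including the side of the alternative that is realized in Lemma 2.13, and the identity of the supporting rays $y \in \cluster \cap \cluster'$). Thus the family of inequalities indexed by adjacent maximal cones collapses, without redundancy beyond that, into the family indexed by pairs of exchangeable cluster variables $\{x,x'\}$, yielding the claimed description.

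I expect the writing to be essentially a one-paragraph deduction; the real content is hidden upstream in Proposition 2.14, whose proof, deferred to Section I.3, is the genuine obstacle and relies on the reformulation of~\cite[Thm.~7.5]{BuanMarshReinekeReitenTodorov} in the $2$-Calabi--Yau triangulated setting.
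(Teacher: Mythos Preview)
Your proposal is correct and follows exactly the route the paper takes: the paper does not spell out a proof but simply states the corollary as an immediate consequence of Definition~2.6, Lemma~2.13, and Proposition~2.14, together with the general observation in Section~1.3 that the unique exchange relation property lets one index the defining inequalities of the type cone by exchangeable pairs of rays rather than by adjacent maximal cones. Your write-up faithfully unpacks precisely this deduction.
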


\begin{example}
\label{exm:typeConeCA}
Consider the cluster fans illustrated in \cref{fig:clusterFans}.
The type cone of the left fan of \cref{fig:clusterFans} lives in~$\R^9$ and has a lineality space of dimension~$3$.
It has $6$ facet-defining inequalities (given below), which correspond to the mesh mutations of \cref{thm:extremalExchangeablePairsCA} as illustrated in \cref{fig:labelFacetDefiningInequalititiesCA1}.

\[
\begin{array}{r|cccccccccc}
\text{variables} & \rotatebox{70}{$x_1$} & \rotatebox{70}{$x_2$} & \rotatebox{70}{$x_3$} & \rotatebox{70}{$\frac{x_2 + x_3}{x_1}$} & \rotatebox{70}{$\frac{x_1 + x_3}{x_2}$} & \rotatebox{70}{$\frac{x_1 + x_2}{x_3}$} & \rotatebox{70}{$\frac{x_1 + x_2 + x_3}{x_2 x_3}$} & \rotatebox{70}{$\frac{x_1 + x_2 + x_3}{x_1 x_3}$} & \rotatebox{70}{$\frac{x_1 + x_2 + x_3}{x_1 x_2}$} \\[.2cm]
\text{$\b{g}$-vectors} & \compactVectorT{1}{0}{0} & \compactVectorT{0}{1}{0} & \compactVectorT{0}{0}{1} & \compactVectorT{-1}{0}{1} & \compactVectorT{1}{-1}{0} & \compactVectorT{0}{1}{-1} & \compactVectorT{0}{0}{-1} & \compactVectorT{-1}{0}{0} & \compactVectorT{0}{-1}{0} \\[.6cm]
\text{facet} 		& 1 & 0 & 0 & 0 & -1 & 0 & 0 & 0 & 1 & \red \circled{A} \\
\text{defining}		& 0 & 0 & -1 & 1 & 1 & 0 & 0 & 0 & -1 & \red \circled{B} \\
\text{inequalities}	& 0 & 0 & 1 & -1 & 0 & 0 & 0 & 1 & 0 & \red \circled{C} \\
 					& 0 & -1 & 0 & 1 & 0 & 1 & 0 & -1 & 0 & \red \circled{D} \\
 					& 0 & 1 & 0 & 0 & 0 & -1 & 1 & 0 & 0 & \red \circled{E} \\
					& -1 & 0 & 0 & 0 & 1 & 1 & -1 & 0 & 0 & \red \circled{F} \\[.2cm]
\end{array}
\]

%sage: B = Matrix([[0,-1,1],[1,0,-1],[-1,1,0]])
%sage: CA = ClusterAlgebra(B)

%sage: CA.explore_to_depth(infinity)
%sage: gvectors = CA.cluster_fan().rays()
%sage: for gvector in gvectors:
%....:     print gvector, CA.cluster_variable(gvector)
%....:     
%N(1, 0, 0) x0
%N(0, 0, 1) x2
%N(1, -1, 0) (x0 + x2)/x1
%N(0, 0, -1) (x0 + x1 + x2)/(x1*x2)
%N(-1, 0, 1) (x1 + x2)/x0
%N(0, -1, 0) (x0 + x1 + x2)/(x0*x1)
%N(-1, 0, 0) (x0 + x1 + x2)/(x0*x2)
%N(0, 1, 0) x1
%N(0, 1, -1) (x0 + x1)/x2

%sage: TCCA = type_cone_CA(CA)
%sage: TCCA
%A 9-dimensional polyhedron in QQ^9 defined as the convex hull of 1 vertex, 6 rays, 3 lines
%sage: TCCA.lines()
%(A line in the direction (0, 1, 0, -1, 1, 0, 0, 0, -1),
% A line in the direction (0, 0, 1, 0, 0, 1, 0, -1, -1),
% A line in the direction (1, 0, 0, 0, -1, -1, -1, 1, 1))
%sage: TCCA.inequalities()
%(An inequality (-1, 0, 1, -1, 0, 0, 0, 0, 1) x + 0 >= 0,
% An inequality (1, 0, -1, 0, 0, 1, 0, 0, 0) x + 0 >= 0,
% An inequality (0, -1, 1, 0, 1, -1, 0, 0, 0) x + 0 >= 0,
% An inequality (0, 1, 0, 0, -1, 0, 1, 0, 0) x + 0 >= 0,
% An inequality (0, 0, 0, 1, 0, 0, 0, 1, -1) x + 0 >= 0,
% An inequality (0, 0, 0, 0, 1, 0, -1, -1, 1) x + 0 >= 0)

\begin{figure}[h]
	\capstart
    \begin{adjustbox}{center}
		\newcommand{\fitClusterVariable}[1]{\parbox[c][.7cm][c]{1.5cm}{\centering $#1$}}
        \begin{tikzpicture}
        	\matrix (m) [matrix of math nodes, row sep=.4cm, column sep=-.1cm, nodes={anchor=center, align=center, inner sep=0pt}]{
        		\fitClusterVariable{\phantom{1}} & \node (e1) {\red \circled{E}}; & \fitClusterVariable{\frac{x_1 + x_2 + x_3}{x_2 x_3}} && \fitClusterVariable{x_3} & \node (c) {\red \circled{C}}; & \fitClusterVariable{\frac{x_1 + x_2 + x_3}{x_1 x_3}} && \fitClusterVariable{x_1} & \node (a2) {\red \circled{A}}; & \fitClusterVariable{\phantom{1}} \\
        		& \fitClusterVariable{\frac{x_1 + x_2}{x_3}} & \node (f1) {\red \circled{F}}; & \fitClusterVariable{\frac{x_1 + x_3}{x_2}} & \node (b) {\red \circled{B}}; & \fitClusterVariable{\frac{x_2 + x_3}{x_1}} & \node (d) {\red \circled{D}}; & \fitClusterVariable{\frac{x_1 + x_2}{x_3}} & \node (f2) {\red \circled{F}}; & \fitClusterVariable{\frac{x_1 + x_3}{x_2}} \\
        		\fitClusterVariable{\phantom{1}} && \fitClusterVariable{x_1} & \node (a1) {\red \circled{A}}; & \fitClusterVariable{\frac{x_1 + x_2 + x_3}{x_1 x_2}} && \fitClusterVariable{x_2} & \node (e2) {\red \circled{E}}; & \fitClusterVariable{\frac{x_1 + x_2 + x_3}{x_2 x_3}} && \fitClusterVariable{\phantom{1}} \\};
        	\draw[densely dotted, thick] (m-1-1) -- (m-2-2);
        	\draw[densely dotted, thick] (m-3-1) -- (m-2-2);
        	\draw[->] (m-2-2) -- (m-1-3);
        	\draw[->] (m-2-2) -- (m-3-3);
        	\draw[->] (m-1-3) -- (m-2-4);
        	\draw[->] (m-3-3) -- (m-2-4);
        	\draw[->] (m-2-4) -- (m-1-5);
        	\draw[->] (m-2-4) -- (m-3-5);
        	\draw[->] (m-1-5) -- (m-2-6);
        	\draw[->] (m-3-5) -- (m-2-6);
        	\draw[->] (m-2-6) -- (m-1-7);
        	\draw[->] (m-2-6) -- (m-3-7);
        	\draw[->] (m-1-7) -- (m-2-8);
        	\draw[->] (m-3-7) -- (m-2-8);
        	\draw[->] (m-2-8) -- (m-1-9);
        	\draw[->] (m-2-8) -- (m-3-9);
        	\draw[->] (m-1-9) -- (m-2-10);
        	\draw[->] (m-3-9) -- (m-2-10);
        	\draw[densely dotted, thick] (m-2-10) -- (m-1-11);
        	\draw[densely dotted, thick] (m-2-10) -- (m-3-11);
        	\draw[red] (a1) -- (m-3-3);
        	\draw[red] (a1) -- (m-3-5);
        	\draw[red] (a1) -- (m-2-4);
        	\draw[red] (a2) -- (m-1-9);
        	\draw[red, densely dotted, thick] (a2) -- (m-1-11);
        	\draw[red] (a2) -- (m-2-10);
        	\draw[red] (b) -- (m-2-4);
        	\draw[red] (b) -- (m-2-6);
        	\draw[red] (b) -- (m-1-5);
        	\draw[red] (b) -- (m-3-5);
        	\draw[red] (c) -- (m-1-5);
        	\draw[red] (c) -- (m-1-7);
        	\draw[red] (c) -- (m-2-6);
        	\draw[red] (d) -- (m-2-6);
        	\draw[red] (d) -- (m-2-8);
        	\draw[red] (d) -- (m-1-7);
        	\draw[red] (d) -- (m-3-7);
        	\draw[red, densely dotted, thick] (e1) -- (m-1-1);
        	\draw[red] (e1) -- (m-1-3);
        	\draw[red] (e1) -- (m-2-2);
        	\draw[red] (e2) -- (m-3-7);
        	\draw[red] (e2) -- (m-3-9);
        	\draw[red] (e2) -- (m-2-8);
        	\draw[red] (f1) -- (m-2-2);
        	\draw[red] (f1) -- (m-2-4);
        	\draw[red] (f1) -- (m-1-3);
        	\draw[red] (f1) -- (m-3-3);
        	\draw[red] (f2) -- (m-2-8);
        	\draw[red] (f2) -- (m-2-10);
        	\draw[red] (f2) -- (m-1-9);
        	\draw[red] (f2) -- (m-3-9);
        \end{tikzpicture}
    \end{adjustbox}
	\caption{The facet-defining inequalities of the type cone~$\typeCone \big( \gvectorFan[\B_\circ] \big)$ of the cluster fan correspond to the mesh mutations described in \cref{thm:extremalExchangeablePairsCA}. See \cref{sec:clusterCategories} for a representation theoretic \mbox{interpretation}.}
	\label{fig:labelFacetDefiningInequalititiesCA1}
\end{figure}

\noindent
The type cone of the right fan of \cref{fig:clusterFans} lives in~$\R^{12}$ and has a lineality space of dimension~$3$.
It has $9$ facet-defining inequalities (given below), which correspond to the mesh mutations of \cref{thm:extremalExchangeablePairsCA} as illustrated in \cref{fig:labelFacetDefiningInequalititiesCA2}.

\centerline{$
\begin{array}{r|c@{\;\;}c@{\;\;}c@{\;\;}c@{\;\;}c@{\;\;}c@{\;}c@{\;}c@{\;}c@{}c@{}c@{}cc}
\text{variables} & \rotatebox{70}{$x_1$} & \rotatebox{70}{$x_2$} & \rotatebox{70}{$x_3$} & \rotatebox{70}{$\frac{x_2 + x_3}{x_1}$} & \rotatebox{70}{$\frac{x_1 + x_3}{x_2}$} & \rotatebox{70}{$\frac{x_1^2 + x_2^2}{x_3}$} & \rotatebox{70}{$\frac{x_1^2 + x_2^2 + x_1 x_3}{x_2 x_3}$} & \rotatebox{70}{$\frac{x_1^2 + x_2^2 + x_2 x_3}{x_1 x_3}$} & \rotatebox{70}{$\frac{x_1 + x_2 + x_3}{x_1 x_2}$} & \rotatebox{70}{$\frac{x_1^2 + x_2^2 + 2 x_1 x_3 + x_3^2}{x_2^2 x_3}$} & \rotatebox{70}{$\frac{x_1^2 + x_2^2 + 2 x_2 x_3 + x_3^2}{x_1^2 x_3}$} & \rotatebox{70}{$\frac{x_1^2 + x_2^2 + x_1 x_3 + x_2 x_3}{x_1 x_2 x_3}$} \\[.2cm]
\text{$\b{g}$-vectors} & \compactVectorT{1}{0}{0} & \compactVectorT{0}{1}{0} & \compactVectorT{0}{0}{1} & \compactVectorT{-1}{0}{1} & \compactVectorT{1}{-1}{0} & \compactVectorT{0}{2}{-1} & \compactVectorT{0}{1}{-1} & \compactVectorT{-1}{1}{0} & \compactVectorT{0}{-1}{0} & \compactVectorT{0}{0}{-1} & \compactVectorT{-2}{0}{1} & \compactVectorT{-1}{0}{0}  \\[.6cm]
\text{facet}		& 1 & 0 & 0 & 0 & -1 & 0 & 0 & 0 & 1 & 0 & 0 & 0 & \red \circled{A} \\
\text{defining}		& 0 & 0 & -1 & 1 & 1 & 0 & 0 & 0 & -1 & 0 & 0 & 0 & \red \circled{B} \\
\text{inequalities}	& 0 & 0 & 1 & -2 & 0 & 0 & 0 & 0 & 0 & 0 & 1 & 0 & \red \circled{C} \\
					& 0 & -1 & 0 & 1 & 0 & 0 & 0 & 1 & 0 & 0 & -1 & 0 & \red \circled{D} \\
					& 0 & 0 & 0 & 0 & 0 & 1 & 0 & -2 & 0 & 0 & 1 & 0 & \red \circled{E} \\
					& 0 & 1 & 0 & 0 & 0 & 0 & 0 & -1 & 0 & 0 & 0 & 1 & \red \circled{F} \\
					& 0 & 0 & 0 & 0 & 0 & -1 & 1 & 1 & 0 & 0 & 0 & -1 & \red \circled{G} \\
					& 0 & 0 & 0 & 0 & 0 & 1 & -2 & 0 & 0 & 1 & 0 & 0 & \red \circled{H} \\
					& -1 & 0 & 0 & 0 & 1 & 0 & 1 & 0 & 0 & -1 & 0 & 0 & \red \circled{I} \\[.2cm]
\end{array}
$}

\begin{figure}[h]
	\capstart
    \begin{adjustbox}{center}
		\newcommand{\fitClusterVariable}[1]{\parbox[c][.7cm][c]{2.3cm}{\centering $#1$}}
        \begin{tikzpicture}
        	\matrix (m) [matrix of math nodes, row sep=.4cm, column sep=-.7cm, nodes={anchor=center, align=center, inner sep=0pt}]{
        		& \fitClusterVariable{\frac{x_1^2 + x_2^2 + 2 x_1 x_3 + x_3^2}{x_2^2 x_3}} && \fitClusterVariable{x_3} & \node (c) {\red \circled{C}}; & \fitClusterVariable{\frac{x_1^2 + x_2^2 + 2 x_2 x_3 + x_3^2}{x_1^2 x_3}} & \node (e) {\red \circled{E}}; & \fitClusterVariable{\frac{x_1^2 + x_2^2}{x_3}} & \node (h) {\red \circled{H}}; & \fitClusterVariable{\frac{x_1^2 + x_2^2 + 2 x_1 x_3 + x_3^2}{x_2^2 x_3}} && \fitClusterVariable{\phantom{1}} \\
        		\fitClusterVariable{\phantom{1}} & \node (i1) {\red \circled{I}}; & \fitClusterVariable{\frac{x_1 + x_3}{x_2}} & \node (b) {\red \circled{B}}; & \fitClusterVariable{\frac{x_2 + x_3}{x_1}} & \node (d) {\red \circled{D}}; & \fitClusterVariable{\frac{x_1^2 + x_2^2 + x_2 x_3}{x_1 x_3}} & \node (g) {\red \circled{G}}; & \fitClusterVariable{\frac{x_1^2 + x_2^2 + x_1 x_3}{x_2 x_3}} & \node (i2) {\red \circled{I}}; & \fitClusterVariable{\frac{x_1 + x_3}{x_2}} & \\
        		& \fitClusterVariable{x_1} & \node (a1) {\red \circled{A}}; & \fitClusterVariable{\frac{x_1 + x_2 + x_3}{x_1 x_2}} && \fitClusterVariable{x_2} & \node (f) {\red \circled{F}}; & \fitClusterVariable{\frac{x_1^2 + x_2^2 + x_1 x_3 + x_2 x_3}{x_1 x_2 x_3}} && \fitClusterVariable{x_1} & \node (a2) {\red \circled{A}}; & \fitClusterVariable{\phantom{1}} \\};
        	\draw[densely dotted, thick] (m-2-1) -- (m-1-2);
        	\draw[densely dotted, thick] (m-2-1) -- (m-3-2);
        	\draw[->] (m-1-2) -- (m-2-3);
        	\draw[->] (m-3-2) -- (m-2-3);
        	\draw[->] (m-2-3) -- (m-1-4);
        	\draw[->] (m-2-3) -- (m-3-4);
        	\draw[->] (m-1-4) -- (m-2-5);
        	\draw[->] (m-3-4) -- (m-2-5);
        	\draw[->] (m-2-5) -- (m-1-6);
        	\draw[->] (m-2-5) -- (m-3-6);
        	\draw[->] (m-1-6) -- (m-2-7);
        	\draw[->] (m-3-6) -- (m-2-7);
        	\draw[->] (m-2-7) -- (m-1-8);
        	\draw[->] (m-2-7) -- (m-3-8);
        	\draw[->] (m-1-8) -- (m-2-9);
        	\draw[->] (m-3-8) -- (m-2-9);
        	\draw[->] (m-2-9) -- (m-1-10);
        	\draw[->] (m-2-9) -- (m-3-10);
        	\draw[->] (m-1-10) -- (m-2-11);
        	\draw[->] (m-3-10) -- (m-2-11);
        	\draw[densely dotted, thick] (m-2-11) -- (m-1-12);
        	\draw[densely dotted, thick] (m-2-11) -- (m-3-12);
        	\draw[red] (a1) -- (m-3-2);
        	\draw[red] (a1) -- (m-3-4);
        	\draw[red] (a1) -- (m-2-3);
        	\draw[red] (a2) -- (m-3-10);
        	\draw[red] (a2) -- (m-3-12);
        	\draw[red] (a2) -- (m-2-11);
        	\draw[red] (b) -- (m-2-3);
        	\draw[red] (b) -- (m-2-5);
        	\draw[red] (b) -- (m-1-4);
        	\draw[red] (b) -- (m-3-4);
        	\draw[red] (c) -- (m-1-4);
        	\draw[red] (c) -- (m-1-6);
        	\draw[red] (c) -- (m-2-5);
        	\draw[red] (d) -- (m-2-5);
        	\draw[red] (d) -- (m-2-7);
        	\draw[red] (d) -- (m-1-6);
        	\draw[red] (d) -- (m-3-6);
        	\draw[red] (e) -- (m-1-6);
        	\draw[red] (e) -- (m-1-8);
        	\draw[red] (e) -- (m-2-7);
        	\draw[red] (f) -- (m-3-6);
        	\draw[red] (f) -- (m-3-8);
        	\draw[red] (f) -- (m-2-7);
        	\draw[red] (g) -- (m-2-7);
        	\draw[red] (g) -- (m-2-9);
        	\draw[red] (g) -- (m-1-8);
        	\draw[red] (g) -- (m-3-8);
        	\draw[red] (h) -- (m-1-8);
        	\draw[red] (h) -- (m-1-10);
        	\draw[red] (h) -- (m-2-9);
        	\draw[red] (i1) -- (m-2-1);
        	\draw[red] (i1) -- (m-2-3);
        	\draw[red] (i1) -- (m-1-2);
        	\draw[red] (i1) -- (m-3-2);
        	\draw[red] (i2) -- (m-2-9);
        	\draw[red] (i2) -- (m-2-11);
        	\draw[red] (i2) -- (m-1-10);
        	\draw[red] (i2) -- (m-3-10);
        \end{tikzpicture}
    \end{adjustbox}
	\caption{The facet-defining inequalities of the type cone~$\typeCone \big( \gvectorFan[\B_\circ] \big)$ of the cluster fan correspond to the mesh mutations described in \cref{thm:extremalExchangeablePairsCA}. See \cref{sec:clusterCategories} for a representation theoretic \mbox{interpretation}.}
	\label{fig:labelFacetDefiningInequalititiesCA2}
\end{figure}
\end{example}

In order to describe the facets of this type cone, we need the following special mutations.

\begin{definition}
\label{def:meshMutation}
The mutation of a seed~$\seed = (\B, \cluster)$ in the direction of a cluster variable~$x \in \cluster$ is a \defn{mesh mutation} that \defn{starts} (resp.~\defn{ends}) at~$x$ if the entries~$b_{xy}$ for~$y \in \cluster$ are all non-negative (resp.~all non-positive).
A mesh mutation is \defn{initial} if it ends at a cluster variable of an initial seed.
We denote by~$\meshes$ the set of all pairs~$\{x,x'\}$ where~$x$ and~$x'$ are two cluster variables of~$\principalClusterAlgebra$ which are exchangeable via a non-initial mesh mutation.
\end{definition}

The following statement is proved in \cref{coro:number of positive meshes}.

\begin{lemma}
\label{lem:bijectionMeshMutations}
We have that~$|\principalVariables| = |\meshes| + n$.
\end{lemma}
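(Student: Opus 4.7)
The plan is to prove the equality by categorifying both sides via the $2$-Calabi--Yau cluster category $\cat$ attached to~$\B_\circ$, as developed in \cref{sec:clusterCategories}. Fix in $\cat$ a cluster-tilting object $T = T_1 \oplus \cdots \oplus T_n$ whose indecomposable summands correspond to the initial cluster $\cluster_\circ = \{x_1, \dots, x_n\}$. By the additive categorification of finite-type cluster algebras of \cite{BuanMarshReinekeReitenTodorov}, the assignment $x \mapsto M_x$ gives a bijection between $\principalVariables$ and the set of isomorphism classes of indecomposable objects of $\cat$, and the initial cluster variables are precisely those whose corresponding indecomposable is a summand of $T$. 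In particular, $|\ind(\cat)| = |\principalVariables|$.

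I would next use the standard representation-theoretic dictionary, recorded in \cref{sec:clusterCategories}: an exchangeable pair $\{x,x'\}$ comes from a mesh mutation ending at $x$ if and only if $\cat$ contains an Auslander--Reiten triangle of the form $M_{x'} \to E \to M_x \to \Sigma M_{x'}$, and such a mutation is initial precisely when $M_x$ is a summand of $T$. Since $\cat$ is Hom-finite and $2$-Calabi--Yau, every indecomposable of $\cat$ is the right end of a unique AR triangle, so AR triangles of $\cat$ are parametrized by $\ind(\cat)$, hence by $\principalVariables$. Among these $|\principalVariables|$ triangles, exactly the $n$ that end at summands of $T$ correspond to initial mesh mutations, leaving $|\principalVariables| - n$ that correspond to non-initial mesh mutations.

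To conclude, I would verify that the assignment sending each non-initial cluster variable $x$ to the unordered pair $\{x,x'\}$ arising from the AR triangle ending at $M_x$ is a bijection onto $\meshes$. Surjectivity is immediate from the interpretation above, as every element of $\meshes$ comes from some non-initial mesh mutation whose end is a non-initial cluster variable. The main obstacle is the injectivity of this map: two distinct non-initial cluster variables could in principle define the same unordered pair, which happens exactly when $y \cong \Sigma x$ with $\Sigma^2 M_x \cong M_x$ and with both $M_x$ and $\Sigma M_x$ non-summands of $T$. Ruling out this degeneracy—via the explicit structure of the AR quiver of a finite-type cluster category and the compatibility of the short $\Sigma$-orbits with any cluster-tilting object—is precisely what is carried out in \cref{coro:number of positive meshes}, and it is the technical heart of the proof; once it is in place the desired equality $|\principalVariables| = |\meshes| + n$ follows by the counting above.
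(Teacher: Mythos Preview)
Your overall strategy is the same as the paper's: categorify via the cluster category~$\cat$ with a cluster-tilting object~$T$, identify cluster variables with indecomposables, and count the non-initial mesh mutations via Auslander--Reiten triangles. The paper carries this out through \cref{lem:exaclty one mesh mutation} and \cref{lem:mesh mutations and meshes}, which together give \cref{coro:number of positive meshes}.

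Where your write-up diverges is in the treatment of the passage from ordered to unordered pairs. You frame the ``technical heart'' as ruling out the degeneracy~$\Sigma^2 M_x \cong M_x$ with both~$M_x, \Sigma M_x$ outside~$\add T$, and you claim this is done via an explicit analysis of short~$\Sigma$-orbits in the AR quiver. That is not what the paper does. The paper never discusses~$\Sigma$-orbits; instead it introduces the intermediate notion of a \emph{positive mesh mutation} (\cref{def:positive mutation}), an ordered pair~$(x,x')$ coming from an almost-split triangle whose connecting morphism lies in~$(\Sigma T)$. \cref{lem:exaclty one mesh mutation} shows these are parametrised by non-initial cluster variables, and \cref{lem:mesh mutations and meshes} shows they biject with~$\meshes$ by exhibiting, for any positive mesh mutation, a slice of the AR quiver containing~$X$ and the summands of~$E$, whose direct sum is a cluster-tilting object in which~$X$ is a source; conversely, any~$\{x,x'\}\in\meshes$ starting at~$x$ yields a positive mesh mutation because the exchange triangle is forced to be almost-split. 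The two constructions are inverse to each other, and no separate $\Sigma$-orbit argument is needed.

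So your proposal is correct in spirit and points to the right place in the paper, but your description of what happens there is inaccurate, and since you defer the only nontrivial step to that reference, your argument as written is not self-contained.
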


The following statement describes the linear dependence in the mesh mutations. It follows from \cref{lem:linearDependencegvectorsCA,def:meshMutation}.

\begin{lemma}
\label{lem:dependenceMesh}
Consider two adjacent seeds~${(\B, \cluster)}$ and~${(\B', \cluster')}$ with~$\cluster \ssm \{x\} = \cluster' \ssm \{x'\}$ connected by a mesh mutation.
If the mesh mutation is initial, then
\[
\gvectorFull{\B_\circ}{x} + \gvectorFull{\B_\circ}{x'} = 0.
\]
Otherwise, the $\b{g}$-vectors of~$\cluster \cup \cluster'$ with respect to~$\B_\circ$ satisfy the linear dependence
\[
\gvectorFull{\B_\circ}{x} + \gvectorFull{\B_\circ}{x'} = \sum_{y \in \cluster \cap \cluster'} |b_{xy}| \, \gvectorFull{\B_\circ}{y}.
\]
\end{lemma}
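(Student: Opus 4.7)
The plan is to derive this result directly from \cref{lem:linearDependencegvectorsCA} by specializing its two possible dependences under the mesh hypothesis, and then to match the resulting dichotomy against the initial/non-initial dichotomy using the unique exchange relation property (\cref{prop:uniqueExchangePropertyCA}). When the mutation exchanging $x$ and $x'$ is a mesh mutation, the entries $b_{xy}$ for $y \in \cluster$ share a common sign by \cref{def:meshMutation}; consequently, one of the two sums on the right-hand sides of the formulas in \cref{lem:linearDependencegvectorsCA} becomes empty (yielding $\gvectorFull{\B_\circ}{x} + \gvectorFull{\B_\circ}{x'} = 0$), while the other reduces exactly to $\sum_{y \in \cluster \cap \cluster'} |b_{xy}| \, \gvectorFull{\B_\circ}{y}$. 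Since \cref{lem:linearDependencegvectorsCA} guarantees that precisely one of these two dependences holds, we obtain a clean dichotomy for mesh mutations that already matches in shape the dichotomy of the statement.

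It then remains to check that the zero dependence corresponds to the initial case and the non-zero dependence to the non-initial case. By \cref{prop:uniqueExchangePropertyCA} the dependence depends only on the pair $\{x,x'\}$, so one is free to choose a convenient seed in which to compute. For the initial direction, when $x \in \cluster_\circ$ the g-vector $\gvectorFull{\B_\circ}{x} = \fundamentalWeight_x$ is a fundamental weight by the grading convention, and the cluster variable produced by mutating $x$ inside $\seed_\circ$ has g-vector $-\fundamentalWeight_x$ (a direct computation from the initial exchange relation, where all monomials on the right-hand side have degree $0$ under the $\Z^n$-grading on $\principalClusterAlgebra$). The uniqueness furnished by \cref{prop:uniqueExchangePropertyCA} then identifies this dependence with the one attached to our pair $\{x,x'\}$, yielding $\gvectorFull{\B_\circ}{x} + \gvectorFull{\B_\circ}{x'} = 0$.

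For the non-initial direction, the only possibility left by the dichotomy above is the second dependence. Equivalently, the relation $\gvectorFull{\B_\circ}{x} + \gvectorFull{\B_\circ}{x'} = 0$ reads $\gvectorFull{\B_\circ}{x'} = -\gvectorFull{\B_\circ}{x}$ and, by injectivity of g-vectors on cluster variables, this fixes the pair $\{x,x'\}$ to be an initial exchange pair, contradicting the non-initial hypothesis. The main technical obstacle in the plan is precisely this last matching step: it rests crucially on the unique exchange relation property, without which one could not move freely between the ambient seed $\seed$ and the initial seed $\seed_\circ$ to perform the g-vector computation. Once this is in place, the remainder of the argument is direct bookkeeping from \cref{def:meshMutation}.
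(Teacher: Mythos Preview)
Your first paragraph correctly specializes \cref{lem:linearDependencegvectorsCA} under the mesh hypothesis: one alternative collapses to $0$ and the other to $\sum_y |b_{xy}|\,\gvectorFull{\B_\circ}{y}$, and this is indeed the shape of the argument the paper intends.

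The matching step, however, does not go through. For the initial direction, the claim that mutating an initial variable $x$ inside $\seed_\circ$ yields a variable of $\b{g}$-vector $-\fundamentalWeight_x$ is false in general: in the type~$A_3$ cyclic example of \cref{exm:typeConeCA}, mutating $x_1$ in $\seed_\circ$ produces $(x_2+x_3)/x_1$ with $\b{g}$-vector $(-1,0,1)$, and the right-hand monomials are not of degree~$0$. More seriously, even where that computation happens to be valid, the variable so produced is not the $x'$ of the statement, and \cref{prop:uniqueExchangePropertyCA} cannot repair this: it asserts that the dependence is determined by the pair $\{x,x'\}$, not that two distinct exchangeable pairs sharing $x$ carry the same dependence. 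The non-initial direction is circular for the same reason: from $\gvectorFull{\B_\circ}{x'}=-\gvectorFull{\B_\circ}{x}$, injectivity of $\b{g}$-vectors tells you nothing about which pairs realise opposite $\b{g}$-vectors---that is essentially the claim to be established. Knowing which alternative of \cref{lem:linearDependencegvectorsCA} holds relative to~$\B_\circ$ is controlled by the sign of the associated $\b{c}$-vector, or equivalently by the categorical criterion of \cref{lem:almost-split}; it is not a consequence of \cref{prop:uniqueExchangePropertyCA} or injectivity.
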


For~$\{x,x'\} \in \meshes$ and~$y \in \principalVariables$, we denote by~$\coefficient[y][x][x']$ the coefficient of~$\gvectorFull{\B_\circ}{y}$ in the linear dependence between the $\b{g}$-vectors~$\gvectorFull{\B_\circ}{x}$ and~$\gvectorFull{\B_\circ}{x'}$. In other words, according to \cref{lem:dependenceMesh}, if~${(\B, \cluster)}$ and~${(\B', \cluster')}$ are two adjacent seeds with~$\cluster \ssm \{x\} = \cluster' \ssm \{x'\}$, we have~$\coefficient[y][x][x'] = |b_{xy}|$ for~$y \in \cluster \cap \cluster'$ and~$\coefficient[y][x][x'] = 0$ otherwise.
The following statement will be shown in \cref{corollary:proof of prop meshMutations}.

\begin{proposition}
\label{prop:meshMutations}
For any finite type exchange matrix~$\B_\circ$ (acyclic or not, simply-laced or not), the linear dependence between the $\b{g}$-vectors of any mutation can be decomposed into positive combinations of linear dependences between $\b{g}$-vectors of non-initial mesh mutations.
\end{proposition}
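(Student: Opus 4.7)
The plan is to reduce the statement to Theorem~\ref{thm:relations-g-vecteurs} by working in the cluster category $\cat$ associated to the finite type cluster algebra~$\principalClusterAlgebra$, with the cluster-tilting object $T$ corresponding to the initial seed~$\B_\circ$. In this setting, cluster variables are represented by indecomposable rigid objects of~$\cat$, with the non-initial ones forming $\ind(\cat) \ssm \add(\susp T)$; the $\b{g}$-vectors are represented by indices with respect to~$T$; and each non-initial mesh mutation of Definition~\ref{def:meshMutation} is realized by the almost split triangle $A \to E \to \susp^{-1} A \to \susp A$ at the corresponding non-initial indecomposable~$A$. The mesh relation of Lemma~\ref{lem:dependenceMesh} then coincides with the basis element $\ell_A = [A] + [\susp^{-1} A] - [E] \in \ker(\b{g})$ appearing in Theorem~\ref{thm:relations-g-vecteurs}, and any exchange of Lemma~\ref{lem:linearDependencegvectorsCA} similarly lifts to an exchange triangle in~$\cat$ and defines an element $r_{x,x'} \in \ker(\b{g})$.

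Applying Theorem~\ref{thm:relations-g-vecteurs} directly to $r_{x,x'}$ immediately yields
\[
r_{x,x'} = \sum_{A \in \ind(\cat) \ssm \add(\susp T)} \frac{\langle r_{x,x'}, A \rangle}{\langle \ell_A, A \rangle}\, \ell_A,
\]
which expresses an arbitrary exchange relation as a linear combination of non-initial mesh relations indexed by~$\meshes$. The proof then reduces to showing that every coefficient in this decomposition is non-negative.

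The hard part will be exactly this positivity. For the denominators $\langle \ell_A, A \rangle$, I would apply $\Hom{\Lambda}(-, FA)$ to the almost split sequence $0 \to FA \to FE \to F\susp^{-1}A \to 0$ obtained from the almost split triangle, and use the defining property of almost split sequences to identify $\langle \ell_A, A \rangle$ with $\dim_\field \End{\Lambda}(FA)/\operatorname{rad}$, a positive integer. For the numerators $\langle r_{x,x'}, A \rangle$, the key observation is that the exchange triangle $X \to Y \to X' \to \susp X$ realizing the mutation has its connecting morphism factoring through $\susp T$ (this is exactly what makes $r_{x,x'}$ lie in $\ker(\b{g})$); through the Keller--Reiten equivalence $F \colon \cat/(\susp T) \xrightarrow{\sim} \MOD \Lambda$ it therefore produces a short exact sequence $0 \to FX \to FY \to FX' \to 0$ in $\MOD \Lambda$. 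Applying $\Hom{\Lambda}(-, FA)$ gives a long exact sequence identifying $\langle r_{x,x'}, A \rangle$ with the dimension of the image of $\Hom{\Lambda}(FX, FA) \to \operatorname{Ext}^1_\Lambda(FX', FA)$, which is manifestly non-negative. The technical subtlety will be correctly tracking summands of~$Y$ lying in $\add(\susp T)$, which are killed by~$F$ but still contribute to the $\b{g}$-vector relation; once all coefficients are shown to be non-negative, the count $|\meshes| = |\principalVariables| - n$ of Lemma~\ref{lem:bijectionMeshMutations} will simultaneously imply Corollary~\ref{coro:simplicialTypeConeCA}.
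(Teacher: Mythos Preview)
Your approach is essentially the same as the paper's: it passes to the cluster category~$\cat$ with cluster-tilting object~$T$, invokes \cref{thm:relations-g-vecteurs} to write the exchange relation as a combination of the~$\ell_A$, and then argues that all coefficients are non-negative. The paper packages this last step as \cref{coro:meshes positively generate cluster cats} and then invokes it in \cref{corollary:proof of prop meshMutations}, after first using \cref{lemma:positive mutation triangles} to reduce to the positive exchange triangle (the one with connecting morphism in~$(\susp T)$).

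There is one genuine imprecision in your positivity argument. You claim that the exchange triangle $X\to E\to X'\xrightarrow{h}\susp X$ with~$h\in(\susp T)$ yields a \emph{short} exact sequence $0\to FX\to FE\to FX'\to 0$ in~$\MOD\Lambda$. Only the right exactness $FX\to FE\to FX'\to 0$ is guaranteed: the kernel of~$FX\to FE$ is the image of~$\cat(T,\susp^{-1}X')\to\cat(T,X)$, and although~$\susp^{-1}h$ factors through~$\add(T)$, this does not force that map to vanish. Consequently your identification of~$\langle r_{x,x'},A\rangle$ with an image in~$\operatorname{Ext}^1$ is not justified. The paper's fix is simpler and avoids the issue entirely: applying~$\Hom{\Lambda}(-,FA)$ to the right exact sequence yields
\[
0\to\Hom{\Lambda}(FX',FA)\to\Hom{\Lambda}(FE,FA)\xrightarrow{f}\Hom{\Lambda}(FX,FA)\to\coker(f)\to 0,
\]
from which $\langle r_{x,x'},A\rangle = \dim_\field\coker(f)\ge 0$ directly. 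Your worry about summands of~$E$ in~$\add(\susp T)$ is unnecessary: such summands are killed by~$F$ and therefore contribute nothing to any of the terms~$\langle -,A\rangle$, so they do not disturb the computation.
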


We derive from \cref{prop:meshMutations} the following description of the facets of the type cone of the cluster fan.

\begin{theorem}
\label{thm:extremalExchangeablePairsCA}
For any finite type exchange matrix~$\B_\circ$ (acyclic or not, simply-laced or not), the non-initial mesh mutations precisely correspond to the extremal exchangeable pairs of the cluster fan~$\gvectorFan[\B_\circ]$.
\end{theorem}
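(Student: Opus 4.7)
My plan is to prove the theorem by a dimension/counting argument that uses \cref{prop:meshMutations} as the key input. Since \cref{prop:meshMutations} tells us that every exchange relation is a non-negative combination of exchange relations coming from non-initial mesh mutations, the facet-defining inequalities of the closed type cone $\ctypeCone\big(\gvectorFan[\B_\circ]\big)$ must form a subset of those coming from non-initial mesh mutations: any inequality that is a strict positive combination of the others cannot be facet-defining, and \cref{prop:meshMutations} in particular implies that the inequalities attached to initial mesh mutations and to arbitrary non-mesh mutations are redundant.

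In concrete terms, first I would observe that the $\b{g}$-vector fan $\gvectorFan[\B_\circ]$ is an essential complete simplicial fan in $\R^n$ with $N \eqdef |\principalVariables|$ rays, and by \cref{coro:typeConeCA} its type cone lives in $\R^N$ with a linearity space of dimension $n$. By \cref{rem:dimTypeCone}, the closed type cone therefore has at least $N-n$ facets. By \cref{lem:bijectionMeshMutations} we have $|\meshes| = N-n$, so this gives a lower bound of~$|\meshes|$ on the number of facets.

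Next, I would obtain the matching upper bound from \cref{prop:meshMutations}. Given the decomposition provided by that proposition, the inner normal vector $\b{n}(\B_\circ,x,x')$ of the inequality associated to any exchangeable pair $\{x,x'\}$ lies in the positive cone generated by $\{\b{n}(\B_\circ,y,y') \mid \{y,y'\} \in \meshes\}$. Consequently, any facet-defining inequality of $\ctypeCone\big(\gvectorFan[\B_\circ]\big)$ must be (a positive multiple of) one of the $\b{n}(\B_\circ,y,y')$ with $\{y,y'\} \in \meshes$, and the number of facets is at most $|\meshes|$.

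Combining the two bounds forces the number of facets to equal $|\meshes|$, and forces each non-initial mesh mutation $\{y,y'\} \in \meshes$ to produce a distinct facet (otherwise two of them would coincide, and the number of facets would drop below the lower bound $N-n$). This establishes the claimed bijection between extremal exchangeable pairs and non-initial mesh mutations. As an immediate byproduct, the closed type cone $\ctypeCone\big(\gvectorFan[\B_\circ]\big)$ has exactly $N-n$ facets and is therefore simplicial, which will be recorded separately as \cref{coro:simplicialTypeConeCA}. The main obstacle in this argument is entirely encapsulated in \cref{prop:meshMutations}, whose proof relies on the categorical input of \cref{sec:clusterCategories} (in particular on \cref{thm:relations-g-vecteurs} applied to the cluster category with cluster-tilting object corresponding to $\B_\circ$); once that is in hand, the proof of \cref{thm:extremalExchangeablePairsCA} reduces to the elementary counting above.
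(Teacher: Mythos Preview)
Your proposal is correct and follows essentially the same counting argument as the paper: use \cref{rem:dimTypeCone} for the lower bound $N-n$ on the number of facets, then \cref{prop:meshMutations} together with \cref{lem:bijectionMeshMutations} for the matching upper bound $|\meshes|=N-n$, and conclude that the non-initial mesh mutations are exactly the extremal exchangeable pairs. Your additional remark that distinctness of the mesh-facet correspondences is forced by the equality of bounds is implicit in the paper's version as well.
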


\begin{proof}
Since~$\gvectorFan[\B_\circ]$ has dimension~$n$ and~$|\principalVariables|$ rays (corresponding to the cluster variables), we know from \cref{rem:dimTypeCone} that there are at least~$|\principalVariables| - n$ extremal exchangeable pairs.
Moreover, \cref{prop:meshMutations,lem:bijectionMeshMutations} ensure that there are at most $|\meshes| = |\principalVariables| - n$ extremal exchangeable pairs (corresponding to the non-initial mesh mutations).
We conclude that all non-initial mesh mutations are extremal exchangeable pairs.
\end{proof}

Our next statement follows from the end of the previous proof.

\begin{corollary}
\label{coro:simplicialTypeConeCA}
The type cone~$\typeCone \big( \gvectorFan[\B_\circ] \big)$ is simplicial.
\end{corollary}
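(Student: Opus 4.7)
The plan is to observe that this corollary is essentially a bookkeeping consequence of \cref{thm:extremalExchangeablePairsCA} together with the dimension count in \cref{rem:dimTypeCone}, so the proof will not require any new ideas beyond those already assembled.

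First I would recall that the cluster fan~$\gvectorFan[\B_\circ]$ is an essential complete simplicial fan in~$\R^n$ with exactly~$N = |\principalVariables|$ rays (one per cluster variable). By \cref{rem:dimTypeCone}, this immediately gives the lower bound
\[
\#\{\text{facets of } \ctypeCone(\gvectorFan[\B_\circ])\} \ge N - n = |\principalVariables| - n.
\]
Each facet of~$\ctypeCone(\gvectorFan[\B_\circ])$ corresponds to an extremal exchangeable pair (using the unique exchange relation property of \cref{prop:uniqueExchangePropertyCA}).

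Next I would use the upper bound coming from \cref{thm:extremalExchangeablePairsCA}: the set of extremal exchangeable pairs is contained in (in fact equals) the set~$\meshes$ of non-initial mesh mutations, and by \cref{lem:bijectionMeshMutations} we have~$|\meshes| = |\principalVariables| - n$. Thus
\[
\#\{\text{facets of } \ctypeCone(\gvectorFan[\B_\circ])\} \le |\meshes| = |\principalVariables| - n.
\]

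Combining the two bounds gives equality, i.e.\ the closed type cone~$\ctypeCone(\gvectorFan[\B_\circ])$ is an~$N$-dimensional cone with linearity space of dimension~$n$ and exactly $N - n$ facets, which by definition (\cref{rem:dimTypeCone}) means it is simplicial. I do not foresee any genuine obstacle here, since the substantive work has already been done in \cref{prop:meshMutations,thm:extremalExchangeablePairsCA}; the only thing to verify is that these two results together really pin down the facet count exactly, with no redundancy between distinct mesh mutations defining the same facet. This non-redundancy is automatic once one knows that the $|\meshes|$ mesh exchange relations span a space of full codimension~$n$ in~$\R^{\principalVariables}$, which follows from the fact that \cref{prop:meshMutations} already exhibits them as a positive generating set for all exchange relations and the latter span the $(N-n)$-dimensional left kernel of the $\b{g}$-vector matrix.
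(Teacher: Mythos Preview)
Your proposal is correct and matches the paper's approach essentially verbatim: the paper states that the corollary ``follows from the end of the previous proof'' (of \cref{thm:extremalExchangeablePairsCA}), and that proof is precisely the lower-bound/upper-bound count you give, using \cref{rem:dimTypeCone} on one side and \cref{prop:meshMutations,lem:bijectionMeshMutations} on the other. Your final paragraph on non-redundancy is more explicit than the paper bothers to be, but the reasoning is sound and not needed once the facet count is pinned to exactly~$N-n$.
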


As a follow-up to \cref{thm:extremalExchangeablePairsCA}, we conjecture the following surprising property.

\begin{conjecture}
Any positive dual $\b{c}$-vector supports exactly one extremal exchangeable pair.
\end{conjecture}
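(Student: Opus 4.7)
I would interpret \emph{supports} through the duality between $\b{g}$- and $\b{c}$-vectors (Proposition~\ref{prop:gvectorscvectorsDualBasesCA}): a dual $\b{c}$-vector $\b{c}$ supports an extremal exchangeable pair $\{x,x'\}$ when there is a seed $\seed=(\B,\cluster)$ containing $x$ with $\mu_x(\seed)$ containing $x'$ such that $\b{c}=\cvectorFull{\B_\circ^\vee}{\seed^\vee}{x^\vee}$, i.e.\ $\b{c}$ is the dual-basis element of $\gvectorFull{\B_\circ}{x}$ inside $\gvectorsFull{\B_\circ}{\seed}$. The unique exchange relation property (Proposition~\ref{prop:uniqueExchangePropertyCA}) ensures this does not depend on the specific choice of~$\seed$, since pairing any such $\b{c}$ against the exchange relation
\[
\gvectorFull{\B_\circ}{x}+\gvectorFull{\B_\circ}{x'}=\sum_{y}\alpha_{x,x',y}\,\gvectorFull{\B_\circ}{y}
\]
produces $\langle\b{c},\gvectorFull{\B_\circ}{x'}\rangle=-1$ and fixes $\b{c}$ uniquely. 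The conjecture therefore splits into two statements: \emph{existence} of at least one positive $\b{c}$-vector per extremal pair, and \emph{uniqueness} (no positive $\b{c}$-vector supports two different extremal pairs).

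For existence, Theorem~\ref{thm:extremalExchangeablePairsCA} lets me identify each extremal pair with a non-initial mesh mutation. Orient such a mesh so that it starts at $x$, meaning $b_{xy}\ge 0$ for all $y\in\cluster$; then $\cvectorFull{\B_\circ^\vee}{\seed^\vee}{x^\vee}$ is the relevant candidate, and its positivity follows from sign-coherence of $\b{c}$-vectors combined with the mesh condition (the ``green vs.\ red'' mutation dichotomy singles out precisely the starting vertex of a mesh as carrying the positive $\b{c}$-vector, the ending vertex $x'$ carrying the opposite negative one). Thus every extremal pair has at least one supporting positive dual $\b{c}$-vector, and the natural candidate is canonical.

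The real obstacle is uniqueness, which is where I would lean on the representation-theoretic input of Section~\ref{sec:clusterCategories}. A positive dual $\b{c}$-vector corresponds, via the categorification of $\clusterAlgebra[\B_\circ^\vee]$, to the class of a unique indecomposable object $M$ of the cluster category outside $\add(\susp T)$; by Theorem~\ref{thm:relations-g-vecteurs} and the almost-split machinery, $M$ is the source of a unique almost-split triangle, which translates on the algebra side to a unique non-initial mesh mutation. Hence the extremal pair $\{x,x'\}$ supported by $\b{c}$ must be that mesh mutation, giving uniqueness. A cardinality check is consistent with bijectivity: in finite type the number of distinct positive dual $\b{c}$-vectors equals the number of positive roots of $\A{\B_\circ}$, which equals $|\principalVariables|-n=|\meshes|$ by Lemma~\ref{lem:bijectionMeshMutations}, matching the count of extremal exchangeable pairs. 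The technical heart is pinning down that the categorical identification $\b{c}\leftrightarrow M$ genuinely coincides with the support relation defined above; this is the only step where the combinatorial bookkeeping does not suffice and the cluster-category arguments of \cref{part:algebra} must be invoked in full.
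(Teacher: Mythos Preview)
The statement you are trying to prove is labelled as a \emph{conjecture} in the paper, and the paper offers no proof: it is explicitly posed as an open problem right after Theorem~\ref{thm:extremalExchangeablePairsCA}. So there is no ``paper's own proof'' to compare against.

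Your proposal is a reasonable outline of why one expects the conjecture to hold, and the cardinality check (positive roots $=|\principalVariables|-n=|\meshes|$) is correct and encouraging. But a counting match alone does not prove a bijection; you still need either injectivity or surjectivity of the map sending an extremal pair to its supporting positive dual $\b{c}$-vector. You correctly identify the missing step: showing that the categorical assignment $\b{c}\leftrightarrow M\leftrightarrow$ (almost-split triangle starting at $M$) agrees with the geometric notion of ``$\b{c}$ supports the wall of this mesh mutation''. However, you overstate what \cref{part:algebra} delivers. Theorem~\ref{thm:relations-g-vecteurs} and its surrounding machinery establish that the $\ell_X$ for $X\in\ind(\cat)\ssm\add(\susp T)$ form a basis of $\ker(\b{g})$, which gives the bijection between non-initial meshes and indecomposables outside $\add(\susp T)$; but nowhere in \cref{part:algebra} is it proved that the $\b{c}$-vector normal to the wall of the mesh at $X$ equals the dimension vector of $FX$ (or the corresponding datum on the dual side). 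That identification is the substance of the conjecture, and it is not contained in the paper.

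In short: your plan is plausible and the ingredients you list are the right ones, but the step you flag as ``technical heart'' is precisely the content of the conjecture, not something the paper already proves. The statement remains open as far as this paper is concerned.
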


Combining \cref{coro:simplicialTypeCone,coro:simplicialTypeConeCA,thm:extremalExchangeablePairsCA}, we derive the following description of all polytopal realizations of the cluster fan~$\gvectorFan[\B_\circ]$.
This result was stated in~\cite{BazierMatteDouvilleMousavandThomasYildirim} in the special situation of acyclic seeds in simply-laced types.
Note that our proof is quite different from~\cite{BazierMatteDouvilleMousavandThomasYildirim} as it relies on our type cone approach.

\begin{theorem}
\label{thm:allPolytopalRealizationsCA}
For any finite type exchange matrix~$\B_\circ$ (acyclic or not, simply-laced or not), and for any~$\b{\ell} \in \R_{>0}^{\meshes}$, the polytope
\[
R_{\b{\ell}}(\B_\circ) \eqdef \Bigset{\b{z} \in \R^{\principalVariables}}{\b{z} \ge 0 \text{ and } \b{z}_x + \b{z}_{x'} - \sum_{y \in \principalVariables} \coefficient[y][x][x'] \, \b{z}_{y} = \b{\ell}_{\{x,x'\}} \text{ for all } \{x,x'\} \in \meshes}
\]
is a generalized associahedron, whose normal fan is the cluster fan~$\gvectorFan[\B_\circ]$.
Moreover, the polytopes~$R_\b{\ell}(\B_\circ)$ for~$\b{\ell} \in \R_{>0}^{\meshes}$ describe all polytopal realizations of~$\gvectorFan[\B_\circ]$.
\end{theorem}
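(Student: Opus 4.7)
The plan is to apply \cref{coro:simplicialTypeCone} to the cluster fan $\gvectorFan[\B_\circ]$, using the structural results assembled earlier in this subsection. The first step is to invoke \cref{coro:simplicialTypeConeCA}, which states that $\typeCone(\gvectorFan[\B_\circ])$ is simplicial; this makes the hypothesis of \cref{coro:simplicialTypeCone} available, and the conclusion yields that all polytopal realizations of $\gvectorFan[\B_\circ]$ are exactly the polytopes $R_\b{\ell} = \set{\b{z} \in \R^{\principalVariables}}{K\b{z} = \b{\ell}, \; \b{z} \ge 0}$, parametrized by $\b{\ell} \in \R_{>0}^{N-n}$, where $K$ is the matrix whose rows are the inner normals of the facets of the type cone.

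The second step is to read off $K$ explicitly from the mesh data. By \cref{thm:extremalExchangeablePairsCA}, the facets of $\typeCone(\gvectorFan[\B_\circ])$ are in bijection with the set $\meshes$ of non-initial mesh exchangeable pairs, so the rows of $K$ are naturally indexed by $\meshes$; the count matches the expected $N - n = |\principalVariables| - n$ by \cref{lem:bijectionMeshMutations}. For each $\{x,x'\} \in \meshes$, \cref{lem:dependenceMesh} supplies the explicit linear dependence
\[
\gvectorFull{\B_\circ}{x} + \gvectorFull{\B_\circ}{x'} - \sum_{y \in \principalVariables} \alpha_{x,x',y} \, \gvectorFull{\B_\circ}{y} = 0,
\]
so the $\{x,x'\}$-row of $K$ is the vector $\b{f}_x + \b{f}_{x'} - \sum_y \alpha_{x,x',y} \b{f}_y$ in the canonical basis $(\b{f}_y)_{y \in \principalVariables}$ of $\R^{\principalVariables}$. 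Substituting this row into $K\b{z} = \b{\ell}$ unfolds precisely into the family of equations $\b{z}_x + \b{z}_{x'} - \sum_y \alpha_{x,x',y} \b{z}_y = \b{\ell}_{\{x,x'\}}$ appearing in the statement, so the polytopes $R_\b{\ell}(\B_\circ)$ of the theorem coincide with the polytopes $R_\b{\ell}$ produced by \cref{coro:simplicialTypeCone}. That each such polytope is a generalized associahedron is then automatic: its normal fan is $\gvectorFan[\B_\circ]$ by construction, which is the defining property of $\B_\circ$-associahedra in \cref{thm:generalizedAsso}.

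I do not foresee a genuine obstacle at this stage, since the hard content has already been established earlier, namely simpliciality of the type cone (\cref{coro:simplicialTypeConeCA}) and the identification of its extremal exchangeable pairs with non-initial mesh mutations (\cref{thm:extremalExchangeablePairsCA}); the remaining work is simply a careful bookkeeping of the linear dependences. The only point requiring care is that the coefficient $\alpha_{x,x',y}$ must be well defined independently of the particular pair of adjacent seeds containing $\{x,x'\}$ used to read it off from \cref{lem:dependenceMesh}; this is exactly the unique exchange relation property for the cluster fan, provided by \cref{prop:uniqueExchangePropertyCA}, which guarantees that the $\{x,x'\}$-row of $K$ is genuinely a function of the pair $\{x,x'\}$ rather than of a specific seed.
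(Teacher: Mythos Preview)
Your proposal is correct and follows essentially the same approach as the paper, which derives the theorem by combining \cref{coro:simplicialTypeCone}, \cref{coro:simplicialTypeConeCA}, and \cref{thm:extremalExchangeablePairsCA}. You have simply spelled out the bookkeeping in more detail, including the well-definedness of the coefficients via \cref{prop:uniqueExchangePropertyCA}, which the paper leaves implicit.
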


\begin{example}
To complement \cref{subsec:typeConeAsso}, let us translate to diagonals and triangulations the result of \cref{thm:allPolytopalRealizationsCA} in type~$A$.
Let~$n \ge 3$.
Consider a convex $(n+3)$-gon whose vertices are labeled modulo~$n+3$ and whose internal and boundary diagonals are denoted by~$\Delta(n) \eqdef \binom{\Z/(n+3)\Z}{2}$.
Consider a triangulation~$T$ of this polygon, formed by the $n+3$ boundary edges and $n$ internal diagonals of the $(n+3)$-gon.
Then for any~$\b{\ell} \in \R_{>0}^{\Delta(n) \ssm T}$, the polytope
\[
R_\b{\ell}(T) \eqdef \Bigset{\b{z} \in \R^{\Delta(n)}}{\begin{array}{l} \b{z} \ge 0 \quad\text{ and }\quad \b{z}_{(a,a+1)} = 0 \text{ for all } a \in \Z/(n+3)\Z \quad\text{ and }\quad \\ \b{z}_{(a,b)} + \b{z}_{(a-1,b-1)} - \b{z}_{(a,b-1)} - \b{z}_{(a-1,b)} = \b{\ell}_{(a,b)} \text{ for all } (a,b) \notin T \end{array}}
\]
is an associahedron whose normal fan is the $\b{g}$-vector fan~$\gvectorFan[T]$ with respect to the triangulation~$T$.
Moreover, the polytopes~$R_\b{\ell}(T)$ for~$\b{\ell} \in \R_{>0}^{\Delta(n) \ssm T}$ describe all polytopal realizations of~$\gvectorFan[T]$.
\end{example}

%\vincent{Describe the rays of the type cone. They are the Minkowski basis. Newton polytopes of $F$-polynomials.}

%%%%%%%%%%%

\subsection{Non-kissing complexes and gentle associahedra}
\label{subsec:typeConeNKC}

Gentle associahedra were constructed by Y.~Palu, V.~Pilaud and P.-G.~Plamondon~\cite{PaluPilaudPlamondon-nonkissing} in the context of support $\tau$-tilting for gentle algebras.
For a given $\tau$-tilting finite gentle quiver~$\quiver$ (defined in the next section), the $\quiver$-associahedron~$\Asso[\quiver]$ is a simple polytope which encodes certain representations of~$\quiver$ and their $\tau$-tilting relations.
Combinatorially, the $\quiver$-associahedron is a polytopal realization of the non-kissing complex of~$\quiver$, defined as the simplicial complex of all collections of walks on the blossoming quiver~$\quiver\blossom$ which are pairwise non-kissing.
The non-kissing complex encompasses two families of simplicial complexes studied independently in the literature: on the one hand the grid associahedra introduced by T.~K.~Petersen, P.~Pylyavskyy and D.~Speyer in~\cite{PetersenPylyavskyySpeyer} for a staircase shape, studied by F.~Santos, C.~Stump and V.~Welker~\cite{SantosStumpWelker} for rectangular shapes, and extended by T.~McConville in~\cite{McConville} for arbitrary grid shapes; and on the other hand the Stokes polytopes and accordion associahedra studied by Y.~Baryshnikov~\cite{Baryshnikov}, F.~Chapoton~\cite{Chapoton-quadrangulations}, A.~Garver and T.~McConville~\cite{GarverMcConville} and T.~Manneville and V.~Pilaud~\cite{MannevillePilaud-accordion}.
These two families naturally extend the classical associahedron, obtained from a line quiver.
Non-kissing complexes are geometrically realized by polytopes called gentle associahedra, whose normal fan is called the non-kissing fan: its rays correspond to walks in the quiver and its cones are generated by the non-kissing walks.
In this section, we describe the type cone of the non-kissing fan of a quiver~$\quiver$ with no self-kissing walks.

%%%

\subsubsection{Non-kissing complex and non-kissing fan of a gentle quiver}
\label{subsubsec:nonkissingComplex}

We present the definitions and properties of the non-kissing complex of a gentle quiver, following the presentation of~\cite{PaluPilaudPlamondon-nonkissing}.
See also~\cite{BrustleDouvilleMousavandThomasYildirim} for an alternative presentation.

\para{Gentle quivers}
Consider a \defn{bound quiver}~${\quiver = (Q,I)}$, formed by a finite quiver~$Q = (Q_0, Q_1, s, t)$ and an ideal~$I$ of the path algebra~$kQ$ (the~$k$-vector space generated by all paths in~$Q$, including vertices as paths of length zero, with multiplication induced by concatenation of paths) such that~$I$ is generated by linear combinations of paths of length at least two, and $I$ contains all sufficiently large paths. See~\cite{AssemSimsonSkowronski} for background.

Following M.~Butler and C.~Ringel~\cite{ButlerRingel}, we say that~$\quiver$ is a \defn{gentle bound quiver} when
\begin{enumerate}[(i)]
\item each vertex~$a \in Q_0$ has at most two incoming and two outgoing arrows,
\item the ideal~$I$ is generated by paths of length exactly two,
\item for any arrow~$\beta \in Q_1$, there is at most one arrow~$\alpha \in Q_1$ such that~$t(\alpha) = s(\beta)$ and~${\alpha\beta\notin I}$ (resp.~$\alpha\beta \in I$) and at most one arrow~$\gamma \in Q_1$ such that~$t(\beta) = s(\gamma)$~and~${\beta\gamma\notin I}$~(resp.~${\beta\gamma \in I}$).
\end{enumerate}
The algebra~$kQ/I$ is called a \defn{gentle algebra}.

The \defn{blossoming quiver}~$\quiver\blossom$ of a gentle quiver~$\quiver$ is the gentle quiver obtained by completing all vertices of~$\quiver$ with additional incoming or outgoing \defn{blossoms} such that all vertices of~$\quiver$ become $4$-valent.
For instance, \cref{fig:exmNonkissing},(left) shows a blossoming quiver: the vertices of~$\quiver$ appear in black, while the blossom vertices of~$\quiver\blossom$ appear in white.
We now always assume that~$\quiver$ is a gentle quiver with blossoming quiver~$\quiver\blossom$.

\para{Strings and walks}
A \defn{string} in~$\quiver = (Q,I)$ is a word of the form
\(
\rho = \alpha_1^{\varepsilon_1}\alpha_2^{\varepsilon_2}\cdots \alpha_\ell^{\varepsilon_\ell},
\)
where
	\begin{enumerate}[(i)]
	\item $\alpha_i \in Q_1$ and~$\varepsilon_i \in \{-1,1\}$ for all~$i \in [\ell]$,
	\item $t(\alpha_i^{\varepsilon_i}) = s(\alpha_{i+1}^{\varepsilon_{i+1}})$ for all~$i \in [\ell-1]$,
	\item there is no path~$\pi \in I$ such that~$\pi$ or~$\pi^{-1}$ appears as a factor of~$\rho$, and
	\item $\rho$ is reduced, in the sense that no factor~$\alpha\alpha^{-1}$ or~$\alpha^{-1}\alpha$ appears in~$\rho$, for~$\alpha \in Q_1$.
	\end{enumerate}
The integer~$\ell$ is called the \defn{length} of the string~$\rho$.
For each vertex~$a \in Q_0$, there is also a \defn{string of length zero}, denoted by~$\varepsilon_a$, that starts and ends at~$a$.
We implicitly identify the two inverse strings~$\rho$ and~$\rho^{-1}$, and call it an \defn{undirected string} of~$\quiver$.
Let~$\strings$ denote the set of strings of~$\quiver$.

A \defn{walk} of~$\quiver$ is a maximal string of its blossoming quiver~$\quiver\blossom$ (meaning that each endpoint is a blossom).
As for strings, we implicitly identify the two inverse walks~$\omega$ and~$\omega^{-1}$, and call it an \defn{undirected walk} of~$\quiver$.
Let~$\walks$ denote the set of walks of~$\quiver$.

A \defn{substring} of a walk~$\omega = \alpha_1^{\varepsilon_1} \cdots \alpha_\ell^{\varepsilon_\ell}$ of~$\quiver$ is a string~$\sigma = \alpha_{i+1}^{\varepsilon_{i+1}} \cdots \alpha_{j-1}^{\varepsilon_{j-1}}$ of~$\quiver$ for some indices~$1 \le i < j \le \ell$. Note that by definition,
\begin{itemize}
\item the endpoints of~$\sigma$ are not allowed to be blossom endpoints of~$\omega$,
\item the position of~$\sigma$ as a factor of~$\omega$ matters (the same string at a different position is considered a different substring).
\item the string~$\varepsilon_a$ is a substring of~$\omega$ for each occurence of~$a$ as a vertex of~$\omega$ (take~$j = i+1$).
\end{itemize}
We denote by~$\Sigma(\omega)$ the set of substrings of~$\omega$.
We say that the substring~$\sigma = \alpha_{i+1}^{\varepsilon_{i+1}} \cdots \alpha_{j-1}^{\varepsilon_{j-1}}$ is \defn{at the bottom} (resp.~\defn{on top}) of the walk~$\omega = \alpha_1^{\varepsilon_1} \cdots \alpha_\ell^{\varepsilon_\ell}$ if~$\varepsilon_i = 1$ and~$\varepsilon_j = -1$ (resp.~if~$\varepsilon_i = -1$ and~$\varepsilon_j = 1$).
In other words the two arrows of~$\omega$ incident to the endpoints of~$\sigma$ point towards~$\sigma$ (resp.~outwards from~$\sigma$).
We denote by~$\Sigma_\bottom(\omega)$ and~$\Sigma_\top(\omega)$ the sets of bottom and top substrings of~$\omega$ respectively.
We use the same notation for undirected walks (of course, substrings of an undirected walk are undirected).

A \defn{peak} (resp.~\defn{deep}) of a walk~$\omega$ is a substring of~$\omega$ of length zero which is on top (resp.~at the bottom) of~$\omega$.
A walk~$\omega$ is \defn{straight} if it has no peak or deep (\ie if~$\omega$ or~$\omega^{-1}$ is a path in~$\quiver\blossom$), and \defn{bending} otherwise.
We denote by~$\peaks{\omega}$ (resp.~$\deeps{\omega}$) the multisets of vertices of peaks (resp.~deeps) of~$\omega$.

\begin{figure}[t]
	\capstart
	\centerline{\includegraphics[width=\textwidth]{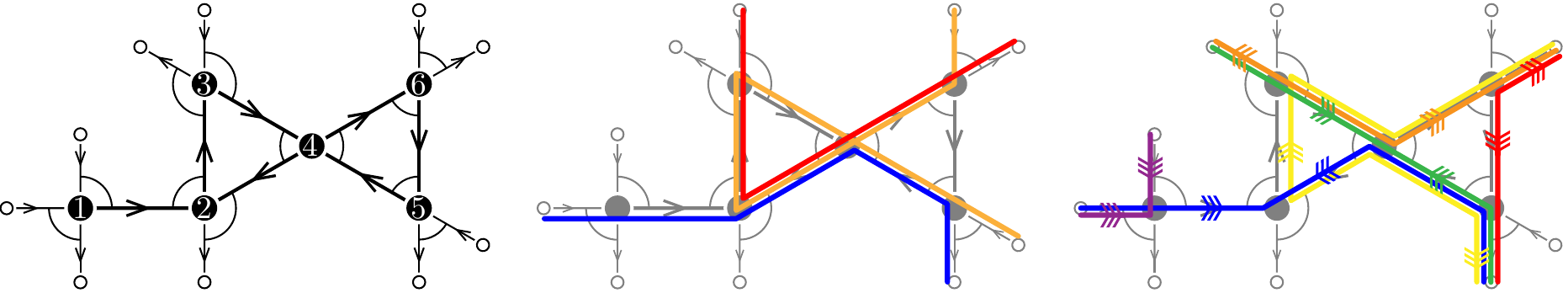}}
	\caption{A blossoming quiver (left), three pairwise kissing walks (middle), and a non-kissing facet (right). Illustration from~\cite{PaluPilaudPlamondon-nonkissing}.}
	\label{fig:exmNonkissing}
\end{figure}

\para{Non-kissing complex}
Let~$\omega$ and~$\omega'$ be two undirected walks on~$\quiver$.
We say that~$\omega$ \defn{kisses}~$\omega'$ if ${\Sigma_\top(\omega) \cap \Sigma_\bottom(\omega')} \ne \varnothing$.
In other words, $\omega$ and~$\omega'$ share a common substring~$\sigma$, and both arrows of~$\omega$ (resp.~of~$\omega'$) incident to the endpoints of~$\sigma$ but not in~$\sigma$ are outgoing (resp.~incoming) at the endpoints of~$\sigma$.
See \cref{fig:kissingCrossing} for a schematic representation and \cref{fig:exmNonkissing}\,(middle) where the three walks are pairwise kissing.
We say that~$\omega$ and~$\omega'$ are \defn{kissing} if~$\omega$ kisses~$\omega'$ or~$\omega'$ kisses~$\omega$ (or both).
Note that we authorize the situation where the common finite substring is reduced to a vertex~$a$, that~$\omega$ can kiss~$\omega'$ several times, that~$\omega$ and~$\omega'$ can mutually kiss, and that~$\omega$ can kiss itself.
For example, the orange walk in \cref{fig:exmNonkissing}\,(middle) is self-kissing (at its self-intersection).
We say that a walk is \defn{proper} if it is not straight nor self-kissing, and \defn{improper} otherwise.
We denote by~$\properWalks$ the set of all proper walks of~$\quiver$.

\begin{figure}[h]
	\capstart
	\centerline{\includegraphics[scale=1]{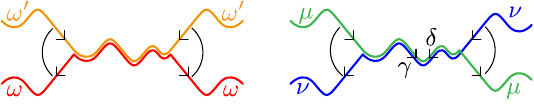}}
	\caption{A schematic representation of kissing and non-kissing walks. The walks $\omega$ and~$\omega'$ kiss (left) while the walks~$\mu$ and~$\nu$ share a common substring but do not kiss (right). Illustration from~\cite{PaluPilaudPlamondon-nonkissing}.}
	\label{fig:kissingCrossing}
\end{figure}

The (reduced) \defn{non-kissing complex} of~$\quiver$ is the simplicial complex~$\NKC$ whose faces are the collections of pairwise non-kissing proper walks of~$\quiver$.
For example, \cref{fig:exmNonkissing}\,(right) represents a non-kissing facet.
As shown in~\cite[Thm.~2.46]{PaluPilaudPlamondon-nonkissing}, the non-kissing complex is a combinatorial model for the support $\tau$-tilting complex on $\tau$-rigid modules over~$kQ/I$.
The quiver~$\quiver$ is called \defn{$\tau$-tilting finite} or \defn{non-kissing finite} when this complex is finite (in other words, $\quiver$ has finitely may non-kissing walks).

\para{$\b{g}$- and $\b{c}$-vectors}
We now define two families of vectors associated to walks in the non-kissing complex.
Let~$(\b{e}_v)_{v \in Q_0}$ denote the canonical basis of~$\R^{Q_0}$.
For a multiset~$V = \{\!\{v_1, \dots, v_k\}\!\}$ of~$Q_0$, we denote by $\multiplicityVector_V \eqdef \sum_{i \in [k]} \b{e}_{v_i}$.

\begin{definition}[{\cite[Def.~{4.8}]{PaluPilaudPlamondon-nonkissing}}]
The \defn{$\b{g}$-vector} of a walk~$\omega$ is~$\gvector{\omega} \eqdef \multiplicityVector_{\peaks{\omega}} - \multiplicityVector_{\deeps{\omega}}$.
We set~$\gvectors{F} \eqdef \set{\gvector{\omega}}{\omega \in F}$ for a non-kissing facet~$F \in \NKC$.
\end{definition}

For example, the $\b{g}$-vectors of the blue, green, and yellow walks of \cref{fig:exmNonkissing}\,(right) are respectively~$(0, -1, 0, 0, 1, 0)$, $(0, 0, 1, -1, 1, 0)$ and $(0, 0, 0, 0, 1, 0)$.
Note that by definition, the $\gvector{\omega} = 0$ for a straight walk~$\omega$.

To define the other family of vectors, we need to recall the notion of distinguished substring of a walk defined in~\cite[Def.~2.25]{PaluPilaudPlamondon-nonkissing}.
Consider an arrow~$\alpha \in Q_1$ contained in two distinct walks~$\omega, \omega'$ of a non-kissing facet~$F \in \NKC$, and let~$\sigma$ denote the common substring of~$\omega$ and~$\omega'$.
We write~$\omega \prec_\alpha \omega'$ if $\omega$ enters and/or exits~$\sigma$ with arrows in the direction pointed by~$\alpha$, while $\omega'$ enters and/or exits~$\sigma$ with arrows in the direction opposite to~$\alpha$.
For example, in \cref{fig:kissingCrossing}, we have~$\mu \prec_\gamma \nu$ but~$\nu \prec_\delta \mu$.
The \defn{distinguished walk} of the non-kissing facet~$F \in \NKC$ at the arrow~$\alpha$ is the maximum walk of~$F$ for~$\prec_\alpha$.
The \defn{distinguished arrows} of the walk~$\omega$ in the non-kissing facet~$F \in \NKC$ are the arrows where~$\omega$ is the distinguished walk.
It is shown in~\cite[Prop.~2.28]{PaluPilaudPlamondon-nonkissing} that each bending walk in a non-kissing facet has precisely two distinguished arrows, pointing in opposite directions.
For example, we have marked in \cref{fig:exmNonkissing}\,(right) the two distinguished arrows on each walk by triple arrows.
The \defn{distinguished substring}~$\distinguishedString{\omega}{F}$ of the walk~$\omega$ in the non-kissing facet~$F \in \NKC$ is the substring located between its two distinguished arrows.

\begin{definition}[{\cite[Def.~{4.11}]{PaluPilaudPlamondon-nonkissing}}]
The \defn{$\b{c}$-vector} of a walk~$\omega$ in a non-kissing facet~${F \in \NKC}$ is~${\cvector{F}{\omega} \eqdef \distinguishedSign{\omega}{F} \, \multiplicityVector_{\distinguishedString{\omega}{F}}}$, where~$\distinguishedSign{\omega}{F} \in \{-1,1\}$ is positive if~${\distinguishedString{\omega}{F} \in \Sigma_\top(\omega)}$ and negative if~${\distinguishedString{\omega}{F} \in \Sigma_\bottom(\omega)}$.
We set~$\cvectors{F} \eqdef \set{\cvector{F}{\omega}}{\omega \in F}$ for a non-kissing facet~$F \in \NKC$.
\end{definition}

For example, the $\b{c}$-vectors of the blue, green, and yellow walks in the non-kissing facet of \cref{fig:exmNonkissing}\,(right) are respectively~$(0, -1, 0, 0, 0, 0)$, $(0, 0, 0, -1, 0, 0)$ and $(0, 1, 0, 1, 1, 0)$.
As the reader can observe on the example of \cref{fig:exmNonkissing}\,(right), the $\b{g}$- and $\b{c}$-vectors also form dual bases.

\begin{proposition}
\label{prop:gvectorscvectorsDualBasesGentle}
For any non-kissing facet~$F \in \NKC$, the set of $\b{g}$-vectors~$\gvectors{F}$ and the set of $\b{c}$-vectors~$\cvectors{F}$ form dual bases, that is
\(
{\bigdotprod{\gvector{\omega}}{\cvector{F}{\omega'}} = \delta_{\omega=\omega'}}
\)
for any two walks~$\omega, \omega' \in F$.
\end{proposition}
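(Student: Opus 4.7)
The plan is to establish the duality by combining the explicit combinatorial descriptions of $\b{g}$- and $\b{c}$-vectors with a telescoping identity, separating the diagonal case $\omega = \omega'$ from the off-diagonal case $\omega \neq \omega'$.

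On the diagonal, the key identity to prove is
\[
\distinguishedSign{\omega}{F} \cdot \dotprod{\multiplicityVector_{\peaks{\omega}} - \multiplicityVector_{\deeps{\omega}}}{\multiplicityVector_{\distinguishedString{\omega}{F}}} = 1.
\]
I would proceed by a telescoping argument along the window of positions of $\omega$ spanned by $\sigma = \distinguishedString{\omega}{F}$: the two distinguished arrows bracketing $\sigma$ point in opposite directions by~\cite[Prop.~2.28]{PaluPilaudPlamondon-nonkissing}, and combined with the ``on top'' / ``at the bottom'' dichotomy this forces the exponents of those bracketing arrows to satisfy $\varepsilon_j - \varepsilon_i = 2\distinguishedSign{\omega}{F}$. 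This value equals, via telescoping, the number of $(-1,+1)$ sign transitions minus the number of $(+1,-1)$ transitions along $\varepsilon_i, \varepsilon_{i+1}, \dots, \varepsilon_j$, which is precisely the count of peaks minus deeps of $\omega$ inside the window of $\sigma$. A short additional check, using that $\omega$ is not self-kissing, shows that passes of $\omega$ through a vertex of $\sigma$ occurring outside this window contribute equally to $\multiplicityVector_{\peaks{\omega}}$ and $\multiplicityVector_{\deeps{\omega}}$ at that vertex, so their contributions cancel in the signed pairing.

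Off the diagonal, for distinct $\omega, \omega' \in F$, I would decompose the intersection of $\omega$ with $\sigma' = \distinguishedString{\omega'}{F}$ into maximal common substrings of $\omega$ and $\omega'$ and apply the same telescoping on each such piece. The non-kissing condition on $F$ constrains the signs of the arrows of $\omega$ bracketing each piece: if they produced a nonzero telescoped contribution, $\omega$ and $\omega'$ would kiss along that piece, since the arrows of $\omega'$ bracketing the corresponding stretch of $\sigma'$ have the prescribed orientations dictated by $\sigma'$ being the distinguished substring. Hence each maximal piece contributes zero, yielding $\dotprod{\gvector{\omega}}{\cvector{F}{\omega'}} = 0$.

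The main obstacle I anticipate is reconciling the sign conditions in the off-diagonal case, particularly for self-intersecting walks and for maximal common pieces that degenerate to a single vertex, where the telescoping must be supplemented by boundary-value checks. An alternative, more conceptual route is to invoke the identification of non-kissing facets with support $\tau$-tilting objects of $kQ/I$ established in~\cite[Thm.~2.46]{PaluPilaudPlamondon-nonkissing}, under which the combinatorial $\b{g}$- and $\b{c}$-vectors coincide with their classical counterparts in $\tau$-tilting theory, and then apply the standard duality between them; this bypasses the combinatorial case analysis at the cost of explicitly verifying the dictionary.
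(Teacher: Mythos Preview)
The paper under review does not prove this proposition; it is stated without proof, as a result imported from~\cite{PaluPilaudPlamondon-nonkissing} (where the $\b{g}$- and $\b{c}$-vectors are introduced in Defs.~4.8 and~4.11). There is therefore no in-paper argument against which to compare your proposal.

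Your telescoping strategy is the natural direct combinatorial route and is correct in outline. The point that needs more care than you give it is the passage from ``peaks minus deeps of~$\omega$ at positions inside the distinguished window'' (what the telescope yields) to $\dotprod{\gvector{\omega}}{\multiplicityVector_\sigma}$ (what you must compute). The latter is a sum over \emph{pairs} of occurrences --- a peak or deep position of~$\omega$, and a vertex occurrence in~$\sigma$ --- lying over the same vertex of~$Q_0$; when~$\omega$ or~$\sigma$ revisits a vertex, these two counts differ. Your ``short additional check'' via non-self-kissing is exactly where the substance lies and deserves a precise lemma (for instance, grouping the extraneous pairs by the maximal common self-overlap of~$\omega$ they determine, and using that~$\omega$ does not kiss itself to force zero net contribution on each group). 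The off-diagonal case has the same issue, compounded by the fact that a maximal common substring of~$\omega$ and~$\omega'$ may overhang~$\sigma'$ on the~$\omega'$ side, so the telescoping endpoints are not simply those of the common substring; here the distinguishedness of the two arrows bracketing~$\sigma'$ in~$F$ must be invoked to control the boundary signs.

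Your alternative via the support $\tau$-tilting dictionary (\cite[Thm.~2.46]{PaluPilaudPlamondon-nonkissing} together with the standard $\b{g}$-/$\b{c}$-vector duality for $\tau$-tilting pairs) is sound and is how the result is usually understood conceptually; it is a perfectly acceptable shortcut, provided you verify that the combinatorial $\b{c}$-vectors of~\cite[Def.~4.11]{PaluPilaudPlamondon-nonkissing} match the representation-theoretic ones, not only the $\b{g}$-vectors.
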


The following result will also be essential in our discussion.
We say that a string~$\sigma$ is \defn{distinguishable} if it is the distinguished substring of a walk in a non-kissing facet, and we denote by~$\distinguishableStrings$ the set of distinguishable strings of~$\quiver$.

\begin{proposition}[{\cite[Prop.~3.68]{PaluPilaudPlamondon-nonkissing}}]
\label{prop:bijectionStringsWalks}
The number of distinguishable strings~$\distinguishableStrings$ and proper walks~$\properWalks$ of the quiver~$\quiver$ are related by
\[
|\distinguishableStrings| + |Q_0| = |\properWalks|.
\]
\end{proposition}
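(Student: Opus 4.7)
The plan is to exhibit a bijection $\Phi \colon \properWalks \longrightarrow \distinguishableStrings \sqcup Q_0$, from which the identity of cardinalities follows at once. Given a proper walk $\omega$, which is necessarily bending because it is not straight, I would traverse $\omega$ inward from each of its two blossom endpoints and stop at the first peak or deep encountered, calling these $p_\omega$ and $q_\omega$ respectively. If $p_\omega$ and $q_\omega$ are located at the same interior vertex $v$, set $\Phi(\omega) \eqdef v \in Q_0$; otherwise, let $\Phi(\omega)$ be the substring of $\omega$ lying strictly between $p_\omega$ and $q_\omega$. This is well-defined because the hypothesis that $\omega$ is proper (hence bending) guarantees the existence of at least one peak or deep.

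The heart of the argument is to show that when $p_\omega \neq q_\omega$, the substring $\sigma(\omega) \eqdef \Phi(\omega)$ actually lies in $\distinguishableStrings$, not merely in $\strings$. The idea is to complete $\{\omega\}$ to a non-kissing facet $F \in \NKC$, which is possible precisely because $\omega$ is not self-kissing, and then to argue that in $F$ the two arrows of $\omega$ immediately exterior to $\sigma(\omega)$ are the distinguished arrows of $\omega$. The peak/deep condition at $p_\omega$ and $q_\omega$ forces $\omega$ to be extremal for the distinguishing order $\prec_\alpha$ along the neighbouring arrow, among all walks of $F$ sharing the relevant substring on the interior side; it follows that $\distinguishedString{\omega}{F} = \sigma(\omega)$ and hence $\sigma(\omega) \in \distinguishableStrings$.

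To finish, one constructs the inverse of $\Phi$. For $\sigma \in \distinguishableStrings$, pick any witness facet $F$ and walk $\omega \in F$ with $\distinguishedString{\omega}{F} = \sigma$; the sign of the distinguished arrows at the two ends of $\sigma$ dictates a hook or cohook extension, and one extends in the unique way that reaches a blossom without encountering a further peak or deep, thereby recovering $\omega$ independently of the choice of $F$. For $v \in Q_0$, the gentle local structure at $v$ together with the requirement of exactly one peak or deep (located at $v$) singles out a unique proper walk $\omega_v$ with $\Phi(\omega_v) = v$. Mutual inverseness is immediate from the construction, and the bijection delivers the claimed identity.

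The principal obstacle is the distinguishability step: one must align the order $\prec_\alpha$, which is defined inside a non-kissing facet, with the intrinsic peak/deep structure of $\omega$, and ensure the existence of a facet $F$ in which the first peak and first deep of $\omega$ become precisely the endpoints of its distinguished substring. This requires a careful case analysis on the hook/cohook configurations near $p_\omega$ and $q_\omega$ and on which pairs of arrows of $\omega$ can be made distinguished by an appropriate completion of $\{\omega\}$ to a maximal non-kissing collection.
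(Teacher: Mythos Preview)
The paper does not prove this statement; it is quoted verbatim from~\cite[Prop.~3.68]{PaluPilaudPlamondon-nonkissing} with no argument given here. So there is no in-paper proof to compare your approach against.

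Independently of that, your bijection~$\Phi$ is not injective. For each vertex~$v \in Q_0$ there are \emph{two} proper walks whose unique peak-or-deep is located at~$v$: the \emph{peak walk} at~$v$, which leaves~$v$ along both outgoing arrows of~$\quiver\blossom$ and follows the two maximal relation-free directed paths to blossoms, and the \emph{deep walk} at~$v$, which enters~$v$ along both incoming arrows and follows the two maximal relation-free directed paths backwards to blossoms. Both are bending walks with exactly one peak (respectively deep) at~$v$ and no other turn, so your rule gives~$p_\omega = q_\omega = v$ for each of them and sends both to~$v$. Your inverse for~$v \in Q_0$ accordingly fails: the condition ``exactly one peak or deep at~$v$'' does not single out a unique walk. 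The count makes this visible: there are~$2|Q_0|$ single-turn walks, so your~$\Phi$ collapses~$2|Q_0|$ walks onto~$|Q_0|$ vertices.

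A correct bijection must break the peak/deep symmetry at each vertex, for instance by declaring that only the peak walks (say) correspond to~$Q_0$ and then producing a bijection between the remaining proper walks and~$\distinguishableStrings$. Your sketch of the latter step (complete~$\{\omega\}$ to a facet and argue that the first turns on each side become the distinguished arrows) is also not yet a proof: you would need to explain why such a completion can always be arranged so that~$\omega$ is~$\prec_\alpha$-maximal at exactly those two arrows, which is where the genuine combinatorics of~\cite{PaluPilaudPlamondon-nonkissing} enters.
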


\para{Non-kissing fan and gentle associahedron}
The $\b{g}$-vectors support a complete simplicial fan realization of the non-kissing complex~$\NKC$.
Examples are illustrated in \cref{fig:nonkissingFans}.

\begin{theorem}[{\cite[Thm.~4.17]{PaluPilaudPlamondon-nonkissing}}]
\label{thm:nonkissingFan}
For any non-kissing finite gentle quiver~$\quiver$, the set of cones
\[
\gvectorFan[\quiver] \eqdef \set{\R_{\ge 0} \, \gvectors{F}}{F \text{ non-kissing face of } \NKC}
\]
is a complete simplicial fan of~$\R^{Q_0}$, called \defn{non-kissing fan} of~$\quiver$, which realizes the non-kissing complex~$\NKC$.
\end{theorem}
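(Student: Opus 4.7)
My plan is to establish, in order, the three defining properties of a complete simplicial fan realizing~$\NKC$: that each maximal cone is simplicial of the right dimension, that the cone intersections are faces, and that the union covers~$\R^{Q_0}$.

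For simpliciality, I would invoke \cref{prop:gvectorscvectorsDualBasesGentle}: since $\gvectors{F}$ and $\cvectors{F}$ are dual bases for each non-kissing facet~$F$, the $\b{g}$-vectors of~$F$ are linearly independent and span a simplicial cone of full dimension~$|Q_0|$. In particular every non-kissing facet has exactly $|Q_0|$ proper walks, and the cone associated to any non-kissing face is a face of the cone of any facet extending it. This already shows that the candidate~$\gvectorFan[\quiver]$ is closed under taking faces, and that $\gvectors{F}$ determines $F$.

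The heart of the argument is then to analyze flips. For any facet~$F$ and any walk~$\omega \in F$, I would first argue that there exists a unique walk~$\omega'$ such that $F' \eqdef (F \ssm \{\omega\}) \cup \{\omega'\}$ is again a non-kissing facet; the walk~$\omega'$ is obtained from~$\omega$ by exchanging hooks for cohooks (and conversely) at its two distinguished arrows. The technical key is then to show that the induced linear dependence takes the \emph{sign-coherent} form
\[
\gvector{\omega} + \gvector{\omega'} = \sum_{\omega'' \in F \cap F'} \lambda_{\omega''} \, \gvector{\omega''}
\]
with $\lambda_{\omega''} \ge 0$, by tracking how the multisets $\peaks{\cdot}$ and $\deeps{\cdot}$ of the affected walks combine: the peaks/deeps introduced on the left-hand side cancel precisely against peaks/deeps arising on walks of~$F \cap F'$ that run through the distinguished substring of~$\omega$. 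Once sign coherence is proved at every wall, adjacent maximal cones lie on opposite sides of their shared facet, so the union of the maximal cones is locally a topological manifold in~$\R^{Q_0}$, and intersections of two maximal cones are forced to be faces of both.

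Finally, for completeness I would combine the pseudo-manifold property (each codimension-one face is shared by exactly two facets, by the flip statement above) with non-kissing finiteness to conclude that the manifold in question is closed and therefore equal to~$\R^{Q_0}$. The main obstacle I anticipate is the sign-coherence step: explicitly describing~$\omega'$ at the level of hooks and cohooks in the blossoming quiver and then controlling the combinatorics of peaks, deeps and substring overlaps requires a careful case analysis. An alternative that would bypass this combinatorial bookkeeping is to invoke the representation-theoretic interpretation of~$\NKC$ as the support $\tau$-tilting complex of $kQ/I$ (as noted in~\cite{PaluPilaudPlamondon-nonkissing, BrustleDouvilleMousavandThomasYildirim}) and appeal to the known sign coherence of $\b{g}$-vectors of $\tau$-rigid pairs established by T.~Adachi, O.~Iyama and I.~Reiten.
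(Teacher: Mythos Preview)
The paper does not prove this theorem: it is quoted from~\cite[Thm.~4.17]{PaluPilaudPlamondon-nonkissing} and used as a black box, so there is no in-paper proof to compare against.

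Your outline is nonetheless the standard strategy and is essentially the one carried out in the cited reference. One sharpening: the exchange relation you write in generic sign-coherent form is in fact completely explicit. As the paper records in \cref{prop:exchangeablePairsNKC}\,(i) and~(iii) (also taken from~\cite{PaluPilaudPlamondon-nonkissing}), if one decomposes~$\omega = \rho\sigma\tau$ and~$\omega' = \rho'\sigma\tau'$ along the distinguished substring~$\sigma$, then the two walks~$\mu = \rho'\sigma\tau$ and~$\nu = \rho\sigma\tau'$ lie in~$F \cap F'$ and the dependence is exactly $\gvector{\omega} + \gvector{\omega'} = \gvector{\mu} + \gvector{\nu}$. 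The ``careful case analysis'' you anticipate therefore collapses to the observation that the multisets of peaks and deeps of~$\{\omega,\omega'\}$ and of~$\{\mu,\nu\}$ coincide, which is immediate since each of the four walks is built from two of the four pieces~$\rho,\rho',\tau,\tau'$ glued onto~$\sigma$. With this wall-crossing relation in hand, the passage from ``pseudomanifold with sign-coherent flips'' to ``complete simplicial fan'' is the standard argument you sketch (connectedness of the flip graph plus local injectivity at each wall), and no appeal to the $\tau$-tilting literature is needed.
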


\begin{figure}[h]
	\capstart
	\centerline{\includegraphics[scale=.45]{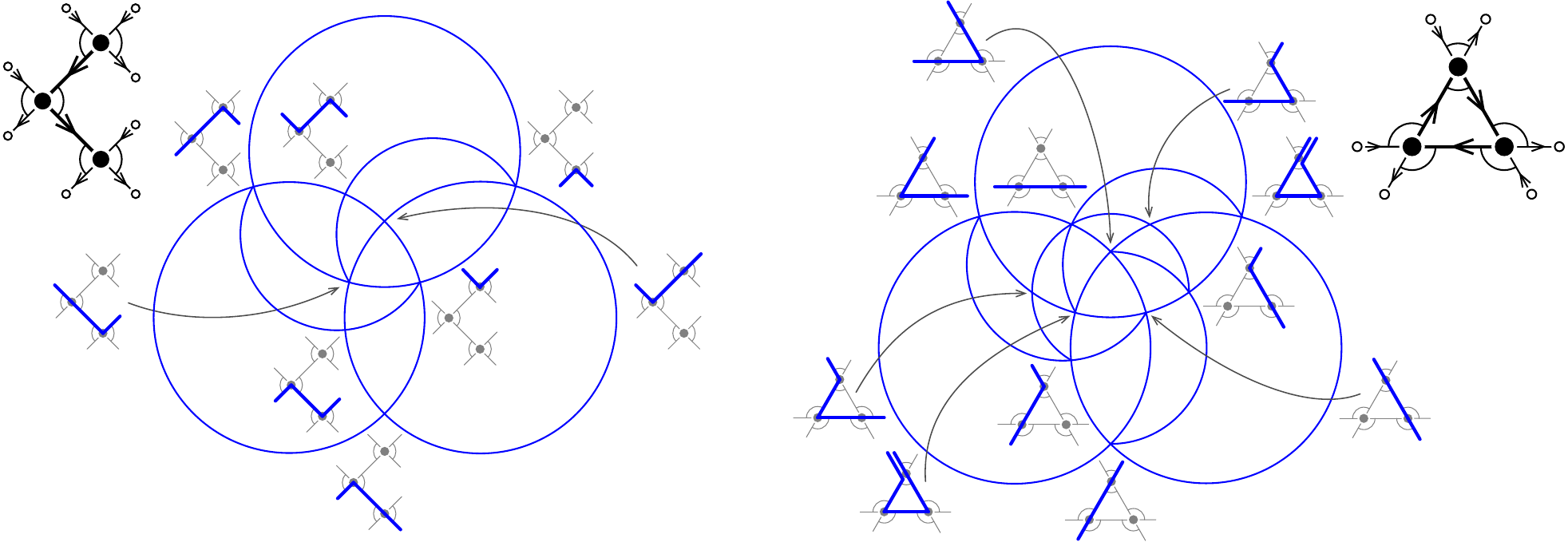}}
	\caption{Two non-kissing fans. As the fans are $3$-dimensional, we intersect them with the sphere and stereographically project them from the direction~$(-1,-1,-1)$. Illustration from~\cite{PaluPilaudPlamondon-nonkissing}.}
	\label{fig:nonkissingFans}
\end{figure}

\enlargethispage{.1cm}
It is proved in~\cite[Thm.~4.27]{PaluPilaudPlamondon-nonkissing} that the non-kissing fan comes from a polytope.
For a walk~$\omega$, denote by~$\KN(\omega)$ the sum over all other walks~$\omega'$ of the number of kisses between~$\omega$~and~$\omega'$.

\begin{theorem}[{\cite[Thm.~4.27]{PaluPilaudPlamondon-nonkissing}}]
\label{thm:nonkissingAsso}
For any non-kissing finite gentle quiver~$\quiver$, the non-kissing fan~$\gvectorFan[\quiver]$ is the normal fan of the \defn{gentle associahedron}~$\Asso[\quiver]$ defined equivalently as:
\begin{enumerate}[(i)]
\item the convex hull of the points~$\sum_{\omega \in F} \KN(\omega) \, \cvector{F}{\omega}$ for all facets~${F \in \NKC}$,~or
\item the intersection of the halfspaces~$\bigset{\b{x} \in \R^{Q_0}}{\dotprod{\gvector{\omega}}{\b{x}} \le \KN(\omega)}$ for all walks~$\omega$ on~$\bar Q$.
\end{enumerate}
\end{theorem}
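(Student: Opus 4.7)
The plan is to apply \cref{prop:characterizationPolytopalFan} to the fan $\gvectorFan[\quiver]$, whose completeness and simpliciality are already guaranteed by \cref{thm:nonkissingFan}. The task then reduces to checking, for each pair of adjacent maximal cones corresponding to a flip between non-kissing facets $F, F' \in \NKC$ with $F \ssm \{\omega\} = F' \ssm \{\omega'\}$, that the unique (up to rescaling) linear dependence between the $\b{g}$-vectors of $F \cup F'$, evaluated on the height vector $\b{h}_\mu \eqdef \KN(\mu)$, yields a strictly positive value. Once this is established, the description in (ii) follows from \cref{prop:characterizationPolytopalFan}, and the description in (i) then follows from \cref{prop:gvectorscvectorsDualBasesGentle}: the vertex of $\Asso[\quiver]$ corresponding to $F$ is uniquely determined by the equations $\dotprod{\gvector{\omega}}{\b{x}} = \KN(\omega)$ for $\omega \in F$, and the dual basis property between $\gvectors{F}$ and $\cvectors{F}$ identifies this vertex with~$\sum_{\omega \in F} \KN(\omega) \, \cvector{F}{\omega}$.

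First, I would combinatorially describe flips in the non-kissing complex. Two exchangeable walks $\omega, \omega'$ must kiss one another: they share a common substring $\sigma$ such that $\sigma$ is a top substring of one walk and a bottom substring of the other. The flip replaces $\omega$ by $\omega'$ by "uncrossing" this kiss in the two possible ways at $\sigma$, producing precisely the walks in $F \cap F'$ that are affected. The remaining walks of $F \cap F'$ do not pass through the endpoints of $\sigma$ and hence do not appear in the linear dependence with nonzero coefficient. Examining the multisets $\peaks{\cdot}$ and $\deeps{\cdot}$ at the endpoints of $\sigma$, one checks that the contributions arising from $\omega$ and $\omega'$ exactly match those arising from the two uncrossing walks, so that the linear dependence takes the form $\gvector{\omega} + \gvector{\omega'} = \gvector{\mu} + \gvector{\nu}$ for two walks $\mu, \nu \in F \cap F'$ (possibly straight, in which case the corresponding $\b{g}$-vector vanishes).

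Next, I would verify the strict inequality $\KN(\omega) + \KN(\omega') > \KN(\mu) + \KN(\nu)$ by a kiss-counting argument. The idea is to set up an injection from kisses involving $\mu$ or $\nu$ into kisses involving $\omega$ or $\omega'$, by tracing how a third walk $\tau$ kisses the locally modified walks: every kiss of $\mu$ or $\nu$ with $\tau$ corresponds, through the local exchange operation at $\sigma$, to a kiss of $\omega$ or $\omega'$ with $\tau$. Crucially, the mutual kiss between $\omega$ and $\omega'$ at $\sigma$ itself contributes to the left-hand side but has no counterpart on the right-hand side, providing the strict surplus. This yields the required strict positivity and hence the polytopality of the normal fan.

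The main obstacle is the kiss-counting step, which demands a careful case analysis to ensure that every kiss is accounted for and no hidden kisses appear or disappear across the flip; in particular, one must handle the subtlety that $\omega$ itself may be self-kissing or that $\mu$ or $\nu$ may become improper (straight or self-kissing) after the flip, in which case they must be dealt with separately. The analysis of peaks and deeps in the first step, and the coherent bookkeeping of kisses in the second step, together form the technical core of the argument.
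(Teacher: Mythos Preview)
The paper does not contain a proof of this statement: it is quoted verbatim from \cite[Thm.~4.27]{PaluPilaudPlamondon-nonkissing} and used as background. So there is no in-paper proof to compare against.

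That said, your outline matches the strategy of the cited source. The linear dependence $\gvector{\omega} + \gvector{\omega'} = \gvector{\mu} + \gvector{\nu}$ you describe is exactly \cref{prop:exchangeablePairsNKC}\,(iii) of the present paper, so you can simply invoke it rather than rederive it. The kiss-counting injection to obtain $\KN(\omega) + \KN(\omega') > \KN(\mu) + \KN(\nu)$ is indeed the heart of the argument in \cite{PaluPilaudPlamondon-nonkissing}, and your observation that the kiss between $\omega$ and $\omega'$ along $\sigma$ provides the strict surplus is the correct mechanism.

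One minor correction: you list among the obstacles ``the subtlety that~$\omega$ itself may be self-kissing''. This cannot happen: $\omega$ belongs to a non-kissing facet~$F$, whose elements are by definition proper (in particular non-self-kissing) walks. The genuine subtlety is only on the side of~$\mu$ and~$\nu$, which may be straight (hence improper, with vanishing $\b{g}$-vector and absent from~$F \cap F'$), as you correctly note.
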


\para{Flips in the non-kissing fan}
Although we lack a characterization of the exchangeable pairs of the non-kissing complex (see \cref{rem:exchangeablePairsNKC}), we can still describe the linear dependence among the $\b{g}$-vectors involved in a flip.
The following statement is partially proved in~\cite[Thm.~4.17]{PaluPilaudPlamondon-nonkissing}.
The notations are illustrated in \cref{fig:flip}.

\begin{proposition}
\label{prop:exchangeablePairsNKC}
Let~$\omega, \omega'$ be two exchangeable walks on~$\quiver$. Then:
\begin{enumerate}[(i)]

\item For any non-kissing facets~$F, F'$ of~$\NKC$ with~$F \ssm \{\omega\} = F' \ssm \{\omega'\}$, the distinguished substrings of~$\omega$ in~$F$ and of~$\omega'$ in~$F'$ coincide to a string~$\sigma$. Moreover, if we decompose~${\omega = \rho \sigma \tau}$ and~${\omega' = \rho' \sigma \tau'}$, then the facets~$F$ and~$F'$ both contain the walks~$\mu \eqdef \rho' \sigma \tau$ and~$\nu \eqdef \rho \sigma \tau'$.
\item The substring~$\sigma$ and thus the walks~$\mu$ and~$\nu$ only depend on the exchangeable walks~$\omega$ and~$\omega'$, and not on the adjacent non-kissing facet~$F $ and~$F'$.
\item The linear dependence between the $\b{g}$-vectors of~$F \cup F'$ is given by
\[
\gvector{\omega} + \gvector{\omega'} = \gvector{\mu} + \gvector{\nu}.
\]
\item The non-kissing fan of~$\quiver$ has the unique exchange relation property.
\item The $\b{c}$-vector orthogonal to all $\b{g}$-vectors~$\gvector{\lambda}$ for~$\lambda \in F \cap F'$ is the multiplicity vector~$\multiplicityVector_{\sigma}$ of the vertices of the substring~$\sigma$ of~$\omega$ and~$\omega'$.
\end{enumerate}

\begin{figure}[b]
	\capstart
	\centerline{\includegraphics[scale=1]{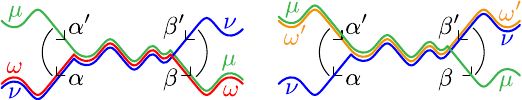}}
	\caption{A schematic representation of a flip. Illustration from~\cite{PaluPilaudPlamondon-nonkissing}.}
	\label{fig:flip}
\end{figure}
\end{proposition}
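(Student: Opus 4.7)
The plan is to combine the partial result in \cite[Thm.~4.17]{PaluPilaudPlamondon-nonkissing}, which already describes how a flip $\omega \leftrightarrow \omega'$ takes place locally on the blossoming quiver, with the $\b{g}$--$\b{c}$ duality from \cref{prop:gvectorscvectorsDualBasesGentle}. Together, these essentially reduce each of (i)--(v) to either a local combinatorial observation on strings or a small bookkeeping argument with peaks and deeps.

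\medskip
First I would address (i). By \cite[Thm.~4.17]{PaluPilaudPlamondon-nonkissing}, the distinguished substring $\distinguishedString{\omega}{F}$ already coincides with the common factor at which the flip takes place, which produces the substring $\sigma$ and the decompositions $\omega = \rho\sigma\tau$, $\omega' = \rho'\sigma\tau'$. The hybrids $\mu = \rho'\sigma\tau$ and $\nu = \rho\sigma\tau'$ are genuine walks on $\quiver\blossom$ because the gentle-algebra admissibility of the concatenations at each endpoint of $\sigma$ depends only on the two arrows immediately adjacent to that endpoint (the first/last arrow of $\sigma$ together with the first/last arrow of $\rho, \rho', \tau, \tau'$), and these match between $\omega$ and $\mu$, and between $\omega'$ and $\nu$. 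To check $\mu, \nu \in F \cap F'$, I would argue directly that $\mu$ cannot kiss any $\lambda \in F \cap F'$: a kiss of $\mu$ with $\lambda$ would occur either on $\rho'\sigma$ or on $\sigma\tau$, which would force $\omega'$ or $\omega$ respectively to also kiss $\lambda$, contradicting that $F$ and $F'$ are non-kissing.

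\medskip
The main obstacle is (ii), namely that $\sigma$ (and hence $\mu,\nu$) depends only on the pair $\{\omega,\omega'\}$, not on the choice of facets $F, F'$. My strategy is to characterize $\sigma$ intrinsically from the pair $(\omega,\omega')$, independently of any facet: among the common factors of $\omega$ and $\omega'$, there is a unique one along which the arrows of $\omega\ssm\sigma$ adjacent to $\sigma$ point in the opposite directions to those of $\omega'\ssm\sigma$ at the same endpoints (this is the local flip picture of \cref{fig:flip}). Since this condition is facet-free, any adjacent pair $(\tilde F,\tilde F')$ with $\tilde F \ssm \{\omega\} = \tilde F' \ssm \{\omega'\}$ must produce the same $\sigma$, and thus the same $\mu, \nu$. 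Statement (iv) is then an immediate corollary of (ii) and (iii).

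\medskip
For (iii) I would compute with the multisets $\peaks{\cdot}$ and $\deeps{\cdot}$. With the decomposition of (i), every peak or deep of any of the four walks $\omega, \omega', \mu, \nu$ lies in one of the four ``halves'' $\rho, \rho', \tau, \tau'$ or in the interior of $\sigma$, and the interior of $\sigma$ contributes identically to both sides. The local arrow pattern at each endpoint of $\sigma$, which is what determines whether that endpoint becomes a peak, a deep, or neither, is shared between $\omega$ and $\mu$ (respectively $\omega'$ and $\nu$) on the $\tau/\tau'$ side, and between $\omega$ and $\nu$ (respectively $\omega'$ and $\mu$) on the $\rho/\rho'$ side, yielding the multiset equalities $\peaks{\omega}\sqcup\peaks{\omega'} = \peaks{\mu}\sqcup\peaks{\nu}$ and similarly for deeps. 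Taking the difference of multiplicity vectors gives $\gvector{\omega}+\gvector{\omega'} = \gvector{\mu}+\gvector{\nu}$. Finally, (v) follows directly from \cref{prop:gvectorscvectorsDualBasesGentle}: up to sign and scaling, the unique $\b{c}$-vector orthogonal to $\gvectors{F\cap F'}$ is $\cvector{F}{\omega} = \distinguishedSign{\omega}{F}\,\multiplicityVector_{\distinguishedString{\omega}{F}} = \distinguishedSign{\omega}{F}\,\multiplicityVector_{\sigma}$, using $\distinguishedString{\omega}{F} = \sigma$ from (i).
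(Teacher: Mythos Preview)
Your treatment of (i), (iii), and (v) is consistent with the paper: these points are established in \cite[Prop.~2.33, Thm.~4.17, Prop.~4.16]{PaluPilaudPlamondon-nonkissing}, and the paper simply cites that reference, while you sketch those arguments directly. Your derivation of (iv) from (ii) and (iii) also matches the paper exactly.

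The genuine gap is in your argument for (ii). Your proposed intrinsic characterization of $\sigma$ --- the unique common factor of $\omega$ and $\omega'$ at which the adjacent arrows of $\omega$ and of $\omega'$ point in opposite directions --- is precisely the condition that $\omega$ and $\omega'$ kiss along $\sigma$. But such a substring is \emph{not} unique in general: as explicitly noted in \cref{rem:exchangeablePairsNKC}, two exchangeable walks may kiss along more than one string, and only one of those kisses is the distinguished one. There is therefore no purely combinatorial, facet-free way to single out $\sigma$ among the several candidate kissing substrings, and your argument for (ii) breaks down exactly here.

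The paper takes a completely different route for (ii), deferring it to \cref{coro:exchangeablePairsNKC}, where it is proved via representation theory. One passes to the extriangulated category $K^{[-1,0]}(\proj\Lambda)$, where the walks $\omega,\omega',\mu,\nu$ correspond to two-term complexes $P(\omega),P(\omega'),P(\mu),P(\nu)$ and the flip becomes a mutation conflation $P(\omega)\infl P(\mu)\oplus P(\nu)\defl P(\omega')$ (or with $\omega,\omega'$ swapped). Using that rigid objects in $K^{[-1,0]}(\proj\Lambda)$ are determined by their $\b{g}$-vectors (\cite[Thm.~6.5]{DemonetIyamaJasso}, based on \cite{DehyKeller}), \cref{prop:exchange conflations are unique} shows that any two mutation conflations between the same pair have isomorphic middle terms and isomorphic approximation maps; this forces $\mu,\nu$, and hence $\sigma$, to be independent of the choice of facets $F,F'$.
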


\begin{proof}
Points~(i), (iii) and~(v) were shown in~\cite[Prop.~2.33, Thm.~4.17 \& Prop~4.16]{PaluPilaudPlamondon-nonkissing}. We postpone the proof of point~(ii) until \cref{coro:exchangeablePairsNKC}. Finally, point~(iv) follows directly from points~(ii) and~(iii).
\end{proof}

\begin{remark}
\label{rem:exchangeablePairsNKC}
\enlargethispage{.7cm}
In view of \cref{prop:exchangeablePairsNKC}, it is tempting to look for a characterization of the exchangeable pairs~$\omega, \omega'$ using the kisses between~$\omega$ and~$\omega'$.
This question was discussed in~\cite[Sect.~9]{BrustleDouvilleMousavandThomasYildirim}.
However, as illustrated for instance in the non-kissing complex of \cref{fig:nonkissingFans}\,(right),
\begin{itemize}
\item two exchangeable walks may kiss along more than one string (only one is distinguished),
\item two non-exchangeable walks can kiss along more than one distinguishable string,
\item two walks that kiss along a single distinguishable string are not always exchangeable,
\item not all strings are distinguishable.
\end{itemize}
In \cref{subsubsec:simplicialTypeConeNKC}, we will restrict to a situation that avoids all these patologies.
\end{remark}

\para{Two families of examples: gentle grid and dissection quivers}
The initial motivation for non-kissing complexes came from two families of examples, illustrated in \cref{fig:dissectionGridQuivers}:

\medskip
\begin{enumerate}[(1)]
\item \textbf{Grid quivers}: Consider the infinite grid quiver~$\quiver_{Z^2}$, whose vertices are all integer points of~$\Z^2$, whose arrows are~$(i,j) \longrightarrow (i,j+1)$ and~$(i,j) \longrightarrow (i+1,j)$ for any~$(i,j) \in \Z^2$, and whose relations are~$(i-1,j) \longrightarrow (i,j) \longrightarrow (i,j+1)$ and~$(i,j-1) \longrightarrow (i,j) \longrightarrow (i+1,j)$ for any~$(i,j) \in \Z^2$. A \defn{gentle grid quiver} is any subquiver~$\quiver_A$ of~$\quiver_{Z^2}$ induced by a finite subset~$A \subset \Z^2$ of the integer grid. See \cref{fig:dissectionGridQuivers}. The non-kissing complex of gentle grid quivers were introduced in~\cite{McConville} with motivation coming from~\cite{PetersenPylyavskyySpeyer} and~\cite{SantosStumpWelker}. 

\medskip
\item \textbf{Dissection quivers}: Consider a dissection~$D$ of a convex polygon~$P$ (that is a crossing-free set of diagonals of~$P$) and its \defn{gentle dissection quiver}~$\quiver_D$, whose vertices are the internal diagonals of~$D$, whose arrows connect pairs of consecutive internal diagonals along the boundary of a face of~$D$, and whose relations correspond to triples of consecutive internal diagonals along the boundary of a face of~$D$. See \cref{fig:dissectionGridQuivers}. The non-kissing complex of~$\quiver_D$ then corresponds to non-crossing sets of accordions of~$D$, where an accordion is a segment connecting the middles of two boundary edges of~$P$ and crossing a connected set of diagonals of~$D$. This accordion complex was studied in~\cite{GarverMcConville} and~\cite{MannevillePilaud-accordion} with motivation coming from~\cite{Baryshnikov}, and~\cite{Chapoton-quadrangulations}.
\end{enumerate}

\begin{figure}[h]
	\capstart
	\centerline{\includegraphics[scale=.35]{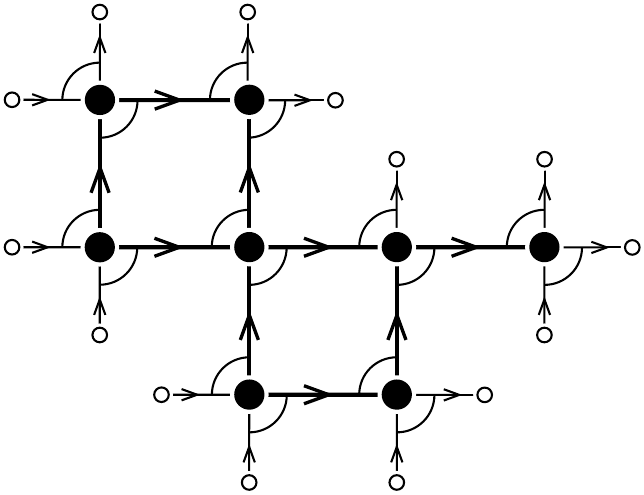} \quad \includegraphics[scale=.8]{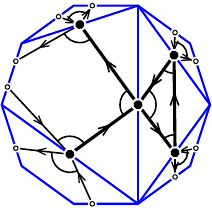} \quad \includegraphics[scale=1.2]{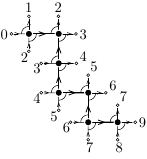} \quad \includegraphics[scale=1.2]{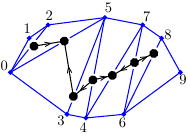}}
	\caption{A gentle grid quiver (left), a gentle dissection quiver (middle left), and a path quiver which is both the gentle grid quiver of a ribbon (middle right) and the dissection quiver of a triangulation (right). Illustration from~\cite{PaluPilaudPlamondon-nonkissing}.}
	\label{fig:dissectionGridQuivers}
\end{figure}

These two families of non-kissing complexes are well-behaved as they avoid all pathologies of \cref{rem:exchangeablePairsNKC}.
However, they still provide good examples of non-kissing complexes.
In particular, both families contain the classical associahedron.
Namely, the gentle associahedron~$\Asso[\quiver]$ is the classical associahedron of~\cite{ShniderSternberg, Loday} presented in \cref{thm:associahedronLoday} when~${\quiver = \quiver_A}$ for the path~${A = \set{(0,j)}{j \in [n]}}$ or equivalently~$\quiver = \quiver_D$ for the fan triangulation~$D$ (where all internal diagonals are incident to the same point).
More generally, $\Asso[\quiver]$ is an associahedron of~\cite{HohlwegLange} when~$\quiver = \quiver_A$ for a ribbon~$A$ or equivalently~$\quiver = \quiver_D$ for a triangulation~$D$.
See \cref{fig:dissectionGridQuivers}.

Note that, it was shown in~\cite{PaluPilaudPlamondon-surfaces} that the accordion complexes can be extended to dissections of arbitrary orientable surfaces with marked points and then provide a geometric model for all non-kissing complexes of gentle quivers.

%%%

\subsubsection{Type cones of non-kissing fans}

We now discuss the type cones of the non-kissing fans defined in \cref{thm:nonkissingFan}. We obtain from \cref{prop:exchangeablePairsNKC} the following redundant description.

\begin{corollary}
For any non-kissing finite gentle quiver~$\quiver$, the type cone of the non-kissing fan~$\gvectorFan[\quiver]$ is given by
\[
\typeCone \big( \gvectorFan[\quiver] \big) = \set{\b{h} \in \R^{\walks}}{\begin{array}{l} \b{h}_\omega = 0 \text{ for any improper walk } \omega \\ \b{h}_\omega + \b{h}_{\omega'} > \b{h}_\mu + \b{h}_\nu \text{ for any exchangeable walks } \omega, \omega' \end{array}},
\]
where the walks~$\mu$ and~$\nu$ for two exchangeable walks~$\omega, \omega'$ are defined in \cref{prop:exchangeablePairsNKC}.
\end{corollary}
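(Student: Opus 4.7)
The plan is to derive this description as a direct translation of \cref{def:typeCone} to the non-kissing fan, reading off the linear dependences from \cref{prop:exchangeablePairsNKC}. The argument should split cleanly into three steps, with no serious obstacle to anticipate: the content of the corollary is already packaged inside \cref{prop:exchangeablePairsNKC}, and what remains is essentially bookkeeping.

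First, I would account for the equalities $\b{h}_\omega = 0$ attached to improper walks. By \cref{thm:nonkissingFan}, the rays of $\gvectorFan[\quiver]$ are the $\b{g}$-vectors of proper walks only: straight walks contribute $\gvector{\omega} = 0$, while self-kissing walks do not appear in the reduced non-kissing complex $\NKC$ at all. So the type cone as defined in \cref{def:typeCone} naturally lives in $\R^{\properWalks}$; embedding it into $\R^{\walks}$ as in the statement amounts to appending dummy coordinates $\b{h}_\omega$ for improper walks and fixing them to zero, which accounts for the first family of equalities.

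Second, I would read off the strict inequalities. For each pair of adjacent maximal cones $\R_{\ge 0}\gvectors{F}$ and $\R_{\ge 0}\gvectors{F'}$ of $\gvectorFan[\quiver]$ with ${F \ssm \{\omega\} = F' \ssm \{\omega'\}}$, parts (i) and (iii) of \cref{prop:exchangeablePairsNKC} identify two walks $\mu, \nu \in F \cap F'$ together with the unique linear dependence $\gvector{\omega} + \gvector{\omega'} = \gvector{\mu} + \gvector{\nu}$, whose coefficients $(1,1,-1,-1)$ already match the normalization $\alpha + \alpha' = 2$ adopted in \cref{def:typeCone}. Substituting these coefficients into the defining inequality of the type cone yields exactly $\b{h}_\omega + \b{h}_{\omega'} > \b{h}_\mu + \b{h}_\nu$.

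Third, I would re-index the inequalities by pairs of exchangeable walks. Parts (ii) and (iv) of \cref{prop:exchangeablePairsNKC} assert precisely that the walks $\mu, \nu$ appearing above, and hence the inequality itself, are determined by the exchangeable pair $\{\omega, \omega'\}$ alone, independently of the adjacent facets $F, F'$ realizing the flip; this is the unique exchange relation property. Together these three steps exhaust the defining inequalities of $\typeCone(\gvectorFan[\quiver])$ and match them bijectively with the inequalities listed in the statement.
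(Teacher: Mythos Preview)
Your proposal is correct and matches the paper's approach: the paper does not give an explicit proof but simply states that this description is obtained from \cref{prop:exchangeablePairsNKC}, and your three steps unpack precisely that derivation via \cref{def:typeCone} and the unique exchange relation property.
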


\pagebreak

\begin{example}
\label{exm:typeConeNKC}
\enlargethispage{.4cm}
Consider the non-kissing fans illustrated in \cref{fig:nonkissingFans}.
The type cone of the left fan of \cref{fig:nonkissingFans} lives in~$\R^8$ and has a lineality space of dimension~$3$.
It has $5$ facet-defining inequalities (given below), which correspond to the flips described in \cref{prop:interestingExchangeablePairsNKC,prop:extremalExchangeablePairsNKC} and illustrated in \cref{fig:labelFacetDefiningInequalititiesNKC}\,(left).

\[
\begin{array}{r|ccccccccc}
\text{walks} & \raisebox{-.4cm}{\includegraphics[scale=.5]{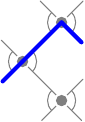}} & \raisebox{-.4cm}{\includegraphics[scale=.5]{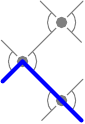}} & \raisebox{-.4cm}{\includegraphics[scale=.5]{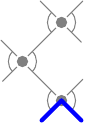}} & \raisebox{-.4cm}{\includegraphics[scale=.5]{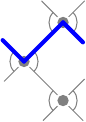}} & \raisebox{-.4cm}{\includegraphics[scale=.5]{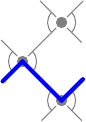}} & \raisebox{-.4cm}{\includegraphics[scale=.5]{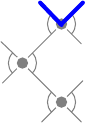}} & \raisebox{-.4cm}{\includegraphics[scale=.5]{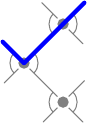}} & \raisebox{-.4cm}{\includegraphics[scale=.5]{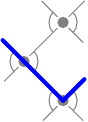}} \\[.6cm]
\text{$\b{g}$-vectors} & \compactVectorT{1}{0}{0} & \compactVectorT{0}{1}{0} & \compactVectorT{0}{0}{1} & \compactVectorT{1}{-1}{0} & \compactVectorT{0}{1}{-1} & \compactVectorT{-1}{0}{0} & \compactVectorT{0}{-1}{0} & \compactVectorT{0}{0}{-1} \\[.6cm]
\text{facet}		& 0 & -1 & 1 & 0 & 1 & 0 & 0 & 0 & \red \circled{A} \\
\text{defining}		& 0 & 1 & 0 & 0 & -1 & 0 & 0 & 1 & \red \circled{B} \\
\text{inequalities}	& -1 & 0 & 0 & 1 & 1 & 0 & 0 & -1 & \red \circled{C} \\
					& 1 & 0 & 0 & -1 & 0 & 0 & 1 & 0 & \red \circled{D} \\
					& 0 & 0 & 0 & 1 & 0 & 1 & -1 & 0 & \red \circled{E} \\[.2cm]
\end{array}
\]

%sage: Q = GentleQuiver({'e1':{'e2':['a']}, 'e2':{'e3':['b']}}, [('a','b')])

%sage: Q.non_kissing_fan().rays()
%N( 1,  0,  0),
%N( 0,  0,  1),
%N( 1, -1,  0),
%N( 0,  0, -1),
%N( 0, -1,  0),
%N(-1,  0,  0),
%N( 0,  1,  0),
%N( 0,  1, -1)
%in 3-d lattice N

%sage: TCNKC = type_cone_NKC(Q)
%sage: TCNKC
%A 8-dimensional polyhedron in QQ^8 defined as the convex hull of 1 vertex, 5 rays, 3 lines
%sage: TCNKC.lines()
%(A line in the direction (0, 0, 1, 0, 1, 0, -1, -1),
% A line in the direction (1, 0, 0, 0, -1, -1, 1, 1),
% A line in the direction (0, 1, 0, -1, 0, 0, 0, -1))
%sage: TCNKC.inequalities()
%(An inequality (-1, 0, 1, -1, 0, 0, 0, 1) x + 0 >= 0,
% An inequality (1, 0, -1, 0, 1, 0, 0, 0) x + 0 >= 0,
% An inequality (0, 1, 0, 0, 0, 0, -1, 1) x + 0 >= 0,
% An inequality (0, 0, 1, 0, -1, 1, 0, 0) x + 0 >= 0,
% An inequality (0, 0, 0, 1, 0, 0, 1, -1) x + 0 >= 0)

\medskip
\noindent
The type cone of the right fan of \cref{fig:nonkissingFans} lives in~$\R^{11}$ and has a lineality space of dimension~$3$.
It has $9$ facet-defining inequalities (given below), which correspond to the flips illustrated in \cref{fig:labelFacetDefiningInequalititiesNKC}\,(right).
In particular, it is not simplicial.

\[
\begin{array}{r|c@{\;}c@{\;}c@{\;}c@{\;}c@{\;}c@{\;}c@{\;}c@{\;}c@{\;}c@{\;}cc}
\text{walks} & \raisebox{-.4cm}{\includegraphics[scale=.5]{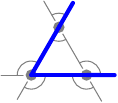}} & \raisebox{-.4cm}{\includegraphics[scale=.5]{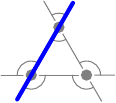}} & \raisebox{-.4cm}{\includegraphics[scale=.5]{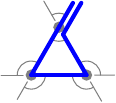}} & \raisebox{-.4cm}{\includegraphics[scale=.5]{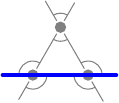}} & \raisebox{-.4cm}{\includegraphics[scale=.5]{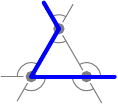}} & \raisebox{-.4cm}{\includegraphics[scale=.5]{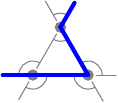}} & \raisebox{-.4cm}{\includegraphics[scale=.5]{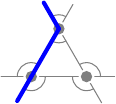}} & \raisebox{-.4cm}{\includegraphics[scale=.5]{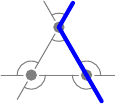}} & \raisebox{-.4cm}{\includegraphics[scale=.5]{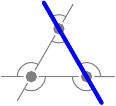}} & \raisebox{-.4cm}{\includegraphics[scale=.5]{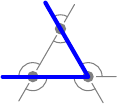}} & \raisebox{-.4cm}{\includegraphics[scale=.5]{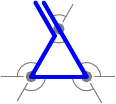}} \\[.6cm]
\text{$\b{g}$-vectors} & \compactVectorT{1}{0}{0} & \compactVectorT{0}{1}{0} & \compactVectorT{0}{0}{1} & \compactVectorT{1}{-1}{0} & \compactVectorT{1}{0}{-1} & \compactVectorT{0}{-1}{1} & \compactVectorT{0}{1}{-1} & \compactVectorT{-1}{0}{1} & \compactVectorT{-1}{0}{0} & \compactVectorT{0}{-1}{0} & \compactVectorT{0}{0}{-1} \\[.6cm]
\text{facet}		& -1 & 1 & 0 & 0 & 1 & 0 & -1 & 0 & 0 & 0 & 0 & \red \circled{A} \\
\text{defining}		& 1 & 0 & 0 & 0 & -1 & 0 & 0 & 0 & 0 & 0 & 1 & \red \circled{B} \\
\text{inequalities}	& 0 & 0 & 0 & 1 & -1 & 0 & 1 & 0 & 0 & 0 & 0 & \red \circled{C} \\
					& 1 & 0 & -1 & -1 & 0 & 1 & 0 & 0 & 0 & 0 & 0 & \red \circled{D} \\
					& 0 & 0 & 0 & -1 & 1 & 0 & 0 & 0 & 0 & 1 & -1 & \red \circled{E} \\
					& 0 & 0 & 1 & 0 & 0 & -1 & 0 & 0 & 0 & 1 & 0 & \red \circled{F}\\
					& 0 & 0 & 0 & 1 & 0 & -1 & 0 & 1 & 0 & 0 & 0 & \red \circled{G} \\
					& 0 & 0 & 0 & 0 & 0 & 1 & 0 & -1 & 1 & -1 & 0 & \red \circled{H} \\
					& 0 & -1 & 0 & 0 & 0 & 0 & 1 & 1 & -1 & 0 & 0 & \red \circled{K} \\[.2cm]
\end{array}
\]

\begin{figure}[h]
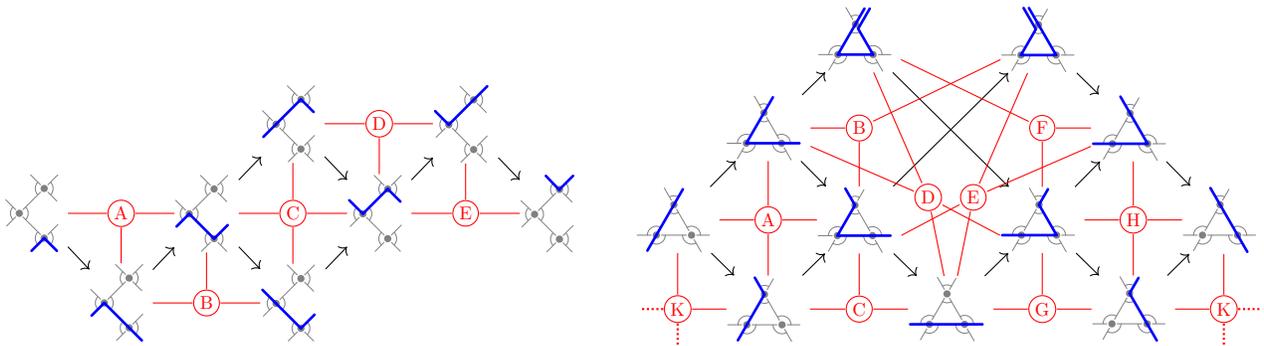

	\capstart
    \begin{adjustbox}{center}
        \begin{tikzpicture}
        	\matrix (m) [matrix of math nodes, row sep=.15cm, column sep=.3cm, nodes={anchor=center, align=center, inner sep=0pt}]{
        		&&& \includegraphics[scale=.5]{walkA1} & \node (d) {\red \circled{D}}; & \includegraphics[scale=.5]{walkA7} & \\
        		\includegraphics[scale=.5]{walkA3} & \node (a) {\red \circled{A}}; & \includegraphics[scale=.5]{walkA5} & \node (c) {\red \circled{C}}; & \includegraphics[scale=.5]{walkA4} & \node (e) {\red \circled{E}}; & \includegraphics[scale=.5]{walkA6} \\
        		& \includegraphics[scale=.5]{walkA2} & \node (b) {\red \circled{B}}; & \includegraphics[scale=.5]{walkA8} &&& \\};
        	\draw[->] (m-2-1) -- (m-3-2);
        	\draw[->] (m-3-2) -- (m-2-3);
        	\draw[->] (m-2-3) -- (m-1-4);
        	\draw[->] (m-2-3) -- (m-3-4);
        	\draw[->] (m-1-4) -- (m-2-5);
        	\draw[->] (m-3-4) -- (m-2-5);
        	\draw[->] (m-2-5) -- (m-1-6);
        	\draw[->] (m-1-6) -- (m-2-7);
        	\draw[red] (a) -- (m-2-1);
        	\draw[red] (a) -- (m-2-3);
        	\draw[red] (a) -- (m-3-2);
        	\draw[red] (b) -- (m-3-2);
        	\draw[red] (b) -- (m-3-4);
        	\draw[red] (b) -- (m-2-3);
        	\draw[red] (c) -- (m-2-3);
        	\draw[red] (c) -- (m-2-5);
        	\draw[red] (c) -- (m-1-4);
        	\draw[red] (c) -- (m-3-4);
        	\draw[red] (d) -- (m-1-4);
        	\draw[red] (d) -- (m-1-6);
        	\draw[red] (d) -- (m-2-5);
        	\draw[red] (e) -- (m-2-5);
        	\draw[red] (e) -- (m-2-7);
        	\draw[red] (e) -- (m-1-6);
        \end{tikzpicture}
        \quad
        \begin{tikzpicture}
        	\matrix (m) [matrix of math nodes, row sep=.3cm, column sep=.1cm, nodes={anchor=center, align=center, inner sep=0pt}]{
        		&& \includegraphics[scale=.5]{walkB3} && \includegraphics[scale=.5]{walkB11} && \\
        		& \includegraphics[scale=.5]{walkB1} & \node (b) {\red \circled{B}}; && \node (f) {\red \circled{F}}; & \includegraphics[scale=.5]{walkB10} & \\
        		\includegraphics[scale=.5]{walkB2} & \node (a) {\red \circled{A}}; & \includegraphics[scale=.5]{walkB5} & \node (d) at (-.3,.3) {\red \circled{D}}; \node (e) at (.3,.3) {\red \circled{E}}; & \includegraphics[scale=.5]{walkB6} & \node (h) {\red \circled{H}}; & \includegraphics[scale=.5]{walkB9} \\
        		\node (k1) {\red \circled{K}}; & \includegraphics[scale=.5]{walkB7} & \node (c) {\red \circled{C}}; & \includegraphics[scale=.5]{walkB4} & \node (g) {\red \circled{G}}; & \includegraphics[scale=.5]{walkB8} & \node (k2) {\red \circled{K}}; \\};
        	\draw[->] (m-3-1) -- (m-2-2);
        	\draw[->] (m-3-1) -- (m-4-2);
        	\draw[->] (m-2-2) -- (m-1-3);
        	\draw[->] (m-2-2) -- (m-3-3);
        	\draw[->] (m-4-2) -- (m-3-3);
        	\draw[->] (m-1-3) -- (m-3-5);
        	\draw[->] (m-3-3) -- (m-1-5);
        	\draw[->] (m-3-3) -- (m-4-4);
        	\draw[->] (m-4-4) -- (m-3-5);
        	\draw[->] (m-1-5) -- (m-2-6);
        	\draw[->] (m-3-5) -- (m-2-6);
        	\draw[->] (m-3-5) -- (m-4-6);
        	\draw[->] (m-2-6) -- (m-3-7);
        	\draw[->] (m-4-6) -- (m-3-7);
        	\draw[red] (a) -- (m-3-1);
        	\draw[red] (a) -- (m-3-3);
        	\draw[red] (a) -- (m-2-2);
        	\draw[red] (a) -- (m-4-2);
        	\draw[red] (b) -- (m-2-2);
        	\draw[red] (b) -- (m-1-5);
        	\draw[red] (b) -- (m-3-3);
        	\draw[red] (c) -- (m-4-2);
        	\draw[red] (c) -- (m-4-4);
        	\draw[red] (c) -- (m-3-3);
        	\draw[red] (d) -- (m-2-2);
        	\draw[red] (d) -- (m-3-5.200);
        	\draw[red] (d) -- (m-1-3);
        	\draw[red] (d) -- (m-4-4);
        	\draw[red] (e) -- (m-3-3.340);
        	\draw[red] (e) -- (m-2-6);
        	\draw[red] (e) -- (m-4-4);
        	\draw[red] (e) -- (m-1-5);
        	\draw[red] (f) -- (m-1-3);
        	\draw[red] (f) -- (m-2-6);
        	\draw[red] (f) -- (m-3-5);
        	\draw[red] (g) -- (m-4-4);
        	\draw[red] (g) -- (m-4-6);
        	\draw[red] (g) -- (m-3-5);
        	\draw[red] (h) -- (m-3-5);
        	\draw[red] (h) -- (m-3-7);
        	\draw[red] (h) -- (m-2-6);
        	\draw[red] (h) -- (m-4-6);
        	\draw[red] (k1) -- (m-3-1);
        	\draw[red] (k1) -- (m-4-2);
        	\draw[red, densely dotted, thick] (k1.west) -- ([xshift=-.3cm]k1.west);
        	\draw[red, densely dotted, thick] (k1.south) -- ([yshift=-.3cm]k1.south);
        	\draw[red] (k2) -- (m-4-6);
        	\draw[red] (k2) -- (m-3-7);
        	\draw[red, densely dotted, thick] (k2.east) -- ([xshift=.3cm]k2.east);
        	\draw[red, densely dotted, thick] (k2.south) -- ([yshift=-.3cm]k2.south);
        \end{tikzpicture}
    \end{adjustbox}
	\caption{The facet-defining inequalities of the type cone~$\typeCone \big( \gvectorFan[\quiver] \big)$ of the non-kissing fan, represented on the Auslander Reiten quiver of the gentle algebra of~$\quiver$. While they correspond to meshes on the left, they are not as clear in the general case as on the right. See also \cref{sec:extricats}.}
	\label{fig:labelFacetDefiningInequalititiesNKC}
\end{figure}
\end{example}

As illustrated in \cref{exm:typeConeNKC}, the type cone of the non-kissing fan is not always simplicial and we do not always understand its extremal exchangeable pairs.
In the next section section, we will explore a special family of gentle quivers for which we can completely describe the type cone.
The combination of this special family with computer experiments in the general case supports the following conjecture.

\begin{conjecture}
Consider a distinguishable string~$\sigma \in \distinguishableStrings$, and let~$\omega$ (resp.~$\omega'$) be the walk obtained from~$\sigma$ by adding two hooks (resp.~two cohooks) at the endpoints of~$\sigma$. Then
\begin{enumerate}
\item The $\b{c}$-vector~$\multiplicityVector_\sigma$ is the direction of at least one extremal exchangeable pair.
\item If the walks~$\omega$ and~$\omega'$ are non-self-kissing and exchangeable, then they form the unique extremal exchangeable pair directed by~$\sigma$. These extremal exchangeable pairs correspond to meshes of the Auslander-Reiten quiver.
\end{enumerate}
\end{conjecture}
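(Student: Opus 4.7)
The plan is to combine the directional information of Proposition~\ref{prop:exchangeablePairsNKC} (which shows every exchangeable pair carries a canonical $\b{c}$-vector direction $\multiplicityVector_\sigma$ with $\sigma \in \distinguishableStrings$) with the counting identity of Proposition~\ref{prop:bijectionStringsWalks}, and then to identify the mesh relations with almost-split extriangles via the machinery of Section~\ref{sec:extricats}. At the outset, for any exchangeable pair $\{\omega,\omega'\}$ Proposition~\ref{prop:exchangeablePairsNKC}(v) assigns the $\b{c}$-vector $\multiplicityVector_\sigma$, where $\sigma$ is the common distinguished substring, so the facet-defining inequalities of $\typeCone(\gvectorFan[\quiver])$ group by direction, and two inequalities in distinct directions are linearly independent as normal vectors. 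Since the type cone lives in $\R^{\properWalks}$ with linearity of dimension $|Q_0|$, Proposition~\ref{prop:bijectionStringsWalks} forces at least $|\properWalks| - |Q_0| = |\distinguishableStrings|$ facets.

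For Part~(1), I would fix $\sigma \in \distinguishableStrings$ and, by definition of distinguishable, choose a non-kissing facet $F$ containing a walk $\omega$ whose distinguished substring equals $\sigma$. Flipping $\omega$ in $F$ produces an exchange relation whose $\b{c}$-vector direction is $\multiplicityVector_\sigma$. Since relations in pairwise distinct directions are linearly independent, the facets of $\typeCone(\gvectorFan[\quiver])$ are partitioned by the $|\distinguishableStrings|$ directions $\{\multiplicityVector_\sigma\}_{\sigma \in \distinguishableStrings}$, and the lower bound on the facet count, combined with the fact that each direction hosts at least one candidate relation, forces every direction to host at least one genuine facet by a pigeonhole/rank argument.

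For Part~(2), suppose $\omega = \hh{\sigma}$ and $\omega' = \cc{\sigma}$ are non-self-kissing and exchangeable. Proposition~\ref{prop:exchangeablePairsNKC}(i) then pins down the flip partners as $\mu = \hc{\sigma}$ and $\nu = \ch{\sigma}$, yielding the linear dependence
\[
\gvector{\hh{\sigma}} + \gvector{\cc{\sigma}} = \gvector{\hc{\sigma}} + \gvector{\ch{\sigma}}.
\]
I would then identify this dependence with the mesh relation of the almost-split extriangle starting at the indecomposable module attached to $\sigma$, via the Butler--Ringel dictionary~\cite{ButlerRingel} and~\cite{PaluPilaudPlamondon-nonkissing}. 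Uniqueness among extremal pairs in direction $\multiplicityVector_\sigma$ would follow by showing that any other exchange relation with that direction decomposes, as in the telescoping argument used in the proof of Proposition~\ref{prop:extremalExchangeablePairsAsso}, into a positive combination of the above mesh relation together with mesh relations in \emph{other} directions; the cluster-category-style statement of Theorem~\ref{thm:extricats}, applied to the relevant relative extriangulated structure, then supplies the algebraic backbone identifying these relations with almost-split extriangles and thus with AR-meshes.

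The main obstacle will be controlling the pathologies highlighted in Remark~\ref{rem:exchangeablePairsNKC} and Example~\ref{exm:typeConeNKC}: outside the brick and $2$-acyclic setting, the type cone is genuinely non-simplicial, so some directions $\multiplicityVector_\sigma$ must host more than one extremal pair, and some such pairs cannot be presented as $\{\hh{\sigma},\cc{\sigma}\}$ because those walks may be self-kissing or non-exchangeable. The surjectivity step in Part~(1) is therefore the delicate point: one needs to verify that the candidate relation produced from an \emph{arbitrary} choice of facet $F$ is not subsumed by positive combinations of relations in other directions, which in principle can happen if the direction $\multiplicityVector_\sigma$ only contributes non-facet-defining inequalities. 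Bridging this gap seems to require an extension of Theorem~\ref{thm:extricats} to non-brick or non-$2$-acyclic gentle algebras, where almost-split extriangles may behave less rigidly, and this is where I expect the bulk of the work to lie.
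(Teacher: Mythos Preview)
The statement you are attempting to prove is a \emph{conjecture} in the paper, not a theorem: the authors do not supply a proof and explicitly motivate it only by the brick $2$-acyclic case together with computer experiments. There is therefore no ``paper's own proof'' to compare your proposal against.

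Your self-diagnosis in the final paragraph is accurate and identifies precisely why this remains open. The pigeonhole/rank step you sketch for Part~(1) does not go through: knowing that the closed type cone has at least $|\distinguishableStrings|$ facets, and that every facet carries some direction $\multiplicityVector_\sigma$, does not force every direction to be realised. In the non-simplicial examples (\cref{exm:typeConeNKC}, right fan, or \cref{exm:nonSimplicialTypeConeNKC}) there are strictly more than $|\distinguishableStrings|$ facets, so nothing prevents several facets from sharing one direction while another direction is empty. The auxiliary claim that ``relations in pairwise distinct directions are linearly independent as normal vectors'' is also not available as stated: the normal vectors $\b{n}(\omega,\omega')$ live in $\R^{\properWalks}$, whereas $\multiplicityVector_\sigma \in \R^{Q_0}$ only controls the image of $\b{n}(\omega,\omega')$ under the $\b{g}$-vector matrix; and distinct distinguishable strings can in any case share the same multiplicity vector, so ``direction'' does not partition the inequalities into $|\distinguishableStrings|$ classes.

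For Part~(2), your identification of the relation $\gvector{\hh{\sigma}} + \gvector{\cc{\sigma}} = \gvector{\hc{\sigma}} + \gvector{\ch{\sigma}}$ with an Auslander--Reiten mesh is correct where it applies (cf.\ \cref{rem:meshARquiver} and \cref{thm:brick-algebra-condition}), but the uniqueness claim again rests on a telescoping decomposition that the paper only establishes under the brick $2$-acyclic hypothesis (\cref{prop:extremalExchangeablePairsNKC}). Your conclusion that closing the gap would require an extension of \cref{thm:extricats} beyond the brick setting matches the reason the authors left this as a conjecture.
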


%%%

\subsubsection{Simplicial type cones for brick and $2$-acyclic gentle bound quivers}
\label{subsubsec:simplicialTypeConeNKC}

The following family of gentle quivers was considered in~\cite[Sect.~4]{GarverMcConvilleMousavand}.

\begin{proposition}
\label{prop:noSelfKissing}
The following conditions are equivalent for a gentle quiver~$\quiver$.
\begin{enumerate}[(i)]
\item any (non necessarily oriented) cycle of~$\quiver$ contains at least two relations in~$I$,
\item any string of~$\quiver$ is distinguishable,
\item no walk on~$\quiver$ is self-kissing.
\end{enumerate}
\end{proposition}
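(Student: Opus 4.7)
The plan is to prove the equivalence via the cyclic chain $\text{(iii)} \Rightarrow \text{(ii)} \Rightarrow \text{(i)} \Rightarrow \text{(iii)}$, with the first and last implications doing most of the combinatorial work.

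For $\text{(iii)} \Rightarrow \text{(ii)}$, given a string $\sigma$, I would extend it at each endpoint by a hook or cohook in $\quiver\blossom$ to produce the walk $\cc{\sigma}$ on which $\sigma$ sits as a top substring between the two added cohooks. Hypothesis (iii) ensures that this walk is not self-kissing, and by construction it is not straight, so $\cc{\sigma} \in \properWalks$. I would then show that $\cc{\sigma}$ belongs to some non-kissing facet $F$ in which $\sigma$ is the distinguished substring: the two added cohook arrows are forced to be distinguished because, at each cohook end, any other walk in $F$ sharing that ``turn'' would be bounded below $\cc{\sigma}$ in the order~$\prec_\alpha$ used to define distinguished walks in \cref{subsubsec:nonkissingComplex}. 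Existence of such an $F$ uses that the non-kissing complex is pseudomanifold and connected by flips, so one starts from any facet containing $\cc{\sigma}$ and flips away any walk that witnesses a distinguished arrow closer to $\sigma$.

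For $\text{(i)} \Rightarrow \text{(iii)}$, I would argue contrapositively. Suppose $\omega$ is self-kissing: there exist two positions in $\omega$ where the same string~$\sigma$ occurs, once as a top substring and once as a bottom substring. Between these two occurrences, $\omega$ traces a walk~$\pi$ in~$\quiver\blossom$ from the endpoints of one copy of $\sigma$ to the endpoints of the other. Identifying the two copies of $\sigma$ turns $\pi$ into a closed walk on $\quiver$, which necessarily contains an undirected cycle $C$. The key point is to count relations on~$C$: at each endpoint $v$ of $\sigma$, the top/bottom configuration of $\omega$ forces the two arrows of~$C$ incident to $v$ to be on opposite sides of $\sigma$ relative to the local $4$-valent neighborhood given by the blossoming; the gentle bound quiver axioms then force the composition at $v$ with $\sigma$ to avoid a relation. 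Performing this local analysis at both endpoints shows that $C$ can carry at most one relation in $I$, contradicting~(i).

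For $\text{(ii)} \Rightarrow \text{(i)}$, I would also argue contrapositively. Given a cycle $C$ in $\quiver$ containing at most one relation in $I$, I would produce a string $\sigma$ by reading $C$ (minus at most one arrow around the lone relation, if any) whose ``two cohook extension'' $\cc{\sigma}$ is forced to loop around $C$ in a way that makes it self-kissing; the same cycle then obstructs any walk of which $\sigma$ could be a distinguished substring from being non-self-kissing, so~$\sigma$ is not distinguishable. The main obstacle I anticipate is precisely this last implication: making the construction of a non-distinguishable~$\sigma$ out of a low-relation cycle fully rigorous requires a careful case analysis depending on whether $C$ is oriented, whether the single permitted relation lies on $C$, and how $C$ embeds in the $4$-valent blossoming quiver. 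I would expect to invoke (or mildly adapt) the analogous analysis of \cite[Sect.~4]{GarverMcConvilleMousavand}, which already identifies the brick condition (i) as the obstruction to pathological walks.
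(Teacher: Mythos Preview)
The paper does not actually prove this proposition: it is stated as a known result and attributed to~\cite[Sect.~4]{GarverMcConvilleMousavand}, so there is no in-paper proof to compare against. Your cyclic strategy $\text{(iii)} \Rightarrow \text{(ii)} \Rightarrow \text{(i)} \Rightarrow \text{(iii)}$ is reasonable, and your instinct to fall back on that reference for the hardest step matches what the authors themselves do.

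That said, two of your implications have genuine gaps as written. For $\text{(iii)} \Rightarrow \text{(ii)}$, you assert that starting from any facet containing~$\cc{\sigma}$ one can ``flip away any walk that witnesses a distinguished arrow closer to~$\sigma$'' until~$\sigma$ becomes distinguished, but you give no reason why this process terminates or even makes progress: flipping changes the distinguished arrows of \emph{all} walks in the facet, and you have not argued that the distinguished substring of~$\cc{\sigma}$ moves monotonically toward~$\sigma$. (Also, adding cohooks makes~$\sigma$ a \emph{bottom} substring of~$\cc{\sigma}$, not a top one, in the paper's conventions; this is cosmetic but worth fixing.) A cleaner route, once you also have $\text{(i)}$, is the argument the paper gives later in \cref{prop:interestingExchangeablePairsNKC}, which directly exhibits a facet in which~$\sigma$ is distinguished for~$\cc{\sigma}$; but that proof uses the brick hypothesis, so you would need to have already closed part of the cycle.

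For $\text{(i)} \Rightarrow \text{(iii)}$, your local analysis is too vague. From a self-kiss you correctly extract a closed walk, but the passage from ``closed walk'' to ``an undirected cycle carrying at most one relation'' is the crux, and your sentence ``the gentle bound quiver axioms then force the composition at~$v$ with~$\sigma$ to avoid a relation'' does not establish it. A self-kiss only tells you that at the two endpoints of~$\sigma$ the four incident arrows of~$\omega$ are configured top/bottom; it does not immediately bound the number of relations on an arbitrary cycle contained in the intervening closed walk, especially when that closed walk is not itself simple. You would need a careful induction on the complexity of the closed walk (or on the length of~$\sigma$), and this is precisely the case analysis carried out in~\cite[Sect.~4]{GarverMcConvilleMousavand}.
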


In this section, we will restrict our attention to the following family of quivers.

\begin{definition}
\label{def:brick2acyclic}
A gentle quiver~$\quiver$ is called:
\begin{itemize}
\item \defn{brick} if it satisfies the three equivalent conditions of \cref{prop:noSelfKissing},
\item \defn{$2$-acyclic} if it contains no cycle of length~$2$.
\end{itemize}
\end{definition}

Note that the family of brick and $2$-acyclic gentle quivers already contains a lot of relevant examples, including the gentle grid and dissection quivers discussed in \cref{subsubsec:nonkissingComplex}.
In particular, the classical associahedron is the gentle associahedron of a brick and $2$-acyclic gentle quiver.
We will see in \cref{coro:simplicialTypeConeNKC} that the type cone of the non-kissing fan~$\gvectorFan[\quiver]$ of a brick and $2$-acyclic gentle quiver~$\quiver$ happens to be simplicial, and we will derive in \cref{thm:allPolytopalRealizationsNKC} a simple description of all polytopal realizations of~$\gvectorFan[\quiver]$.

For a string~$\sigma$ of~$\quiver$, we denote by $\sigma\hR$ (resp.~$\sigma\cR$) the unique string of the blossoming quiver~$\quiver\blossom$ of the form $\sigma\hR = \sigma \alpha_1^{-1} \alpha_2 \dots \alpha_\ell$ (resp.~$\sigma\cR = \sigma \alpha_1 \alpha_2^{-1} \dots \alpha_\ell^{-1}$) with~$\ell \ge 1$ and~${\alpha_1, \dots, \alpha_\ell \in Q_1}$ and such that~$t(\alpha_\ell)$ (resp.~$s(\alpha_\ell)$) is a blossom of~$\quiver\blossom$.
These notations are motivated by the representation of strings used in~\cite{ButlerRingel, PaluPilaudPlamondon-nonkissing}, and the terminology usually says that~$\sigma\hR$ (resp.~$\sigma\cR$) is obtained by adding a \defn{hook} (resp.~a \defn{cohook}) to~$\sigma$.
We define similarly~$\hL\sigma$ (resp.~$\cL\sigma$).
The walk~$\hL(\sigma\hR) = (\hL\sigma)\hR$ of~$\quiver$ is simply denoted by~$\hh{\sigma}$, and we define similarly~$\cc{\sigma}$, $\hc{\sigma}$ and~$\ch{\sigma}$.

\begin{proposition}
\label{prop:interestingExchangeablePairsNKC}
For any brick and $2$-acyclic gentle quiver~$\quiver$ and any string~$\sigma \in \strings$, the walks~$\cc{\sigma}$ and~$\hh{\sigma}$ are exchangeable with distinguished substring~$\sigma$.
\end{proposition}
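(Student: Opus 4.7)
The plan is to apply Proposition~\ref{prop:exchangeablePairsNKC} by constructing a non-kissing facet $F$ of $\NKC$ containing $\cc{\sigma}$ whose flip in the direction of $\cc{\sigma}$ produces a facet $F'$ containing $\hh{\sigma}$. As a first step I would check that the four walks $\cc{\sigma}, \hh{\sigma}, \hc{\sigma}, \ch{\sigma}$ are well defined, pairwise distinct, and proper: brickness combined with Proposition~\ref{prop:noSelfKissing} rules out any self-kissing, while the gentle axioms together with the $2$-acyclicity hypothesis ensure non-straightness and pairwise distinctness, in particular in the degenerate situation when $\sigma$ has length zero.

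Next I would analyze the kissing relations among these four walks. By construction, the substring $\sigma$ sits at the bottom of $\cc{\sigma}$ (both cohook arms exit $\sigma$ with outgoing flanking arrows) and on top of $\hh{\sigma}$ (both hook arms enter $\sigma$ with incoming flanking arrows), matching the kissing pattern of Figure~\ref{fig:kissingCrossing}; so $\cc{\sigma}$ and $\hh{\sigma}$ kiss along $\sigma$. By contrast, a direct inspection of each common substring shows that the three walks $\cc{\sigma}, \hc{\sigma}, \ch{\sigma}$ are pairwise non-kissing, and therefore form a face of $\NKC$, which we may extend to a facet $F$.

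Finally I would compute the distinguished substring of $\cc{\sigma}$ in $F$. At the right-flanking arrow of $\sigma$ inside $\cc{\sigma}$, the walk $\hc{\sigma} \in F$ shares the substring $\sigma\cR$ with $\cc{\sigma}$ but enters it with an arrow oriented oppositely, so $\cc{\sigma}$ is the $\prec$-maximum at this arrow; symmetrically at the left-flanking arrow via $\ch{\sigma}$. Hence both flanking arrows of $\sigma$ are distinguished in $\cc{\sigma}$, giving $\distinguishedString{\cc{\sigma}}{F} = \sigma$. Applying Proposition~\ref{prop:exchangeablePairsNKC}~(i) to the flip of $\cc{\sigma}$ in $F$ then yields an adjacent facet $F'$ in which $\cc{\sigma}$ is replaced by some walk $\omega' = \rho'\sigma\tau'$ sharing the distinguished substring $\sigma$; since the gentle axioms provide only a hook or a cohook as extension at each endpoint of $\sigma$, the walk $\omega'$ must belong to $\{\cc{\sigma}, \hh{\sigma}, \hc{\sigma}, \ch{\sigma}\}$, and ruling out $\omega' = \cc{\sigma}$ and $\omega' \in \{\hc{\sigma}, \ch{\sigma}\} \subset F \cap F'$ forces $\omega' = \hh{\sigma}$, as required.

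The main obstacle will be the case analysis in the second paragraph, specifically verifying that $\{\hc{\sigma}, \ch{\sigma}\}$ is non-kissing: these two walks share $\sigma$ and may overlap further along their hook and cohook arms, and the $2$-acyclicity is precisely what guarantees that no such overlap produces a kissing configuration. Once this verification is carried out and the facet $F$ is constructed, the rest of the argument is a direct structural application of Proposition~\ref{prop:exchangeablePairsNKC}.
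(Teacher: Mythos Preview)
Your overall strategy matches the paper's: build the face $\{\cc{\sigma},\hc{\sigma},\ch{\sigma}\}$, extend it to a facet~$F$, compute the distinguished data of~$\cc{\sigma}$ in~$F$, and read off the flip. Two points need attention, though, one minor and one substantive.

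The minor point: your top/bottom convention is reversed. Cohook arms leave~$\sigma$ via outgoing arrows, so~$\sigma$ is on \emph{top} of~$\cc{\sigma}$ and at the bottom of~$\hh{\sigma}$; the kiss goes from~$\cc{\sigma}$ to~$\hh{\sigma}$, not the other way. This does not affect the structure of the argument.

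The substantive gap is in your third paragraph. To show that~$\cc{\sigma}$ is the distinguished walk at a flanking arrow~$\beta$, you must compare~$\cc{\sigma}$ with \emph{every} walk~$\lambda\in F$ passing through~$\beta$, not only with~$\hc{\sigma}$ and~$\ch{\sigma}$. Exhibiting a single walk that loses to~$\cc{\sigma}$ under~$\prec_\beta$ does not make~$\cc{\sigma}$ the maximum. The paper handles this by a different (and sharper) computation: it shows that~$\mu=\hc{\sigma}$ is distinguished at the hook-side arrow~$\alpha'$, arguing that for any~$\lambda\in F$ containing~$\alpha'$, the maximal common substring of~$\lambda$ and~$\omega=\cc{\sigma}$ is bounded on one side by the opposing arrows~$\alpha,\alpha'$, so compatibility of~$\lambda$ with~$\omega$ forces~$\lambda$ to separate from~$\mu$ in the direction of~$\alpha'$, whence~$\lambda\prec_{\alpha'}\mu$. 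The trick is to compare the unknown~$\lambda$ with~$\omega$ rather than with~$\mu$. Once one knows that~$\mu$ and~$\nu$ are distinguished at~$\alpha'$ and~$\beta'$, the explicit flip description of~\cite[Prop.~2.31]{PaluPilaudPlamondon-nonkissing} immediately identifies the flipped walk as~$\hh{\sigma}$, so your final step (arguing that ``the gentle axioms provide only a hook or a cohook as extension'') becomes unnecessary---which is good, since that claim constrains only the first arrow of~$\rho'$ and~$\tau'$, not the entire walks.
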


\begin{proof}
We fix some notations similar to that of \cref{fig:flip}.
Let~$\omega \eqdef \cc{\sigma}$, $\omega' \eqdef \hh{\sigma}$, $\mu \eqdef \hc{\sigma}$ and~$\nu \eqdef \ch{\sigma}$.
See \cref{fig:flip}.

Since~$\quiver$ is brick, these walks are not self-kissing by \cref{prop:noSelfKissing} and since they are bending, they are proper walks.
We want to show that there are non-kissing facets~$F, F'$ of the non-kissing complex~$\NKC$ containing both~$\mu$ and~$\nu$ and such that~$F \ssm \{\omega\} = F' \ssm \{\omega'\}$.

We first show that~$\mu$ and~$\nu$ are compatible.
Let~$\tau$ be a maximal common substring of~$\mu$ and~$\nu$.
If~$\sigma$ and~$\tau$ are not disjoint, then~$\sigma = \tau$ and $\mu$ and~$\nu$ are not kissing at~$\tau$.
We can therefore assume that $\tau$ appears completely before or completely after~$\sigma$ in both~$\mu$ and~$\nu$.
We distinguish two different cases:
\begin{itemize}
\item If~$\tau$ appears before~$\sigma$ in both~$\mu$ and~$\nu$, then~$\alpha \tau \alpha'$ forms a cycle in~$\quiver$. If~$\tau$ is reduced to a vertex, then we have a $2$-cycle. Otherwise, since~$\alpha\tau$ is a substring of~$\nu$ and~$\alpha'\tau$ is a substring of~$\nu$, the cycle~$\alpha \tau \alpha'$ contains a unique relation~$\alpha'\alpha$. This rules out this case under the assumption that~$\quiver$ is brick and $2$-acyclic. The case when~$\tau$ appears after~$\sigma$ in both~$\mu$ and~$\nu$ is symmetric.
\item If~$\tau$ appears before~$\sigma$ in~$\mu$ and after~$\sigma$ in~$\nu$, then~$\omega$, $\mu$ and~$\nu$ finish at the same blossom so that~$\mu$ and~$\nu$ are not kissing at~$\tau$. The case when~$\tau$ appears after~$\sigma$ in~$\mu$ and before~$\sigma$ in~$\nu$ is symmetric.
\end{itemize}
We conclude that~$\mu$ and~$\nu$ are non-kissing, which also implies that~$\{\mu, \nu, \omega\}$ and~$\{\mu, \nu, \omega'\}$ are non-kissing faces of~$\NKC$.
We can therefore consider a non-kissing facet~$F$ containing~$\{\mu, \nu, \omega\}$.

We claim that~$\omega$ is distinguished at~$\alpha$ and~$\beta$ in~$F$, that~$\mu$ is distinguished at~$\alpha'$ in~$F$ and that~$\nu$ is distinguished at~$\beta'$ in~$F$.
Let us just prove that~$\mu$ is distinguished at~$\alpha'$, the other statements being similar.
Consider any walk~$\lambda$ of~$F$ containing~$\alpha'$, and let~$\tau$ be the maximal common substring of~$\omega$ and~$\lambda$.
Note that~$\omega$ has the outgoing arrow~$\alpha$ while~$\lambda$ has the incoming arrow~$\alpha'$ at one end of~$\tau$.
Since~$\omega$ and~$\lambda$ are compatible, it ensures that either~$\tau$ ends at a blossom, or $\lambda$ has an outgoing arrow at the other end of~$\tau$.
This shows that~$\lambda \prec_{\alpha'} \mu$ since $\lambda$ separates from~$\mu$ with an arrow in the same direction as~$\alpha'$.

Since this precisely coincides with the description of the flip in the non-kissing complex (see \cref{prop:exchangeablePairsNKC}\,(i) or~\cite[Prop.~2.33]{PaluPilaudPlamondon-nonkissing} for an alternative presentation), we conclude that the flip of~$\omega$ in~$F$ creates a facet~$F'$ containing~$\omega'$.
Therefore, $\omega$ and~$\omega'$ are exchangeable with distinguished substring~$\sigma$.
\end{proof}

The following statement describes the type cone of the non-kissing fan of a brick and $2$-acyclic gentle quiver.
We provide here an elementary combinatorial proof, a more general representation theoretic perspective is discussed in \cref{thm:brick-algebra-condition}.

\begin{proposition}
\label{prop:extremalExchangeablePairsNKC}
For any brick and $2$-acyclic gentle quiver~$\quiver$, the extremal exchangeable pairs for the non-kissing fan of~$\quiver$ are precisely the pairs~$\{\cc{\sigma}, \hh{\sigma}\}$ for all strings~$\sigma \in \strings$.
\end{proposition}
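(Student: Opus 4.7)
The plan is to follow the strategy of the associahedron case (Proposition \ref{prop:extremalExchangeablePairsAsso}): combine a counting argument with an explicit non-negative decomposition of arbitrary exchange relations into the claimed extremal ones. First, we set up the count. Since $\quiver$ is brick, Proposition \ref{prop:noSelfKissing} gives $|\distinguishableStrings| = |\strings|$, and Proposition \ref{prop:bijectionStringsWalks} then yields $|\strings| = |\properWalks| - |Q_0|$. The non-kissing fan lies in $\R^{Q_0}$ with $|\properWalks|$ rays, so by Remark \ref{rem:dimTypeCone} the type cone has at least $|\strings|$ facets, hence at least $|\strings|$ extremal exchangeable pairs. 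On the other hand, Proposition \ref{prop:interestingExchangeablePairsNKC} exhibits $|\strings|$ candidate pairs $\{\cc{\sigma}, \hh{\sigma}\}$ indexed by $\sigma \in \strings$, which are pairwise distinct because Proposition \ref{prop:exchangeablePairsNKC}\,(ii) identifies $\sigma$ as the common distinguished substring of the pair. By Proposition \ref{prop:exchangeablePairsNKC}\,(iii), the normal vector of the candidate pair indexed by $\sigma$ is
\[
\b{m}(\sigma) \eqdef \b{f}_{\cc{\sigma}} + \b{f}_{\hh{\sigma}} - \b{f}_{\hc{\sigma}} - \b{f}_{\ch{\sigma}} \in \R^{\properWalks}.
\]

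The heart of the proof is to show that for every exchangeable pair $\{\omega, \omega'\}$ with common distinguished substring $\sigma$ and associated walks $\mu, \nu$, the normal vector $\b{n}(\omega, \omega') \eqdef \b{f}_\omega + \b{f}_{\omega'} - \b{f}_\mu - \b{f}_\nu$ is a non-negative integer combination of the vectors $\b{m}(\tau)$ for strings $\tau$ sitting ``between'' $\omega$ and $\omega'$, in the sense that they extend $\sigma$ by hook/cohook steps compatible with the decompositions $\omega = \rho\sigma\tau$ and $\omega' = \rho'\sigma\tau'$ furnished by Proposition \ref{prop:exchangeablePairsNKC}. The decomposition would proceed end by end: starting from $\sigma$, we would peel off one hook/cohook step at a time on each side to reach the full extensions appearing in $\omega$ and $\omega'$, and at each elementary step a local four-term mesh relation $\b{m}(\cdot)$ would contribute to the sum. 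Telescoping over all intermediate extensions would cancel the inner walks in pairs and leave precisely the four corners $\b{f}_\omega + \b{f}_{\omega'} - \b{f}_\mu - \b{f}_\nu$. This would be the exact analog of the identity $\b{n}(a,b,a',b') = \sum_{(c,d)} \b{m}(c,d)$ used in the proof of Proposition \ref{prop:extremalExchangeablePairsAsso}. Granting this decomposition, any extremal exchangeable pair must be one of the $|\strings|$ candidates $\{\cc{\sigma}, \hh{\sigma}\}$, and combined with the matching lower bound, all candidates are in fact extremal, proving the claim.

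The main obstacle is making the telescoping decomposition rigorous. One has to organize carefully the tree of iterated hook/cohook extensions of $\sigma$ sitting between $\omega$ and $\omega'$, check that at each intermediate stage all four walks $\cc{\tau}, \hh{\tau}, \hc{\tau}, \ch{\tau}$ remain proper (this is where the brick hypothesis, via Proposition \ref{prop:noSelfKissing}, and the $2$-acyclic hypothesis enter, precisely as in the case analysis at the end of the proof of Proposition \ref{prop:interestingExchangeablePairsNKC}), and keep track of the signed contributions so that only the four corners $\omega, \omega', \mu, \nu$ survive. A more conceptual alternative, developed in Sections \ref{sec:Kbproj} and \ref{sec:extricats}, would identify each $\b{m}(\sigma)$ with the class of an almost split sequence in a suitable extriangulated category and deduce the result from an Auslander-type theorem on minimal relations among generators of Grothendieck groups, bypassing the explicit combinatorial bookkeeping entirely.
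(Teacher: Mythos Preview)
Your proposal has the right overall architecture --- the counting argument and the reduction of arbitrary exchange relations to non-negative combinations of the $\b{m}(\sigma)$'s --- and this is exactly what the paper does. The difference lies in how the decomposition is organized.

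You describe the decomposition as a direct telescoping sum indexed by a tree of intermediate hook/cohook extensions of $\sigma$, in the spirit of the explicit formula $\b{n}(a,b,a',b') = \sum \b{m}(c,d)$ from the associahedron case. The paper instead proceeds by \emph{descending induction on the length of the common distinguished substring} $\lambda(\omega,\omega') = \ell(\sigma)$. The key device is the splitting identity
\[
\b{n}(\omega,\omega') = \b{n}(\omega, \cL\sigma\tau') + \b{n}(\cL\sigma\tau, \omega')
\]
(together with its three variants obtained by replacing $\cL$ by $\cR$, $\hL$, or $\hR$). Applying the four variants in succession yields
\[
\b{n}(\omega,\omega') = \b{n}(\omega,\cL\sigma\tau') + \b{n}(\cL\sigma\tau,\hL\sigma\cR) + \b{m}(\sigma) + \b{n}(\cL\sigma\hR,\hL\sigma\tau') + \b{n}(\hL\sigma\tau,\omega'),
\]
where each of the four residual $\b{n}$-terms has strictly longer common substring, so the induction hypothesis applies directly.

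This inductive organization sidesteps precisely the bookkeeping you flag as the ``main obstacle'': you never have to enumerate a full grid of intermediate strings $\tau$ and verify that each $\cc{\tau}, \hh{\tau}, \hc{\tau}, \ch{\tau}$ is proper. At each inductive step only two new walks appear, and only one explicit mesh $\b{m}(\sigma)$ is extracted. Your telescoping picture is the unfolding of this induction, but the inductive packaging is what makes it clean. Your sketch is not wrong, just less efficient than the paper's argument, and the gap you identify is exactly what the splitting identity closes.
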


\begin{proof}
Let~$(\b{f}_\omega)_{\omega \in \walks}$ be the canonical basis of~$\R^{\walks}$.
Consider two exchangeable walks~$\omega$ and~$\omega'$ with distinguished substring~$\sigma \in \Sigma_\top(\omega) \cap \Sigma_\bottom(\omega')$.
Decompose~$\omega = \rho \sigma \tau$ and~$\omega' = \rho' \sigma \tau'$ and define~$\mu \eqdef \rho' \sigma \tau$ and~$\nu \eqdef \rho \sigma \tau'$ as in \cref{prop:exchangeablePairsNKC}\,(i). \cref{prop:exchangeablePairsNKC}\,(iii) ensures that the linear dependence between the corresponding $\b{g}$-vectors is given by
\[
\gvector{\omega} + \gvector{\omega'} = \gvector{\mu} + \gvector{\nu}.
\]
Therefore, the inner normal vector of the corresponding inequality of the type cone~$\typeCone \big( \gvectorFan[\quiver] \big)$ is
\[
\b{n}(\omega, \omega') \eqdef \b{f}_\omega + \b{f}_{\omega'} - \b{f}_\mu - \b{f}_\nu.
\]
We claim that this normal vector is always a positive linear combination of the normal vectors~$\b{m}(\sigma) \eqdef \b{n}(\cc{\sigma}, \hh{\sigma}) = \b{f}_{\cc{\sigma}} + \b{f}_{\hh{\sigma}} - \b{f}_{\hc{\sigma}} - \b{f}_{\ch{\sigma}}$ for all strings~$\sigma \in \strings$.
Our proof works by descending induction on the length~$\lambda(\omega,\omega') \eqdef \ell(\sigma)$ of the common substring of~$\omega$ and~$\omega'$.
If~$\lambda(\omega, \omega')$ is big enough, then the walk~$\omega$ (resp.~$\omega'$) is just obtained by adding two outgoing (resp.~incoming) blossoms at the end of~$\sigma$, thus $\omega = \cc{\sigma}$ (resp.~$\omega' = \hh{\sigma}$), and there is nothing to prove.
Assume now that~$\omega \ne \hh{\sigma}$ (the situation where~$\omega' \ne \cc{\sigma}$ is symmetric).
If~$\rho \ne \cL$, observe that
\begin{itemize}
\item $\omega$ and~$\cL \sigma \tau'$ are exchangeable with~$\b{n}(\omega, \cL \sigma \tau') = \b{f}_\omega + \b{f}_{\cL \sigma \tau'} - \b{f}_{\cL \sigma \tau} - \b{f}_\nu$, and
\item $\cL \sigma \tau$ and~$\omega'$ are exchangeable with~$\b{n}(\cL \sigma \tau, \omega') = \b{f}_{\cL \sigma \tau} + \b{f}_{\omega'} - \b{f}_{\cL \sigma \tau'} - \b{f}_\mu$.
\end{itemize}
We derive that
\[
\b{n}(\omega, \omega') = \b{n}(\omega, \cL \sigma \tau') + \b{n}(\cL \sigma \tau, \omega').
\]
Observe moreover that since~$\omega$ has outgoing arrows at the endpoints of~$\sigma$, the common substring of~$\omega$ and~$\cL \sigma \tau'$ strictly contains~$\sigma$ so that $\lambda(\omega, \cL \sigma \tau') > \lambda(\omega, \omega')$.
By induction, $\b{n}(\omega, \cL \sigma \tau')$ is thus a positive linear combination of~$\b{m}(\sigma)$ for~$\sigma \in \strings$.
By symmetry, we obtain the four equalities
\[
\b{n}(\omega, \omega') = 
\begin{cases}
\b{n}(\omega, \cL \sigma \tau') + \b{n}(\cL \sigma \tau, \omega') & \text{if } \rho \ne \cL, \\
\b{n}(\omega, \rho' \sigma \cR) + \b{n}(\rho \sigma \cR, \omega') & \text{if } \tau \ne \cR, \\
\b{n}(\omega, \hL \sigma \tau') + \b{n}(\hL \sigma \tau, \omega') & \text{if } \rho' \ne \hL, \\
\b{n}(\omega, \rho' \sigma \hR) + \b{n}(\rho \sigma \hR, \omega') & \text{if } \tau' \ne \hR.
\end{cases}
\]
These four equalities are illustrated on \cref{fig:fourEqualities}.
\begin{figure}[t]
	\capstart
	\begin{adjustbox}{center}
    	\begin{tabular}{c@{\qquad}c}
    		\includegraphics[scale=1.2]{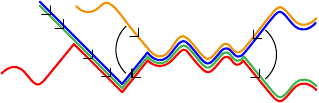} & \includegraphics[scale=1.2]{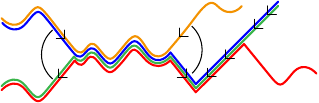} \\
    		$\b{n}({\red \omega}, {\orange \omega'}) = \b{n}({\red \omega}, {\blue \cL \sigma \tau'}) + \b{n}({\green \cL \sigma \tau}, {\orange \omega'})$ & $\b{n}({\red \omega}, {\orange \omega'}) = \b{n}({\red \omega}, {\blue \rho' \sigma \cR}) + \b{n}({\green \rho \sigma \cR}, {\orange \omega'})$ \\[.4cm]
    		\includegraphics[scale=1.2]{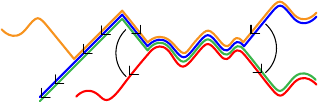} & \includegraphics[scale=1.2]{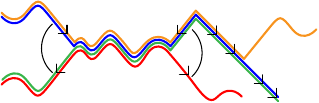} \\
    		$\b{n}({\red \omega}, {\orange \omega'}) = \b{n}({\red \omega}, {\blue \hL \sigma \tau'}) + \b{n}({\green \hL \sigma \tau}, {\orange \omega'})$ & $\b{n}({\red \omega}, {\orange \omega'}) = \b{n}({\red \omega}, {\blue \rho' \sigma \hR}) + \b{n}({\green \rho \sigma \hR}, {\orange \omega'})$ \\
    	\end{tabular}
	\end{adjustbox}
	\caption{Schematic representation of the four equalities in the proof of \cref{prop:extremalExchangeablePairsNKC}.}
	\label{fig:fourEqualities}
\end{figure}
Moreover~$\b{n}(\omega, \cL \sigma \tau')$, $\b{n}(\cL \sigma \tau, \hL \sigma \cR)$, $\b{n}(\cL \sigma \hR, \hL \sigma \tau')$ and~$\b{n}(\hL \sigma \tau, \omega')$ are all positive combinations of~$\b{m}(\sigma)$ for~$\sigma \in \strings$ by induction hypothesis.
Applying these equalities one after the other, we obtain
\[
\renewcommand{\arraystretch}{1.3}
\begin{array}[t]{l@{\;\;}l@{\;}l@{\;}l@{\;}l@{\;}l@{\;}l@{\;}l@{\;}l@{\;}l}
  & \b{n}(\omega, \omega') & & \hfill\raisebox{-.1cm}{\text{\tiny{Equality 1}}} & & & & & & \\
\cline{2-4}
= & \b{n}(\omega, \cL \sigma \tau') & + & \b{n}(\cL \sigma \tau, \omega') & & & & & & \hfill\raisebox{-.1cm}{\text{\tiny{Equality 2}}} \\
\cline{4-10}
= & \b{n}(\omega, \cL \sigma \tau') & + & \b{n}(\cL \sigma \tau, \hL \sigma \tau') & & \hfill\raisebox{-.1cm}{\text{\tiny{Equality 3}}} & & & + & \b{n}(\hL \sigma \tau, \omega') \\
\cline{4-6}
= & \b{n}(\omega, \cL \sigma \tau') & + & \b{n}(\cL \sigma \tau, \hL \sigma \cR) & + & \b{n}(\cL \sigma \cR, \hL \sigma \tau') & & \hfill\raisebox{-.1cm}{\text{\tiny{Equality 4}}} & + & \b{n}(\hL \sigma \tau, \omega') \\
\cline{6-8}
= & \b{n}(\omega, \cL \sigma \tau') & + & \b{n}(\cL \sigma \tau, \hL \sigma \cR) & + & \b{n}(\cL \sigma \cR, \hL \sigma \hR) & + & \b{n}(\cL \sigma \hR, \hL \sigma \tau') & + & \b{n}(\hL \sigma \tau, \omega') \\
= & \b{n}(\omega, \cL \sigma \tau') & + & \b{n}(\cL \sigma \tau, \hL \sigma \cR) & + & \multicolumn{1}{c}{\b{m}(\sigma)} & + & \b{n}(\cL \sigma \hR, \hL \sigma \tau') & + & \b{n}(\hL \sigma \tau, \omega')
\end{array}
\renewcommand{\arraystretch}{1}
\]
where we fix the convention~$\b{n}(\lambda, \lambda) = 0$ in case~$\rho = \hL$, $\rho' = \cL$, $\tau = \hR$ or~$\tau' = \cR$.
We conclude that~$\b{n}(\omega, \omega')$ is a positive combination of~$\b{m}(\sigma)$ for~$\sigma \in \strings$, since~$\b{n}(\omega, \cL \sigma \tau')$, $\b{n}(\cL \sigma \tau, \hL \sigma \cR)$, $\b{n}(\cL \sigma \hR, \hL \sigma \tau')$ and~$\b{n}(\hL \sigma \tau, \omega')$ are.
This shows that all extremal exchangeable pairs are of the form~$\{\cc{\sigma}, \hh{\sigma}\}$ for~$\sigma \in \strings$.

Conversely, we know from \cref{rem:dimTypeCone} that there are at least~$|\properWalks|-|Q_0|$ extremal exchangeable pairs. Since~$|\strings| = |\distinguishableStrings| = |\properWalks| - |Q_0|$ by \cref{prop:bijectionStringsWalks}, we conclude that all exchangeable pairs~$\{\cc{\sigma}, \hh{\sigma}\}$ for~$\sigma \in \strings$ are extremal.
\end{proof}

Our next statement follows from the end of the previous proof.
See also \cref{coro:simplicialTypeConeNKCalgebraic}.

\begin{corollary}
\label{coro:simplicialTypeConeNKC}
For any brick and $2$-acyclic gentle quiver~$\quiver$, the type cone~$\typeCone \big( \gvectorFan[\quiver] \big)$ of the non-kissing fan~$\gvectorFan[\quiver]$ is simplicial.
\end{corollary}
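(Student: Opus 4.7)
My plan is to read the corollary as an immediate numerical consequence of \cref{prop:extremalExchangeablePairsNKC} together with the dimension count in \cref{rem:dimTypeCone} and the enumeration in \cref{prop:bijectionStringsWalks}. Recall that the non-kissing fan~$\gvectorFan[\quiver]$ is an essential complete simplicial fan in~$\R^{Q_0}$ whose rays are indexed by the set of proper walks~$\properWalks$. By \cref{rem:dimTypeCone}, the type cone~$\typeCone \big( \gvectorFan[\quiver] \big)$ lives in~$\R^{\properWalks}$ with a linearity subspace of dimension~$|Q_0|$, so it admits at least~$|\properWalks|-|Q_0|$ facet-defining inequalities, with equality if and only if it is simplicial.

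The first step is to collect the upper bound on the number of facets that is already packaged in \cref{prop:extremalExchangeablePairsNKC}: every extremal exchangeable pair is of the form~$\{\cc{\sigma}, \hh{\sigma}\}$ for some string~$\sigma \in \strings$, so there are at most~$|\strings|$ facets. The second step is to match this upper bound with the lower bound coming from \cref{rem:dimTypeCone}. For this I would invoke \cref{prop:bijectionStringsWalks}, which yields~$|\strings| = |\distinguishableStrings| = |\properWalks| - |Q_0|$ precisely because~$\quiver$ is brick (so that every string is distinguishable, by \cref{prop:noSelfKissing}). Combining both bounds, the number of facets of~$\typeCone \big( \gvectorFan[\quiver] \big)$ equals~$|\properWalks|-|Q_0|$, and the cone is simplicial.

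In other words, the argument is the two-sided squeeze
\[
|\properWalks| - |Q_0| \;\le\; \#\text{facets of } \typeCone \big( \gvectorFan[\quiver] \big) \;\le\; |\strings| \;=\; |\properWalks| - |Q_0|,
\]
where the left inequality is the general dimensional bound, the middle inequality is \cref{prop:extremalExchangeablePairsNKC}, and the final equality uses both defining assumptions on~$\quiver$: brickness to identify~$\strings$ with~$\distinguishableStrings$ via \cref{prop:noSelfKissing}, and brickness together with $2$-acyclicity to ensure (via \cref{prop:interestingExchangeablePairsNKC}) that for every~$\sigma \in \strings$ the pair~$\{\cc{\sigma}, \hh{\sigma}\}$ is an honest exchangeable pair whose flip is governed by \cref{prop:exchangeablePairsNKC}.

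Since the actual work has already been done, there is no real obstacle left; the only thing to be careful about is to make explicit that the two bounds are computed in the same space~$\R^{\properWalks}$ and with the same normalization of dependences, so that counting facets on both sides is meaningful. Once this bookkeeping is done, the simpliciality is immediate and, by \cref{coro:simplicialTypeCone}, opens the door to the explicit description of all polytopal realizations of~$\gvectorFan[\quiver]$ given in \cref{thm:allPolytopalRealizationsNKC}.
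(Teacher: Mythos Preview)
Your proposal is correct and matches the paper's own argument essentially verbatim: the paper states that the corollary ``follows from the end of the previous proof'' of \cref{prop:extremalExchangeablePairsNKC}, which is precisely the two-sided squeeze you describe---at most $|\strings|$ facets by the first half of that proof, at least $|\properWalks|-|Q_0|$ facets by \cref{rem:dimTypeCone}, and equality via \cref{prop:bijectionStringsWalks} together with brickness (\cref{prop:noSelfKissing}).
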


Combining \cref{coro:simplicialTypeCone,coro:simplicialTypeConeNKC,prop:extremalExchangeablePairsNKC}, we derive the following description of all polytopal realizations of the non-kissing fan~$\gvectorFan[\quiver]$ of a brick and $2$-acyclic quiver~$\quiver$.

\begin{theorem}
\label{thm:allPolytopalRealizationsNKC}
For any brick and $2$-acyclic gentle quiver~$\quiver$ and any~$\b{\ell} \in \R_{>0}^{\strings}$, the polytope
\[
R_\b{\ell}(\quiver) \eqdef \set{\b{z} \in \R^{\walks}}{\begin{array}{l} \b{z} \ge 0 \qquad\text{and}\qquad \b{z}_\omega = 0 \text{ for any improper walk } \omega \\ \b{z}_{\cc{\sigma}} + \b{z}_{\hh{\sigma}} - \b{z}_{\hc{\sigma}} - \b{z}_{\ch{\sigma}} = \b{\ell}_\sigma \text{ for all } \sigma \in \strings\end{array}}
\]
is a realization of the non-kissing fan~$\gvectorFan[\quiver]$.
Moreover, the polytopes~$R_\b{\ell}(\quiver)$ for~$\b{\ell} \in \R_{>0}^{\strings}$ describe all polytopal realizations of~$\gvectorFan[\quiver]$.
\end{theorem}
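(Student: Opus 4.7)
The plan is to obtain this theorem as a direct corollary of the general simplicial type cone framework of \cref{coro:simplicialTypeCone}, once the two ingredients about the non-kissing fan of a brick and $2$-acyclic gentle quiver have been collected. Those ingredients are \cref{coro:simplicialTypeConeNKC} (the type cone $\typeCone(\gvectorFan[\quiver])$ is simplicial) and \cref{prop:extremalExchangeablePairsNKC} (its facets correspond bijectively to strings $\sigma \in \strings$, through the extremal exchangeable pairs $\{\cc{\sigma},\hh{\sigma}\}$). Concretely, I would set up the indexing so as to be able to quote \cref{coro:simplicialTypeCone} verbatim and then interpret the resulting polytope in the variables of the statement.

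First I would note that the rays of $\gvectorFan[\quiver]$ are exactly the $\b{g}$-vectors $\gvector{\omega}$ for $\omega \in \properWalks$; straight walks contribute $\gvector{\omega}=0$ and, since $\quiver$ is brick, there are no self-kissing walks by \cref{prop:noSelfKissing}, so ``improper'' reduces to ``straight''. The dimension of the ambient space is $|Q_0|$ and the number of rays is $|\properWalks|$, so by \cref{prop:bijectionStringsWalks} the codimension of the linearity space of $\typeCone(\gvectorFan[\quiver])$ equals $|\properWalks|-|Q_0| = |\distinguishableStrings| = |\strings|$ (every string is distinguishable in the brick case by \cref{prop:noSelfKissing}). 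This count matches the number of facets predicted by \cref{rem:dimTypeCone} and guaranteed by \cref{coro:simplicialTypeConeNKC,prop:extremalExchangeablePairsNKC}.

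Second, by the proof of \cref{prop:extremalExchangeablePairsNKC}, the inner normal of the facet of $\ctypeCone(\gvectorFan[\quiver])$ corresponding to the string $\sigma \in \strings$ is
\[
\b{m}(\sigma) = \b{f}_{\cc{\sigma}} + \b{f}_{\hh{\sigma}} - \b{f}_{\hc{\sigma}} - \b{f}_{\ch{\sigma}},
\]
coming from the linear dependence $\gvector{\cc{\sigma}} + \gvector{\hh{\sigma}} = \gvector{\hc{\sigma}} + \gvector{\ch{\sigma}}$ supplied by \cref{prop:exchangeablePairsNKC}(iii). Arranging these vectors as the rows of a $|\strings|\times|\properWalks|$ matrix $\b{K}$, the hypotheses of \cref{coro:simplicialTypeCone} are met, and the corollary immediately yields that the polytopes
\[
\bigset{\b{z} \in \R^{\properWalks}}{\b{K}\b{z} = \b{\ell} \text{ and } \b{z}\ge 0}
\]
for $\b{\ell} \in \R_{>0}^{\strings}$ are exactly all polytopal realizations of $\gvectorFan[\quiver]$.

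To conclude, I would translate this polytope into the ambient space $\R^{\walks}$ of the statement by extending each vector $\b{z} \in \R^{\properWalks}$ trivially: the coordinates $\b{z}_\omega$ for improper walks $\omega$ are set to $0$. Under this innocuous embedding, the linear equations $\b{K}\b{z}=\b{\ell}$ become precisely the system $\b{z}_{\cc{\sigma}}+\b{z}_{\hh{\sigma}}-\b{z}_{\hc{\sigma}}-\b{z}_{\ch{\sigma}}=\b{\ell}_\sigma$ of the theorem, and the positivity constraint together with the vanishing on improper walks defines $R_{\b{\ell}}(\quiver)$. There is no real obstacle here: all the substantive content has been absorbed into \cref{prop:extremalExchangeablePairsNKC,coro:simplicialTypeConeNKC}; the only care one needs is to verify that no coordinate among $\b{z}_{\cc{\sigma}},\b{z}_{\hh{\sigma}},\b{z}_{\hc{\sigma}},\b{z}_{\ch{\sigma}}$ accidentally forces itself to zero (i.e., that these four walks are genuinely proper), which follows from the brick and $2$-acyclic hypothesis already used in the proof of \cref{prop:interestingExchangeablePairsNKC}.
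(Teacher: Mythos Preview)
Your approach is correct and essentially identical to the paper's: the theorem is stated as a direct consequence of combining \cref{coro:simplicialTypeCone}, \cref{coro:simplicialTypeConeNKC}, and \cref{prop:extremalExchangeablePairsNKC}, and the paper gives no further argument beyond that. Your additional bookkeeping (the dimension count via \cref{prop:bijectionStringsWalks} and the passage from~$\R^{\properWalks}$ to~$\R^{\walks}$) is sound; one small remark is that some of the walks~$\hc{\sigma}$ or~$\ch{\sigma}$ may in fact be straight, but in that case the constraint~$\b{z}_\omega=0$ for improper walks makes the corresponding term vanish, so the equation in the statement still matches the row of~$\b{K}$ and no separate verification of properness is needed.
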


We also obtain from \cref{prop:extremalExchangeablePairsNKC} the following surprising property.

\begin{corollary}
Any $\b{c}$-vector supports exactly one extremal exchangeable pair.
\end{corollary}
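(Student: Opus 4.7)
The plan is to chain together the two preceding results. Proposition~\ref{prop:extremalExchangeablePairsNKC} identifies the extremal exchangeable pairs of~$\gvectorFan[\quiver]$ as the pairs $\{\cc{\sigma}, \hh{\sigma}\}$ indexed by strings $\sigma \in \strings$, and Proposition~\ref{prop:exchangeablePairsNKC}\,(v) identifies the $\b{c}$-vector of the extremal pair $\{\cc{\sigma}, \hh{\sigma}\}$ as the vertex multiplicity vector~$\multiplicityVector_\sigma$. Since $\sigma$ is recoverable as the common substring of the walks~$\cc{\sigma}$ and~$\hh{\sigma}$, the first correspondence is already bijective, so the corollary reduces to the statement that distinct strings $\sigma \in \strings$ induce distinct multiplicity vectors $\multiplicityVector_\sigma$.

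To establish this injectivity, I would reconstruct a string from its vertex multiset. The $2$-acyclicity of~$\quiver$ guarantees at most one arrow in each direction between any two vertices, so a string is determined by its sequence of vertices; this sequence would then be rebuilt inductively via the gentle axiom~(iii), which prescribes uniquely, given the current vertex and the previously traversed arrow, the next arrow compatible with the ideal~$I$. The reconstruction would start from an endpoint of~$\sigma$, which can be identified among the extremal vertices of the multiset, and peel off vertices one at a time, using at each step that the arrow to traverse next is forced by the remaining multiplicity vector together with the gentle rules.

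The main obstacle is handling vertices appearing with multiplicity greater than one in $\multiplicityVector_\sigma$, where the inductive extension could a priori branch into two legitimate continuations. Here the brick hypothesis, in the form of Proposition~\ref{prop:noSelfKissing} (no walk of~$\quiver$ is self-kissing, equivalently every cycle of~$\quiver$ carries at least two relations), is the decisive ingredient: two distinct strings sharing the same vertex multiset would force the corresponding walks~$\cc{\sigma}$ to carry incompatible self-kissing configurations, which the brick condition excludes. Making this argument precise, most plausibly by induction on the length of~$\sigma$ and a careful case analysis of revisited vertices, is the crux of the proof.
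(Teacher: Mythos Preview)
Your reduction is correct: once Propositions~\ref{prop:extremalExchangeablePairsNKC} and~\ref{prop:interestingExchangeablePairsNKC} identify the extremal exchangeable pairs as $\{\cc{\sigma},\hh{\sigma}\}$ with distinguished substring $\sigma$, the corollary amounts to the injectivity of $\sigma \mapsto \multiplicityVector_\sigma$ on~$\strings$. The paper, however, offers no argument beyond ``we also obtain from Proposition~\ref{prop:extremalExchangeablePairsNKC}\dots''. It appears to treat a $\b{c}$-vector as essentially the datum of a distinguishable string (which, under the brick hypothesis, is just a string by Proposition~\ref{prop:noSelfKissing}), so that the corollary becomes a direct restatement of the bijection between extremal pairs and $\strings$. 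Under that reading there is nothing more to prove, and you have done strictly more work than the paper.

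If one insists on reading ``$\b{c}$-vector'' as a literal vector in $\R^{Q_0}$, then you have correctly exposed a step the paper does not justify. Your proposed reconstruction is the natural attack, but the inductive step is more delicate than you indicate: at a revisited vertex, the gentle axiom~(iii) pins down the next arrow only once both the incoming arrow \emph{and} the side of~$I$ on which to continue are fixed, and it is not clear the remaining multiset alone resolves this. Your appeal to the brick hypothesis via self-kissing is plausible in spirit, but the implication ``two distinct strings with the same vertex multiset force a self-kissing walk'' is not obvious and would itself require a careful argument. In short, you have located the only nontrivial content of the corollary and sketched a reasonable strategy; neither you nor the paper actually complete it.
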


\begin{remark}
\label{rem:meshARquiver}
Although not needed in the proof of \cref{prop:extremalExchangeablePairsNKC}, we note that the extremal exchangeable pairs~$\{\cc{\sigma}, \hh{\sigma}\}$ and their linear dependencies~${\gvector{\cc{\sigma}} + \gvector{\hh{\sigma}} - \gvector{\hc{\sigma}} - \gvector{\ch{\sigma}}}$ precisely correspond to the meshes of the Auslander-Reiten quiver of~$\quiver$.
\end{remark}

%\vincent{Describe the rays of the type cone. They are the Minkowski basis.}

\begin{remark}
\label{exm:nonSimplicialTypeConeNKC}
Note that \cref{prop:interestingExchangeablePairsNKC,prop:extremalExchangeablePairsNKC} and therefore \cref{coro:simplicialTypeConeNKC,thm:allPolytopalRealizationsNKC} fail when the quiver is not brick or $2$-cyclic.
The smallest exemples are the $2$-cycles whose non-kissing fans are represented in \cref{fig:nonkissingFans2}.
The left one is not brick: it has a non-distinguishable string and a self-kissing walk.
The right one is brick but is $2$-cyclic.

\begin{figure}[h]
	\capstart
	\centerline{\includegraphics[scale=.45]{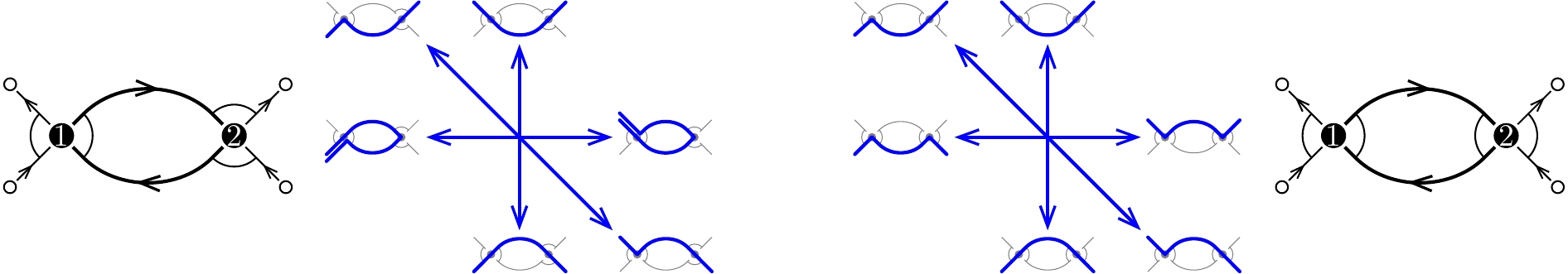}}
	\caption{The non-kissing fans of two $2$-cyclic gentle quivers. Illustration from~\cite{PaluPilaudPlamondon-nonkissing}.}
	\label{fig:nonkissingFans2}
\end{figure}

In both cases, \cref{prop:interestingExchangeablePairsNKC} fails for the string~$\sigma$ of length~$0$ at vertex~$1$ (resp.~$2$) as the resulting walks~$\cc{\sigma}$ and~$\hh{\sigma}$ have $\b{g}$-vectors~$\gvector{\cc{\sigma}} = (1,-1)$ and~$\gvector{\hh{\sigma}} = (-1,1)$ (resp.~$\gvector{\cc{\sigma}} = (-1,1)$ and~$\gvector{\hh{\sigma}} = (1,-1)$), and are therefore not exchangeable.
\cref{prop:extremalExchangeablePairsNKC} also fails as the type cone has dimension~$4$ and the following~$5$ inequalities:

\[
\begin{array}{r|cccccc}
\text{walks} & \raisebox{-.25cm}{\includegraphics[scale=.5]{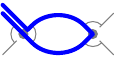}} & \raisebox{-.25cm}{\includegraphics[scale=.5]{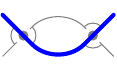}} & \raisebox{-.25cm}{\includegraphics[scale=.5]{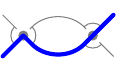}} & \raisebox{-.25cm}{\includegraphics[scale=.5]{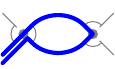}} & \raisebox{-.25cm}{\includegraphics[scale=.5]{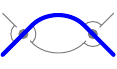}} & \raisebox{-.25cm}{\includegraphics[scale=.5]{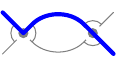}} \\[.2cm]
& \raisebox{-.25cm}{\includegraphics[scale=.5]{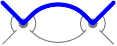}} & \raisebox{-.25cm}{\includegraphics[scale=.5]{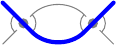}} & \raisebox{-.25cm}{\includegraphics[scale=.5]{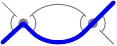}} & \raisebox{-.25cm}{\includegraphics[scale=.5]{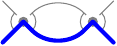}} & \raisebox{-.25cm}{\includegraphics[scale=.5]{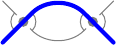}} & \raisebox{-.25cm}{\includegraphics[scale=.5]{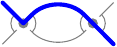}} \\[.6cm]
\text{$\b{g}$-vectors} & \compactVectorD{1}{0} & \compactVectorD{0}{1} & \compactVectorD{-1}{1} & \compactVectorD{-1}{0} & \compactVectorD{0}{-1} & \compactVectorD{1}{-1} \\[.6cm]
\text{facet}		& 1 & -1 & 1 & 0 & 0 & 0 \\
\text{defining}		& 0 & 1 & -1 & 1 & 0 & 0 \\
\text{inequalities}	& 0 & 0 & 1 & -1 & 1 & 0 \\
					& 0 & 0 & 0 & 1 & -1 & 1 \\
					& 1 & 0 & 0 & 0 & 1 & -1 \\[.2cm]
\end{array}
\]

\medskip
\noindent
(since the fans are the same, the type cones are the same and we therefore present them together).
\cref{fig:nonkissingFans}\,(right) gives another example of brick but $2$-cyclic gentle quiver for which \cref{prop:interestingExchangeablePairsNKC,prop:extremalExchangeablePairsNKC} fail.

%sage: Q = GentleQuiver({'e1':{'e2':['a']}, 'e2':{'e1':['b']}}, [('b','a')])

%sage: Q.non_kissing_fan().rays()
%N( 0,  1),
%N(-1,  1),
%N(-1,  0),
%N( 0, -1),
%N( 1,  0),
%N( 1, -1)
%in 2-d lattice N

%sage: TCNKC = type_cone_NKC(Q)
%sage: TCNKC
%A 6-dimensional polyhedron in QQ^6 defined as the convex hull of 1 vertex, 5 rays, 2 lines
%sage: TCNKC.lines()
%(A line in the direction (1, 0, -1, -1, 1, 0),
% A line in the direction (1, 1, 0, -1, 0, -1))
%sage: TCNKC.inequalities()
%(An inequality (-1, 1, 0, 0, 1, 0) x + 0 >= 0,
% An inequality (0, 0, 0, 1, 1, -1) x + 0 >= 0,
% An inequality (0, 0, 1, -1, 0, 1) x + 0 >= 0,
% An inequality (0, 1, -1, 1, 0, 0) x + 0 >= 0,
% An inequality (1, -1, 1, 0, 0, 0) x + 0 >= 0,
% An inequality (1, 0, 0, 0, -1, 1) x + 0 >= 0)

%sage: Q = GentleQuiver({'e1':{'e2':['a']}, 'e2':{'e1':['b']}}, [('a','b'), ('b','a')])

%sage: Q.non_kissing_fan().rays()
%N( 0,  1),
%N(-1,  1),
%N(-1,  0),
%N( 0, -1),
%N( 1,  0),
%N( 1, -1)
%in 2-d lattice N

%sage: TCNKC = type_cone_NKC(Q)
%sage: TCNKC
%A 6-dimensional polyhedron in QQ^6 defined as the convex hull of 1 vertex, 5 rays, 2 lines
%sage: TCNKC.lines()
%(A line in the direction (1, 0, -1, -1, 1, 0),
% A line in the direction (1, 1, 0, -1, 0, -1))
%sage: TCNKC.inequalities()
%(An inequality (-1, 1, 0, 0, 1, 0) x + 0 >= 0,
% An inequality (0, 0, 0, 1, 1, -1) x + 0 >= 0,
% An inequality (0, 0, 1, -1, 0, 1) x + 0 >= 0,
% An inequality (0, 1, -1, 1, 0, 0) x + 0 >= 0,
% An inequality (1, -1, 1, 0, 0, 0) x + 0 >= 0,
% An inequality (1, 0, 0, 0, -1, 1) x + 0 >= 0)
\end{remark}

%%%%%%%%%%%%%%%%%%%%%%%%%%%%%%%%%%%%%%%%%%%%%%%%%%%

\newpage
\part{Grothendieck group and relations between $\b{g}$-vectors}
\label{part:algebra}

%%%%%%%%%%%%%%%%%%%%%%%%%%%%%%%%%%%%%%%%%%%%%%%%%%%

\section{Relations for~$\b{g}$-vectors in finite type cluster algebras \\ via~$2$-Calabi--Yau triangulated categories}
\label{sec:clusterCategories}

In~\cite{Butler} and~\cite{Auslander1984}, M.~Butler and M.~Auslander showed that the relations in the Grothendieck group of an Artin algebra are generated by the ones given by almost-split sequences precisely when the algebra is of finite representation type.  This result was generalized to triangulated categories with certain finiteness conditions by J.~Xiao and B.~Zhu in \cite{XiaoZhu}, to~$\Hom{}$-finite, Krull-Schmidt exact categories with enough projectives by H.~Enomoto in~\cite{Enomoto}, to $(n+2)$-angulated categories by F.~Fedele in~\cite{Fedele}, and to $\Hom{}$-finite, Krull--Schmidt triangulated categories with a cogenerator by J.~Haugland in~\cite{Haugland}. Relations in the Grothendieck group of cluster categories have also been studied in~\cite{Palu-grothendieckGroup}.  Inspired by these results, we will show in this Section that a similar statement holds (see \cref{thm:relations-g-vecteurs}) for the~$\b{g}$-vectors (or indices) of objects in a~$2$-Calabi--Yau triangulated category.  We will further generalize these results in Section~\ref{sec:extricats} to the setting of Ext-finite, Krull-Schmidt, extriangulated categories with Auslander--Reiten--Serre duality admitting projective objects with certain properties.

%%%%%%%%%%%

\subsection{Setting}
\label{sec:setting}

Let~$\field$ be a field. Let~$\cat$ be a~$\field$-linear triangulated category with suspension functor~$\susp$. We fix a collection~$\ind(\cat)$ of representatives of isomorphism classes of indecomposable objects of~$\cat$. We will assume the following:
\begin{itemize}
\item $\cat$ is essentially small (in particular,~$\ind(\cat)$ is a set);
\item $\cat$ is~$\Hom{}$-finite: for each pair of objects~$X$ and~$Y$, the~$\field$-vector space~$\cat(X,Y)$ is finite-dimensional;
\item $\cat$ is Krull--Schmidt: the endomorphism algebra of any indecomposable object is local;
\item $\cat$ is~$2$-Calabi--Yau: for each pair of objects~$X$ and~$Y$, there is an isomorphism of bifunctors
\[
\cat(X,\susp Y) \to D\cat(Y,\susp X)
\]
where~$D = \Hom{\field}(-,\field)$ is the usual duality of vector spaces;
\item $\cat$ contains a basic cluster-tilting object~$T = \bigoplus_{i=1}^n T_i$:
\[
\textrm{for any object~$X$,} \quad \cat(T, \susp X) = 0 \quad \textrm{if and only if} \quad X \in \add(T),
\]
where~$\add(T)$ is the smallest additive full subcategory of~$\cat$ containing the~$T_i$'s and closed under isomorphisms.
\end{itemize}

%%%%%%%%%%%

\subsection{Statement of the theorem}
\label{subsec:Statement cluster cats}

We need to introduce some notations before stating the main result of this section. Let~$\Lambda\eqdef \End{\cat}(T)$, and let~$F$ be the functor
\[
F = \cat(T,-): \cat \xrightarrow{} \MOD \Lambda.
\]

\begin{proposition}[\cite{BuanMarshReiten, KellerReiten}]\label{prop:functor-F}

The functor~$F$ induces an equivalence of~$\field$-linear categories
\[
 F:\cat/(\susp T) \xrightarrow{} \MOD \Lambda,
\]
where~$(\susp T)$ is the ideal of morphisms factoring through an object of~$\add(\susp T)$ and~$\MOD \Lambda$ is the category of finite-dimensional right~$\Lambda$-modules. This equivalence induces further equivalences between~$\add(T)$ and the category of projective modules, and between~$\add(\susp^2 T)$ and that of injective modules.
\end{proposition}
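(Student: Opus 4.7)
The plan is to prove four things in order: that $F = \cat(T,-)$ descends to a functor on $\cat/(\susp T)$, that it is essentially surjective, that it is full, and that it is faithful. The key tool throughout is that, because $T$ is cluster-tilting, every object $X \in \cat$ sits in a distinguished triangle
\[
T_1^X \to T_0^X \to X \to \susp T_1^X
\]
with $T_0^X, T_1^X \in \add T$, obtained from a right $\add T$-approximation of $X$ whose cone lies in $\add(\susp T)$ by cluster-tilting. Applying $F$ to this triangle and using $\cat(T, \susp T) = 0$ yields a projective presentation $F(T_1^X) \to F(T_0^X) \to F(X) \to 0$ in $\MOD \Lambda$, since $F(T) = \Lambda$ and $F$ is fully faithful on $\add T$ as $\End{\cat}(T) = \Lambda$.

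The vanishing on $(\susp T)$ is immediate: any morphism factoring through some $\susp T' \in \add(\susp T)$ is sent by $F$ to a map factoring through $F(\susp T') = 0$. For essential surjectivity, any $M \in \MOD \Lambda$ admits a projective presentation $P_1 \to P_0 \to M \to 0$ which, by fully faithfulness of $F$ on $\add T$, lifts to a morphism in $\add T$; completing to a triangle produces an object $X$ with $F(X) \cong M$.

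For fullness and faithfulness modulo $(\susp T)$, I would apply $\cat(-, Y)$ to the triangle for $X$ to obtain a long exact sequence and compare via $F$ with $\Hom{\Lambda}(-, FY)$ applied to the projective presentation of $F(X)$. The crucial point is that for any $T' \in \add T$ and any $Y \in \cat$, the map $F : \cat(T', Y) \to \Hom{\Lambda}(FT', FY)$ is bijective: reducing via idempotents to the case $T' = T$, this is the tautological identity $\cat(T, Y) = FY$. A short diagram chase then yields surjectivity of $\cat(X, Y) \to \Hom{\Lambda}(FX, FY)$, together with the identification of its kernel as the image of $\cat(\susp T_1^X, Y) \to \cat(X, Y)$. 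To match this kernel with $(\susp T)(X, Y)$, one inclusion is trivial, while for the converse, if $F(f) = 0$ for $f : X \to Y$ then the composition $T_0^X \to X \xrightarrow{f} Y$ has zero image under $F$ and hence vanishes by the bijectivity above, so $f$ factors through the cone $\susp T_1^X \in \add(\susp T)$ of $T_0^X \to X$.

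The two further equivalences follow at once: $F(T) = \Lambda$ identifies $\add T$ with the category of projective $\Lambda$-modules, while the $2$-Calabi--Yau isomorphism
\[
F(\susp^2 T) = \cat(T, \susp^2 T) \cong D\,\cat(\susp T, \susp T) = D\Lambda
\]
identifies $\add(\susp^2 T)$ with the category of injective $\Lambda$-modules. The main obstacle is setting up the approximation triangle and checking its compatibility with the projective presentation on both sides of $F$; once this is in place, everything reduces to a careful but routine diagram chase.
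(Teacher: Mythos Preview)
Your proof is correct and follows the standard argument from the cited references \cite{BuanMarshReiten, KellerReiten}. The paper itself does not prove this proposition directly---it is stated as a citation---but it does prove a generalization to extriangulated categories (\cref{prop:KRextricat}), and that proof uses exactly the same strategy you outline: the approximation triangle $T_1^X \to T_0^X \to X \to \susp T_1^X$, the observation that $F$ kills $(\susp T)$ because $F(\susp T) = 0$, and the converse that if $Ff = 0$ then $f \circ (T_0^X \to X) = 0$ so $f$ factors through the cone $\susp T_1^X$.
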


For categories with a cluster-tilting object, the~$2$-Calabi--Yau condition implies other duality results which we shall need.

\begin{proposition}[\cite{Palu}]
For any pair of objects~$X$ and~$Y$ in~$\cat$, there is an isomorphism of bifunctors
\[
(\susp T)\cat(X, \susp Y) \xrightarrow{\cong} \cat(Y, \susp X)/(\susp T),
\]
where~$(\susp T)\cat(X, \susp Y)$ is the space of morphisms from~$X$ to~$Y$ factoring through~$(\susp T)$.
\end{proposition}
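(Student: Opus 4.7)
The proof deduces the statement from the $2$-Calabi--Yau duality, viewed as a perfect bifunctorial pairing $\cat(X, \susp Y) \otimes_{\field} \cat(Y, \susp X) \to \field$. The plan is to show that under this pairing, the subspace $(\susp T)\cat(X, \susp Y)$ is exactly the orthogonal complement of $(\susp T)\cat(Y, \susp X)$; the pairing then descends to a perfect pairing between $(\susp T)\cat(X, \susp Y)$ and the quotient $\cat(Y, \susp X)/(\susp T)$, yielding the claimed (duality-interpreted) isomorphism of bifunctors.

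The easier half is orthogonality. Given $f = f_2 f_1$ with $f_1 \colon X \to \susp T_1$ and $f_2 \colon \susp T_1 \to \susp Y$, and $g = g_2 g_1$ with $g_1 \colon Y \to \susp T_2$ and $g_2 \colon \susp T_2 \to \susp X$, the CY-pairing of $f$ with $g$ is (up to the canonical trace) the composition $\susp f \circ g = \susp f_2 \circ \susp f_1 \circ g_2 \circ g_1 \colon Y \to \susp^2 Y$. This composition contains the factor $\susp f_1 \circ g_2 \colon \susp T_2 \to \susp^2 T_1$, which lives in $\cat(\susp T_2, \susp^2 T_1) \cong \cat(T_2, \susp T_1)$. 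The latter space vanishes by the cluster-tilting property $\cat(T, \susp T) = 0$, so the pairing is zero. Hence it descends to a well-defined map
\[
 (\susp T)\cat(X, \susp Y) \otimes_{\field} \bigl(\cat(Y, \susp X)/(\susp T)\bigr) \longrightarrow \field.
\]

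The main obstacle is non-degeneracy: the symmetric argument with $X$ and $Y$ swapped only yields the analogous orthogonality inclusion on the other side, and a bare dimension count is inconclusive. To upgrade these inclusions to equalities, I would reduce both sides to module-theoretic expressions via the equivalence $F \colon \cat/(\susp T) \to \MOD \Lambda$ of \cref{prop:functor-F}. The right-hand side is immediately identified with $\Hom{\Lambda}(FY, F\susp X)$. For the left-hand side, I would fix a cluster-tilting presentation $T_1 \to T_0 \to Y \to \susp T_1$ and apply $\cat(X, \susp -)$; the resulting exact sequence, combined with the $2$-CY identifications $\cat(X, \susp T_i) \cong D\cat(T_i, \susp X)$ together with the fact that $F$ sends $\add(T)$ to projective $\Lambda$-modules and $\add(\susp^2 T)$ to injective ones, expresses $(\susp T)\cat(X, \susp Y)$ as the $\field$-dual of a $\Lambda$-module $\Hom{}$-space attached to the same data. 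Matching the two expressions then reduces to the standard Nakayama-type duality between projectives and injectives in $\MOD \Lambda$, completing the argument.
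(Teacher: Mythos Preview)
The paper does not give a proof of this proposition: it is simply cited from~\cite{Palu}, with a remark that the hypothesis of an algebraically closed field is unnecessary. (Note, incidentally, that the statement as printed is missing a duality~$D$ on the right-hand side; when the paper applies the proposition in the proof of \cref{lem:almost-split}, it writes $(\susp T)\cat(\susp^{-1}X,\susp X)\cong D\big(\cat(X,X)/(\susp T)\big)$, which is what your duality-interpreted reading correctly yields.)

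Your approach is essentially the one in~\cite{Palu}, and your second half is the substantive part. A cleaner execution is as follows. Take an $\add(T)$-presentation $T_1\to T_0\to Y\to\susp T_1$. Since $T_0\to Y$ is a right $\add(T)$-approximation, $\susp T_0\to\susp Y$ is a right $\add(\susp T)$-approximation, so the long exact sequence for $\cat(X,-)$ identifies
\[
(\susp T)\cat(X,\susp Y)\;\cong\;\coker\big(\cat(X,\susp T_1)\to\cat(X,\susp T_0)\big).
\]
Applying the $2$-CY duality $\cat(X,\susp T_i)\cong D\cat(T_i,\susp X)$ and using that $\cat(T_i,\susp X)=\Hom{\Lambda}(FT_i,F\susp X)$ (no morphism out of $T_i$ factors through $\susp T$ since $\cat(T,\susp T)=0$), this becomes
\[
D\ker\big(\Hom{\Lambda}(FT_0,F\susp X)\to\Hom{\Lambda}(FT_1,F\susp X)\big)\;=\;D\Hom{\Lambda}(FY,F\susp X),
\]
the last equality coming from the projective presentation $FT_1\to FT_0\to FY\to 0$. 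Since $\Hom{\Lambda}(FY,F\susp X)=\cat(Y,\susp X)/(\susp T)$ by \cref{prop:functor-F}, you are done.

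Two comments on your write-up. First, the orthogonality computation in your first paragraph is correct but is a detour: the presentation argument above gives the isomorphism directly, with no separate non-degeneracy step to worry about. Second, the appeal to a ``Nakayama-type duality between projectives and injectives in $\MOD\Lambda$'' at the end is unnecessary and slightly misleading; as the computation above shows, only the left-exactness of $\Hom{\Lambda}(-,F\susp X)$ on the projective presentation of~$FY$ is used, and the injectives in $\add(\susp^2 T)$ play no role.
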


\begin{remark}
Although the field~$\field$ is assumed to be algebraically closed in~\cite{Palu}, this assumption is not needed in the proof, and the result is valid over any field.
\end{remark}

Finally, we need the existence of almost-split triangles in~$\cat$. Recall that a triangle
\[
X\xrightarrow{f} Y \xrightarrow{g} Z \xrightarrow{h} \susp X
\]
in~$\cat$ is \defn{almost-split} if~$X$ and~$Z$ are indecomposable,~$h$ is non-zero, and any non-section~$X\to X'$ factors through~$f$  (or equivalently, any non-retraction~$Z'\to Z$ factors through~$g$). We say that a triangulated category \defn{has almost-split triangles} if there is an almost-split triangle as above for any indecomposable object~$X$. 

\begin{proposition}[{\cite[Prop.~I.2.3]{ReitenVandenbergh}}]
Any triangulated category admitting a Serre functor has almost-split triangles. In particular, any~$2$-Calabi--Yau triangulated category has almost-split triangles.
\end{proposition}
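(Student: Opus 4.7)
My plan is to follow the construction in Reiten--Van den Bergh by producing, for each indecomposable $Z\in\cat$, a canonical nonzero morphism $\eta_Z : Z \to \susp X$ whose cone yields the almost-split triangle ending at $Z$. The candidate for $X$ is $\susp^{-1}SZ$, where $S$ is the Serre functor, so that $\susp X = SZ$. In the $2$-Calabi--Yau case $S=\susp^{2}$, so this specializes to $X=\susp Z$, which recovers the familiar identification of the Auslander--Reiten translation with $\susp$.

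First, since $\cat$ is Krull--Schmidt and $\Hom{}$-finite, $\End{\cat}(Z)$ is a local $\field$-algebra with maximal ideal $\f{m}_Z$. I would choose a nonzero $\field$-linear form $\varphi_Z : \End{\cat}(Z) \to \field$ that annihilates $\f{m}_Z$, and then transport it through the Serre duality isomorphism $\cat(Z, SZ) \cong D\End{\cat}(Z)$ to obtain the morphism $\eta_Z : Z \to \susp X$. Completing $\eta_Z$ to a triangle $X \xrightarrow{f} Y \xrightarrow{g} Z \xrightarrow{\eta_Z} \susp X$ gives the candidate triangle; it does not split because $\eta_Z\neq0$. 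The indecomposability of $X$ follows from the Krull--Schmidt property together with the fact that the right action of $\End{\cat}(Z)$ on $\eta_Z$ identifies $\End{\cat}(X)^{\mathrm{op}}$ with $\End{\cat}(Z)$, a local ring.

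The essential step is verifying the universal property: for any non-retraction $f : W \to Z$ with $W$ indecomposable, I claim $\eta_Z \circ f = 0$. By naturality of Serre duality, $\eta_Z \circ f$ corresponds under $\cat(W, SZ) \cong D\cat(Z, W)$ to the functional sending $g : Z \to W$ to $\varphi_Z(f\circ g)$. The composite $f \circ g \in \End{\cat}(Z)$ cannot be an isomorphism, for otherwise $g \circ (f\circ g)^{-1}$ would be a right inverse of $f$, contradicting that $f$ is not a retraction; hence $f\circ g \in \f{m}_Z$ and $\varphi_Z(f\circ g)=0$. The dual construction, applied to $Z\eqdef S^{-1}\susp X$, yields an almost-split triangle starting at any indecomposable $X$. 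The main obstacle in carrying out the argument rigorously is the bookkeeping for Serre duality: one must confirm that the isomorphism $\cat(W,SZ)\cong D\cat(Z,W)$ is natural in both variables so that the trick above applies uniformly, and that the auxiliary identifications for indecomposability of $X$ go through. In the $2$-Calabi--Yau setting these naturality properties are already encoded in the defining isomorphism $\cat(X,\susp Y) \cong D\cat(Y,\susp X)$, so the argument specializes immediately, giving the promised corollary.
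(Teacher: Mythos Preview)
The paper does not give its own proof of this proposition; it simply cites \cite[Prop.~I.2.3]{ReitenVandenbergh} and uses the result as a black box. Your sketch is precisely the standard Reiten--Van den Bergh argument, and it is correct: choose $\eta_Z\in\cat(Z,SZ)\cong D\End{\cat}(Z)$ corresponding to a linear form vanishing on the radical, complete to a triangle, and check the right almost-split property via naturality of Serre duality. One minor simplification: you do not need the identification of endomorphism rings to see that $X=\susp^{-1}SZ$ is indecomposable, since a Serre functor is automatically an autoequivalence and hence preserves indecomposability. With that, and since the paper's definition already records that the left and right almost-split conditions are equivalent once both ends are indecomposable and $h\ne 0$, your argument is complete.
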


\begin{definition}
\label{def:grothendieck-group}
\begin{enumerate}
\item Let~$\Ksp(\cat)$ be the \defn{split Grothendieck group} of~$\cat$, that is, the free abelian group generated by symbols~$[X]$, where~$[X]$ denotes the isomorphism class of~$X$ in~$\cat$, modulo the following relations: for any objects~$Y$ and~$Z$, we let~$[Y\oplus Z] = [Y] + [Z]$. 
  
\item Let~$K_0(\cat ; T)$ be the quotient of~$\Ksp(\cat)$ by the relations~$[X]+[Z]-[Y]$ for all triangles
\[
X\xrightarrow{} Y \xrightarrow{} Z \xrightarrow{h} \susp X
\]
with~$h \in (\susp T)$. Denote by~$\b{g}:\Ksp(\cat) \to K_0(\cat ; T)$ the canonical projection.
\end{enumerate}
\end{definition}

In particular,~$\Ksp(\cat)$ is isomorphic to a free abelian group over the set~$\ind(\cat)$. Considering the modified group~$K_0(\cat ; T)$ instead of the usual Grothendieck group is motivated by \cref{sec:applications-g-vectors}, where we study relations between~$\b{g}$-vectors.

\begin{definition}
\label{def:bilinear form}
For any two objects~$X$ and~$Y$ of~$\cat$, define
\[
\langle X, Y \rangle \eqdef \dim_{\field} \Hom{\Lambda}(FX,FY).
\]
This defines a bilinear form
\[
\langle -,-\rangle : \Ksp(\cat) \times \Ksp(\cat) \xrightarrow{} \Z.
\]
\end{definition}

\begin{notation}
\label{notation:ell_X}
\begin{enumerate}
\item For any indecomposable object~$X$ of~$\cat$, let
\[
X \to E \to \susp^{-1} X \to \susp X
\]
be an almost split triangle (unique up to isomorphism). We let
\[
\ell_X \eqdef [X] + [\susp^{-1} X]  - [E] \in \Ksp(\cat).
\]
 
\item For any indecomposable object~$Y$ of~$\cat$, let
\[
\susp Y \to E' \to  Y \to \susp^2 Y
\]
be an almost split triangle (unique up to isomorphism). We let
\[
r_Y \eqdef [Y] + [\susp Y]  - [E'] \in \Ksp(\cat).
\]
\end{enumerate}
\end{notation}

We can finally state the main theorem of this section, which is an analogue of the main result of~\cite{Auslander1984}.

\begin{theorem}
\label{thm:relations-g-vecteurs}
Let~$\cat$ be a category satisying the hypotheses of \cref{sec:setting}. Then~$\cat$ has only finitely many isomorphism classes of indecomposable objects if and only if the set
\[
L \eqdef \bigset{\ell_X}{X \in \ind(\cat) \ssm \add(\susp T)}
\]
generates the kernel of~$\b{g}:\Ksp(\cat) \to K_0(\cat ; T)$. 
In this case, the set~$L$ is a basis of the kernel of~$\b{g}$, and for any $x \in \ker(\b{g})$, we have that
\[
x= \sum_{A \in \ind(\cat) \ssm \add(\susp T)} \frac{\langle x, A \rangle}{\langle \ell_A, A \rangle} \ell_A.
\]
\end{theorem}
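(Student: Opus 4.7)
The plan is to adapt Auslander's proof~\cite{Auslander1984} to the cluster-tilted setting, exploiting the equivalence $F:\cat/(\susp T)\xrightarrow{\sim}\MOD\Lambda$ of \cref{prop:functor-F}, which sends the almost-split triangle $X\to E\to\susp^{-1}X\to\susp X$ (for $X\in\ind(\cat)\ssm\add(\susp T)$) to a short exact sequence $0\to FX\to FE\to F\susp^{-1}X\to 0$ in $\MOD\Lambda$, almost-split whenever $F\susp^{-1}X\ne 0$. Note that $|\ind(\cat)|<\infty$ if and only if $\Lambda$ is of finite representation type, since $F$ restricts to a bijection between $\ind(\cat)\ssm\add(\susp T)$ and the indecomposable $\Lambda$-modules.

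First, I would establish that $L\subseteq\ker(\b{g})$ and compute the bilinear form. The short exact sequence above shows that the connecting morphism $\susp^{-1}X\to\susp X$ is killed by $F$, hence factors through $\add(\susp T)$ by \cref{prop:functor-F}; thus $\ell_X$ is one of the defining relations of $K_0(\cat;T)$. Applying $\Hom{\Lambda}(-,FY)$ to this sequence and using the defining property of almost-split sequences yields, for $X,Y\in\ind(\cat)\ssm\add(\susp T)$,
\[
\langle\ell_X,Y\rangle=\langle X,Y\rangle-\langle E,Y\rangle+\langle\susp^{-1}X,Y\rangle=\delta_{X\cong Y}\cdot d_X,
\]
where $d_X\eqdef\dim_\field\End{\Lambda}(FX)/\mathrm{rad}$ is a positive integer (the degenerate case $F\susp^{-1}X=0$, i.e.\ $X\in\add(\susp^2T)$, requires a small separate check using the long exact sequence of $\cat(T,-)$ combined with $2$-CY duality). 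In particular $L$ is $\Z$-linearly independent in $\Ksp(\cat)$.

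For the forward direction, assume $|\ind(\cat)|<\infty$. The short exact sequence $0\to\ker(\b{g})\to\Ksp(\cat)\to K_0(\cat;T)\to 0$, combined with the Dehy--Keller isomorphism $K_0(\cat;T)\cong\Ksp(\add T)\cong\Z^n$~\cite{DehyKeller}, shows that $\ker(\b{g})$ is free abelian of rank $|\ind(\cat)|-n=|L|$, so by linear independence $L$ is a $\Z$-basis. Pairing any $x\in\ker(\b{g})$ against $\langle-,Y\rangle$ and solving for the coefficients via the computation above yields the decomposition formula.

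For the converse, assume $L$ generates $\ker(\b{g})$. For each $Z\in\ind(\cat)\ssm\add(\susp T)$, fix a triangle $T_1\to T_0\to Z\to\susp T_1$ with $T_i\in\add T$, so that $x_Z\eqdef[T_1]-[T_0]+[Z]$ lies in $\ker(\b{g})$. Writing $x_Z$ as a \emph{finite} sum $\sum a_A\ell_A$ and pairing with $\langle-,Y\rangle$ forces $\langle x_Z,Y\rangle=0$ for all but finitely many indecomposable $Y\in\ind(\cat)\ssm\add(\susp T)$. A direct computation using the projective presentation $FT_1\to FT_0\to FZ\to 0$ in $\MOD\Lambda$ (obtained by applying $F$ to the triangle) identifies $\langle x_Z,Y\rangle$ with $\dim_\field\mathrm{Ext}^1_\Lambda(FZ,FY)$; hence each indecomposable $\Lambda$-module satisfies $\mathrm{Ext}^1_\Lambda(FZ,FY)\ne 0$ for only finitely many indecomposables $FY$. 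A Harada--Sai-type argument, analogous to~\cite{Auslander1984}, then forces $\Lambda$ to be of finite representation type. The main obstacle is precisely this last implication: although the Harada--Sai reasoning is classical for Artin algebras, its transfer to the cluster-tilted setting is not completely routine, and the Iwanaga--Gorenstein property of cluster-tilted algebras~\cite{KellerReiten} is expected to provide the structural input needed to adapt the argument cleanly.
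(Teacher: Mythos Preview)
Your forward direction follows the paper's strategy (compute $\langle\ell_X,Y\rangle$ via the image under $F$ of the almost-split triangle, then use it to recover coefficients), but the step ``$\ker(\b{g})$ is free of rank $|L|$ and $L$ is linearly independent of that cardinality, hence $L$ is a $\Z$-basis'' is not valid over $\Z$ (take $\{2\}\subset\Z$). The paper circumvents this by arguing directly that the difference $z:=x-\sum_A\frac{\langle x,A\rangle}{\langle\ell_A,A\rangle}\ell_A$ satisfies $\langle z,B\rangle=0$ for all $B$, hence lies in $\Ksp\big(\add(\susp T)\big)$, hence vanishes because $\b{g}$ is injective there. This is a repairable gap, but worth noting.

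The substantive problem is your converse. Your identification $\langle x_Z,Y\rangle=\dim_\field\mathrm{Ext}^1_\Lambda(FZ,FY)$ is not justified: applying $F$ to the triangle $T_1\to T_0\to Z\to\susp T_1$ only yields a projective \emph{presentation} $FT_1\to FT_0\to FZ\to 0$, not a resolution, so the cokernel of $\Hom{\Lambda}(FT_0,FY)\to\Hom{\Lambda}(FT_1,FY)$ need not compute $\mathrm{Ext}^1$. And even granting that identification, deducing finite representation type from ``$\mathrm{Ext}^1_\Lambda(M,-)$ has finite indecomposable support for every $M$'' is, as you acknowledge, not routine. The paper bypasses all of this with a single well-chosen element: the triangle $T\to 0\to\susp T\xrightarrow{\mathrm{id}}\susp T$ has connecting map in $(\susp T)$, so $[T]+[\susp T]\in\ker(\b{g})$; but $\langle[T]+[\susp T],B\rangle=\langle T,B\rangle=\dim_\field FB$ is nonzero for \emph{every} indecomposable $B\notin\add(\susp T)$. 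If $[T]+[\susp T]$ were a finite combination $\sum_{A\in S}\lambda_A\ell_A$, then pairing with any $B\notin S$ would give zero, forcing $\ind(\cat)\ssm\add(\susp T)\subseteq S$. No Gorenstein input or Harada--Sai argument is needed.
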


\begin{corollary}
\label{coro:meshes positively generate cluster cats}
Assume that~$\ind(\cat)$ is finite. Let~$X\xrightarrow{} E \xrightarrow{} Y \xrightarrow{h} \susp X$ be a triangle with~$h \in (\susp T)$. Then the element~$x=[X]+[Y]-[E]$ of the kernel of~$\b{g}$ is a non-negative linear combination of the~$\ell_A$, with~$A \in \ind(\cat)\ssm \add(\susp T)$.
\end{corollary}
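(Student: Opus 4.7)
The strategy is to invoke Theorem~\ref{thm:relations-g-vecteurs} and then verify that the coefficients in its decomposition formula are non-negative. First, $x = [X]+[Y]-[E]$ lies in $\ker(\b{g})$ by the very definition of $K_0(\cat; T)$, since $h \in (\susp T)$. Theorem~\ref{thm:relations-g-vecteurs} then yields
\[
 x \;=\; \sum_{A\in\ind(\cat)\ssm\add(\susp T)}\frac{\langle x, A\rangle}{\langle \ell_A, A\rangle}\,\ell_A,
\]
and since $L$ is a $\Z$-basis of $\ker(\b{g})$, each coefficient is automatically an integer. It therefore suffices to check that $\langle x, A\rangle\geq 0$ and $\langle \ell_A, A\rangle>0$ for every $A$.

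The key step is the non-negativity of $\langle x, A\rangle$, which I would obtain by applying the cohomological functor $F = \cat(T,-)$ to the given triangle. The hypothesis $h\in(\susp T)$ forces $Fh=0$: factoring $h = h_2\circ h_1$ with $h_1\colon Y \to \susp T'$ and $T'\in\add T$, any morphism $f\colon T\to Y$ satisfies $h_1\circ f\in\cat(T,\susp T')=0$ by the cluster-tilting property of $T$, hence $h\circ f = 0$. The long exact sequence obtained from $F$ therefore shortens to a surjection $FE\twoheadrightarrow FY$, whose kernel $N$ equals the image of $FX\to FE$ and is in particular a quotient of $FX$. Applying $\Hom{\Lambda}(-,FA)$ to the short exact sequence $0\to N\to FE\to FY\to 0$ and combining with the induced injection $\Hom{\Lambda}(N,FA)\hookrightarrow\Hom{\Lambda}(FX,FA)$ yields
\[
 \langle E, A\rangle\;\leq\;\langle Y, A\rangle+\dim_\field\Hom{\Lambda}(N,FA)\;\leq\;\langle Y, A\rangle+\langle X, A\rangle,
\]
whence $\langle x, A\rangle\geq 0$.

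The strict positivity $\langle \ell_A, A\rangle>0$ follows from the same calculation applied to the almost-split triangle $A\to E_A\to\susp^{-1}A\to \susp A$: its connecting morphism must lie in $(\susp T)$ in order for the relation $\ell_A=[A]+[\susp^{-1}A]-[E_A]$ to express an element of $\ker(\b{g})$, as asserted by Theorem~\ref{thm:relations-g-vecteurs}. The same estimate then gives $\langle \ell_A, A\rangle\geq 0$, while the well-definedness of the decomposition formula for every kernel element forces $\langle \ell_A, A\rangle\neq 0$, hence $\langle \ell_A, A\rangle>0$. The main conceptual obstacle is the verification $Fh=0$; everything else is short exact sequence bookkeeping combined with the input from Theorem~\ref{thm:relations-g-vecteurs}.
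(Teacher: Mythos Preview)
Your proof is correct and follows the same strategy as the paper: apply Theorem~\ref{thm:relations-g-vecteurs}, then verify that the coefficients in the decomposition formula are non-negative by applying $F=\cat(T,-)$ to the triangle and analysing the resulting exact sequence $FX\to FE\to FY\to 0$.

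Two minor differences in execution. First, the paper computes $\langle x,A\rangle$ exactly rather than bounding it: from the exact sequence $0\to\Hom{\Lambda}(FY,FA)\to\Hom{\Lambda}(FE,FA)\xrightarrow{f}\Hom{\Lambda}(FX,FA)\to\coker(f)\to 0$ one reads off $\langle x,A\rangle=\dim_\field\coker(f)\ge 0$ directly, without splitting into $N$ and chaining inequalities. Second, for $\langle\ell_A,A\rangle>0$ the paper simply invokes Lemma~\ref{lem:bilinear-form}, which computes $\langle\ell_A,A\rangle=\dim_\field\field_A$; your indirect argument (inferring the connecting morphism lies in $(\susp T)$ from $\ell_A\in\ker(\b{g})$, then non-negativity plus nonvanishing of the denominator) is valid but more roundabout, since both facts you are extracting from Theorem~\ref{thm:relations-g-vecteurs} are already established as independent lemmas (\ref{lem:almost-split} and~\ref{lem:bilinear-form}) in the course of proving that theorem.
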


\begin{proof}
We know that~$x=\sum_{A \in \ind(\cat) \ssm \add(\susp T)} \frac{\langle x, [A] \rangle}{\langle \ell_A, [A] \rangle} \ell_A$; since~$\langle \ell_A, [A] \rangle$ is positive by \cref{lem:bilinear-form}, we only need to show that each~$\langle x, [A] \rangle$ is non-negative. 
The functor~$F=\cat(T,-)$ induces an exact sequence
\[
FX \to FE \to FY \to 0,
\]
which in turn induces an exact sequence
\[
0\to \Hom{\Lambda}(FY, FA) \to \Hom{\Lambda}(FE, FA) \xrightarrow{f} \Hom{\Lambda}(FX, FA) \to \coker(f) \to 0.
\]
Therefore~$\langle x, [A] \rangle = \dim_{\field} \coker(f) \ge 0$.
\end{proof}

%%%%%%%%%%%

\subsection{Proof of \cref{thm:relations-g-vecteurs}}

Before we can prove \cref{thm:relations-g-vecteurs}, we need to recall the following results and definition from~\cite{DehyKeller,Palu}.

\begin{proposition}
Let~$X$ be an object of~$\cat$. Then there exists a triangle
\[
T_1^X \to T_0^X \to X \to \susp T_1^X
\]
with~$T_1^X$ and~$T_0^X$ in~$\add(T)$.
\end{proposition}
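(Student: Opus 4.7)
The plan is to use the cluster-tilting property of $T$ in a standard way. By $\Hom{}$-finiteness, the right $\Lambda$-module $FX = \cat(T,X)$ is finite-dimensional, hence in particular finitely generated. So I can lift a finite generating family of $FX$ to obtain an object $T_0^X \in \add(T)$ together with a morphism $f : T_0^X \to X$ such that the induced map $\cat(T, T_0^X) \to \cat(T, X)$ is surjective. (In other words, $f$ is a right $\add(T)$-approximation of $X$.)

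Next, I would embed $f$ into a distinguished triangle in $\cat$,
\[
T_1^X \xrightarrow{} T_0^X \xrightarrow{f} X \xrightarrow{} \susp T_1^X,
\]
defining $T_1^X$ as the shift of the cone of $f$. The entire substance of the statement is then to check that $T_1^X \in \add(T)$. By the cluster-tilting hypothesis on $T$, this is equivalent to verifying $\cat(T, \susp T_1^X) = 0$.

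The hard part, to the extent there is one, is exactly this last vanishing, and it falls out of the long exact sequence obtained by applying the cohomological functor $\cat(T,-)$ to the triangle above. Indeed, that sequence contains the fragment
\[
\cat(T, T_0^X) \xrightarrow{\;f_*\;} \cat(T, X) \xrightarrow{} \cat(T, \susp T_1^X) \xrightarrow{} \cat(T, \susp T_0^X).
\]
By construction $f_*$ is surjective, and the rightmost term vanishes because $T_0^X \in \add(T)$ and $T$ is cluster-tilting. Hence $\cat(T, \susp T_1^X) = 0$, so applying the cluster-tilting property once more yields $T_1^X \in \add(T)$, which completes the proof. No subtle obstacle arises: the only ingredients are $\Hom{}$-finiteness (to produce the approximation) and the defining property of the cluster-tilting object $T$ (applied twice, first to kill $\cat(T,\susp T_0^X)$ and then to place $T_1^X$ in $\add(T)$).
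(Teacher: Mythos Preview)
Your proof is correct and is precisely the standard argument. The paper does not give its own proof of this proposition: it simply recalls the result with a citation to~\cite{DehyKeller,Palu}, and your argument is essentially the one found there.
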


\begin{definition}
The \defn{index} of an object~$X$ is the element
\[
\ind_T(X) \eqdef [T_0^X] - [T_1^X] \in \Ksp \big( \add(T) \big).
\]
\end{definition}

The notion of index is very close to the definition of the map~$\b{g}:\Ksp(\cat)\to K_0(\cat ; T)$ of \cref{def:grothendieck-group}. The link is given by the following result.

\begin{proposition}[\cite{Palu}]
Let~$X\xrightarrow{} Y \xrightarrow{} Z \xrightarrow{h} \susp X$ be a triangle.
Then
\[
\ind_T(X) + \ind_T(Z) - \ind_T(Y) = 0
\]
if and only if~$h \in (\susp T)$.
\end{proposition}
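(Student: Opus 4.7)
The plan is to prove both implications by exploiting $\add(T)$-approximation triangles together with the octahedral axiom, using crucially the fact that any morphism from an object of $\add(T)$ to an object of $\add(\susp T)$ vanishes (which is simply the cluster-tilting condition $\cat(T,\susp T)=0$).

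For the ``if'' direction, fix the approximation triangles $T_1^X \to T_0^X \to X \to \susp T_1^X$ and $T_1^Z \to T_0^Z \to Z \to \susp T_1^Z$. Since $h \in (\susp T)$ factors as $Z \to \susp T' \to \susp X$ with $T' \in \add T$, the composite $T_0^Z \to Z \to \susp T'$ must be zero by the vanishing observation, so $T_0^Z \to Z \xrightarrow{h} \susp X$ is also zero. Consequently $T_0^Z \to Z$ lifts along $Y \to Z$ to a morphism $T_0^Z \to Y$, and combined with the natural composition $T_0^X \to X \to Y$ this produces a morphism $\alpha: T_0^X \oplus T_0^Z \to Y$. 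The octahedral axiom applied to $\alpha$ together with the two given triangles yields a triangle
\[
T_1^X \oplus T_1^Z \to T_0^X \oplus T_0^Z \xrightarrow{\alpha} Y \to \susp(T_1^X \oplus T_1^Z),
\]
which is an $\add(T)$-approximation of $Y$; hence $\ind_T(Y) = [T_0^X]+[T_0^Z]-[T_1^X]-[T_1^Z] = \ind_T(X)+\ind_T(Z)$.

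For the ``only if'' direction, suppose $\ind_T(X)+\ind_T(Z)-\ind_T(Y)=0$ and choose any approximation triangle $T_1^Y \to T_0^Y \to Y \to \susp T_1^Y$. Apply $F=\cat(T,-)$ to the three approximation triangles to obtain projective presentations of $FX$, $FY$, $FZ$ over $\Lambda$, and apply $F$ to the original triangle to get an exact sequence $FX \to FY \to FZ \to FY/(\mathrm{image}) \to 0$ where the failure of injectivity on the left and of surjectivity on the right is controlled by the image of $h$ under $F$ (which lives in $\cat(Z,\susp X)/(\susp T) \cong \Hom_\Lambda(FZ,F\susp X)$ via Proposition~\ref{prop:functor-F}, combined with the Calabi--Yau duality of Palu). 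The hypothesis forces $[FT_0^Y]-[FT_1^Y] = [FT_0^X]+[FT_0^Z]-[FT_1^X]-[FT_1^Z]$ in $K_0(\proj\Lambda)$, and by comparing Euler characteristics of the associated complexes (or by an application of Schanuel's lemma at the projective level) one concludes that $Fh$ must vanish, i.e.\ $h$ factors through $\add(\susp T)$.

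The main obstacle will be the ``only if'' direction: the map $FX\to FY$ is not injective in general, so one cannot simply read off $h\in(\susp T)$ from a short exact sequence. The key is therefore to set up the right bookkeeping, expressing $Fh$ in terms of the discrepancy between any approximation triangle of $Y$ and the candidate approximation built from those of $X$ and $Z$, and then observing that the additivity of indices exactly cancels this discrepancy at the level of projectives, which by projectivity lifts back to the vanishing of $Fh$ (equivalently, $h\in(\susp T)$).
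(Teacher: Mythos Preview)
The paper does not give its own proof of this proposition; it is quoted from \cite{Palu}. So there is no in-paper argument to compare against, but let me comment on the correctness of your attempt.

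\medskip
\textbf{The ``if'' direction.} Your strategy is the right one, but the sentence ``the octahedral axiom applied to~$\alpha$ together with the two given triangles yields a triangle $T_1^X\oplus T_1^Z \to T_0^X\oplus T_0^Z \xrightarrow{\alpha} Y \to \susp(T_1^X\oplus T_1^Z)$'' is doing too much work. A single octahedron does not produce this. One clean way: first form the homotopy pullback~$P$ of $Y\to Z\leftarrow T_0^Z$, which gives triangles $T_1^Z\to P\to Y$ and $X\to P\to T_0^Z\xrightarrow{\psi}\susp X$ with $\psi=h\circ(T_0^Z\to Z)=0$; then apply the octahedron to $T_0^X\to X\to P$ and use $\cat(T_0^Z,\susp T_1^X)=0$ to split the resulting extension. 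Compare the proof of \cref{lem:index kzero extricat} in the extriangulated setting, which carries out exactly this two-step argument.

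\medskip
\textbf{The ``only if'' direction.} Here there is a genuine gap. Your claim that from
\[
[FT_0^Y]-[FT_1^Y]=[FT_0^X]+[FT_0^Z]-[FT_1^X]-[FT_1^Z]\quad\text{in }K_0(\proj\Lambda)
\]
one can ``by comparing Euler characteristics'' or ``by Schanuel's lemma'' deduce $Fh=0$ is not a valid inference: an equality of classes in~$K_0$ simply does not force a specific morphism to vanish. You also cannot pass through $K_0(\MOD\Lambda)$, because $[FX]\ne[FT_0^X]-[FT_1^X]$ in general (the map $FT_1^X\to FT_0^X$ need not be injective).

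What Palu actually proves is a precise formula: setting $C=\coker\big(\cat(T,Y)\to\cat(T,Z)\big)$, one has
\[
\ind_T X+\ind_T Z-\ind_T Y=\phi(C),
\]
where~$\phi$ is an explicit invariant of~$C$ built from indices and coindices. The $2$-Calabi--Yau duality is then used to show that~$\phi(C)=0$ forces~$C=0$ (equivalently $Fh=0$, equivalently $h\in(\susp T)$). Your outline invokes the Calabi--Yau duality only in passing and never exploits it; without this structural input, the implication does not go through. If you want to complete the argument, you will need to reconstruct this explicit correction term and prove its vanishing characterises $C=0$, rather than relying on a $K_0$-level bookkeeping.

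\medskip
As a side remark: for the purposes of this paper, only the ``if'' direction is actually needed (it suffices to establish \cref{coro:grothendieck-g-vectors}, whose inverse map is constructed directly from the approximation triangles).
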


\begin{corollary}
\label{coro:grothendieck-g-vectors}
There is an isomorphism~$\phi:K_0(\cat ; T) \to \Ksp \big( \add(T) \big)$ such that~$\ind_T = \phi\circ \b{g}$. In particular,~$K_0(\cat ; T)$ is a free abelian group generated by the~$[T_i]$.
\end{corollary}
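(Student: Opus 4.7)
The plan is to extend $\ind_T$ from objects to a group homomorphism on $\Ksp(\cat)$, factor it through $\b{g}$, and then produce an inverse coming from the inclusion $\add(T) \hookrightarrow \cat$. No step should be hard: the content is essentially bookkeeping once the preceding proposition is in hand.

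First, I will check that $\ind_T$ extends to a well-defined group homomorphism $\widetilde{\ind}_T: \Ksp(\cat) \to \Ksp \big(\add(T)\big)$. Since $\Ksp(\cat)$ is the free abelian group on isomorphism classes modulo $[X\oplus Y] = [X]+[Y]$, it suffices to check that $\ind_T$ is additive on direct sums. But given triangles $T_1^X \to T_0^X \to X \to \susp T_1^X$ and $T_1^Y \to T_0^Y \to Y \to \susp T_1^Y$, their direct sum is again such a triangle for $X\oplus Y$, so $\ind_T(X\oplus Y) = \ind_T(X) + \ind_T(Y)$. One should also observe that $\ind_T(X)$ does not depend on the choice of the triangle; this follows from the proposition immediately preceding the statement, applied to two different such triangles.

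Next, I will show that $\widetilde{\ind}_T$ factors through $\b{g}$. For each defining relation $[X]+[Z]-[Y]$ of $K_0(\cat; T)$, coming from a triangle $X\to Y \to Z\xrightarrow{h} \susp X$ with $h \in (\susp T)$, the preceding proposition gives $\ind_T(X)+\ind_T(Z)-\ind_T(Y) = 0$. Thus there is a unique group homomorphism $\phi: K_0(\cat; T) \to \Ksp\big(\add(T)\big)$ with $\phi\circ \b{g} = \widetilde{\ind}_T$, which is the desired identity.

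To prove $\phi$ is an isomorphism, define $\psi: \Ksp\big(\add(T)\big) \to K_0(\cat; T)$ as the composition of the inclusion $\Ksp\big(\add(T)\big) \hookrightarrow \Ksp(\cat)$ with $\b{g}$. On generators $[T_i]$ we have $\phi\circ\psi([T_i]) = \ind_T(T_i) = [T_i]$, using the trivial triangle $0\to T_i\to T_i \to 0$; hence $\phi\circ \psi = \id$. Conversely, for any $X\in \cat$, the triangle $T_1^X \to T_0^X \to X \to \susp T_1^X$ has its connecting morphism in $(\susp T)$ by construction, so $\b{g}([X]) = \b{g}([T_0^X]) - \b{g}([T_1^X]) = \psi\big([T_0^X]-[T_1^X]\big) = \psi\circ\phi(\b{g}([X]))$. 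Since the classes $\b{g}([X])$ generate $K_0(\cat;T)$, we conclude $\psi\circ\phi = \id$, and $\phi$ is an isomorphism.

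Finally, since $T = \bigoplus_{i=1}^n T_i$ is basic with pairwise non-isomorphic indecomposable summands, $\Ksp\big(\add(T)\big)$ is the free abelian group on $[T_1],\ldots,[T_n]$. Transporting this basis through $\phi^{-1}$ shows that $K_0(\cat;T)$ is freely generated by $\b{g}([T_1]),\ldots,\b{g}([T_n])$, which are the images of the $[T_i]$. The only mild subtlety, already handled above, is verifying that $\ind_T$ is well-defined on isomorphism classes and additive, so nothing in the argument is truly an obstacle.
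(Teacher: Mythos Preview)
Your proof is correct and follows essentially the same approach as the paper: the paper does not spell out a proof of this corollary, but the analogous statement in the extriangulated setting (\cref{prop:index iso extricat}) is proved exactly as you do, by defining the inverse $\pi$ (your $\psi$) via the inclusion $\add(T)\hookrightarrow\cat$ and checking both compositions using the trivial triangle $0\to T_i\to T_i$ and the defining triangle $T_1^X\to T_0^X\to X$. One small quibble: the well-definedness of $\ind_T(X)$ (independence of the chosen triangle) is not a consequence of the Palu proposition immediately preceding the statement, but is part of the results recalled from \cite{DehyKeller,Palu} just before the definition of the index; this is a citation issue rather than a mathematical gap.
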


We can now begin the proof of \cref{thm:relations-g-vecteurs}

\begin{lemma}
\label{lem:bilinear-form-suspention-T}
If~$X$ or~$Y$ lie in~$\add(\susp T)$, then~$\langle X, Y \rangle = 0$.
\end{lemma}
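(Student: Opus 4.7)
The plan is to unwind the definition of $\langle -, - \rangle$ and observe that the functor $F = \cat(T,-)$ annihilates the subcategory $\add(\susp T)$, which makes the result immediate.

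More precisely, the first step is to recall that by the cluster-tilting property of $T$ (see \cref{sec:setting}), we have $\cat(T, \susp X) = 0$ if and only if $X \in \add(T)$. Applying this with $X \in \add(T)$ itself shows that $F(\susp T') = \cat(T, \susp T') = 0$ for every $T' \in \add(T)$. Since $F$ is additive, it vanishes on every object of $\add(\susp T)$. Alternatively, one can invoke \cref{prop:functor-F}: the equivalence $F : \cat/(\susp T) \xrightarrow{\sim} \MOD \Lambda$ identifies $\add(\susp T)$ with the zero ideal, hence any object of $\add(\susp T)$ is sent to~$0$.

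The second and final step is to conclude: if $X \in \add(\susp T)$, then $FX = 0$, so $\Hom_\Lambda(FX, FY) = 0$, and symmetrically if $Y \in \add(\susp T)$. In both cases $\langle X, Y \rangle = \dim_\field \Hom_\Lambda(FX, FY) = 0$, as desired. No real obstacle arises here; the statement is essentially a bookkeeping consequence of the cluster-tilting hypothesis, but it will be used repeatedly in the sequel (for instance to justify that the sum in \cref{thm:relations-g-vecteurs} ranges only over $\ind(\cat) \ssm \add(\susp T)$).
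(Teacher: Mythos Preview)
Your proof is correct and takes essentially the same approach as the paper: both arguments reduce immediately to the observation that $F\susp T = 0$. The paper's proof is a single sentence (``This is because $F\susp T = 0$''), and you have simply spelled out why this vanishing holds.
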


\begin{proof}
This is because~$F\susp T = 0$.
\end{proof}

\begin{lemma}
\label{lem:almost-split}
Let~$X\xrightarrow{} Y \xrightarrow{} \susp^{-1} X \xrightarrow{h} \susp X$ be an almost-split triangle. Then~$X\notin \add(\susp T)$ if and only if~$h \in (\susp T)$.
\end{lemma}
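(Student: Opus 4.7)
The plan is to translate the condition $h \in (\susp T)$ into a condition in the module category via the Keller--Reiten equivalence $F = \cat(T,-) : \cat/(\susp T) \xrightarrow{\cong} \MOD \Lambda$ of Proposition~\ref{prop:functor-F}. Under this equivalence, $h \in (\susp T)$ is equivalent to $F(h) = 0$. Applying $F$ to the almost-split triangle $X \to E \xrightarrow{v} \susp^{-1} X \xrightarrow{h} \susp X$ gives an exact sequence
\[
 F(E) \xrightarrow{F(v)} F(\susp^{-1}X) \xrightarrow{F(h)} F(\susp X),
\]
so $F(h) = 0$ if and only if $F(v)$ is surjective, i.e.\ every morphism $T \to \susp^{-1} X$ factors through $v$. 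Decomposing $T$ into indecomposable summands $T_i$, it suffices to decide when every morphism $T_i \to \susp^{-1} X$ factors through $v$.

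The heart of the argument is then the defining property of the almost-split triangle: a morphism $T_i \to \susp^{-1} X$ factors through $v$ if and only if it is not a retraction. Since both $T_i$ and $\susp^{-1} X$ are indecomposable (the latter because $X$ is), such a retraction exists if and only if $T_i \cong \susp^{-1} X$, equivalently $X \cong \susp T_i \in \add(\susp T)$.

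Combining the two observations yields both directions. If $X \notin \add(\susp T)$, then no $T_i$ is isomorphic to $\susp^{-1} X$, so no morphism $T_i \to \susp^{-1} X$ is a retraction, every such morphism factors through $v$, and hence $F(h) = 0$. Conversely, if $X \in \add(\susp T)$ with $X$ indecomposable, then $\susp^{-1} X \cong T_j$ for some summand $T_j$; the identity map $T_j \to \susp^{-1} X$ is a retraction which cannot factor through $v$ (otherwise $v$ would split, contradicting that the almost-split triangle is non-split), so $F(h) \neq 0$. I do not expect a real obstacle: the work is entirely done by Proposition~\ref{prop:functor-F} together with the standard ``non-retractions lift'' characterization of almost-split triangles.
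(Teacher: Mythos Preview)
Your argument is correct and takes a genuinely different route from the paper's. The paper establishes the forward direction by a socle computation: it observes that $h$ lies in the one-dimensional socle of the right $\End{\cat}(X)$-module $\cat(\susp^{-1}X,\susp X)$ (using the $2$-Calabi--Yau duality), and then invokes the isomorphism $(\susp T)\cat(\susp^{-1}X,\susp X)\cong D\big(\cat(X,X)/(\susp T)\big)$ from \cite{Palu} to conclude that this submodule is nonzero, hence contains the socle, hence contains $h$. Your approach bypasses both the socle argument and the auxiliary duality: you translate $h\in(\susp T)$ into $F(h)=0$ via the faithfulness part of Proposition~\ref{prop:functor-F}, reduce to surjectivity of $F(v)$ by exactness of the long sequence, and then read this off directly from the right-almost-split property of $v$. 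This is more elementary and makes the equivalence transparent in terms of the lifting behaviour of morphisms from $\add T$. The paper's argument, on the other hand, highlights the role of the $2$-Calabi--Yau structure and the bifunctorial duality of \cite{Palu}, which are used elsewhere in the section; your proof shows that for this particular lemma neither is strictly needed once Proposition~\ref{prop:functor-F} is available.
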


\begin{proof}
If~$X \in \add(\susp T)$, then~$h$ cannot be in~$(\susp T)$, otherwise it would be zero since~$\cat(T, \susp T) =0$. 
 
Assume now that~$X\notin \add(\susp T)$. Let~$\field_X$ be the residue field of the algebra~$\End{\cat}(X)$. 
By definition of an almost-split triangle,~$h$ is in the socle of the right~$\End{\cat}(X)$-module~$\cat(\susp^{-1} X, \susp X)$. Moreover, this socle is a one-dimensional~$\field_X$-vector space; indeed, the~$2$-Calabi--Yau condition gives an isomorphism
\[
\cat(\susp^{-1} X, \susp X) \cong D\cat(X, X).
\]
Thus the socle of the right module~$\cat(\susp^{-1} X, \susp X)$ has the same~$\field_X$ dimension as the top of the left module~$\cat(X,X)$. Since~$X$ is indecomposable,~$\cat(X,X)$ is local, and its top is one-dimensional over~$\field_X$.
 
Now,~$(\susp T)\cat(\susp^{-1} X, \susp X)$ is a sub-module of~$\cat(\susp^{-1} X, \susp X)$. Therefore, if~$(\susp T)\cat(\susp^{-1} X, \susp X)$ is non-zero, then it contains the one-dimensional socle of~$\cat(\susp^{-1} X, \susp X)$, and thus contains~$h$.  
By~\cite{Palu},
\[
(\susp T)\cat(\susp^{-1} X, \susp X) \cong D\cat(X, X)/(\susp T). 
\]
The identity morphism of~$X$ is not in~$(\susp T)$, since~$X$ is not in~$\add(\susp T)$. Thus the right-hand side is non-zero, and so neither is the left-hand side. By the above, this implies~${h \in (\susp T)\cat(\susp^{-1} X, \susp X)}$, which finishes the proof.
\end{proof}

\begin{lemma}
\label{lem:bilinear-form}
Let~$A$ and~$B$ be two indecomposable objects of~$\cat$.
\begin{enumerate}
\item If~$A \notin \add(\susp T)$, then
\[
\langle \ell_A, B \rangle = 
\begin{cases}
0 & \textrm{if~$A\ncong B$;} \\
\dim_{\field}\field_A & \textrm{if~$A\cong B$.}
\end{cases}
\]
 
\item If~$B \notin \add(\susp T)$, then
\[
\langle A, r_B \rangle =
\begin{cases}
0 & \textrm{if~$A\ncong B$;} \\
\dim_{\field}\field_B & \textrm{if~$A\cong B$.}
\end{cases}
\]
\end{enumerate} 
\end{lemma}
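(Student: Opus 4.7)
The plan is to reduce Part~(1) to the almost-split property of the defining triangle via the functor $F$, and then derive Part~(2) from Part~(1) using the $2$-Calabi--Yau duality.

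For Part~(1), consider the almost-split triangle $A \xrightarrow{f} E \xrightarrow{g} \susp^{-1}A \xrightarrow{h} \susp A$. Since $A \notin \add(\susp T)$, \cref{lem:almost-split} gives $h \in (\susp T)$, hence $Fh = 0$. Applying the cohomological functor $F = \cat(T,-)$ therefore yields an exact sequence in $\MOD\Lambda$ whose right end is $FA \xrightarrow{Ff} FE \xrightarrow{Fg} F\susp^{-1}A \to 0$. Setting $M \eqdef \operatorname{im}(Ff) = \ker(Fg)$, this splits into a short exact sequence $0 \to M \to FE \to F\susp^{-1}A \to 0$ together with a surjection $FA \twoheadrightarrow M$. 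Applying $\Hom_\Lambda(-,FB)$ to the short exact sequence, the induced injection $\Hom_\Lambda(M,FB) \hookrightarrow \Hom_\Lambda(FA,FB)$ coming from $FA \twoheadrightarrow M$, together with a short diagram chase, identifies $\ker\bigl(\Hom_\Lambda(FE,FB) \to \Hom_\Lambda(FA,FB)\bigr)$ with $\Hom_\Lambda(F\susp^{-1}A,FB)$, and the resulting dimension identity reads
\[
 \langle \ell_A, B \rangle = \langle A,B \rangle - \langle E,B \rangle + \langle \susp^{-1}A,B \rangle = \dim_\field \operatorname{coker}\bigl(\Hom_\Lambda(FE,FB) \to \Hom_\Lambda(FA,FB)\bigr).
\]

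The crux of the proof, and the main obstacle, is to identify this cokernel with $\delta_{A \cong B} \dim_\field \field_A$. Transporting the statement back to $\cat$ via the equivalence of \cref{prop:functor-F}, the cokernel becomes the quotient of $\cat(A,B)$ by the sum of two ideals: morphisms factoring through $f$, and morphisms factoring through $\add(\susp T)$. The key observation is that, since $A$ is indecomposable and $A \notin \add(\susp T)$, any morphism $A \to S$ with $S \in \add(\susp T)$ is a non-section, and therefore factors through $f$ by the almost-split property of the triangle; consequently the second ideal is contained in the first, and the cokernel reduces to $\cat(A,B)$ modulo the morphisms factoring through $f$. The almost-split property identifies this latter quotient with the residue field $\field_A$ when $A \cong B$ and with $0$ when $A \not\cong B$ (since any morphism between distinct indecomposables is then a non-section and thus factors through $f$).

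For Part~(2), I would exploit the $2$-Calabi--Yau duality $\dim \cat(A, Y) = \dim \cat(Y, \susp^2 A)$ to rewrite the dimensions $\dim \cat(A, B)$, $\dim \cat(A, \susp B)$, and $\dim \cat(A, E')$ in terms of morphism spaces with target $\susp^2 A$, where $\susp B \to E' \to B \to \susp^2 B$ is the almost-split triangle defining $r_B$. Expanding via the identity $\dim \cat(X, Y) = \langle X, Y \rangle + \langle \susp^{-1}Y, \susp X \rangle$ provided by Palu's duality, one obtains
\[
 \langle A, r_B \rangle = \bigl[\dim \cat(B, \susp^2 A) + \dim \cat(\susp B, \susp^2 A) - \dim \cat(E', \susp^2 A)\bigr] - \langle \ell_B, \susp A \rangle.
\]
An alternating-sum analysis of the long exact sequence obtained by applying $\cat(-, \susp^2 A)$ to the triangle, combined with the almost-split property (used once directly for the map $\cat(E', \susp^2 A) \to \cat(\susp B, \susp^2 A)$ and once for the $\susp^2$-shift of the triangle to control the other extremal image), shows that the bracketed quantity equals $\delta_{A \cong B} \dim_\field \field_B + \delta_{B \cong \susp A} \dim_\field \field_B$. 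Finally, since $B \notin \add(\susp T)$, Part~(1) applies and yields $\langle \ell_B, \susp A \rangle = \delta_{B \cong \susp A} \dim_\field \field_B$; substituting this back gives Part~(2).
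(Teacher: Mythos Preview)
Your proof of Part~(1) is correct and essentially the same as the paper's: both apply~$F$ to the almost-split triangle, then~$\Hom_\Lambda(-,FB)$, and identify the cokernel using that morphisms from~$A$ into~$\add(\susp T)$ are non-sections and hence factor through~$f$. The paper packages this via a commutative diagram comparing~$\coker(f^*)$ in~$\cat$ with~$\coker(Ff^*)$ in~$\MOD\Lambda$, while you work directly with the equivalence~$\cat/(\susp T)\simeq\MOD\Lambda$; the content is the same.

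For Part~(2) the approaches differ. The paper simply says the statement is proved dually, meaning one runs the mirror argument: take the almost-split triangle~$\susp B\to E'\to B\to\susp^2 B$, apply~$F$, then apply~$\Hom_\Lambda(FA,-)$, and use the right-almost-split property of~$E'\to B$. This is short and symmetric to Part~(1). You instead reduce Part~(2) to Part~(1) via the~$2$-Calabi--Yau pairing and Palu's duality, expressing~$\langle A,r_B\rangle$ as an alternating sum of~$\dim\cat(-,\susp^2A)$ applied to the triangle minus~$\langle\ell_B,\susp A\rangle$. Your computation is correct (the two ``extremal images'' in the long exact sequence indeed contribute~$\delta_{A\cong B}\dim\field_B$ and~$\delta_{B\cong\susp A}\dim\field_B$, the latter cancelling against~$\langle\ell_B,\susp A\rangle$), but it is considerably longer than the paper's dual argument and requires more bookkeeping. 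The paper's route is preferable for economy; yours has the minor conceptual interest of showing explicitly how the~$2$-Calabi--Yau duality ties the two halves of the lemma together.
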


\begin{proof}
We only prove the first assertion; the second one is proved dually. Assume that ${A\notin\add(\susp T)}$. Let
\[
A \xrightarrow{f} E \xrightarrow{} \susp^{-1} A \xrightarrow{h} \susp A
\]
be an almost-split triangle. By \cref{lem:almost-split}, the morphism~$h$ is in~$(\susp T)$. Applying the functor ${F=\cat(T,-)}$, we get an exact sequence
\[
FA\xrightarrow{Ff} FE \xrightarrow{} F\susp^{-1} A \xrightarrow{} 0.
\]
Applying now the functors~$\cat(-, B)$ and~$\Hom{\Lambda}(-, FB)$, we get a commutative diagram whose rows are exact sequences and whose vertical maps are surjective by \cref{prop:functor-F}.
\[
\hspace{-.3cm}
\xymatrix{ & \cat(\susp^{-1} A, B)\ar[r]\ar@{->>}[d] & \cat(E, B)\ar[r]^{f^*}\ar@{->>}[d] & \cat(A,B)\ar[r]\ar@{->>}[d] & \coker(f^*)\ar[r]\ar@{->>}[d] & 0 \\
0 \ar[r] & \Hom{\Lambda}(F\susp^{-1} A, FB)\ar[r] & \Hom{\Lambda}(FE, FB)\ar[r]^{Ff^*} & \Hom{\Lambda}(FA, FB)\ar[r] & \coker(Ff^*)\ar[r] & 0.
}
\]

If~$B\ncong A$, then the definition of an almost-split triangle implies that~$f^*$ is surjective. Thus $\coker(f^*)=0$, so that~$\coker(Ff^*) = 0$, and by additivity of the dimension in exact sequences, we get that~$\langle \ell_A, B \rangle =0$.
 
If~$B\cong A$, then~$\coker(f^*)$ is isomorphic to the residue field~$\field_A$ of~$\cat(A,A)$. Since~$A$ is not in~$\add(\susp T)$, the ideal of endomorphisms of~$A$ factoring through an object of~$\add(\susp T)$ is contained in the maximal ideal of~$\cat(A,A)$. Therefore, the rightmost morphism in the commutative diagram is an isomorphism, so~$\coker(Ff^*)$ is isomorphic to~$\field_A$.
\end{proof}

\begin{lemma}
\label{lem:coefficients}
Let~$x \in \Ksp(\cat)$, and write~\[x=\sum_{A \in \ind(\cat)} \lambda_A [A].\]  Then for any~$A\in\ind(\cat) \ssm \add(\susp T)$, we have that
\[
\lambda_A = \frac{\langle \ell_A, x \rangle}{\langle \ell_A, A \rangle} = \frac{\langle x, r_A \rangle}{\langle A, r_A \rangle}.
\]
\end{lemma}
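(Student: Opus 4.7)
The plan is to derive this lemma as an essentially immediate corollary of \cref{lem:bilinear-form} combined with the bilinearity of $\langle -,-\rangle$ established in \cref{def:bilinear form}. Since both claimed identities have the same shape, I will write the argument for the first equality $\lambda_A = \langle \ell_A, x \rangle / \langle \ell_A, A \rangle$, and obtain the second by a completely symmetric computation using the dual statement (2) of \cref{lem:bilinear-form}.

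First I would fix $A \in \ind(\cat) \ssm \add(\susp T)$ and expand
\[
\langle \ell_A, x \rangle = \sum_{B \in \ind(\cat)} \lambda_B \, \langle \ell_A, B \rangle
\]
using bilinearity. By part (1) of \cref{lem:bilinear-form}, every term with $B \not\cong A$ vanishes, and the single surviving term is $\lambda_A \dim_\field \field_A$. On the other hand, applying the same formula to the specific vector $[A]$ in place of $x$, we see that $\langle \ell_A, A \rangle = \dim_\field \field_A$; note in particular that this quantity is a non-zero positive integer, so dividing by it is legitimate. Comparing the two computations yields $\lambda_A = \langle \ell_A, x\rangle / \langle \ell_A, A\rangle$.

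For the second equality, the argument is identical: expand $\langle x, r_A \rangle = \sum_B \lambda_B \langle B, r_A \rangle$, invoke part (2) of \cref{lem:bilinear-form} to kill all terms but $B \cong A$, and compare with the evaluation $\langle A, r_A\rangle = \dim_\field \field_A$.

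There is no real obstacle here; the lemma is a straightforward bookkeeping consequence of the ``dual basis'' behaviour of the $\ell_A$'s and $r_A$'s with respect to the bilinear pairing, which is exactly what \cref{lem:bilinear-form} encodes. The only subtle point worth flagging is that one uses the hypothesis $A \notin \add(\susp T)$ precisely to ensure that $\langle \ell_A, A \rangle$ and $\langle A, r_A\rangle$ are non-zero (indeed equal to $\dim_\field \field_A$), so that the stated quotients are well-defined; without this hypothesis \cref{lem:bilinear-form-suspention-T} would force the denominators to vanish.
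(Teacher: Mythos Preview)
Your proof is correct and follows essentially the same approach as the paper: expand the pairing by bilinearity, apply \cref{lem:bilinear-form} to kill all cross terms, and read off the coefficient. Your added remark about the hypothesis $A \notin \add(\susp T)$ ensuring the denominator is non-zero is accurate and worth keeping.
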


\begin{proof}
Let~$B \in \ind(\cat) \ssm \add(\susp T)$. Applying \cref{lem:bilinear-form}, we get that
\[
\langle \ell_B, x \rangle = \Big\langle \ell_B, \sum_{A \in \ind(\cat)} \lambda_A [A] \Big\rangle = \sum_{A \in \ind{\cat}} \lambda_A \langle \ell_B, [A] \rangle = \lambda_B \langle \ell_B, [B] \rangle.
\]
The equality~$\lambda_A = \frac{\langle x, r_A \rangle}{\langle A, r_A \rangle}$ is proved in a similar way.
\end{proof}

\begin{corollary}
\label{lem:test-susp-T}
Let~$x \in \Ksp(\cat)$. Then the following are equivalent.
\begin{enumerate}
\item $x \in \Ksp \big( \add(\susp T) \big)$;
\item $\langle x, [A]\rangle = 0$ for all~$A \in \ind(\cat)$;
\item $\langle [A], x \rangle = 0$ for all~$A \in \ind(\cat)$.
\end{enumerate}
\end{corollary}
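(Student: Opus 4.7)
The plan is to prove $(1)\Rightarrow(2)$ and $(1)\Rightarrow(3)$ directly from \cref{lem:bilinear-form-suspention-T}, and to obtain the converses $(2)\Rightarrow(1)$ and $(3)\Rightarrow(1)$ as immediate consequences of the coefficient formulas recorded in \cref{lem:coefficients}. No new computation is really needed: the preceding lemmas have already done all the work.

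For $(1)\Rightarrow(2)$ and $(1)\Rightarrow(3)$: if $x \in \Ksp\big(\add(\susp T)\big)$, write $x = \sum_i \mu_i [S_i]$ with each $S_i$ an indecomposable summand of some object of $\add(\susp T)$. By \cref{lem:bilinear-form-suspention-T}, both $\langle [S_i], [A]\rangle$ and $\langle [A], [S_i]\rangle$ vanish for every $A \in \ind(\cat)$, so bilinearity of $\langle -,-\rangle$ yields both (2) and (3).

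For $(2)\Rightarrow(1)$, decompose $x = \sum_{A\in\ind(\cat)} \lambda_A [A]$ and aim to show $\lambda_A = 0$ whenever $A \notin \add(\susp T)$. For such an $A$, \cref{lem:coefficients} gives
\[
\lambda_A = \frac{\langle x, r_A \rangle}{\langle A, r_A \rangle}.
\]
The element $r_A = [A] + [\susp A] - [E']$ is a $\Z$-linear combination of classes of indecomposables, so by bilinearity $\langle x, r_A\rangle = 0$ follows from the assumption in (2). The denominator equals $\dim_\field \field_A \neq 0$ by \cref{lem:bilinear-form}, so $\lambda_A = 0$ as required, and the only surviving terms in the decomposition of $x$ lie in $\Ksp\big(\add(\susp T)\big)$. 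The implication $(3)\Rightarrow(1)$ is entirely symmetric, using instead the formula $\lambda_A = \langle \ell_A, x\rangle / \langle \ell_A, A\rangle$ from \cref{lem:coefficients}.

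There is no real obstacle: the statement is essentially a packaging of \cref{lem:bilinear-form-suspention-T,lem:bilinear-form,lem:coefficients}. The only subtle point worth flagging is that \cref{lem:coefficients} only extracts coefficients $\lambda_A$ for $A \notin \add(\susp T)$; but that is exactly what is needed, since vanishing of those coefficients is precisely the content of (1).
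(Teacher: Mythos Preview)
Your proof is correct and follows essentially the same approach as the paper: both derive $(1)\Rightarrow(2)$ and $(1)\Rightarrow(3)$ from \cref{lem:bilinear-form-suspention-T}, and both obtain the converses by reading off the vanishing of the coefficients $\lambda_A$ for $A\notin\add(\susp T)$ via the formulas in \cref{lem:coefficients}. If anything, your version is slightly tidier in pairing each implication with the appropriate formula (using $r_A$ for $(2)\Rightarrow(1)$ and $\ell_A$ for $(3)\Rightarrow(1)$), whereas the paper's displayed formula for $(2)\Rightarrow(1)$ uses $\ell_A$; but since both formulas are available in \cref{lem:coefficients}, the argument is the same either way.
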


\begin{proof}
We will only proove that (1) is equivalent to (2); the proof that (1) is equivalent to (3) is similar. 
 
Assume that (2) holds. Then, by \cref{lem:coefficients}, we have that
\[
x = \sum_{A \in \ind(\cat) \ssm \add(\susp T)} \frac{\langle \ell_A, x \rangle}{\langle \ell_A, A \rangle}[A] + \sum_{i=1}^n \lambda_{\susp T_i} [\susp T_i] = \sum_{i=1}^n \lambda_{\susp T_i} [\susp T_i].
\]
Thus~$x \in \Ksp \big( \add(\susp T) \big)$, and (1) holds.
 
Assume now that (1) holds. Then~$\langle x, [A]\rangle = 0$ for any~$A$ by \cref{lem:bilinear-form-suspention-T}. Thus (2) holds.
\end{proof}

\begin{proposition}
\label{prop:free-set}
\begin{enumerate}
\item The set~$\set{[\ell_A]}{A \in \ind(\cat) \ssm \add(\susp T)} \cup \set{[\susp T_i]}{i \in [n]}$ is free in~$\Ksp(\cat)$.
\item The set~$\set{[r_A]}{A \in \ind(\cat) \ssm \add(\susp T)} \cup \set{[\susp T_i]}{i \in [n]}$ is free in~$\Ksp(\cat)$.
\end{enumerate}
\end{proposition}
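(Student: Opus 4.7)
My plan is to prove both (1) and (2) simultaneously, since they are symmetric via the roles of left/right in the bilinear form $\langle -, - \rangle$ and the roles of $\ell_A$/$r_A$ (coming from the almost-split triangles ending/starting at $A$). For (1), I will establish linear independence by pairing with indecomposables on the right; for (2), by pairing on the left.

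For part (1), suppose a finitely-supported linear relation
\[
\sum_{A \in \ind(\cat) \ssm \add(\susp T)} \lambda_A \, \ell_A + \sum_{i=1}^n \mu_i \, [\susp T_i] = 0
\]
holds in $\Ksp(\cat)$. Fix any $B \in \ind(\cat) \ssm \add(\susp T)$ and apply $\langle -, [B] \rangle$. By \cref{lem:bilinear-form-suspention-T}, each term $\langle [\susp T_i], [B] \rangle$ vanishes, so the right-hand sum contributes nothing. By \cref{lem:bilinear-form}\,(1), $\langle \ell_A, [B] \rangle = 0$ whenever $A \ncong B$, while $\langle \ell_B, [B] \rangle = \dim_\field \field_B > 0$. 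The pairing therefore collapses to $\lambda_B \dim_\field \field_B = 0$, forcing $\lambda_B = 0$. This holds for every $B \notin \add(\susp T)$, so all $\lambda_A$ vanish and we are left with $\sum_{i=1}^n \mu_i [\susp T_i] = 0$. Since $\Ksp(\cat)$ is free abelian on $\ind(\cat)$ and the $\susp T_i$ are pairwise non-isomorphic indecomposables, this forces $\mu_i = 0$ for all $i$, proving the set is free.

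Part (2) is entirely analogous, replacing $\ell_A$ by $r_A$ and pairing on the opposite side. Suppose
\[
\sum_{A \in \ind(\cat) \ssm \add(\susp T)} \lambda_A \, r_A + \sum_{i=1}^n \mu_i \, [\susp T_i] = 0,
\]
and apply $\langle [B], - \rangle$ for $B \in \ind(\cat) \ssm \add(\susp T)$. \cref{lem:bilinear-form-suspention-T} again kills the $[\susp T_i]$ contributions, while \cref{lem:bilinear-form}\,(2) gives $\langle [B], r_A \rangle = \delta_{A \cong B} \dim_\field \field_B$. This isolates $\lambda_B$, and once all $\lambda_A$ are zero, freeness of $\Ksp(\cat)$ on $\ind(\cat)$ kills the $\mu_i$ exactly as before.

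There is essentially no obstacle here: both claims are immediate corollaries of the near-orthogonality lemma \cref{lem:bilinear-form}, which has already been established, together with the observation from \cref{lem:bilinear-form-suspention-T} that $\susp T$-summands are invisible to the bilinear form. The only subtle point is to recognise that the freeness argument needs the second step (recovering $\mu_i = 0$) and that this step uses the freeness of $\Ksp(\cat)$ itself on $\ind(\cat)$, which holds by construction (\cref{def:grothendieck-group}). This proposition will then serve, in the subsequent argument, as the key input toward showing that $L$ is in fact a basis of $\ker(\b{g})$ in the finite representation type case, by combining the freeness established here with a dimension count against the rank of $\ker(\b{g})$ coming from \cref{coro:grothendieck-g-vectors}.
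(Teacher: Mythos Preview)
Your proof is correct and follows essentially the same approach as the paper: assume a vanishing linear combination, pair with $[B]$ for $B \notin \add(\susp T)$ using \cref{lem:bilinear-form} to isolate $\lambda_B$, then use freeness of $\Ksp(\cat)$ on $\ind(\cat)$ to conclude $\mu_i = 0$. You are in fact slightly more careful than the paper in two respects: you explicitly invoke \cref{lem:bilinear-form-suspention-T} to kill the $[\susp T_i]$ contributions (the paper leaves this implicit), and you correctly record the value $\dim_\field \field_B$ rather than just $1$, though of course either suffices since it is nonzero.
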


\begin{proof}
We only prove (1); the proof of (2) is similar. Assume that~
\[
x= \sum_{A \in \ind(\cat) \ssm \add(\susp T)} \lambda_A \ell_A + \sum_{i=1}^n \lambda_i [\susp T_i] = 0.
\]
Then~$\langle x, [A] \rangle = 0$ for all~$A\in\ind(\cat) \ssm \add(\susp T)$. But~$\langle x, [A] \rangle = \lambda_A$ by \cref{lem:bilinear-form}. Thus~
\[
x = \sum_{i=1}^n \lambda_i [\susp T_i] = 0.
\]
But the~$[\susp T_i]$ are linearly independent in~$\Ksp(\cat)$. Thus~$\lambda_i = 0$ for all~$i \in [n]$. This finishes the proof.
\end{proof}

\begin{proposition}
Assume that~$\ind(\cat)$ is finite. Then the set~$\set{[\ell_A]}{A \in \ind(\cat) \ssm \add(\susp T)}$ is a basis of the kernel of~$\b{g} : \Ksp(\cat) \to K_0(\cat ; T)$. Moreover, for any~$x \in \ker \b{g}$, we have that
\[
x= \sum_{A \in \ind(\cat) \ssm \add(\susp T)} \frac{\langle x, A \rangle}{\langle \ell_A, A \rangle} \ell_A.
\]
\end{proposition}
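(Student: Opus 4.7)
My plan is to combine Proposition~\ref{prop:free-set} (linear independence) with a rank count for $\ker\b{g}$ and the explicit coefficient formula derived from the bilinear form $\langle -,-\rangle$ of Definition~\ref{def:bilinear form}. First I would verify that each $\ell_A$ with $A\in\ind(\cat)\ssm\add(\susp T)$ actually lies in $\ker\b{g}$: the almost-split triangle $A\to E\to \susp^{-1}A\xrightarrow{h}\susp A$ has $h\in(\susp T)$ by Lemma~\ref{lem:almost-split}, so $\ell_A=[A]+[\susp^{-1}A]-[E]$ is by construction a defining relation of $K_0(\cat;T)$ and thus vanishes under $\b{g}$.

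Next I would run a rank count. Since $\ind(\cat)$ is finite, $\Ksp(\cat)$ is free abelian of rank $|\ind(\cat)|$, while Corollary~\ref{coro:grothendieck-g-vectors} identifies $K_0(\cat;T)$ with $\Ksp(\add T)\cong\Z^n$; the map $\b{g}$ is surjective because each generator $[T_i]$ lifts to $[T_i]\in\Ksp(\cat)$. The resulting short exact sequence
\[
0\to\ker\b{g}\to\Ksp(\cat)\to K_0(\cat;T)\to 0
\]
forces $\ker\b{g}$ to be free of rank $|\ind(\cat)|-n$. Proposition~\ref{prop:free-set} then provides exactly $|\ind(\cat)|-n$ elements $\ell_A$ which are $\Z$-linearly independent, so they span a full-rank sublattice of $\ker\b{g}$ and form a $\Q$-basis of $\ker\b{g}\otimes\Q$.

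To pin down the coefficients, I would set
\[
y\eqdef\sum_{A\in\ind(\cat)\ssm\add(\susp T)}\frac{\langle x,A\rangle}{\langle\ell_A,A\rangle}\,\ell_A
\]
(with coefficients a priori in $\Q$) and show $x=y$ by testing against the bilinear form. Lemma~\ref{lem:bilinear-form}(1) gives $\langle\ell_A,B\rangle=\dim_{\field}\field_A\cdot\delta_{A\cong B}$ for $B\notin\add(\susp T)$, which yields $\langle y,B\rangle=\langle x,B\rangle$ for every such $B$, while Lemma~\ref{lem:bilinear-form-suspention-T} makes both sides vanish for $B\in\add(\susp T)$. Hence $\langle x-y,B\rangle=0$ for every $B\in\ind(\cat)$, and Corollary~\ref{lem:test-susp-T} places $x-y$ in the rational span of $\{[\susp T_i]\}$. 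But both $x$ and each $\ell_A$ lie in $\ker\b{g}$, so $x-y\in\ker\b{g}$; since $\b{g}([\susp T_i])=-[T_i]$ belongs to a $\Z$-basis of $K_0(\cat;T)$, the map $\b{g}$ is injective on the $\Q$-span of the $[\susp T_i]$'s, which forces $x=y$.

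The main obstacle I expect is promoting this rational identity to a genuine $\Z$-basis statement: one must argue that each coefficient $\langle x,A\rangle/\langle\ell_A,A\rangle$ is in fact an integer whenever $x\in\ker\b{g}$. For $\field$ algebraically closed this is automatic, since $\dim_\field\field_A=1$; in general it amounts to showing that $\dim_\field\field_A$ divides $\langle x,A\rangle$, a divisibility property analogous to the one already guaranteed by Lemma~\ref{lem:coefficients} for the dual pairing $\langle\ell_A,-\rangle$. Once this integrality is verified, combining the explicit formula with the linear independence from Proposition~\ref{prop:free-set} yields that $\{\ell_A\}_{A\notin\add(\susp T)}$ is a $\Z$-basis of $\ker\b{g}$, completing the proof.
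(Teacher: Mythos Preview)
Your core argument is the same as the paper's: form the candidate expansion, show that $\langle x-y,B\rangle=0$ for all indecomposable $B$ via Lemma~\ref{lem:bilinear-form}, apply Corollary~\ref{lem:test-susp-T} to put $x-y$ in $\Ksp(\add(\susp T))$, and then use that $\b{g}$ is injective there (Corollary~\ref{coro:grothendieck-g-vectors}). The preliminary rank count is unnecessary: once the formula $x=y$ is established for every $x\in\ker\b{g}$, spanning is automatic, and together with the linear independence from Proposition~\ref{prop:free-set} this already yields the basis statement without comparing ranks of free abelian groups.

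Your integrality concern is legitimate, and in fact the paper's proof glosses over it as well. Here is how to close it cleanly. For any $B\in\cat$ and any $A\in\ind(\cat)\ssm\add(\susp T)$, the space $\Hom_\Lambda(FB,FA)$ is a right module over the local Artinian $\field$-algebra $\End_\Lambda(FA)$, whose unique simple module is its residue field; since $F$ identifies $\End_\Lambda(FA)$ with $\End_\cat(A)/(\susp T)$ and the ideal $(\susp T)$ lies in the radical of $\End_\cat(A)$, that residue field is exactly $\field_A$. Hence $\dim_\field\Hom_\Lambda(FB,FA)$ is always a multiple of $\dim_\field\field_A=\langle\ell_A,A\rangle$, so $\langle x,A\rangle/\langle\ell_A,A\rangle\in\Z$ for every $x\in\Ksp(\cat)$, not just over algebraically closed fields.
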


\begin{proof}
By \cref{prop:free-set}, the set is free. Let~$x \in \ker \b{g}$. Consider the element
\[
z = x- \sum_{A \in \ind(\cat) \ssm \add(\susp T)} \frac{\langle x, A \rangle}{\langle \ell_A, A \rangle} \ell_A.
\]
Then for any~$B \in \ind{\cat}$, we have that
\[
\langle  z, [B] \rangle = \Big\langle x- \sum_{A \in \ind(\cat) \ssm \add(\susp T)} \frac{\langle x, A \rangle}{\langle \ell_A, A \rangle} \ell_A, [B] \Big\rangle = \langle x, [B] \rangle - \langle x, [B] \rangle = 0, 
\]
where the second equality is obtained by using \cref{lem:bilinear-form}. By \cref{lem:test-susp-T}, this implies that~$z \in \Ksp \big( \add(\susp T) \big)$. Since~$z \in \ker(\b{g})$ and since~$\b{g}$ is injective on~$\Ksp \big( \add (\susp T) \big)$ by \cref{coro:grothendieck-g-vectors}, we get that~$z=0$. This finishes the proof.
\end{proof}

\begin{corollary}
If~$\ind(\cat)$ is finite, then the set
\[
\set{[\ell_A]}{A \in \ind(\cat) \ssm \add(\susp T)} \cup \set{[\susp T_i]}{i \in [n]}
\]
is a basis of~$\Ksp(\cat)$.
\end{corollary}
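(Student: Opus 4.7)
The plan is to combine the basis of $\ker(\b{g})$ already established in the previous proposition with a lift of a basis of the quotient $K_0(\cat ; T)$ via the short exact sequence
\[
0 \to \ker(\b{g}) \to \Ksp(\cat) \xrightarrow{\b{g}} K_0(\cat ; T) \to 0.
\]
First, by the previous proposition, the set $L \eqdef \set{\ell_A}{A \in \ind(\cat) \ssm \add(\susp T)}$ is a basis of $\ker(\b{g})$ when $\ind(\cat)$ is finite.

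Next, I would identify the images $\b{g}([\susp T_i])$ inside $K_0(\cat; T)$. For each $i \in [n]$, the triangle $T_i \to 0 \to \susp T_i \xrightarrow{\mathrm{id}} \susp T_i$ has connecting morphism the identity of $\susp T_i$, which trivially factors through $\add(\susp T)$. By the defining relation of $K_0(\cat ; T)$ (\cref{def:grothendieck-group}) this gives $\b{g}([T_i]) + \b{g}([\susp T_i]) = 0$, so $\b{g}([\susp T_i]) = -\b{g}([T_i])$. Under the isomorphism $\phi : K_0(\cat ; T) \xrightarrow{\cong} \Ksp(\add T)$ of \cref{coro:grothendieck-g-vectors}, this becomes $-[T_i]$. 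Since $\Ksp(\add T)$ is freely generated by $\{[T_i]\}_{i \in [n]}$, the family $\set{\b{g}([\susp T_i])}{i \in [n]}$ is a $\Z$-basis of $K_0(\cat ; T)$.

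Now the short exact sequence above has free abelian group $K_0(\cat ; T) \cong \Z^n$ as its right-hand term, hence splits. The map $\Z^n \to \Ksp(\cat)$ sending the $i$-th standard generator to $[\susp T_i]$ provides a set-theoretic lift of a basis of $K_0(\cat ; T)$, and standard homological algebra (or simply a direct argument via the splitting) then yields the direct sum decomposition
\[
\Ksp(\cat) = \ker(\b{g}) \oplus \bigoplus_{i \in [n]} \Z \, [\susp T_i].
\]
Combining the basis $L$ of $\ker(\b{g})$ with the basis $\set{[\susp T_i]}{i \in [n]}$ of the complementary summand produces the desired basis of $\Ksp(\cat)$. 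Linear independence of the combined set is already guaranteed by \cref{prop:free-set}, so the only point requiring care is the spanning statement, which follows from the splitting combined with the fact that $L$ spans the kernel.

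I do not expect a real obstacle here: the previous proposition does all the work on $\ker(\b{g})$, and the description of $\b{g}([\susp T_i])$ is immediate from the definition of $K_0(\cat ; T)$. The only subtlety to keep in mind is that linear independence of $|\ind(\cat)|$ elements in a free abelian group of rank $|\ind(\cat)|$ does not suffice to conclude the basis property over $\Z$ (it would over $\Q$), so the splitting argument, rather than a naive dimension count, must be invoked.
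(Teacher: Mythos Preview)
Your proof is correct and takes a genuinely different route from the paper's. The paper does not invoke the previous proposition at all: instead, for an arbitrary $x \in \Ksp(\cat)$ it forms $z = x - \sum_{A} \frac{\langle x, A\rangle}{\langle \ell_A, A\rangle}\,\ell_A$, checks via \cref{lem:bilinear-form} that $\langle z,[B]\rangle = 0$ for every $B \in \ind(\cat)$, and then applies \cref{lem:test-susp-T} to conclude $z \in \Ksp\big(\add(\susp T)\big)$. Linear independence is handled, as in your argument, by \cref{prop:free-set}.

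Your approach is more structural: you package the previous proposition (basis of $\ker(\b{g})$) together with the freeness of $K_0(\cat;T)$ and a concrete section of the short exact sequence. This cleanly separates the kernel contribution from the quotient contribution and avoids repeating the bilinear-form computation. The paper's approach, by contrast, is self-contained (it does not rely on the preceding proposition) and reuses the same pairing technique, yielding in passing the explicit coefficients of the $\ell_A$-part of any $x$. Your remark that a cardinality count alone would not suffice over $\Z$ is well taken; the splitting argument is exactly the right way to secure spanning.
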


\begin{proof}
By \cref{prop:free-set}, the set is free. It suffices to prove that it generates~$\Ksp(\cat)$. Let~${x \in \Ksp(\cat)}$. Consider
\[
z = x - \sum_{A \in \ind{\cat}} \frac{\langle x, A \rangle}{\langle \ell_A, A\rangle} \ell_A.
\]
Then for any~$B \in \ind(\cat)$, we have that~$\langle z, [B] \rangle = 0$. By \cref{lem:test-susp-T}, this implies that ${z \in \Ksp \big( \add(\susp T) \big)}$, and finishes the proof.
\end{proof}

All that remains is to prove the converse in the statement of \cref{thm:relations-g-vecteurs}.

\begin{proposition}
Assume that the set~$\set{[\ell_A]}{A \in \ind(\cat) \ssm \add(\susp T)}$ is a basis of the kernel of~$\b{g}$. Then~$\ind(\cat)$ is finite.
\end{proposition}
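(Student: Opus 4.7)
The plan is to prove the contrapositive by constructing, for each $i \in [n]$, an explicit element of $\ker(\b{g})$ whose coefficient against $\ell_B$ in the basis $L$ detects whether a prescribed simple $\Lambda$-module (with $\Lambda = \End{\cat}(T)$) appears as a composition factor of $FB$; a pigeonhole step over the $n$ simples then forces $\ind(\cat) \ssm \add(\susp T)$ to be finite.

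The candidate element is $x_i \eqdef [T_i] + [\susp T_i] \in \Ksp(\cat)$. The triangle ${T_i \to 0 \to \susp T_i \xrightarrow{\mathrm{id}} \susp T_i}$ (the rotation of the trivial triangle $T_i \xrightarrow{\mathrm{id}} T_i \to 0 \to \susp T_i$) shows that $\ind_T(\susp T_i) = -[T_i]$, so $\ind_T(T_i) + \ind_T(\susp T_i) = 0$ and hence $x_i \in \ker(\b{g})$ by~\cref{coro:grothendieck-g-vectors}. Now assume that $L$ generates $\ker(\b{g})$; combined with~\cref{prop:free-set}, the set $L$ is then a $\Z$-basis of $\ker(\b{g})$, so one can expand $x_i = \sum_A \lambda_A^{(i)} \, \ell_A$ as a finite $\Z$-linear combination. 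Pairing against $\langle -, B \rangle$ for any $B \in \ind(\cat) \ssm \add(\susp T)$ and applying~\cref{lem:bilinear-form}\,(1), together with $F\susp T_i = 0$ and $FT_i = P_i$ (the $i$-th indecomposable projective $\Lambda$-module, with simple top $S_i$), yields
\[
\lambda_B^{(i)} \, \dim_\field \field_B = \langle x_i, B \rangle = \dim_\field \Hom{\Lambda}(P_i, FB).
\]
Hence $\lambda_B^{(i)} \neq 0$ if and only if the simple $S_i$ appears as a composition factor of the $\Lambda$-module $FB$.

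The finite support of the expansion of each $x_i$ in the basis $L$ therefore forces $\bigset{B \in \ind(\cat) \ssm \add(\susp T)}{S_i \text{ is a composition factor of } FB}$ to be finite for every $i$. Since every non-zero $\Lambda$-module admits a composition series, the equivalence $F : \cat/(\susp T) \xrightarrow{\sim} \MOD \Lambda$ of~\cref{prop:functor-F} ensures that each $B \in \ind(\cat) \ssm \add(\susp T)$ lies in at least one of these finite sets (pick any $i$ such that $S_i$ appears in the finite-dimensional non-zero indecomposable $FB$). Letting $i$ run over the $n$ possibilities exhibits $\ind(\cat) \ssm \add(\susp T)$ as a finite union of finite sets, hence itself finite; since $\add(\susp T)$ contains only $n$ indecomposable objects, $\ind(\cat)$ is finite. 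The crux of the argument is the identification of the correct witness $x_i = [T_i] + [\susp T_i]$: it belongs to $\ker(\b{g})$ for a trivial index reason, yet by~\cref{lem:bilinear-form} its coordinates in the basis $L$ record precisely the multiplicities of $S_i$ in the indecomposable $\Lambda$-modules $FB$, which is the bridge from the algebraic hypothesis on $\ker(\b{g})$ to the combinatorial finiteness of $\ind(\cat)$.
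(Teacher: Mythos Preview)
Your proof is correct and follows essentially the same approach as the paper: exhibit elements of $\ker(\b{g})$ whose coefficients in the basis $L$ are nonzero for every $B \in \ind(\cat) \ssm \add(\susp T)$, forcing this set to be finite. The paper uses the single witness $[T] + [\susp T]$ (so that $\langle [T] + [\susp T], [B] \rangle = \dim_\field FB > 0$ for all such $B$ at once), whereas you split this into the $n$ summands $[T_i] + [\susp T_i]$ and detect composition factors of $FB$ one simple at a time; your argument works but is slightly less economical.
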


\begin{proof}
Let~$x \in \ker \b{g}$, and write~$x = \sum_{[A] \in \ind(\cat)} \lambda_A[A]$, where the sum has finite support. For any~$B$ not in the support of the sum, we have that~$\langle x, [B] \rangle = 0$.
 
Now,~$[T]+[\susp T]$ is in the kernel of~$\b{g}$, but~$\langle [T] + [\susp T], [B] \rangle = \langle [T], [B] \rangle = 0$ if and only if ${B \in \add(\susp T)}$. Thus~$\ind(\cat)$ has to be finite.
\end{proof}

%%%%%%%%%%%

\subsection{Application to~$\b{g}$-vectors of cluster algebras of finite type}
\label{sec:applications-g-vectors}

In this section, we apply the results of \cref{subsec:Statement cluster cats} in order to prove \cref{prop:uniqueExchangePropertyCA,prop:meshMutations} of \cref{subsec:typeConeCA}.

We first recall the results on categorification of cluster algebras that we will use in the proofs.
\begin{theorem}
\label{thm:categorification}
Let~$\cat$ be the cluster category of a valued quiver of Dynkin type~$A,B,C,D,E,F$ or~$G$. Let~$T$ be a basic cluster-tilting object in~$\cat$, and let~$Q$ be the valued Gabriel quiver of~$\End{\cat}(T)$. Then there is a bijection
\[
\CC: \ind(\cat) \xrightarrow{} \{\textrm{cluster variables in the cluster algebra of~$Q$}\},
\]
expressing cluster variables as Laurent polynomials in the variables of the cluster~$\CC(T)$ and which has the follwing properties.
\begin{itemize}
\item[(i)] \emph{(}\cite[Prop.~3.2]{BuanMarshReinekeReitenTodorov}\emph{)} For any~$X,Y \in \ind(\cat)$, $\CC(X)$ and~$\CC(Y)$ are compatible if and only if~$\cat(X, \susp Y) = 0$.
\item[(ii)] \emph{(}\cite[Prop.~4.3]{FuKeller} and \cite[Prop.~2.2]{Palu}, where the proofs also work over an arbitrary base field\emph{)} For any~$X \in \ind(\cat)$, the~$\b{g}$-vector of~$\CC(X)$ is~$\b{g}([X])$, where we identify~$\Z^n$ and~$K_0(\cat ; T)$ via the isomorphism sending~$(a_1, \dots, a_n)$ to~$\sum_{i=1}^n a_i [T_i]$.
\item[(iii)] \emph{(}\cite[Thm.~7.5]{BuanMarshReinekeReitenTodorov}\emph{)} For any~$X,Y \in \ind(\cat)$, $\CC(X)$ and~$\CC(Y)$ are exchangeable if and only if~$\dim_{\field_X}\cat(X, \susp Y) = \dim_{\field_Y}\cat(X, \susp Y) = 1$.
\item[(iv)]\emph{(}\cite{BuanIyamaReitenScott} and \cite[Thm.~5.1]{BuanMarshReiten-mutation}, adapted to the cluster categories of species of Dynkin type\emph{)} Let~$T'$ be any basic cluster-tilting object of~$\cat$. Then the matrix~$B'$ of the associated seed~$(\CC(T'),B')$ is given by the multiplicities of the middle terms of the exchange triangles. 
\end{itemize}
\end{theorem}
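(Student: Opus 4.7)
The plan is to assemble Theorem~\ref{thm:categorification} from the cited references by checking that each statement, originally formulated for the simply-laced cluster category over an algebraically closed field, transfers to the setting of valued quivers of Dynkin type over an arbitrary base field~$\field$. The bijection~$\CC$ will be the Caldero--Chapoton cluster character (or its species analogue); since the cluster variables of a finite type cluster algebra are in canonical bijection with the indecomposable objects of the associated cluster category (by~\cite{BuanMarshReinekeReitenTodorov} in the simply-laced case, extended via folding or species arguments otherwise), defining~$\CC$ this way yields a well-defined map into cluster variables expressed as Laurent polynomials in~$\CC(T)$.

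For~(i), I would unpack that $\CC(X)$ and~$\CC(Y)$ are compatible exactly when they lie in a common cluster, which under the bijection means that~$X\oplus Y$ extends to a basic cluster-tilting object, which by definition of cluster-tilting and the~$2$-Calabi--Yau property is equivalent to $\cat(X,\susp Y)=0$; this is~\cite[Prop.~3.2]{BuanMarshReinekeReitenTodorov}. For~(ii), the~$\b{g}$-vector of a cluster variable of a principal coefficient cluster algebra is by definition the multidegree of its Laurent expansion. Applying~$\CC$ to a triangle $T_1^X \to T_0^X \to X \to \susp T_1^X$ gives an explicit Laurent formula in~$\CC(T)$ whose multidegree is the index $[T_0^X]-[T_1^X] \in \Ksp(\add T)$, which via Corollary~\ref{coro:grothendieck-g-vectors} corresponds to $\b{g}([X])\in K_0(\cat;T)$; this is~\cite[Prop.~4.3]{FuKeller} combined with~\cite[Prop.~2.2]{Palu}, whose proofs only use formal properties valid over any base field.

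For~(iii), the condition that~$\CC(X)$ and~$\CC(Y)$ be exchangeable translates, under the bijection, to the requirement that~$X\oplus Y$ complete in exactly two different ways to a basic cluster-tilting object. By the classification of almost-complete cluster-tilting objects in~\cite[Thm.~7.5]{BuanMarshReinekeReitenTodorov}, this happens precisely when the two extension spaces $\cat(X,\susp Y)$ and $\cat(Y,\susp X)$ are both one-dimensional over the relevant residue fields~$\field_X$ and~$\field_Y$. For~(iv), once exchangeability is established, the two exchange triangles $X\to E\to Y\to \susp X$ and $Y\to E'\to X\to\susp Y$ are unique up to isomorphism, and reading off the multiplicities of the indecomposable summands of~$E$ and~$E'$ (weighted by their residue field dimensions in the valued case) recovers the entries of the mutated exchange matrix~$B'$; this is~\cite{BuanIyamaReitenScott} and~\cite[Thm.~5.1]{BuanMarshReiten-mutation}.

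The main obstacle will be part~(iv) in the non-simply-laced setting: the original proofs of the mutation formula for matrices via middle terms of exchange triangles are written for ordinary cluster categories over algebraically closed fields, and the valued quiver case requires carefully tracking residue field dimensions of the indecomposable summands on both sides of the triangle. The technical verification that the multiplicity count yields exactly the skew-symmetrizable matrix~$B'$ prescribed by Fomin--Zelevinsky mutation is the only step not entirely formal; all the other assertions follow by transcribing category-theoretic proofs that use only the~$\Hom{}$-finite, Krull--Schmidt, and~$2$-Calabi--Yau structure, all of which hold in our setting.
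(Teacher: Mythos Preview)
The paper does not prove this theorem: it is stated purely as a compilation of results from the literature, with each item attributed to its source and no proof environment following the statement. Your proposal to unpack each citation and verify that the arguments transfer to the valued/non-simply-laced setting is more than the paper itself provides; the paper simply invokes the cited references (noting parenthetically for~(ii) that the proofs work over an arbitrary base field, and for~(iv) that the results are ``adapted to the cluster categories of species of Dynkin type'') and moves on to use the theorem as a black box.
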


The following definition is a cluster-tilting version of the positive mutation for~$\tau$-tilting modules, or for~$2$-term silting complexes.

\begin{definition}
\label{def:positive mutation}
Let~$x\in\cluster$ be a cluster variable in a given cluster and let~$x' \in \cluster'$ be obtained by mutating~$\cluster$ at~$x$.
The pair~$(x,x')$ is said to be a \defn{positive mutation} with respect to some initial cluster~$\cluster_\circ$ if there are~$X,Y\in \ind(\cat)$ such that~$\CC(X)=x$,~$\CC(Y)=x'$,~$\dim_{\field_X}\cat(X, \susp Y) = \dim_{\field_Y}\cat(X, \susp Y) = 1$ and, for any non-split triangle
\[
X\xrightarrow{} E \xrightarrow{} Y \xrightarrow{h} \susp X 
\]
(this triangle is unique up to isomorphism) we have~$h\in (\susp T)$, where~$T\in\cat$ is any basic cluster-tilting object such that~$\CC(T)=\cluster_\circ$.

The mutation is a \defn{mesh mutation} if the triangle can be chosen to be an almost-split triangle (with no assumption on~$h$).
\end{definition}

\begin{lemma}
\label{lem:exaclty one mesh mutation}
For every non-initial cluster variable~$x$, there exists precisely one pair~$(x,x')$ such that there exists a positive mesh mutation changing~$x$ into~$x'$.
\end{lemma}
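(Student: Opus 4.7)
The plan is to show that each non-initial cluster variable $v$ arises as the second coordinate $x'$ of a unique positive mesh mutation $(x,x')$, by identifying this mutation with the unique almost-split triangle of $\cat$ ending at the corresponding indecomposable. This interpretation of the ordered pair is essential: by \cref{def:positive mutation}, a mesh mutation is witnessed by an almost-split triangle $X \to E \to Y \to \susp X$, in which $Y \cong \susp^{-1}X$ (using that $\tau = \susp$ in the $2$-Calabi--Yau setting), so $x' = \CC(Y)$ is naturally the ``new'' variable produced by the mutation, while $x$ is the variable being replaced.

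Given such a non-initial $v$, I will first write $v = \CC(Z)$ for a unique $Z \in \ind(\cat) \ssm \add(T)$ via \cref{thm:categorification}, and consider the almost-split triangle ending at $Z$:
\[
\susp Z \longrightarrow E \longrightarrow Z \xrightarrow{h} \susp^2 Z.
\]
This produces the candidate pair $(x,x') = (\CC(\susp Z), v)$, with $X = \susp Z$ and $Y = Z$. I will then verify the three conditions of a positive mesh mutation. The mesh-mutation condition holds by construction. Positivity follows from \cref{lem:almost-split} applied with $X = \susp Z$: the characterization there gives $h \in (\susp T)$ if and only if $\susp Z \notin \add(\susp T)$, equivalently $Z \notin \add(T)$, which is precisely our hypothesis. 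For the exchange-dimension condition, $\cat(X, \susp Y) = \cat(\susp Z, \susp Z) \cong \End{\cat}(Z)$; since indecomposables in Dynkin-type cluster categories are bricks, $\End{\cat}(Z) \cong \field_Z$ and both required dimensions equal $1$.

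For uniqueness, any positive mesh mutation of the form $(x_0, v)$ comes by definition from an almost-split triangle $X_0 \to E_0 \to Z \to \susp X_0$ ending at $Z$, and uniqueness of almost-split triangles up to isomorphism forces $X_0 \cong \susp Z$, whence $x_0 = x$. The main obstacle in writing this up is really just the directional interpretation of the ordered pair: an unordered reading would give each non-initial $v$ two mesh pairs in general (one from the almost-split triangle starting at $Z$, one from the one ending at $Z$), and only the convention fixing $x'$ as the right endpoint yields the clean bijection predicted by \cref{lem:bijectionMeshMutations}. Once this is pinned down, the argument reduces to the existence and uniqueness of almost-split triangles in a $2$-Calabi--Yau Krull--Schmidt category, combined with \cref{lem:almost-split}.
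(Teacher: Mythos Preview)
Your proof is correct and uses the same core ingredients as the paper: uniqueness of almost-split triangles together with \cref{lem:almost-split}. The main difference is your choice of parameterization. The paper indexes by the \emph{starting} variable~$x$: it takes the unique almost-split triangle beginning at~$X$ and invokes \cref{lem:almost-split} to say that it is positive precisely when~$X\notin\add(\susp T)$. You instead index by the \emph{ending} variable~$x'$, taking the almost-split triangle terminating at~$Z$, so that positivity becomes~$\susp Z\notin\add(\susp T)$, i.e.~$Z\notin\add(T)$, which is literally the condition ``$v$ is non-initial''. Your parameterization is the cleaner one here: the paper's final clause identifies ``$X\notin\add(\susp T)$'' with ``$x$ is not initial'', which is not quite right as stated (it is~$x'$ that is non-initial); your version sidesteps this. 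You also explicitly verify the exchange-dimension condition via the brick property of indecomposables in Dynkin-type cluster categories, a step the paper leaves implicit. That claim is true (it follows from representation-directedness of the underlying hereditary algebra, which forces~$\Hom{}(M,\tau^2 M)=0$ and hence $\End{\cat}(M)=\End{H}(M)=\field_M$), but if you want to be fully self-contained you might add a sentence to that effect.
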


\begin{proof}
Up to isomorphism, there is precisely one almost-split triangle of~$\cat$ starting in~$X$, where~$X$ is the object such that~$\phi(X) = x$.  Thus there is precisely one mesh relation starting at~$x$. By \cref{lem:almost-split}, the pair~$(x,x')$ is a positive mutation if and only~$X\notin \add(\susp T)$, that is, if and only if~$x$ is not an initial variable.
\end{proof}

\begin{lemma}
\label{lem:mesh mutations and meshes}
The set of positive mesh mutations~$(x,x')$ of \cref{def:positive mutation} is in bijection with the set~$\meshes$ of \cref{def:meshMutation}.
\end{lemma}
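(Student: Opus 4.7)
The plan is to exploit Theorem~\ref{thm:categorification}\,(iv) to translate the combinatorial notion of a mesh mutation into a condition on exchange triangles in the cluster category. Given a cluster~$\cluster'$ with corresponding cluster-tilting object~$T'$ and an exchangeable pair~$\big(\CC(X), \CC(Y)\big)$ with~$X$ a summand of~$T'$, the entries of the row~$b_{\CC(X),\cdot}$ of the exchange matrix record the multiplicities of the summands of~$T'$ in the middle terms~$E, E'$ of the two exchange triangles
\[
X \xrightarrow{} E \xrightarrow{} Y \xrightarrow{} \susp X \qquad \text{and} \qquad Y \xrightarrow{} E' \xrightarrow{} X \xrightarrow{} \susp Y,
\]
with positive entries accounted for by~$E$ and negative entries by~$E'$. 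Consequently, the mutation at~$\CC(X)$ is a mesh mutation (all entries of the same sign) if and only if at least one of~$E$ or~$E'$ is zero.

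Next I would analyze the case~$E' = 0$ (mesh mutation starting at~$\CC(X)$): the triangle~$Y \to 0 \to X \to \susp Y$ collapses, forcing~$Y \cong \susp^{-1}X$, and I would then show that the remaining exchange triangle~$X \to E \to \susp^{-1}X \to \susp X$ must coincide with the almost-split triangle starting at~$X$. The key input is that in a finite-type $2$-Calabi--Yau cluster category every indecomposable is a brick, so~$\End_{\cat}(X) = \field_X$ is one-dimensional; by $2$-CY duality~$\cat(\susp^{-1}X, \susp X) \cong D\End_{\cat}(X)$ is also one-dimensional and coincides with its own socle, where the almost-split triangle's connecting morphism lives. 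Any non-split triangle of the form~$X \to \bullet \to \susp^{-1}X \to \susp X$ has a non-zero connecting morphism, which must therefore generate this one-dimensional socle; hence the exchange triangle and the almost-split triangle are isomorphic.

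Having identified mesh mutations with almost-split exchange triangles, I would invoke Lemma~\ref{lem:almost-split}: the connecting morphism of the almost-split triangle lies in~$(\susp T)$ precisely when~$X \notin \add(\susp T)$, which under~$\CC$ translates to~$Y = \susp^{-1}X \notin \add(T)$, i.e., $\CC(Y)$ is not an initial cluster variable. This matches exactly the non-initiality condition in \cref{def:meshMutation}: the mesh mutation does not end at an initial cluster variable. Therefore the assignment~$(x,x') \mapsto \{x,x'\}$ sends each positive mesh mutation to an element of~$\meshes$, and conversely each element of~$\meshes$—which carries a canonical ``starting'' direction given by the side of the pair on which~$B$ has a non-negative row—determines uniquely the ordered pair~$(x,x')$ and the associated~$X \notin \add(\susp T)$ with~$Y = \susp^{-1}X$. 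Bijectivity then follows from the matching cardinalities provided by \cref{lem:exaclty one mesh mutation} on the source side and \cref{lem:bijectionMeshMutations} on the target side, both equal to~$|\ind(\cat)| - n$.

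The main obstacle will be the identification of the exchange triangle with the almost-split triangle carried out in the second paragraph: this relies on combining the 2-Calabi--Yau duality, the brick property of indecomposables in finite type, and the uniqueness (up to isomorphism) of triangles in a triangulated category given fixed endpoints and connecting morphism. Once this categorical identification is in place, the rest of the proof is a clean translation between the categorical side (indexed by~$X \notin \add(\susp T)$) and the combinatorial side (unordered pairs in~$\meshes$) via the cluster character~$\CC$.
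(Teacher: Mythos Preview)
Your proposal has a genuine gap in the forward direction, and the patch you reach for to close it is circular.

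The gap: to show that $(x,x')\mapsto\{x,x'\}$ lands in~$\meshes$, you must show that the exchange pair $(X,\susp^{-1}X)$ coming from an almost-split triangle is actually realised by a \emph{combinatorial} mesh mutation in some seed, i.e.\ that one of the two exchange triangles has zero middle term. Your paragraph~2 only argues the reverse implication (from $E'=0$ to almost-split). The missing step---that the connecting map of the second exchange triangle $\susp^{-1}X\to E'\to X\to X$ lies in~$\End_\cat(X)$, is therefore invertible, and hence $E'=0$---would indeed follow from the brick property you assert, but you never run the argument in that direction; you only use brickness to pin down the \emph{first} triangle.

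Having left the forward map unjustified, you fall back on a cardinality match via \cref{lem:bijectionMeshMutations}. This is circular: in the paper that lemma is established as \cref{coro:number of positive meshes}, whose proof reads ``a direct consequence of \cref{lem:exaclty one mesh mutation} and \cref{lem:mesh mutations and meshes}''---the very statement you are proving. You cannot invoke it here.

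The paper avoids both issues by constructing explicit mutually inverse injections. For the forward direction it does not analyse~$E'$ at all: given $X\notin\add(\susp T)$ with almost-split triangle $X\to E\to\susp^{-1}X\to\susp X$, it takes a \emph{slice} of the Auslander--Reiten quiver containing~$X$ and all indecomposable summands of~$E$; the direct sum over this slice is a cluster-tilting object in which~$X$ is a source, so mutation at~$X$ is automatically a combinatorial mesh mutation starting at~$\CC(X)$. With both injections in hand, bijectivity is immediate and no cardinality argument is needed. Your $E'=0$ route is salvageable (and arguably cleaner once the brick property is properly justified, e.g.\ via \cref{thm:categorification}\,(iii) applied to the pair $(X,\susp^{-1}X)$ once you know it is exchangeable), but as written it is incomplete.
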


\begin{proof}
\enlargethispage{.3cm}
If~$(x,x')$ is a positive mesh mutation, then~$\{x,x'\}$ is in~$\meshes$.  Indeed, if~$X$ and~$X'$ are the corresponding objects in~$\cat$, then there is an almost-split triangle
\[
X\to E\to X' \to \susp E,
\]
so that~$X' = \tau^{-1}X$.  Moreover, there is a slice of the Auslander--Reiten quiver of~$\cat$ containing~$X$ and all indecomposable direct factors of~$E$.  The direct sum of indecomposable objects in this slice is a cluster-tilting object in which~$X$ is a source, and mutation at~$X$ gives~$X'$.  Thus~$\{x,x'\}$ is in~$\meshes$.  This gives an injective map from the set of positive mesh mutations to~$\meshes$.
 
\smallskip
Next, assume that~$\{x,x'\}\in \meshes$ starts in~$x$.  Then there is a cluster-tilting object~$T$ in~$\cat$ having~$X$ as a direct factor, and such that~$X$ is a source in~$T$ and mutation at~$X$ yields~$X'$.  Since~$X$ is a source, the mutation triangle starting and ending in~$X$ are
\[
X\to E\to X' \to \susp X \quad \textrm{and} \quad  X' \to 0 \to X \to \susp X'.
\]
Therefore~$X' = \susp^{-1}X = \tau^{-1}X$.  Thus the first triangle is the almost-split triangle starting in~$X$, since the dimension of~$\cat(X',\susp X)$ is~$1$ over~$\field_{X'}$.  Since the pair~$\{x,x'\}$ is not initial,~$\phi(X')$ is not an initial variable; in other words,~$X'$ is not in~$\add(T)$.  Therefore,~$X = \tau X' = \susp X'$ is not in~$\add\susp T$, so the mutation is a positive mesh mutation by \cref{lem:almost-split}.  This gives an injective map from~$\meshes$ to the set of positive mutations. 
\end{proof}

\begin{corollary}[\cref{lem:bijectionMeshMutations}]
\label{coro:number of positive meshes}
We have that~$|\meshes|$ is the number~$|\variables[\B_\circ]|-n$ of non-initial cluster variables.
\end{corollary}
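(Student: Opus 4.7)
The plan is to deduce this cardinality statement directly by chaining the two lemmas immediately preceding it. First I would observe that \cref{lem:exaclty one mesh mutation} produces, for each non-initial cluster variable $x$, exactly one ordered pair $(x,x')$ arising from a positive mesh mutation starting at $x$. I would then check that conversely every positive mesh mutation has a well-defined starting cluster variable (the $x$ such that $x'=\tau^{-1}x$ on the categorified side), and that this start is automatically non-initial by \cref{lem:almost-split} (since the starting object $X$ must not lie in $\add(\susp T)$ for the connecting morphism to be in $(\susp T)$). This gives a bijection between non-initial cluster variables and positive mesh mutations, so the set of positive mesh mutations has cardinality $|\variables[\B_\circ]|-n$.

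Next I would invoke \cref{lem:mesh mutations and meshes}, which provides a bijection between positive mesh mutations and the set $\meshes$. Composing the two bijections yields $|\meshes|=|\variables[\B_\circ]|-n$, which is the desired equality.

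I expect no real obstacle here; the only subtle point is making sure that the bookkeeping does not double-count. Concretely, one must be careful that a non-initial variable $x$ may also appear as the \emph{end} of some other positive mesh mutation (the one starting at $\tau x$, when $\tau x \notin \add(\susp T)$), but this is counted by a \emph{different} pair in $\meshes$, namely $\{\tau x,x\}$ rather than $\{x,\tau^{-1}x\}$. Since \cref{lem:exaclty one mesh mutation} tracks the start only, and \cref{lem:mesh mutations and meshes} identifies each pair in $\meshes$ with a unique ordered mesh mutation whose start is determined by the direction of the AR-translation, the counting is consistent and the corollary follows in one short paragraph.
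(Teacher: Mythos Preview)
Your proposal is correct and follows essentially the same approach as the paper: the paper's proof is the single sentence ``This is a direct consequence of \cref{lem:exaclty one mesh mutation,lem:mesh mutations and meshes},'' and you have simply spelled out how the two bijections compose, together with a sanity check on double-counting that the paper leaves implicit.
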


\begin{proof}
This is a direct consequence of \cref{lem:exaclty one mesh mutation,lem:mesh mutations and meshes}.
\end{proof}

\begin{lemma}
\label{lemma:positive mutation triangles}
Let~$\mathcal{A}$ be any cluster algebra admitting a categorification by a Hom-finite,~$2$-Calabi--Yau, Krull--Schmidt, triangulated category with cluster-tilting objects.
Let~$x\in\cluster$ be a cluster variable in a given cluster and let~$x' \in \cluster'$ be obtained by mutating~$\cluster$ at~$x$.
Then precisely one of the two mutations~$\mu_x(\cluster)$ and~$\mu_{x'}(\cluster')$ is a positive mutation.
\end{lemma}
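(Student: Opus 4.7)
The plan is to translate the statement into a dichotomy between the two exchange triangles of the mutation and to resolve it by the Palu isomorphism recalled in \cref{sec:setting}.

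Let $X, Y \in \ind(\cat)$ be the indecomposable objects satisfying $\CC(X) = x$ and $\CC(Y) = x'$, and let $T \in \cat$ be a cluster-tilting object lifting the initial cluster $\cluster_\circ$, so that $\CC(T) = \cluster_\circ$. Since $(x,x')$ is an exchangeable pair in a $2$-Calabi--Yau Krull--Schmidt category with cluster-tilting objects, the theory of mutation of cluster-tilting objects provides two non-split exchange triangles
\[
X \to E \to Y \xrightarrow{h_1} \susp X \qquad \text{and} \qquad Y \to E' \to X \xrightarrow{h_2} \susp Y,
\]
and one has $\dim \cat(X, \susp Y) = \dim \cat(Y, \susp X) = 1$ (the $2$-CY duality forces the equality, and $\Hom$-spaces of exchangeable pairs are one-dimensional). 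Hence $h_1$ and $h_2$ are non-zero generators of these two one-dimensional $\Hom$-spaces. Unwinding \cref{def:positive mutation}, $\mu_x(\cluster)$ is a positive mutation if and only if $h_1 \in (\susp T)$, while $\mu_{x'}(\cluster')$ is a positive mutation if and only if $h_2 \in (\susp T)$.

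The key step is to apply the Palu isomorphism $(\susp T)\cat(X, \susp Y) \cong \cat(Y, \susp X)/(\susp T)$ stated before \cref{def:grothendieck-group}. Setting $a \eqdef \dim (\susp T)\cat(X, \susp Y)$ and $b \eqdef \dim (\susp T)\cat(Y, \susp X)$, this isomorphism combined with $\dim \cat(X, \susp Y) = 1 = \dim \cat(Y, \susp X)$ yields $a = 1 - b$, hence $a + b = 1$ with $a,b \in \{0,1\}$. Now $h_2 \in (\susp T)$ is equivalent to $(\susp T)\cat(X, \susp Y) = \cat(X, \susp Y)$, i.e.\ to $a = 1$, and symmetrically $h_1 \in (\susp T)$ is equivalent to $b = 1$. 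The equality $a + b = 1$ therefore translates into the desired dichotomy: exactly one of the two mutations $\mu_x(\cluster)$ and $\mu_{x'}(\cluster')$ is positive.

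The only point requiring attention is the one-dimensionality of both $\Hom$-spaces, but this is standard for exchangeable pairs in the stated setting (and does not require finite type of the underlying cluster algebra); once it is in hand, the proof is a short computation with dimensions, with no serious obstacle to overcome.
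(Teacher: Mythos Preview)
Your proof is correct and follows essentially the same route as the paper, which simply invokes \cite[Lem.~3.3]{Palu} after noting \cref{thm:categorification}\,(iii); you have unpacked that citation by using the Palu bifunctor isomorphism recalled in \cref{sec:setting} and carrying out the dimension count explicitly. One small caveat: in the generality stated (arbitrary base field), the spaces $\cat(X,\susp Y)$ and $\cat(Y,\susp X)$ are one-dimensional over the residue fields $\field_X,\field_Y$ rather than over $\field$, so the dichotomy $a,b\in\{0,1\}$ should be read as $a,b\in\{0,d\}$ with $a+b=d$ for $d=\dim_\field\cat(X,\susp Y)$, using that $(\susp T)\cat(X,\susp Y)$ is an $\End{\cat}(X)$-submodule of a module simple over $\field_X$; the conclusion is unchanged.
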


\begin{proof}
In view of \cref{thm:categorification}\,(iii), it follows from~\cite[Lem.~3.3]{Palu}.
\end{proof}

\begin{corollary}\emph{(\cref{prop:meshMutations})}
\label{corollary:proof of prop meshMutations}
For any initial exchange matrix~$\B_\circ$ of mutation type~$A, B, C, D$, $E, F$ or~$G$, the linear dependence between the $\b{g}$-vectors of any mutation can be decomposed into positive combinations of linear dependences between $\b{g}$-vectors of non-initial mesh mutations.
\end{corollary}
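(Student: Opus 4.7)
The plan is to reduce the corollary to a categorical statement in the cluster category $\cat$ of the finite type cluster algebra $\principalClusterAlgebra$ and then invoke \cref{coro:meshes positively generate cluster cats}. I will fix a basic cluster-tilting object $T \in \cat$ corresponding to the initial seed $\B_\circ$ under \cref{thm:categorification}, so that $\cat$ is Hom-finite, Krull--Schmidt, $2$-Calabi--Yau with only finitely many isomorphism classes of indecomposable objects. By \cref{thm:categorification}\,(ii), the projection $\b{g}: \Ksp(\cat) \to K_0(\cat; T) \cong \Z^n$ assigns to each $[X]$ the $\b{g}$-vector of the cluster variable $\CC(X)$, so it suffices to prove the analogous statement inside $\ker(\b{g})$.

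First, I would fix an exchangeable pair $(x,x')$ with $x = \CC(X)$ and $x' = \CC(X')$. By \cref{lemma:positive mutation triangles}, exactly one of the two exchange triangles between $X$ and $X'$, say $X \to E \to X' \xrightarrow{h} \susp X$, has connecting morphism $h \in (\susp T)$. Then $[X] + [X'] - [E]$ lies in $\ker(\b{g})$ and realizes, under the identification above, the unique (up to rescaling) linear dependence between the $\b{g}$-vectors of the mutation, as guaranteed by \cref{lem:linearDependencegvectorsCA} together with \cref{prop:uniqueExchangePropertyCA}.

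Next, I would apply \cref{coro:meshes positively generate cluster cats} to expand
\[
[X] + [X'] - [E] = \sum_{A \in \ind(\cat) \ssm \add(\susp T)} \lambda_A \, \ell_A
\]
with all $\lambda_A \geq 0$. For each such $A$, the element $\ell_A = [A] + [\susp^{-1}A] - [E_A]$ is the relation in $\ker(\b{g})$ coming from the almost-split triangle $A \to E_A \to \susp^{-1}A \to \susp A$, whose connecting morphism belongs to $(\susp T)$ by \cref{lem:almost-split} since $A \notin \add(\susp T)$. By \cref{lem:mesh mutations and meshes}, the pair $\{\CC(A), \CC(\susp^{-1}A)\}$ then belongs to $\meshes$ in the sense of \cref{def:meshMutation}, and projecting $\ell_A$ under $\b{g}$ yields precisely the mesh relation of \cref{lem:dependenceMesh}. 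Assembling these facts completes the decomposition.

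I expect the substance of the argument to sit entirely inside \cref{coro:meshes positively generate cluster cats}, whose proof uses the explicit formula of \cref{thm:relations-g-vecteurs} to evaluate $\lambda_A = \dim_{\field} \coker \big( \Hom{\Lambda}(FE,FA) \to \Hom{\Lambda}(FX,FA) \big)$, where $F = \cat(T,-)$: the assumption $h \in (\susp T)$ forces $Fh = 0$ and makes $FX \to FE \to FX' \to 0$ exact, which is what renders the above cokernel dimension non-negative. This right-exactness will be the main conceptual obstacle; the matching of positive mutations with non-initial mesh mutations, and the translation of the categorical relations into the combinatorial $\b{g}$-vector relations, are both routine given the dictionary provided by \cref{thm:categorification}, \cref{lem:mesh mutations and meshes}, and \cref{lem:dependenceMesh}.
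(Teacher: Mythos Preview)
Your proposal is correct and follows essentially the same approach as the paper: reduce to the cluster category via \cref{thm:categorification}, use \cref{lemma:positive mutation triangles} to select the positive mutation triangle with connecting morphism in $(\susp T)$, and then invoke \cref{coro:meshes positively generate cluster cats} to obtain the non-negative decomposition into the $\ell_A$'s, which correspond to non-initial mesh mutations via \cref{lem:mesh mutations and meshes}. The paper's own proof is more terse but structurally identical; your additional remarks on \cref{lem:dependenceMesh} and the internals of \cref{coro:meshes positively generate cluster cats} are accurate elaborations rather than a different route.
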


\begin{proof}
Let~$\cat$ be a cluster category of Dynkin type~$A,B,C,D,E,F$ or~$G$, and let $T\in\cat$ be some basic cluster-tilting object categorifying the given initial exchange matrix~$\B_\circ$. 
By \cref{lemma:positive mutation triangles}, we may assume that the mutation under consideration is a positive mutation.
In view of \cref{thm:categorification}\,(ii) and (iv) it is enough to show that the relation in~$K_0(\cat ; T)$ given by any triangle ${X\to E\to Y\xrightarrow{h} \susp X}$ with~$h\in(\susp T)$ is a positive linear combination of relations coming from Auslander--Reiten triangles with third term not in~$\add(\susp T)$.
This categorified statement is precisely \cref{coro:meshes positively generate cluster cats}.
\end{proof}

It also follows from \cref{thm:categorification} above that any cluster algebra of finite type has the unique exchange relation property (see \cref{def:uerp}).

\begin{corollary}
\label{corollary:UERPforCAproof}
\emph{(\cref{prop:uniqueExchangePropertyCA})}
Let~$\B_\circ$ be any finite type exchange matrix, and let~$\mathcal{A}(\B_\circ)$ be the associated cluster algebra without coefficients.
Then, for any exchangeable cluster variables~$x$ and~$x'$, the linear dependence $\sum_{y \in \cluster \cup \cluster'} \coefficient[{\b{g}_y}][\cluster][\cluster'] \, \b{g}_y = 0$ only depends on the pair~$(x,x')$ and not on the specific choice of clusters~$\cluster$ and~$\cluster'$ containing~$x$ and~$x'$ respectively and such that~${\cluster \ssm \{x\} = \cluster' \ssm \{x'\}}$.
\end{corollary}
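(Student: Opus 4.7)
The plan is to translate the statement to a categorical one via the cluster category $\cat$ of the appropriate finite Dynkin type, and to extract the uniqueness of the relation from the uniqueness of exchange triangles. Fix a basic cluster-tilting object $T \in \cat$ categorifying $\B_\circ$, so that the bijection $\CC$ from Theorem~\ref{thm:categorification} identifies indecomposable objects with cluster variables. Given an exchangeable pair $(x,x')$, set $X = \CC^{-1}(x)$ and $X' = \CC^{-1}(x')$; by Theorem~\ref{thm:categorification}(iii) we have $\dim_{\field_X}\cat(X,\susp X') = \dim_{\field_{X'}}\cat(X',\susp X) = 1$, a condition on the pair $(X,X')$ alone, with no reference to any ambient cluster-tilting object.

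My main step would be to observe that the two non-split exchange triangles
\[
X \to E \to X' \xrightarrow{h} \susp X \qquad \text{and} \qquad X' \to E' \to X \xrightarrow{h'} \susp X'
\]
are then uniquely determined up to isomorphism by the pair $(X,X')$: a non-zero choice of $h$ (resp.\ $h'$) in the one-dimensional space $\cat(X', \susp X)$ (resp.\ $\cat(X, \susp X')$) determines the triangle up to isomorphism, and rescaling produces an isomorphic triangle. By Lemma~\ref{lemma:positive mutation triangles}, exactly one of the connecting morphisms $h, h'$ lies in the ideal $(\susp T)$; say $h \in (\susp T)$. Then $[X]+[X']=[E]$ holds in $K_0(\cat;T)$, and under the identification $\b{g}(\CC(Z)) = \b{g}([Z])$ furnished by Theorem~\ref{thm:categorification}(ii), decomposing $E = \bigoplus_i Y_i^{m_i}$ into indecomposables yields the linear dependence
\[
\b{g}(x) + \b{g}(x') = \sum_i m_i \, \b{g}(\CC(Y_i)),
\]
all of whose ingredients depend only on $(X,X')$, hence only on $(x,x')$.

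The step that I expect to require the most care is checking that the coefficients $m_i$ actually match the entries $|b_{xy}|$ appearing in Lemma~\ref{lem:linearDependencegvectorsCA}, so that the above relation is indeed the exchange relation read off any cluster $\cluster$ containing $x$ with $\cluster\ssm\{x\} = \cluster'\ssm\{x'\}$. This is exactly what Theorem~\ref{thm:categorification}(iv) provides: it identifies the entries of the exchange matrix attached to any basic cluster-tilting object with the multiplicities of the middle terms of exchange triangles, and the uniqueness of the triangle for the pair $(X,X')$ forces these entries to be intrinsic to $(x,x')$. Combined with Lemma~\ref{lem:linearDependencegvectorsCA}, which fixes the two possible forms of the dependence, this yields the unique exchange relation property.
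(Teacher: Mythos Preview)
Your proposal is correct and follows essentially the same route as the paper: the paper reduces the corollary to Lemma~\ref{lem:uerp categorified}, whose proof uses exactly the ingredients you assemble --- the one-dimensionality of $\cat(X',\susp X)$ from Theorem~\ref{thm:categorification}(iii) to pin down the exchange triangle intrinsically to $(X,X')$, Lemma~\ref{lemma:positive mutation triangles} to select the positive direction, Theorem~\ref{thm:categorification}(ii) to read off the $\b{g}$-vector relation, and Theorem~\ref{thm:categorification}(iv) to match it with one of the two candidates in Lemma~\ref{lem:linearDependencegvectorsCA}. Your anticipated ``most careful'' step is precisely the closing sentence of the paper's argument.
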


\begin{proof}
By the results in additive categorification of cluster algebras recalled in \cref{thm:categorification}, the statement follows from \cref{lem:uerp categorified} below.
\end{proof}

\begin{lemma}
\label{lem:uerp categorified}
Let~$\cat$ be a cluster category of Dynkin type~$A,B,C,D,E,F$ or~$G$, and let~$X,Y \in \cat$ be such~$\CC(Y)$ is obtained by performing a positive mutation at~$\CC(X)$ in some cluster containing it.
Then the linear dependence between the~$\b{g}$-vectors given by the positive mutation is
\[
\b{g}_{\CC(X)} + \b{g}_{\CC(Y)} = \sum_i\b{g}_{\CC(E_i)}, 
\]
where~$X\to \bigoplus_i E_i \to Y \xrightarrow{h} \susp X$ is a non-split triangle and each~$E_i$ is indecomposable.
\end{lemma}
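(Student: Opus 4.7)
The plan is that the non-split triangle $X \to E \to Y \xrightarrow{h} \susp X$ is determined up to isomorphism by the pair $(X,Y)$, independently of any ambient cluster-tilting object, and that the desired $\b{g}$-vector identity is then a direct consequence of the categorical relation $[X]+[Y]-[E] = 0$ holding in $K_0(\cat;T)$.

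First I would apply \cref{thm:categorification}\,(iii) to the exchangeable pair $\CC(X),\CC(Y)$ to obtain $\dim_{\field_X}\cat(X,\susp Y) = \dim_{\field_Y}\cat(X,\susp Y) = 1$; by the $2$-Calabi--Yau property, the space $\cat(Y,\susp X) \cong D\cat(X,\susp Y)$ is also one-dimensional. Any non-zero connecting morphism $h : Y \to \susp X$ is therefore unique up to a unit scalar, so any two non-split triangles with the same endpoints are isomorphic, and the middle term $E = \bigoplus_i E_i$ together with its indecomposable decomposition depend only on $X$ and $Y$. Next, I would pick any cluster-tilting object $T' \in \cat$ containing $X$ such that mutation at $X$ produces a cluster-tilting object containing $Y$; by \cref{thm:categorification}\,(iv) the mutation triangle at $X$ is precisely this unique triangle, so $E$ controls the exchange relation between $\CC(X)$ and $\CC(Y)$. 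Positivity of the mutation provides $h \in (\susp T)$ with respect to the initial cluster-tilting object $T$, whence $[X]+[Y]-[E] = 0$ in $K_0(\cat;T)$ by \cref{def:grothendieck-group}. Transporting through the isomorphism of \cref{coro:grothendieck-g-vectors} and using the identification of $\b{g}([X])$ with the $\b{g}$-vector of $\CC(X)$ from \cref{thm:categorification}\,(ii) yields the claimed identity $\b{g}_{\CC(X)} + \b{g}_{\CC(Y)} = \sum_i \b{g}_{\CC(E_i)}$.

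The main subtlety is bookkeeping rather than a genuine difficulty: one must verify that the combinatorial positivity of the mutation in $\mathcal{A}(\B_\circ)$ matches the categorical condition $h \in (\susp T)$ of \cref{def:positive mutation}. This is handled by \cref{lemma:positive mutation triangles}, which guarantees that exactly one of the two possible mutation triangles $X \to E \to Y \to \susp X$ and $Y \to E' \to X \to \susp Y$ has its connecting morphism in $(\susp T)$. Once this match-up is made explicit, the uniqueness of $E$ established above immediately gives the claimed $\b{g}$-vector relation and, as a direct consequence, the unique exchange relation property asserted in \cref{corollary:UERPforCAproof}.
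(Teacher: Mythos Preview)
Your proposal is correct and follows essentially the same approach as the paper: uniqueness of the non-split triangle from the one-dimensionality of $\cat(Y,\susp X)$ (via \cref{thm:categorification}\,(iii) and the $2$-Calabi--Yau property), the relation $[X]+[Y]=[E]$ in $K_0(\cat;T)$ from $h\in(\susp T)$, and the transfer to $\b{g}$-vectors via \cref{thm:categorification}\,(ii) and (iv). Your final paragraph slightly overcomplicates matters, since positivity is part of the hypothesis here (so $h\in(\susp T)$ is immediate from \cref{def:positive mutation}); the match-up via \cref{lemma:positive mutation triangles} only becomes relevant when deducing \cref{corollary:UERPforCAproof}, not for this lemma itself.
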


\begin{proof}
Let~$T\in\cat$ be a basic cluster tilting object such that~$\CC(T)$ is the initial cluster.
According to \cref{lemma:positive mutation triangles}, the morphism~$h$ belongs to the ideal~$(\susp T)$.
Therefore, the triangle ${X\to \bigoplus_i E_i \to Y \xrightarrow{h} \susp X}$ induces the relation~$[X]+[Y]=\sum_i [E_i]$ in~$K_0(\cat ; T)$.
\cref{thm:categorification}\,(ii) gives the equality ${\b{g}_{\CC(X)} + \b{g}_{\CC(Y)} = \sum_i\b{g}_{\CC(E_i)}}$, where the~$\b{g}$-vectors are computed with respect to the initial seed~$\CC(T)$.
By \cref{thm:categorification}\,(iii), the middle term of the triangle is uniquely defined, up to isomorphism, by~$X$, and~$Y$, and the triangle is therefore an exchange triangle, with respect to any mutation at~$X$: it does not depend on the choice of a cluster tilting object containing~$X$.
By \cref{thm:categorification}\,(iv) this equality is one of the two equalities in \cref{lem:linearDependencegvectorsCA}.
\end{proof}

%%%%%%%%%%%%%%%%%%%%%%%%%%%%%%%%%%%%%%%%%%%%%%%%%%%

\section{Relations for~$\b{g}$-vectors in brick algebras via extriangulated categories}
\label{sec:extricats}

The main definitions and first properties on extriangulated categories are recalled after the statements of the main results of this section.
See \cref{sec:recollections extricats} below.
The reader that does not want to dwell into details can safely skip \cref{sec:recollections extricats} and focus on any of the first three examples given in \cref{rem:examples extricats}: triangulated categories, exact categories (that are small or have enough projectives or injectives) and extension-closed subcategories of triangulated categories, such as the category~$K^{[-1,0]}(\proj \Lambda)$, are examples of extriangulated categories.

%%%%%%%%%%%

\subsection{Setting}
\label{setting:extricat}

We indifferently call \emph{conflations}, written~$X\infl Y\defl Z$, or \emph{extriangles}, written ${X\infl Y\defl Z \overset{\delta}{\dashrightarrow}}$, the analogues of short exact sequences or triangles in an extriangulated category.

\smallskip
\para{Setting for Section~\ref{sec:prelim on extricats}}
In that section, we let~$\cat$ be an extriangulated category with a fixed full additive subcategory~$\tc$, stable under isomorphisms, under taking direct summands, and satisfying the following three properties:
\begin{enumerate}
\item Every~$T\in\tc$ is projective in~$\cat$;
\item For each~$T\in\tc$, the morphism~$T\to 0$ is an inflation for the extriangulated structure of~$\cat$;
\item For each~$X\in\cat$, there is an extriangle~$T_1^X\infl T_0^X \defl X \overset{\delta_X}{\dashrightarrow}$ in~$\cat$ with~$T_0^X, T_1^X$ in~$\tc$.
\end{enumerate}

\smallskip
\para{Setting for Section~\ref{sec:Statment extricats}}
In that section, we keep the previous setting, but assume moreover that~$\cat$ is Krull--Schmidt, $\field$-linear, Ext-finite, and has Auslander--Reiten--Serre duality, and that the subcategory~$\tc$ is of the form~$\add T$, where~$T=T_1\oplus\cdots\oplus T_n$ is a basic object.

\begin{remark}
\label{rem:examples extricats}
Examples of categories satisfying the properties above are
\begin{itemize}
\item $2$-Calabi--Yau triangulated categories admitting a cluster-tilting object (see \cref{sec:2CYTriangulated});
\item for any Artin algebra~$\Lambda$, the category~$K^{[-1,0]}(\proj \Lambda)$ of complexes of finitely generated projective~$\Lambda$-modules concentrated in degrees~$-1$ and~$0$, with morphisms considered up to homotopy (see \cref{sec:Kbproj});
\item more generally, extriangulated categories~$\cat$ constructed as follows: if~$\tc$ is a rigid subcategory of an extriangulated category~$\ec$ (with some assumption ensuring that condition (2) holds) we let~$\cat$ be the full subcategory of~$\ec$ whose objects satisfy condition (3), equipped with the extriangulated structure obtained from that of~$\ec$ by considering those extriangles in~$\ec$ all whose terms belong to~$\cat$ and whose deflation is~$\tc$-epi (see \cref{sec:prT}).
\end{itemize}
\end{remark}

%%%%%%%%%%%

\subsection{Statement of preliminary results on extriangulated categories}
\label{sec:prelim on extricats}

So as to be able to state the main theorem of \cref{sec:extricats}, we need a few results on extriangulated categories.
However, all proofs are postponed to \cref{sec:proofs extricats}.

We let~$\cat$ be as in \cref{setting:extricat}.

\begin{notation}
For any object~$T\in\tc$, we fix an extriangle~$T\infl 0 \defl \susp T \dashrightarrow$.
\end{notation}

\begin{remark}
As will be proven below (\cref{rem:equivalence Sigma} and \cref{coro:Sigma T injectives}), this notation extends to an equivalence of categories from the category~$\tc$ of projective objects in~$\cat$ to the category~$\susp\tc$ of injective objects in~$\cat$.
\end{remark}

\begin{notation}
We let~$\Modt$ denote the category of additive functors from~$\tc$ to Abelian groups, and~$\modt$ its full subcategory of functors that are finitely presented, \ie cokernels of morphisms between representable functors.
We let~$F:\cat\to\Modt$ be the functor defined on objects by sending~$X\in\cat$ to~$\cat(-,X)|_\tc$.
\end{notation}

\begin{lemma}
\label{lem:finitely presented}
For any~$X\in\cat$, the functor~$FX$ is finitely presented.
We thus have a functor
\[
F:\cat\to\modt.
\]
\end{lemma}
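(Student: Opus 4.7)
The plan is to exploit the defining hypotheses of the setting directly. By assumption~(3), every object $X \in \cat$ fits into an extriangle
\[
T_1^X \infl T_0^X \defl X \overset{\delta_X}{\dashrightarrow}
\]
with $T_0^X, T_1^X \in \tc$. The goal is to use this as a presentation of $FX$ by representables in $\modt$.

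First I would recall that in any extriangulated category, every extriangle $A \infl B \defl C \overset{\delta}{\dashrightarrow}$ induces, for any object $P$, a long exact sequence
\[
0 \to \cat(P,A) \to \cat(P,B) \to \cat(P,C) \to \mathbb{E}(P,A) \to \mathbb{E}(P,B) \to \mathbb{E}(P,C),
\]
where $\mathbb{E}$ is the biadditive extension bifunctor. This is one of the basic structural facts recalled in \cref{sec:recollections extricats}. Since by hypothesis~(1) every $T \in \tc$ is projective, we have $\mathbb{E}(T,-) = 0$ on $\tc$, so applying $\cat(T,-)$ to the extriangle above produces an honest short exact sequence
\[
0 \to \cat(T,T_1^X) \to \cat(T,T_0^X) \to \cat(T,X) \to 0
\]
of abelian groups, natural in $T \in \tc$.

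Naturality in $T$ then upgrades this to an exact sequence of functors on $\tc$:
\[
F(T_1^X) \to F(T_0^X) \to FX \to 0.
\]
Since $T_0^X, T_1^X \in \tc$, the functors $F(T_i^X) = \cat(-,T_i^X)|_\tc$ coincide with the representable functors $\tc(-,T_i^X)$, and the morphism between them is induced by the inflation $T_1^X \to T_0^X$ via the Yoneda lemma. Thus $FX$ is exhibited as the cokernel of a morphism between representable functors in $\Modt$, hence belongs to $\modt$. This gives the desired factorisation $F: \cat \to \modt$.

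There is no real obstacle here: the only subtlety is ensuring that projectivity in the extriangulated sense (vanishing of $\mathbb{E}(T,-)$ on the relevant terms) gives the surjectivity $\cat(T,T_0^X) \twoheadrightarrow \cat(T,X)$, which is precisely what hypothesis~(1) provides via the long exact sequence. The remaining checks — functoriality in $T$ and the fact that cokernels in $\Modt$ are computed pointwise — are routine.
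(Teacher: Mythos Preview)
Your argument is essentially the same as the paper's: apply $\cat(T,-)$ to the extriangle $T_1^X \infl T_0^X \defl X \overset{\delta_X}{\dashrightarrow}$, use projectivity of $T$ to kill the $\mathbb{E}$-term, and read off that $FX$ is a cokernel of a map between representables. One minor inaccuracy: the long exact sequence in \cref{prop:extricat long exact sequences} does not in general begin with~$0$, so your claimed short exact sequence $0 \to \cat(T,T_1^X) \to \cdots$ is not justified (inflations need not be monomorphisms in an extriangulated category); however this is irrelevant, since only the right-exact part $FT_1^X \to FT_0^X \to FX \to 0$ is needed, which is exactly what the paper uses.
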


\cref{prop:KRextricat} below extends similar results from~\cite{BuanMarshReiten,KellerReiten,KoenigZhu,IyamaYoshino} (see also \cref{prop:functor-F}) to the setting under consideration. We note that the proof requires minor modifications.

\begin{proposition}
\label{prop:KRextricat}
The functor~$F$ induces an equivalence of categories
\[
F: \cat/(\susp\tc) \to \modt
\]
where~$(\susp\tc)$ is the ideal of morphisms factoring through an object of the form~$\susp T$, for some~$T\in\tc$.
\end{proposition}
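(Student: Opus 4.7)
My plan is to adapt the classical Keller--Reiten/Buan--Marsh--Reiten proof recalled in \cref{prop:functor-F} to the extriangulated setting, exploiting conditions~(1)--(3) from \cref{setting:extricat}.

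First, I would check that $F$ descends to the quotient $\cat/(\susp\tc)$: for $T,T'\in\tc$, applying $\cat(T,-)$ to the extriangle $T'\infl 0\defl\susp T'\dashrightarrow$ given by condition~(2) yields a six-term sequence in which $\cat(T,0)=0$ and $\mathbb{E}(T,T')=0$ by projectivity of $T$ (condition~(1)), forcing $\cat(T,\susp T')=0$. Hence every morphism factoring through $\susp\tc$ is annihilated by $\cat(T,-)$ for every $T\in\tc$, so by~$F$.

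For essential surjectivity, given $M\in\modt$, I would lift a presentation $\cat(-,T_1)|_\tc\to\cat(-,T_0)|_\tc\to M\to 0$ to a morphism $\phi:T_1\to T_0$ in $\tc$ via Yoneda. Starting from the extriangle $T_1\infl 0\defl\susp T_1\dashrightarrow\delta$ provided by condition~(2), the push-out of $\delta$ along $\phi$ — part of the extriangulated axiom $(\mathrm{ET4})^{\mathrm{op}}$ — produces an extriangle $T_0\infl X\defl\susp T_1\dashrightarrow\phi_*\delta$ together with a morphism of extriangles whose left column is $\phi$ and whose right column is $1_{\susp T_1}$. Applying $\cat(T,-)$ to both extriangles and comparing the resulting six-term sequences (using $\mathbb{E}(T,T_i)=0$ to kill the error terms) collapses to exactness of $\cat(T,T_1)\xrightarrow{\phi_*}\cat(T,T_0)\to\cat(T,X)\to 0$, so that~$FX\cong M$.

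Fullness and faithfulness both rely on the presentation $T_1^X\infl T_0^X\defl X\dashrightarrow$ afforded by condition~(3), which also establishes \cref{lem:finitely presented}. For fullness, a morphism $\alpha:FX\to FY$ evaluated at $T_0^X$ sends the deflation $\xi:T_0^X\to X$ to some $\beta_0:T_0^X\to Y$, and naturality of $\alpha$ with respect to the inflation $T_1^X\to T_0^X$ forces $\beta_0$ to restrict to zero on $T_1^X$; the weak cokernel property of the extriangle (obtained from the six-term sequence applied with $\cat(-,Y)$) then lifts $\beta_0$ to a morphism $\beta:X\to Y$ with $F\beta=\alpha$, well-defined modulo $(\susp\tc)$. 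For faithfulness, if $f:X\to Y$ satisfies $Ff=0$ then $f\circ\xi=0$, and the identification $\mathbb{E}(X,T_1^X)\cong\cat(X,\susp T_1^X)$ (extracted by applying $\cat(X,-)$ to $T_1^X\infl 0\defl\susp T_1^X$, since $\cat(X,0)=0=\mathbb{E}(X,0)$) permits extending the extriangle one step into a Puppe-like sequence through which $f$ factors via some $\susp T_1^X\to Y$, exhibiting $f\in(\susp\tc)$.

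The main obstacle is the essential-surjectivity step: in a triangulated category the identification $\ker(\cat(T,T_0)\to\cat(T,X))=\phi_*\cat(T,T_1)$ is immediate from the long exact sequence, but in the extriangulated setting it must be extracted from a careful comparison of two six-term sequences via the morphism of extriangles, essentially invoking a $3\times 3$-style lemma derived from iterating $(\mathrm{ET4})$ and its dual. The same machinery powers the Puppe-like extension used in the faithfulness argument.
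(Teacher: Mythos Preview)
Your overall strategy is correct and matches the paper's: show $F$ kills $(\susp\tc)$, then establish faithfulness, fullness and density using the presentations from condition~(3). The paper, however, organises the argument differently in a way that dissolves precisely the obstacle you flag.

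For essential surjectivity you build $X$ via the push-out $T_0\infl X\defl\susp T_1$ of $T_1\infl 0\defl\susp T_1$ along $\phi$, and then must compare six-term sequences (your ``$3\times 3$-style lemma'') to identify $\ker\big(\cat(T,T_0)\to\cat(T,X)\big)$ with $\phi_*\cat(T,T_1)$. The paper avoids this entirely by first isolating \cref{lem: mphs in T are inflations}: \emph{every} morphism with domain in $\tc$ is an inflation. Applied to $\phi:T_1\to T_0$ itself, this immediately yields a conflation $T_1\overset{\phi}{\infl}T_0\defl X$; applying $F$ (and using projectivity of $T$ to kill the $\mathbb{E}$-term) gives the exact sequence $FT_1\to FT_0\to FX\to 0$ on the nose. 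The push-out/octahedron manipulation is still there, but it is absorbed into the proof of \cref{lem: mphs in T are inflations} once and for all, and the output is the conflation you actually want rather than its shift.

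For faithfulness your ``Puppe-like extension'' is exactly the content of \cref{lem:shifts of extriangles}, which the paper proves separately: from $T_1^X\infl T_0^X\overset{g_X}{\defl}X$ one obtains a genuine conflation $T_0^X\overset{g_X}{\infl}X\defl\susp T_1^X$. Then $Ff=0$ gives $f\circ g_X=0$, and the weak-cokernel property of this conflation (rather than an ad hoc identification $\mathbb{E}(X,T_1^X)\cong\cat(X,\susp T_1^X)$) directly factors $f$ through $\susp T_1^X$. So your argument is right in spirit, but packaging the shifted extriangle as a lemma makes the step a one-liner.

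In short: your proof would go through, but the paper's two preparatory lemmas (\cref{lem: mphs in T are inflations} and \cref{lem:shifts of extriangles}) are worth extracting; they turn your ``main obstacle'' into a non-issue and make both density and faithfulness immediate.
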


\begin{remark}
Because the category~$\cat$ does not have weak kernels in general, the category~$\modt$ might not be abelian.
However, in all our applications, the subcategory~$\tc$ is of the form~$\add(T)$ for some object~$T$.
In that case,~$\modt$ is equivalent to~$\MOD\End{\cat}(T)$ and thus abelian.
\end{remark}

\begin{definition}
We let~$\kzero{\cat}$ denote the \defn{Grothendieck group} of~$\cat$, that is, the quotient of the free abelian group generated by symbols~$[X]$, for each $X\in\cat$, by the relations~$[X]-[Y]+[Z]$, for each conflation~$X\infl Y\defl Z$ in~$\cat$.
\end{definition}

\begin{remark}
Since~$\tc$ is extension-closed in~$\cat$, it inherits an extriangulated structure.
Because~$\tc$ is made of projective objects in~$\cat$, its extriangulated structure splits and we have~$\kzero{\tc}\cong\Ksp(\tc)$.
\end{remark}

The notion of index from~\cite{DehyKeller,Palu} generalises to our current setting.

\begin{definition}
For any object~$X\in\cat$, fix some extriangle
\[
T_1^X \infl T_0^X \defl X \overset{\delta_X}{\dashrightarrow}
\]
and define the \defn{index} of~$X$ by
\[
\ind_\tc X = [T_0^X]-[T_1^X]\in\kzero{\tc}.
\]
\end{definition}

\begin{proposition}
\label{prop:index iso extricat}
 The assignment~$X\mapsto\ind_\tc X$ is well-defined and induces an isomorphism
\[
\ind_\tc : \kzero{\cat} \overset{\cong}{\longrightarrow} \kzero{\tc}
\]
of abelian groups.
\end{proposition}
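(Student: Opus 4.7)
The plan has three steps: well-definedness of the index on objects, descent of the index to the Grothendieck group, and construction of an inverse map.

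First, I would establish that $\ind_\tc X$ does not depend on the choice of extriangle $T_1^X \infl T_0^X \defl X \overset{\delta_X}{\dashrightarrow}$. Given a second such extriangle $T_1'^X \infl T_0'^X \defl X \overset{\delta'_X}{\dashrightarrow}$, condition~(1) of \cref{setting:extricat} guarantees that $T_0^X$ is projective in $\cat$, so the identity on $X$ lifts to a morphism $T_0^X \to T_0'^X$ and, after choosing a lift of the other deflation symmetrically, to a morphism of extriangles in each direction. A Schanuel-type argument in the extriangulated setting (using the fact that the pullback of one extriangle along the deflation of the other splits, since its class lies in an Ext group that vanishes on projectives) then produces an isomorphism $T_0^X \oplus T_1'^X \cong T_1^X \oplus T_0'^X$, giving the equality $[T_0^X] - [T_1^X] = [T_0'^X] - [T_1'^X]$ in $\kzero{\tc} = \Ksp(\tc)$.

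Next, I would show that $\ind_\tc$ respects the relations defining $\kzero{\cat}$, i.e. that $\ind_\tc Y = \ind_\tc X + \ind_\tc Z$ for every conflation $X \infl Y \defl Z$. This is the horseshoe lemma for extriangulated categories: starting from chosen extriangles $T_1^X \infl T_0^X \defl X$ and $T_1^Z \infl T_0^Z \defl Z$, one uses projectivity of $T_0^Z$ to lift $T_0^Z \defl Z$ through $Y \defl Z$, combines the result with $T_0^X \to X \infl Y$ to produce a deflation $T_0^X \oplus T_0^Z \defl Y$, and then identifies the kernel (in the extriangulated sense) with $T_1^X \oplus T_1^Z$ by a diagram chase using the axioms of an extriangulated category. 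Summing the resulting indices gives the required additivity. This step will probably be the main technical obstacle, as one must carefully invoke the axioms (ET3), (ET4) governing morphisms of extriangles and octahedral-type commutative diagrams to produce the required conflation in the middle row.

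Finally, I would construct the inverse. The inclusion $\tc \hookrightarrow \cat$ induces a homomorphism $\iota : \kzero{\tc} \to \kzero{\cat}$ (using that conflations in $\tc$ split by projectivity, so relations in $\kzero{\tc}$ automatically hold in $\kzero{\cat}$). For any $T \in \tc$, the trivial extriangle $0 \infl T \xrightarrow{=} T$ shows $\ind_\tc T = [T]$, so $\ind_\tc \circ \iota = \mathrm{id}_{\kzero{\tc}}$. Conversely, for any $X \in \cat$, the defining extriangle $T_1^X \infl T_0^X \defl X$ yields $[X] = [T_0^X] - [T_1^X] = \iota(\ind_\tc X)$ in $\kzero{\cat}$, so $\iota \circ \ind_\tc = \mathrm{id}_{\kzero{\cat}}$. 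Both compositions being the identity, $\ind_\tc$ is an isomorphism with inverse $\iota$.
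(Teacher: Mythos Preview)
Your proposal is correct and follows essentially the same three-step approach as the paper: a Schanuel-type argument for well-definedness (the paper obtains the isomorphism $T_0\oplus T_1'\cong T_1\oplus T_0'$ via a pullback diagram from \cite[Prop.~3.15]{NakaokaPalu}), a horseshoe construction for additivity on conflations (the paper makes explicit the two diagram steps, invoking \cite[Prop.~3.15]{NakaokaPalu} and axiom (ET4)$^{\mathrm{op}}$ precisely where you anticipate needing ``octahedral-type'' diagrams), and the obvious inverse coming from the inclusion $\tc\hookrightarrow\cat$. The only refinement the paper adds over your outline is pinning down exactly which structural results of extriangulated categories replace the kernel/cokernel manipulations of the abelian horseshoe lemma.
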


%%%%%%%%%%%

\subsection{Statement of the theorem}
\label{sec:Statment extricats}

Let~$\cat$ be as in \cref{setting:extricat}.

\begin{remark}
Assume moreover that~$\cat$ is $\field$-linear and Ext-finite.
By the preliminary results of \cref{sec:recollections extricats}, we can apply~\cite[Prop.~4.2]{IyamaNakaokaPalu} to obtain that~$\modt$ is Hom-finite.
In particular, if $\tc=\add T$, then $\End{\cat}(T)$ is a finite-dimensional~$\field$-algebra.
It thus also follows that \cref{def:bilinear form} still makes sense in this more general setup: For any~$X,Y\in\cat$,
\[
\langle X,Y\rangle \eqdef \dim_\field \Hom{\tc}(FX,FY) = \dim_\field \cat/(\susp\tc)(X,Y).
\]
\end{remark}

When this makes sense, we make use of \cref{notation:ell_X}: if
\[
X \infl E \defl Y \dashrightarrow
\]
is an almost-split sequence (see~\cite{IyamaNakaokaPalu} for a definition in this setting), we let
\[
\ell_X \eqdef [X]+[Y]-[E]\in\Ksp(\cat).
\]

\begin{theorem}
\label{thm:extricats}
Assume that~$\cat$ is a~$\field$-linear, Ext-finite, Krull--Schmidt, extriangulated category with Auslander--Reiten--Serre duality.
Assume that~$T$ is a projective object of~$\cat$ such that any~$X\in\cat$ admits a conflation~$T_1^X\infl T_0^X\defl X$ with~$T_0^X, T_1^X\in\add(T)$, and the morphism~$T\to 0$ is an inflation. Fix a conflation~$T\to 0 \to \susp T$. Then~$\cat$ has only finitely many isomorphism classes of indecomposable objects if and only if the set
\[
L \eqdef \bigset{\ell_X}{X \in \ind(\cat) \ssm \add(\susp T)}
\]
generates the kernel of the canonical projection~$\b{g}:\Ksp(\cat) \to \kzero{\cat}$. 
In this case, the set~$L$ is a basis of the kernel of~$\b{g}$, and for any $x \in \ker(\b{g})$, we have that
\[
x= \sum_{A \in \ind(\cat) \ssm \add(\susp T)} \frac{\langle x, A \rangle}{\langle \ell_A, A \rangle} \ell_A.
\]
\end{theorem}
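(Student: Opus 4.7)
The plan is to mimic the proof of Theorem~\ref{thm:relations-g-vecteurs} in the extriangulated setting, replacing each ingredient from the $2$-Calabi--Yau triangulated world by its extriangulated counterpart. The three pillars are: the equivalence $F : \cat/(\susp T) \to \modt$ already granted by Proposition~\ref{prop:KRextricat}; the index isomorphism $\ind_T : \kzero{\cat} \overset{\cong}{\to} \kzero{\add T}$ of Proposition~\ref{prop:index iso extricat}, which identifies $\ker(\b{g})$ with $\ker(\ind_T)$ and in particular tells us that $\b{g}$ is injective on $\Ksp(\add \susp T)$ (since $\ind_T[\susp T_i] = -[T_i]$); and the existence of almost-split sequences, which follows from Auslander--Reiten--Serre duality under our standing assumptions (see~\cite{IyamaNakaokaPalu}).

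The first decisive step is to prove the extriangulated analogue of Lemma~\ref{lem:almost-split}: for any indecomposable $X \notin \add(\susp T)$, an almost-split sequence $X \infl E \defl Y \overset{\delta}{\dashrightarrow}$ satisfies that $\delta$ is realized by a sequence whose connecting extension factors through $\add(\susp T)$, equivalently the induced sequence $FX \to FE \to FY \to 0$ is right exact in $\modt$. For this I would use the Auslander--Reiten--Serre formula $\mathbb{E}(Y, X) \cong D\,\overline{\cat}(X, Y)$, which shows that the socle of $\mathbb{E}(Y,X)$ over $\End{\cat}(X)$ is one-dimensional over the residue field $\field_X$, and that the subspace of extensions factoring through $\add(\susp T)$ is identified with $D\,(\cat/(\susp T))(X, Y)$; this latter space is non-zero precisely because $\mathrm{id}_X \notin (\susp T)$ when $X \notin \add(\susp T)$, so the socle is contained there and in particular $\delta$ is.

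The second step is the computation of the bilinear form: for any indecomposables $A \notin \add(\susp T)$ and $B$, $\langle \ell_A, B \rangle = \dim_\field \field_A$ if $A \cong B$ and zero otherwise. Applying $F$ to the almost-split sequence (right exact by the previous step) and then $\Hom{\End{\cat}(T)}(-, FB)$ yields, via Proposition~\ref{prop:KRextricat}, the same commutative diagram as in Lemma~\ref{lem:bilinear-form}; the conclusion follows as in the triangulated case. Symmetrically one treats $\langle A, r_B\rangle$. Together with Lemma~\ref{lem:finitely presented} and Hom-finiteness of $\modt$, this provides the analogue of Lemma~\ref{lem:coefficients} and then of Corollary~\ref{lem:test-susp-T}: an element $x \in \Ksp(\cat)$ lies in $\Ksp(\add(\susp T))$ if and only if $\langle x, [A] \rangle = 0$ for all $A \in \ind(\cat)$. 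Linear independence of $L \cup \{[\susp T_i]\}_i$ in $\Ksp(\cat)$ follows formally, exactly as in Proposition~\ref{prop:free-set}.

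From here the two directions of the theorem follow. Assuming $\ind(\cat)$ finite, for $x \in \ker(\b{g})$ the element $z = x - \sum_A \frac{\langle x, A\rangle}{\langle \ell_A, A\rangle}\ell_A$ satisfies $\langle z, B\rangle = 0$ for every indecomposable $B$, hence lies in $\Ksp(\add(\susp T))$; but $\b{g}$ is injective on this subgroup by Proposition~\ref{prop:index iso extricat}, so $z = 0$ and the claimed decomposition holds, which also proves that $L$ is a basis of $\ker(\b{g})$. For the converse, observe that $[T] + [\susp T] \in \ker(\b{g})$ (since $T \infl 0 \defl \susp T$ is a conflation) and $\langle [T]+[\susp T], [B]\rangle = \langle [T], [B]\rangle$ vanishes only when $B \in \add(\susp T)$; if $L$ generates $\ker(\b{g})$ then $[T]+[\susp T]$ is a finite combination of $\ell_A$'s, so only finitely many indecomposables $B$ can fail to satisfy this vanishing, forcing $\ind(\cat)$ to be finite.

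The main obstacle is the first step: proving that the connecting extension of an almost-split sequence factors through $\add(\susp T)$ whenever $X \notin \add(\susp T)$. In the $2$-Calabi--Yau case this used the explicit duality $\cat(X, \susp X) \cong D\cat(X,X)$ together with the duality between $(\susp T)\cat(\susp^{-1}X, \susp X)$ and $\cat(X,X)/(\susp T)$ from~\cite{Palu}. The corresponding statement in the extriangulated setting has to be extracted from Auslander--Reiten--Serre duality of~\cite{IyamaNakaokaPalu} together with a careful identification of which extensions are killed by $F$; once this is established the remainder of the argument is formal.
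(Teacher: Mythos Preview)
Your overall plan is the same as the paper's: run the proof of Theorem~\ref{thm:relations-g-vecteurs} verbatim, replacing Proposition~\ref{prop:functor-F} by Proposition~\ref{prop:KRextricat} and Corollary~\ref{coro:grothendieck-g-vectors} by Proposition~\ref{prop:index iso extricat}. The steps you outline for the bilinear form, Corollary~\ref{lem:test-susp-T}, Proposition~\ref{prop:free-set}, the decomposition formula, and the converse direction are all correct and match the paper.

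Where you diverge is in what you call ``the main obstacle'': you try to prove an extriangulated analogue of Lemma~\ref{lem:almost-split}, namely that for an almost-split conflation $X\infl E\defl Y\overset{\delta}{\dashrightarrow}$ the extension $\delta$ ``factors through $\add(\susp T)$''. This is a misreading of the situation. In the triangulated setting of Section~\ref{sec:clusterCategories}, the relations defining $K_0(\cat;T)$ come only from triangles whose connecting morphism lies in $(\susp T)$, so one must check that almost-split triangles have this property; and one needs $h\in(\susp T)$ again to make $FA\to FE\to F\susp^{-1}A\to 0$ right exact. In the extriangulated setting both problems disappear simultaneously: every conflation already gives a relation in $\kzero{\cat}$ by definition, so $\ell_X\in\ker(\b g)$ is automatic; and since $T$ is projective, $\mathbb{E}(T,-)=0$, so applying $F=\cat(T,-)$ to \emph{any} conflation $X\infl E\defl Y$ yields the right exact sequence $FX\to FE\to FY\to 0$ needed in the proof of the analogue of Lemma~\ref{lem:bilinear-form}. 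This is exactly what the paper means when it says ``the fact that $\tc$ is made of projective objects instead of Lemma~\ref{lem:almost-split}''. Your proposed argument via Auslander--Reiten--Serre duality and sockles is thus unnecessary (and somewhat ill-posed, since $\delta\in\mathbb{E}(Y,X)$ is not a morphism in $\cat$, so ``factoring through $\add(\susp T)$'' has no direct meaning here).

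One minor clarification: the reason $\ell_X$ is only defined for $X\notin\add(\susp T)$ is not a factoring condition but simply that the objects of $\add(\susp T)$ are precisely the injectives of $\cat$ (Corollary~\ref{coro:Sigma T injectives}), so no almost-split conflation starts there.
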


\begin{corollary}
\label{coro:meshes positively generate extricats}
Assume that~$\ind(\cat)$ is finite.
Let~$X\infl E\defl Y\dashrightarrow$ be any extriangle.
Then the element~$x=[X]+[Y]-[E]$ of the kernel of~$\b{g}$ is a non-negative linear combination of the~$\ell_A$, with~$A\in\ind(\cat)\ssm \add(\susp T)$.
\end{corollary}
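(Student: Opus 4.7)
The plan is to follow the argument used to prove \cref{coro:meshes positively generate cluster cats}, with extriangles playing the role of triangles; the condition $h\in(\susp T)$ which was crucial in the triangulated setting becomes automatic here thanks to the projectivity of the objects of $\tc=\add T$.

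First, since $\ind(\cat)$ is finite by assumption and $x\in\ker(\b{g})$, \cref{thm:extricats} applies and gives
\[
x = \sum_{A \in \ind(\cat) \ssm \add(\susp T)} \frac{\langle x, A \rangle}{\langle \ell_A, A \rangle} \, \ell_A.
\]
The denominators $\langle \ell_A, A\rangle$ are positive by the extriangulated analogue of \cref{lem:bilinear-form} (which is built into the proof of \cref{thm:extricats} via almost-split extriangles and Auslander--Reiten--Serre duality), so it suffices to show that $\langle x, A\rangle \ge 0$ for every $A\in\ind(\cat)\ssm\add(\susp T)$.

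Next, because every object of $\tc$ is projective in $\cat$, the extriangulated long exact sequence attached to $X \infl E \defl Y \dashrightarrow$ collapses, for each $T'\in\tc$, to a short exact sequence $0 \to \cat(T',X) \to \cat(T',E) \to \cat(T',Y) \to 0$. Assembling these as $T'$ varies over $\tc$ produces a short exact sequence $0 \to FX \to FE \to FY \to 0$ in the abelian category $\modt \cong \MOD \End{\cat}(T)$. Applying the left exact functor $\Hom{\modt}(-,FA)$ yields an exact sequence
\[
0 \to \Hom{\modt}(FY, FA) \to \Hom{\modt}(FE, FA) \to \Hom{\modt}(FX, FA) \to C \to 0,
\]
where $C$ is the image of the connecting map in $\mathrm{Ext}^1_{\modt}(FY, FA)$, so in particular $\dim_\field C \ge 0$. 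By \cref{prop:KRextricat}, the Hom-spaces in $\modt$ compute $\cat/(\susp\tc)(-,A)$, whose $\field$-dimensions are by definition the values of $\langle -, A\rangle$. The alternating sum of dimensions therefore yields
\[
\langle x, A\rangle = \langle X, A\rangle + \langle Y, A\rangle - \langle E, A\rangle = \dim_\field C \ge 0,
\]
which completes the argument.

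The main obstacle in carrying this out is verifying that projectivity of the objects of $\tc$ really does turn an arbitrary extriangle into a short exact sequence in $\modt$, in particular that an inflation $f:X\infl E$ induces a monomorphism $\cat(T',X)\hookrightarrow\cat(T',E)$ for every $T'\in\tc$. This rests on the standard long exact sequence associated to an extriangle together with the vanishing of the extension bifunctor on projective objects, and is already implicit in the proofs of \cref{prop:KRextricat} and \cref{prop:index iso extricat}.
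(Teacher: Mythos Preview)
Your approach is exactly the paper's: apply \cref{thm:extricats}, note the denominators $\langle\ell_A,A\rangle$ are positive, and show each $\langle x,A\rangle\ge 0$ via an exact sequence in $\modt$.

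There is one over-claim worth fixing. Projectivity of $T'\in\tc$ only gives $\mathbb{E}(T',-)=0$, so the long exact sequence of \cref{prop:extricat long exact sequences} collapses to a \emph{right} exact sequence
\[
FX \longrightarrow FE \longrightarrow FY \longrightarrow 0,
\]
not a short exact one; nothing in that proposition bounds the kernel of $\cat(T',X)\to\cat(T',E)$, and your final paragraph does not supply an argument either. Fortunately this is all you need: applying $\Hom{\modt}(-,FA)$ to a right exact sequence still yields
\[
0 \to \Hom{\modt}(FY,FA) \to \Hom{\modt}(FE,FA) \xrightarrow{f} \Hom{\modt}(FX,FA),
\]
and the alternating sum of dimensions equals $\dim_\field\coker(f)\ge 0$. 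So just drop the claim of left exactness (and the reference to a connecting map into $\mathrm{Ext}^1$), and your proof matches the paper's.
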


%%%%%%%%%%%

\subsection{Recollections on extriangulated categories}
\label{sec:recollections extricats}

Extriangulated categories, recently introduced in~\cite{NakaokaPalu}, axiomatize extension-closed subcategories of triangulated categories in a (moderately) similar way that Quillen's exact categories axiomatize extension-closed subcategories of abelian categories.
They appear in representation theory in relation with cotorsion pairs~\cite{ChangZhouZhu-ClusterSubalgebras,ZhaoHuang-Phantom,LiuNakaoka-Hearts,Liu-LocalizationsHearts}, with Auslander--Reiten theory~\cite{IyamaNakaokaPalu}, with cluster algebras, mutations, or cluster-tilting theory~\cite{ChangZhouZhu-ClusterSubalgebras,Pressland,ZhouZhu-TriangulatedQuotient,LiuZhou-Abelian,LiuZhou-AbelianII,LiuZhou-Gorenstein}, with Cohen--Macaulay dg-modules in the remarkable~\cite{Jin}.
We also note the generalization, called~$n$-exangulated categories~\cite{HerschendLiuNakaokaI,HerschendLiuNakaokaII}, to a version suited for higher homological algebra.

An extriangulated category is the data of an additive category~$\cat$, of an additive bifunctor ${\mathbb{E}:\cat^\mathrm{op}\times\cat\to \mathit{Ab}}$ modelling the~$\operatorname{Ext}^1$-bifunctor, and of an additive realization~$\mathfrak{s}$ sending each element~$\delta\in\mathbb{E}(Z,X)$ to some (equivalence classe of) diagram~$X\to Y\to Z$ modelling the short exact sequences or triangles.
Some axioms, inspired from the case of extension-closed subcategories of triangulated categories have to be satisfied.

More specifically: fix an additive category~$\cat$, and an additive bifunctor~$\mathbb{E}: \cat^\mathrm{op}\times\cat\to\mathit{Ab}$, where $\mathit{Ab}$ is the category of abelian groups.

\begin{definition}
For any~$X,Z\in\cat$, an element~$\delta\in\mathbb{E}(Z,X)$ is called an~\defn{$\mathbb{E}$-extension}.
A \defn{split}~$\mathbb{E}$-extension is a zero element~$0\in\mathbb{E}(Z,X)$, for some objects~$X,Z\in\cat$.
For any two~$\mathbb{E}$-exten\-sions~${\delta\in\mathbb{E}(Z,X)}$, $\delta'\in\mathbb{E}(Z',X')$, the additivity of~$\cat$,~$\mathbb{E}$ permits to define the~$\mathbb{E}$-extension
\[
\delta\oplus\delta'\in\mathbb{E}(Z\oplus Z',X\oplus X').
\]
\end{definition}

\begin{remark}
Let~$\delta\in\mathbb{E}(Z,X)$ be an~$\mathbb{E}$-extension. By functoriality, any morphisms~$f\in\cat(X,X')$ and~$h\in\cat(Z',Z)$ induce~$\mathbb{E}$-extensions~$\mathbb{E}(Z,f)(\delta)\in\mathbb{E}(Z,X') \text{ and } \mathbb{E}(h,X)(\delta)\in\mathbb{E}(Z',X)$.
For short, we write~$f_\ast\delta$ and~$h^\ast\delta$ instead.
Using those notations, we have, in~$\mathbb{E}(Z',X')$
\[
\mathbb{E}(h,f)(\delta)=h^\ast f_\ast\delta=f_\ast h^\ast\delta.
\]
\end{remark}

\begin{definition}
A \defn{morphism}~$(f,h):\delta\to\delta'$ of~$\mathbb{E}$-extensions~$\delta\in\mathbb{E}(Z,X)$,~$\delta'\in\mathbb{E}(Z',X')$ is a pair of morphisms~$f\in\cat(X,X')$ and~$h\in\cat(Z,Z')$ in~$\cat$, such that~$f_\ast\delta=h^\ast\delta'.$
\end{definition}

\begin{definition}
\label{DefSqEquiv}
Let~$X,Z\in\cat$ be any two objects. Two sequences of morphisms in~$\cat$
\[
X \overset{x}{\longrightarrow}Y\overset{y}{\longrightarrow}Z \text{ and } X\overset{x'}{\longrightarrow}Y'\overset{y'}{\longrightarrow}Z
\]
are said to be \defn{equivalent} if there exists an isomorphism~$g\in\cat(Y,Y')$ such that the following diagram commutes.
\[
\xy
(-20,0)*+{X}="X";
(5,0)*+{}="1";
(0,6)*+{Y}="Y";
(0,-6)*+{Y'}="Y'";
(-5,0)*+{}="2";
(20,0)*+{Z}="Z";
{\ar^{x} "X";"Y"};
{\ar^{y} "Y";"Z"};
{\ar_{x^{\prime}} "X";"Y'"};
{\ar_{y^{\prime}} "Y'";"Z"};
{\ar^{g}_{\cong} "Y";"Y'"};
{\ar@{}|{} "X";"1"};
{\ar@{}|{} "2";"Z"};
\endxy
\]
The equivalence class of~$X \xrightarrow{x} Y \xrightarrow{y} Z$ is denoted by~$[X \xrightarrow{x} Y \xrightarrow{y} Z]$.
\end{definition}

\begin{notation}
For any~$X,Y,Z,A,B,C\in\cat$, and any~$[X \xrightarrow{x} Y \xrightarrow{y} Z]$,~$[A \xrightarrow{a} B \xrightarrow{b} C]$, we let
\[ 0=[X\overset{\left[\bsm1\\0\esm\right]}{\longrightarrow} X\oplus Y \overset{\left[\bsm0\;1\esm\right]}{\longrightarrow} Y]
\]
and
\[
[X \xrightarrow{x} Y \xrightarrow{y} Z]\oplus [A \xrightarrow{a} B \xrightarrow{b} C] = [X\oplus A \overset{\left[\bsm x \; 0 \\ 0\; a\esm\right]}{\longrightarrow} Y\oplus B \overset{\left[\bsm y\;0\\0\;b \esm\right]}{\longrightarrow} Z\oplus C].
\]
\end{notation}

\begin{definition}
An \defn{additive realization}~$\mathfrak{s}$ is a correspondence associating, with~$\mathbb{E}$-extension ${\delta\in\mathbb{E}(Z,X)}$, an equivalence class~$\mathfrak{s}(\delta)=[X\xrightarrow{x}Y\xrightarrow{y}Z]$ and satisfying the following condition: 
\begin{itemize}
\item[$(\ast)$] Let~$\delta\in\mathbb{E}(Z,X)$ and~$\delta'\in\mathbb{E}(Z',X')$ be any pair of~$\mathbb{E}$-extensions, with
\[\mathfrak{s}(\delta)=[X\xrightarrow{x}Y\xrightarrow{y}Z] \text{ and } \mathfrak{s}(\delta')=[X'\xrightarrow{x'}Y'\xrightarrow{y'}Z'].\]
Then, for any morphism~$(f,h):\delta\to\delta'$, there exists~$g\in\cat(Y,Y')$ such that the following diagram commutes:
\[
\xy
(-12,6)*+{X}="0";
(0,6)*+{Y}="2";
(12,6)*+{Z}="4";
(-12,-6)*+{X'}="10";
(0,-6)*+{Y'}="12";
(12,-6)*+{Z'}="14";
{\ar^{x} "0";"2"};
{\ar^{y} "2";"4"};
{\ar_{f} "0";"10"};
{\ar^{g} "2";"12"};
{\ar^{h} "4";"14"};
{\ar_{x'} "10";"12"};
{\ar_{y'} "12";"14"};
{\ar@{}|{} "0";"12"};
{\ar@{}|{} "2";"14"};
{\ar@{}|\circlearrowright "0";"12"};
{\ar@{}|\circlearrowright "2";"14"};
\endxy
\]
\end{itemize}

The sequence~$X\xrightarrow{x}Y\xrightarrow{y}Z$ is said to \defn{realize}~$\delta$ if~$\mathfrak{s}(\delta)=[X\xrightarrow{x}Y\xrightarrow{y}Z]$, and the triple~$(f,g,h)$ is said to realize~$(f,h)$ if the diagram in~$(\ast)$ commutes.
\end{definition}

\begin{definition}
A realization of~$\mathbb{E}$ is called an \defn{additive realization} if the following conditions are satisfied:
\begin{enumerate}
\item For any~$X,Z\in\cat$, the realization of the split~$\mathbb{E}$-extension~$0\in\mathbb{E}(Z,X)$ is given by~$\mathfrak{s}(0)=0$.
\item For any two~$\mathbb{E}$-extensions~$\delta\in\mathbb{E}(Z,X)$ and~$\delta'\in\mathbb{E}(Z',X')$, the realization of~$\delta\oplus\delta'$ is given by~$\mathfrak{s}(\delta\oplus\delta')=\mathfrak{s}(\delta)\oplus\mathfrak{s}(\delta')$.
\end{enumerate}
\end{definition}

\begin{definition}(\cite[Def.~2.12]{NakaokaPalu})
A triple~$(\cat,\mathbb{E},\mathfrak{s})$ is called an \defn{extriangulated category} if the following holds:
\begin{itemize}
\item[{\rm (ET1)}] $\mathbb{E}:\cat^{\mathrm{op}}\times\cat\to\mathit{Ab}$ is an additive bifunctor;
\item[{\rm (ET2)}] $\mathfrak{s}$ is an additive realization of~$\mathbb{E}$;
\item[{\rm (ET3)}] Let~$\delta\in\mathbb{E}(Z,X)$ and~$\delta'\in\mathbb{E}(Z',X')$ be~$\mathbb{E}$-extensions, respectively realized by~$X\xrightarrow{x}Y\xrightarrow{y}Z$ and~$X'\xrightarrow{x'}Y'\xrightarrow{y'}Z'$. Then, for any commutative square
\[
\xy
(-12,6)*+{X}="0";
(0,6)*+{Y}="2";
(12,6)*+{Z}="4";
(-12,-6)*+{X'}="10";
(0,-6)*+{Y'}="12";
(12,-6)*+{Z'}="14";
{\ar^{x} "0";"2"};
{\ar^{y} "2";"4"};
{\ar_{f} "0";"10"};
{\ar^{g} "2";"12"};
{\ar_{x'} "10";"12"};
{\ar_{y'} "12";"14"};
{\ar@{}|{} "0";"12"};
{\ar@{}|\circlearrowright "0";"12"};
\endxy 
\]
in~$\cat$, there exists a morphism~$(f,h):\delta\to\delta'$ satisfying~$h\circ y=y'\circ g$.
\item[{\rm (ET3)$^{\mathrm{op}}$}] Dual of {\rm (ET3)}.
\item[{\rm (ET4)}] Let~$\delta\in\mathbb{E}(Z',X)$ and~$\delta'\in\mathbb{E}(X',Y)$ be~$\mathbb{E}$-extensions realized respectively by
\[
X\overset{f}{\longrightarrow}Y\overset{f'}{\longrightarrow}Z' \quad\text{and}\quad Y\overset{g}{\longrightarrow}Z\overset{g'}{\longrightarrow}X'.
\]
Then there exist an object~$Y'\in\cat$, a commutative diagram in~$\cat$
\[
\xy
(-18,6)*+{X}="0";
(-6,6)*+{Y}="2";
(6,6)*+{Z'}="4";
(18,6)="6";
(-18,-6)*+{X}="10";
(-6,-6)*+{Z}="12";
(6,-6)*+{Y'}="14";
(18,-6)="16";
(-6,-18)*+{X'}="22";
(6,-18)*+{X'}="24";
(-6,-28)="32";
(6,-28)="34";
{\ar^{f} "0";"2"};
{\ar^{f'} "2";"4"};
{\ar@{=} "0";"10"};
{\ar_{g} "2";"12"};
{\ar^{d} "4";"14"};
{\ar^{h} "10";"12"};
{\ar^{h'} "12";"14"};
{\ar_{g'} "12";"22"};
{\ar^{e} "14";"24"};
{\ar@{=} "22";"24"};
{\ar@{}|{} "0";"12"};
{\ar@{}|{} "2";"14"};
{\ar@{}|{} "12";"24"};
{\ar@{-->}^\delta "4";"6"};
{\ar@{-->}^{\delta''} "14";"16"};
{\ar@{-->}^{\delta'} "22";"32"};
{\ar@{-->}^{f'_\ast \delta'} "24";"34"};
{\ar@{}|\circlearrowright "0";"12"};
{\ar@{}|\circlearrowright "2";"14"};
{\ar@{}|\circlearrowright "12";"24"};
\endxy
\]
and an~$\mathbb{E}$-extension~$\delta''\in\mathbb{E}(Y',X)$ realized by~$X\overset{h}{\longrightarrow}Z\overset{h'}{\longrightarrow}Y'$, which satisfy the following compatibilities.
  \begin{enumerate}[(i)]
    \item $Z'\overset{d}{\longrightarrow}Y'\overset{e}{\longrightarrow}X'$ realizes~$f'_\ast\delta'$,
    \item $d^\ast\delta''=\delta$,
    \item $f_\ast\delta''=e^\ast\delta'$. 
  \end{enumerate}
\item[{\rm (ET4)$^{\mathrm{op}}$}] Dual of {\rm (ET4)}.
\end{itemize}
\end{definition}

\enlargethispage{-.8cm}
We use the following terminology.

\begin{notation}
Let~$(\cat,\mathbb{E},\mathfrak{s})$ be an extriangulated category.
\begin{enumerate}
\item A sequence~$X\xrightarrow{x}Y\xrightarrow{y}Z$ is called a conflation if it realizes some~$\mathbb{E}$-extension in~$\mathbb{E}(Z,X)$. In which case the morphism~$X\xrightarrow{x}Y$ is called an inflation, written~$X\infl Y$, and the morphism~$Y\xrightarrow{y}Z$ is called a deflation, witten~$Y\defl Z$.
\item An extriangle is a diagram~$X\overset{x}{\infl} Y\overset{y}{\defl} Z\overset{\delta}{\dashrightarrow}$ where~$X\overset{x}{\infl} Y\overset{y}{\defl} Z$ is a conflation realizing the~$\mathbb{E}$-extension~$\delta\in\mathbb{E}(Z,X)$.
\item Similarly, we call morphism of extriangles any diagram
\[
\xy
(-12,6)*+{X}="0";
(0,6)*+{Y}="2";
(12,6)*+{Z}="4";
(24,6)*+{}="6";
(-12,-6)*+{X'}="10";
(0,-6)*+{Y'}="12";
(12,-6)*+{Z'}="14";
(24,-6)*+{}="16";
{\ar^{x} "0";"2"};
{\ar^{y} "2";"4"};
{\ar@{-->}^{\delta} "4";"6"};
{\ar_{f} "0";"10"};
{\ar^{g} "2";"12"};
{\ar^{h} "4";"14"};
{\ar_{x'} "10";"12"};
{\ar_{y'} "12";"14"};
{\ar@{-->}_{\delta'} "14";"16"};
{\ar@{}|{} "0";"12"};
{\ar@{}|{} "2";"14"};
\endxy
\]
where~$(f,h):\delta\to\delta'$ is a morphism of~$\mathbb{E}$-extensions realized by~$(f,g,h)$.
\end{enumerate}
\end{notation}

The axioms above ensure that any extriangle induce long-ish exact sequences after application of some covariant or contravariant Hom-functor.
In particular in any conflation, the inflation is a weak kernel of the deflation, and the deflation is a weak cokernel of the inflation.

\begin{proposition}[{\cite[Props.~3.3~\&~3.11]{NakaokaPalu}}]
\label{prop:extricat long exact sequences}
Assume that $(\cat,\mathbb{E},\mathfrak{s})$ is an extriangulated category, and let $X\xrightarrow{x}Y\xrightarrow{y}Z\overset{\delta}{\dashrightarrow}$ be an extriangle.
Then the following sequences of natural transformations are exact:
\[
\cat(Z,-)\overset{-\circ y}{\longrightarrow}\cat(Y,-)\overset{-\circ x}{\longrightarrow}\cat(X,-)\overset{\delta^\sharp}{\longrightarrow}\mathbb{E}(Z,-)\overset{y^\ast}{\longrightarrow}\mathbb{E}(Y,-)\overset{x^\ast}{\longrightarrow}\mathbb{E}(X,-),
\]
\[
\cat(-,X)\overset{x\circ-}{\longrightarrow}\cat(-,Y)\overset{y\circ-}{\longrightarrow}\cat(-,Z)\overset{\delta_\sharp}{\longrightarrow}\mathbb{E}(-,X)\overset{x_\ast}{\longrightarrow}\mathbb{E}(-,Y)\overset{y_\ast}{\longrightarrow}\mathbb{E}(-,Z),
\]
where~$\delta^\sharp(f)=f_\ast\delta$ and~$\delta_\sharp(g)=g^\ast\delta$.
\end{proposition}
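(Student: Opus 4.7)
The plan is to verify exactness at each of the six positions in each of the two natural transformation sequences. The two sequences are dual to one another via the opposite extriangulated structure on $\cat^{\mathrm{op}}$, so it suffices to treat the first, contravariantly indexed sequence; the second one will follow formally from (ET3)$^{\mathrm{op}}$ and (ET4)$^{\mathrm{op}}$ by the same arguments.

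First I would check that consecutive compositions vanish. The equality $yx = 0$ follows from the fact that $(x,y)$ realizes an extension, combined with the standard consequence of (ET3) that the composition of the two structural maps of a conflation is zero. The vanishings $\delta^\sharp \circ (- \circ x) = 0$ and $y^\ast \circ \delta^\sharp = 0$ follow from naturality of $\mathbb{E}$: the first computes $(fx)_\ast\delta = f_\ast(x_\ast\delta)$, where $x_\ast\delta = 0$ is immediate from the definition of the connecting transformation (the pushforward of $\delta$ along its own inflation splits), and the second from the symmetric computation $y^\ast f_\ast\delta = f_\ast(y^\ast\delta) = 0$.

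Next come the two "easy" exactness statements, which express that in an extriangle, $y$ is a weak cokernel of $x$ and the connecting transformation detects whether a morphism out of $X$ extends to $Y$. For exactness at $\cat(Y,-)$: given $g\colon Y\to W$ with $gx = 0$, axiom (ET3), applied to the morphism of data formed by $0\colon X\to W$ and $g$, produces an $h\colon Z\to W$ with $hy = g$. For exactness at $\cat(X,-)$: given $f\colon X\to W$ with $f_\ast\delta = 0$, the realization of the split extension $f_\ast\delta$ gives a splitting of the pushout conflation, which by (ET3) yields a morphism $h\colon Y\to W$ with $hx = f$.

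The harder parts are exactness at $\mathbb{E}(Z,-)$ and $\mathbb{E}(Y,-)$, and this is where I expect the main obstacle to lie, since they require the octahedral-type axioms (ET4) and (ET4)$^{\mathrm{op}}$. For exactness at $\mathbb{E}(Y,-)$: given $\eta \in \mathbb{E}(Y,W)$ with $x^\ast\eta = 0$, realize $\eta$ by some conflation $W \infl V \defl Y$; then (ET4)$^{\mathrm{op}}$ applied to the pair of deflations $V \defl Y$ and $Y \defl Z$ produces a commutative diagram containing a conflation $W \infl V' \defl Z$ realizing an extension $\zeta \in \mathbb{E}(Z,W)$ with $y^\ast\zeta = \eta$. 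Exactness at $\mathbb{E}(Z,-)$ is the dual construction: given $\eta \in \mathbb{E}(Z,W)$ with $y^\ast\eta = 0$, a splitting of the pullback conflation, combined with (ET4) applied to the composable inflations produced by $\delta$ and a realization of $\eta$, produces a morphism $f\colon X\to W$ with $f_\ast\delta = \eta$; the compatibility $d^\ast\delta'' = \delta$ in (ET4) is exactly what pins down that $\eta$ is the pushforward of $\delta$ along $f$. Throughout, the bookkeeping required to track which extensions the various realized conflations represent, and to check the compatibility identities $(i)$--$(iii)$ in (ET4), is where the subtleties accumulate.
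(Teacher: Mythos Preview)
The paper does not prove this proposition; it is stated as a recollection and attributed to \cite[Props.~3.3~\&~3.11]{NakaokaPalu}. So there is no ``paper's proof'' to compare against beyond the original reference. Your outline is essentially a correct reconstruction of how that reference argues: exactness at the $\Hom{}$-spots from (ET3)/(ET3)$^{\mathrm{op}}$ and the splitting of $f_\ast\delta$, and exactness at the $\mathbb{E}$-spots from the octahedral-type axioms.

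Two points where your sketch is loose. First, exactness at $\mathbb{E}(Z,-)$ does not need (ET4): once $y^\ast\eta=0$, the map $y$ lifts through the deflation realizing $\eta$, and (ET3)$^{\mathrm{op}}$ applied to the resulting square directly produces $f\colon X\to W$ with $f_\ast\delta=\eta$. Second, and more substantively, your argument for exactness at $\mathbb{E}(Y,-)$ as written never invokes the hypothesis $x^\ast\eta=0$. Applying (ET4)$^{\mathrm{op}}$ to the composable deflations $V\defl Y$ and $Y\defl Z$ yields a conflation $F\infl V\defl Z$ together with a conflation $W\infl F\defl X$, and the compatibility identities identify the latter with $x^\ast\eta$. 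The hypothesis $x^\ast\eta=0$ is exactly what splits $F\cong W\oplus X$, and then the compatibility (the dual of (ET4)(iii)) applied to the inclusion $W\hookrightarrow F$ gives $y^\ast\zeta=\eta$ for the $W$-component $\zeta\in\mathbb{E}(Z,W)$ of the new extension. You gesture at this with your closing remark about bookkeeping, but the role of the vanishing hypothesis deserves to be made explicit, since without it the construction produces an extension with the wrong domain.
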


Any variant of the axiom (ET4) that would hold in an extension-closed subcategory of a triangulated category by applying the octahedron axiom also holds in any extriangulated category.
See~\cite[Sect.~3.2]{NakaokaPalu} for more details.

\begin{definition}
An \defn{almost-split}~$\mathbb{E}$-extension~$\delta\in\mathbb{E}(Z,X)$ is a non-split~$\mathbb{E}$-extension such that:
\begin{itemize}
\item[{\rm (AS1)}] $f_\ast\delta=0$ for any non-section $f\in\cat(X,X')$.
\item[{\rm (AS2)}] $g^\ast\delta=0$ for any non-retraction $g\in\cat(Z',Z)$.
\end{itemize}
\end{definition}

\begin{definition}
A non-zero object $X\in\cat$ is said to be \defn{endo-local} if $\cat(X,X)$ is local.
\end{definition}

\begin{proposition}[{\cite[Prop.~2.5]{IyamaNakaokaPalu}}]
For any non-split $\mathbb{E}$-extension $\delta\in\mathbb{E}(Z,X)$, the following holds.
\begin{enumerate}
\item If $\delta$ satisfies {\rm (AS1)}, then $X$ is endo-local.
\item If $\delta$ satisfies {\rm (AS2)}, then $Z$ is endo-local.
\end{enumerate}
\end{proposition}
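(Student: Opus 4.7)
The plan is to establish (1), since (2) follows by a dual argument exploiting the self-duality of the axioms of extriangulated categories (in particular the presence of (ET3)$^{\text{op}}$ and (ET4)$^{\text{op}}$) together with the fact that (AS2) is exactly the dual of (AS1). To prove that $\cat(X,X)$ is a local ring, I will verify the criterion that for every $f \in \cat(X,X)$, either $f$ or $1_X - f$ is an isomorphism. Observe first that $X \neq 0$, for otherwise $\mathbb{E}(Z,X) = 0$ would force $\delta = 0$. The key tool throughout is that, because $\mathbb{E}$ is an additive bifunctor, the map
\[
\delta^\sharp : \cat(X,X) \to \mathbb{E}(Z,X), \qquad f \mapsto f_* \delta,
\]
is a homomorphism of abelian groups, and (AS1) says exactly that every non-section of $\cat(X,X)$ lies in its kernel.

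The first step is to rule out non-trivial idempotents in $\cat(X,X)$. Let $e \in \cat(X,X)$ with $e^2 = e$ and $e \neq 0, 1_X$. If $e$ were a section, say $ge = 1_X$, then
\[
e = 1_X \cdot e = (ge)\cdot e = g \cdot e^2 = ge = 1_X,
\]
contradicting $e \neq 1_X$; hence $e$ is not a section. The same reasoning applied to the idempotent $1_X - e$ shows that it is not a section either. By (AS1), both $e_* \delta$ and $(1_X - e)_* \delta$ vanish, and the additivity of $\delta^\sharp$ then yields $\delta = (e + (1_X - e))_* \delta = 0$, contradicting the assumption that $\delta$ is non-split.

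Consequently, every section $f \in \cat(X,X)$ is automatically an isomorphism: if $gf = 1_X$, then $fg$ is an idempotent in $\cat(X,X)$, hence equal to $0$ or $1_X$ by the previous step; but $fg = 0$ would give $f = f\cdot 1_X = f(gf) = (fg)f = 0$ and thus $gf = 0 \neq 1_X$ (as $X \neq 0$), so $fg = 1_X$ and $f$ is invertible. In particular, every non-isomorphism in $\cat(X,X)$ is a non-section and therefore lies in the kernel of $\delta^\sharp$ by (AS1). Now suppose, for contradiction, that some $f \in \cat(X,X)$ has both $f$ and $1_X - f$ non-isomorphisms; then $\delta = f_* \delta + (1_X - f)_* \delta = 0$, contradicting $\delta \neq 0$. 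Hence at least one of $f$ and $1_X - f$ is an isomorphism, which proves that $\cat(X,X)$ is local.

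The main subtlety is that (AS1) only controls non-sections, whereas the criterion for locality is formulated in terms of non-isomorphisms; the bridge between these two notions is built by the intermediate step of eliminating non-trivial idempotents, which then forces sections from $X$ to itself to coincide with isomorphisms. Once that gap is closed, the final argument is a routine additivity computation using $\delta^\sharp$ and the non-vanishing of $\delta$.
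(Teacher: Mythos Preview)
Your argument is correct. Note, however, that the paper does not supply its own proof of this proposition: it is quoted verbatim from \cite[Prop.~2.5]{IyamaNakaokaPalu} and used as a black box in the recollections on extriangulated categories, so there is no in-paper proof to compare against. Your proof is essentially the standard one (and matches the argument in the cited reference): use additivity of $f\mapsto f_*\delta$ together with (AS1) to first kill non-trivial idempotents, upgrade sections to isomorphisms, and then run the $f$ versus $1_X-f$ dichotomy.
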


\begin{definition}
 An object~$P\in\cat$ is called \defn{projective} if, for any~$X\in\cat$, we have~$\mathbb{E}(P,X)=0$.
 Dually, an object~$I\in\cat$ is called \defn{injective} if for any~$X\in\cat$, we have~$\mathbb{E}(X,I)=0$.
\end{definition}

\begin{proposition}[{\cite[Prop.~8.1]{IyamaNakaokaPalu}}]
Let~$\Lambda$ be an Artin algebra and let~$K^{[-1,0]}(\proj\Lambda)$ be the full subcategory of the homotopy category~$K^b(\proj\Lambda)$ consisting of complexes concentrated in degrees~$-1$ and~$0$ (using cohomological conventions).
Then any endo-local non-projective object~$X\in\cat$ permits an almost-split conflation~$\tau X\infl E\defl X$ and any endo-local non-injective object~$Y\in\cat$ permits an almost-split conflation~$Y\infl E'\defl \tau^- Y$.
\end{proposition}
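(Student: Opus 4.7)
The plan is to deduce the existence of almost-split conflations from the general theory of Auslander--Reiten theory in extriangulated categories, applied to $\cat \eqdef K^{[-1,0]}(\proj\Lambda)$. First, I would record that $\cat$ is an extension-closed subcategory of the triangulated category $K^b(\proj\Lambda)$: indeed, the cone of any chain map between two complexes concentrated in degrees $-1$ and $0$ admits a representative concentrated in degrees $-1$ and $0$ (using that $\proj\Lambda$ is closed under direct sums). Hence $\cat$ inherits an extriangulated structure in the sense of \cite{NakaokaPalu}, in which the conflations are the triangles of $K^b(\proj\Lambda)$ whose three terms can be chosen in $\cat$. Since $\Lambda$ is Artin, $\cat$ is Krull--Schmidt, Hom-finite and $\field$-linear, so endo-local objects coincide with the indecomposables. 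A direct computation in $K^b(\proj\Lambda)$ identifies the projective (resp.\ injective) objects of $\cat$ as the stalk complexes $0\to Q$ (resp.\ $Q\to 0$) for $Q\in\proj\Lambda$.

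Next, I would construct an Auslander--Reiten translate $\tau$ on $\cat$. The Nakayama functor $\nu \eqdef D\Hom_{\Lambda}(-,\Lambda):\proj\Lambda\xrightarrow{\sim}\mathrm{inj}\,\Lambda$ extends to a triangle equivalence $K^b(\proj\Lambda)\xrightarrow{\sim} K^b(\mathrm{inj}\,\Lambda)$. For an object $X = (X^{-1}\to X^0)$ in $\cat$ with no direct summand of the form $0\to Q$, I would define $\tau X$ as a canonical replacement in $\cat$ of $\nu X[-1]$, chosen via a projective resolution of its cohomology so as to land back in $K^{[-1,0]}(\proj\Lambda)$. A direct computation using the description of $\Hom$ and $\mathbb{E}$ in $\cat$ then yields the desired Auslander--Reiten--Serre duality
\[
\mathbb{E}(X,Y)\;\cong\;D\,\cat(Y,\tau X),
\]
natural in $X$ endo-local non-projective and $Y$ arbitrary.

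With AR--Serre duality established, existence of almost-split conflations is formal: any morphism $\cat(X,\tau X)\twoheadrightarrow \field_X$ onto the residue field of the local ring $\End{\cat}(X)$ corresponds, under the duality, to a non-split $\mathbb{E}$-extension $\delta_X\in\mathbb{E}(X,\tau X)$, and the two axioms (AS1) and (AS2) follow from the functoriality of $\tau$ and standard arguments (cf.\ \cite[Sect.~2]{IyamaNakaokaPalu}). The realization of $\delta_X$ is then an almost-split conflation $\tau X\infl E\defl X$. The dual statement, producing $Y\infl E'\defl \tau^{-}Y$ for endo-local non-injective $Y$, is obtained by the symmetric construction using $\nu^{-1}$, or equivalently by invoking the result in $\cat^{\mathrm{op}}$.

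The main obstacle is the correct definition of $\tau$ on $\cat$: while the classical Auslander--Reiten translate on $\mod\Lambda$ is well known, lifting it to a functor on $K^{[-1,0]}(\proj\Lambda)$ that both recovers it under $H^0$ and handles the degenerate summands (stalk complexes in degree $0$ or $-1$) requires choosing minimal projective representatives before applying $\nu$ and shifting. Once this is carried out, verifying the isomorphism $\mathbb{E}(X,Y)\cong D\cat(Y,\tau X)$ reduces to a diagram chase using the explicit description of morphisms and extensions in $K^b(\proj\Lambda)$, and the rest of the argument proceeds as described.
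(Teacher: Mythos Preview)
The paper does not prove this statement at all: it is quoted as \cite[Prop.~8.1]{IyamaNakaokaPalu} in the recollections on extriangulated categories, with no argument supplied. There is therefore no ``paper's own proof'' to compare against; your proposal is effectively a sketch of the cited external result.

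As a sketch of that external result, your outline is reasonable and follows the expected route (extension-closedness in $K^b(\proj\Lambda)$, Nakayama functor to build $\tau$, AR--Serre duality, then the standard passage from duality to almost-split extensions). Two points are worth tightening. First, your justification for extension-closedness is misstated: the cone of an \emph{arbitrary} chain map between complexes in degrees $-1,0$ is a three-term complex, not a two-term one. What you actually need is that for a triangle $X\to Y\to Z\to\susp X$ with $X,Z\in K^{[-1,0]}$, the connecting map $Z\to\susp X$ has a single possibly nonzero component $Z^{-1}\to X^0$, and computing the cocone then shows $Y$ is representable in degrees $-1,0$. (The paper records exactly this extension-closedness, without detail, in the proof of \cref{prop:2kbproj-is-extriangulated}.) Second, $\Lambda$ is only assumed Artin, not finite-dimensional over a field, so ``$\field$-linear'' and the residue-field argument should be phrased over the commutative Artinian base ring with the appropriate duality $D$; the mechanism is the same, but the notation should match the hypothesis.
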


%%%%%%%%%%%

\subsection{Proofs of preliminary results and of \cref{thm:extricats}}
\label{sec:proofs extricats}

We let~$\cat$ and~$\tc$ satisfy the assumptions (1), (2) and (3) of \cref{setting:extricat}.
We will make it explicit when more assumptions are needed.

\enlargethispage{-.1cm}
We start by stating some immediate implications of the existence of conflations~$T\infl 0 \defl \susp T$, for~$T\in\tc$.

\begin{lemma}
\label{lem:first consequences}
The following holds:
\begin{enumerate}[(i)]
\item An object belongs to~$\tc$ if and only if it is projective in~$\cat$.
\item For any~$T,T'\in\tc$, we have~$\cat(T,\susp T')=0$.
\item For any~$X\in\cat$ and~$T\in\tc$,~$\cat(X,\susp T)\cong\mathbb{E}(X,T)$.
\item For any~$X\in\cat$ and~$T\in\tc$,~$\cat(T,X)\cong\mathbb{E}(\susp T,X)$.
\item The full subcategory~$\susp\tc$ is rigid in~$\cat$.
\end{enumerate}
\end{lemma}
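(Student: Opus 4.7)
The plan is to exploit two tools together: the long exact Hom/$\mathbb{E}$ sequences of Proposition~\ref{prop:extricat long exact sequences} (applied to the defining conflations $T\infl 0\defl \susp T$), and the projectivity/inflation hypotheses of the setting, to reduce each statement to an almost trivial computation. I would first establish (ii)--(iv), since they are direct consequences of these long exact sequences, and then deduce (i) and (v) as corollaries.

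For (iii), apply the contravariant functor $\cat(X,-)$ to the extriangle $T\infl 0\defl \susp T\overset{\delta}{\dashrightarrow}$ (which exists by assumption (2)); the relevant portion of the long exact sequence of Proposition~\ref{prop:extricat long exact sequences} reads
\[
\cat(X,0)\longrightarrow \cat(X,\susp T)\overset{\delta^\sharp}{\longrightarrow} \mathbb{E}(X,T) \longrightarrow \mathbb{E}(X,0),
\]
and both outer terms vanish because $\cat$ and $\mathbb{E}$ are additive, so the boundary map is an isomorphism. For (iv), apply $\cat(-,X)$ to the same extriangle and use additionally that $\mathbb{E}(T,X)=0$ by projectivity of $T$; the corresponding piece of the long exact sequence gives $\cat(T,X)\cong \mathbb{E}(\susp T,X)$. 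For (ii), specialize (iii) to $X=T$ and use once more that $\mathbb{E}(T,T')=0$ since $T$ is projective. Statement (v) is then immediate: $\mathbb{E}(\susp T,\susp T')\cong \cat(T,\susp T')$ by (iv), and this vanishes by (ii).

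Finally, for (i), one implication is assumption (1) of the setting. For the converse, let $P\in\cat$ be projective and consider a conflation $T_1^P\infl T_0^P\defl P\overset{\delta_P}{\dashrightarrow}$ provided by assumption (3), with $\delta_P\in\mathbb{E}(P,T_1^P)$. Since $P$ is projective, $\mathbb{E}(P,T_1^P)=0$, hence $\delta_P=0$ and the conflation splits; therefore $P$ is a direct summand of $T_0^P\in\tc$. Since $\tc$ is closed under direct summands by assumption, $P\in\tc$, concluding the proof.

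No step is genuinely difficult here: the only subtle point is organizing the logic so that the projectivity hypothesis on $\tc$ is invoked at the right moment in (ii) and (iv), and making sure that (v) is derived by chaining (iv) and (ii) rather than trying to produce an independent extriangle for $\susp T$ (which we do not have a priori). The whole argument is a formal consequence of the axioms of an extriangulated category, once the conflations $T\infl 0\defl \susp T$ are in hand.
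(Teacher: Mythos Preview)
Your proposal is correct and follows essentially the same approach as the paper: both use the long exact sequences of Proposition~\ref{prop:extricat long exact sequences} applied to the extriangle $T\infl 0\defl\susp T$ for (iii) and (iv), derive (v) from (iv) and (ii), and prove (i) by splitting the conflation from assumption~(3). The only cosmetic differences are the order of presentation and that the paper argues (ii) directly via the lifting property of projectives along the deflation $0\defl\susp T'$, whereas you obtain it by specializing (iii); these are equivalent. (Minor slip: $\cat(X,-)$ is covariant, not contravariant.)
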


\begin{proof}
\begin{enumerate}[(i)]
\item If~$X$ is projective in~$\cat$, then the deflation~$T_0^X\defl X$ splits and~$X$ is a summand of~$T_0^X$.
\item This follows from the fact that~$T$ is projective and~$0\defl\susp T'$ is a deflation.
\item and (iv) are obtained from the exact sequences of \cref{prop:extricat long exact sequences} associated with the extriangle~$T\infl 0\defl \susp T\dashrightarrow$.
\addtocounter{enumi}{1}
\item For any~$T,T'\in\tc$, we have~$\mathbb{E}(\susp T,\susp T') \cong \cat(T,\susp T')=0$.
\qedhere
\end{enumerate}
\end{proof}

\begin{lemma}
\label{lem:shifts of extriangles}
Let~$X\in\cat$ and let
\[
T_1^X\overset{f_X}{\infl} T_0^X\overset{g_X}{\defl} X \overset{\delta_X}{\dashrightarrow}, \quad T_0^X\infl 0\defl \susp T_0^X\overset{\delta_0}{\dashrightarrow}, \quad T_1^X\infl 0\defl \susp T_1^X\overset{\delta_1}{\dashrightarrow}
\]
be extriangles with~$T_0^X,T_1^X\in\tc$.
Then there are extriangles
\[
T_0^X\overset{g_X}{\infl} X \overset{h_X}{\defl} \susp T_1^X \overset{(f_X)_\ast\delta_1}{\dashrightarrow}, \quad X \overset{h_X}{\infl} \susp T_1^X \overset{\susp f_X}{\defl} \susp T_0^X \overset{(g_X)_\ast\delta_0}{\dashrightarrow}
\]
where~$h_X$ is the unique morphism in~$\cat(X,\susp T_1^X)$ satisfying~$(h_X)^\ast\delta_1=\delta_X$ and~$\susp f_X$ is the unique morphism in~$\cat(\susp T_1^X,\susp T_0^X)$ satisfying~$(\susp f_X)^\ast\delta_0 = (f_X)_\ast\delta_1$.
\end{lemma}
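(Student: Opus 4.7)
The plan is to derive both extriangles as consequences of the octahedron axiom~(ET4) and its dual~(ET4)$^{\mathrm{op}}$, combined with the vanishings of~\cref{lem:first consequences} and the long exact sequences of~\cref{prop:extricat long exact sequences}.

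To obtain the second extriangle, I would apply~(ET4) to the pair $T_1^X \overset{f_X}{\infl} T_0^X \overset{g_X}{\defl} X \overset{\delta_X}{\dashrightarrow}$ and $T_0^X \infl 0 \defl \susp T_0^X \overset{\delta_0}{\dashrightarrow}$ (glued along the middle object $T_0^X$). This would produce an object $Y'$ fitting into a commutative octahedral diagram which contains the two new extriangles $T_1^X \infl 0 \defl Y' \overset{\delta''}{\dashrightarrow}$ and $X \overset{d}{\infl} Y' \overset{e}{\defl} \susp T_0^X \overset{(g_X)_\ast \delta_0}{\dashrightarrow}$, subject to the compatibilities $d^\ast \delta'' = \delta_X$ and $(f_X)_\ast \delta'' = e^\ast \delta_0$. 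Next, I would identify $Y'$ canonically with $\susp T_1^X$ by applying $\cat(Y',-)$ to $T_1^X \infl 0 \defl \susp T_1^X \overset{\delta_1}{\dashrightarrow}$: the vanishings $\cat(Y',0) = 0$ and $\mathbb{E}(Y',0) = 0$ force the connecting map $\partial \colon \cat(Y',\susp T_1^X) \to \mathbb{E}(Y',T_1^X)$, $\phi \mapsto \phi^\ast \delta_1$, to be an isomorphism, yielding a unique $\alpha \colon Y' \to \susp T_1^X$ with $\alpha^\ast \delta_1 = \delta''$ (and a symmetric inverse). Setting $h_X \eqdef \alpha d$ and $\susp f_X \eqdef e \alpha^{-1}$ then transports the extriangle to the desired $X \overset{h_X}{\infl} \susp T_1^X \overset{\susp f_X}{\defl} \susp T_0^X$ realizing $(g_X)_\ast \delta_0$; chasing the compatibilities yields both $(h_X)^\ast \delta_1 = d^\ast \alpha^\ast \delta_1 = d^\ast \delta'' = \delta_X$ and $(\susp f_X)^\ast \delta_0 = (\alpha^{-1})^\ast e^\ast \delta_0 = (\alpha^{-1})^\ast (f_X)_\ast \alpha^\ast \delta_1 = (f_X)_\ast \delta_1$. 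The analogous connecting isomorphism for $T_0^X \infl 0 \defl \susp T_0^X$, together with the rigidity $\cat(\susp T_1^X, T_0^X) = 0$, would give the stated uniqueness of $h_X$ and $\susp f_X$.

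For the first extriangle, I would apply the dual octahedron~(ET4)$^{\mathrm{op}}$ to the two conflations $T_1^X \overset{f_X}{\infl} T_0^X \overset{g_X}{\defl} X$ and $T_1^X \infl 0 \defl \susp T_1^X$, which share the inflation source $T_1^X$. The resulting pushout-style octahedron would identify the pushout of $T_0^X$ and $0$ along $T_1^X$ with $X$ itself (the auxiliary conflation $0 \infl X \defl X$ produced by the axiom is forced to be the split one by~(ET3)) and would simultaneously yield the sought extriangle $T_0^X \overset{g_X}{\infl} X \defl \susp T_1^X$ with class $(f_X)_\ast \delta_1$; its deflation must equal $h_X$ by the uniqueness just established.

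The main obstacle I anticipate is the canonical identification of the cone $Y'$ with the fixed object $\susp T_1^X$: in a general extriangulated category cones are defined only up to non-canonical isomorphism, but the vanishing $\cat(T,\susp T') = 0$ combined with the choice of $0$ as the intermediate object of the conflation defining $\susp T_1^X$ is precisely what makes the connecting map $\partial$ an isomorphism, promoting the identification to a canonical one and letting all the class equations descend unambiguously.
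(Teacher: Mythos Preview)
Your approach is correct and follows essentially the same strategy as the paper: both arguments rest on octahedron-type axioms for extriangulated categories. The paper's proof is a two-line invocation of \cite[Prop.~3.15]{NakaokaPalu} (and its dual), applied first to~$\delta_X$ and~$\delta_1$ to produce the first extriangle, and then to~$\delta_0$ and~$(f_X)_\ast\delta_1$ to produce the second; you instead obtain the second extriangle first via~(ET4) applied to~$\delta_X$ and~$\delta_0$, carry out explicitly the canonical identification of the cone~$Y'$ with~$\susp T_1^X$ through the connecting isomorphism, and then derive the first extriangle by a pushout-style argument.

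One small terminological point: the statement you invoke for the first extriangle --- gluing two conflations along a common \emph{inflation source} --- is not~(ET4)$^{\mathrm{op}}$ itself (which concerns composable deflations), but rather a cobase-change consequence of the axioms; this is exactly what the dual of \cite[Prop.~3.15]{NakaokaPalu} packages. Your observation that the auxiliary conflation~$0\infl Y\defl X$ forces~$Y\cong X$ is correct (since~$\mathbb{E}(X,0)=0$, the extension is split), so the argument goes through regardless of which variant of the octahedron you cite.
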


\begin{proof}
The first extriangle is obtained by applying the dual of~\cite[Prop.~3.15]{NakaokaPalu} to~$\delta_X$ and~$\delta_1$, and the second one is obtained similarly from~$\delta_0$ and ~$(f_X)_\ast \delta_1$.
\end{proof}

\begin{remark}
\label{rem:equivalence Sigma}
For any~$T\in\tc$, fix an extriangle~$T\infl 0\defl\susp T\overset{\eps_T}{\dashrightarrow}$.
As noticed in \cref{lem:shifts of extriangles}, for any morphism~$T\overset{f}{\to}T'$ in~$\tc$, there is a unique morphism~$\susp f\in\cat(\susp T,\susp T')$ such that~${f_\ast\eps_T=(\susp f)^\ast\eps_{T'}}$ (\ie such that~$(f,\susp f)$ is a morphism from~$\eps_T$ to~$\eps_{T'}$), as illustrated below:
\[
\xy
(-12,6)*+{T}="0";
(0,6)*+{0}="2";
(12,6)*+{\susp T}="4";
(24,6)*+{}="6";
(-12,-6)*+{T'}="10";
(0,-6)*+{0}="12";
(12,-6)*+{\susp T'}="14";
(24,-6)*+{}="16";
{\ar "0";"2"};
{\ar "2";"4"};
{\ar@{-->}^{\eps_T} "4";"6"};
{\ar_{f} "0";"10"};
{\ar "2";"12"};
{\ar^{\susp f} "4";"14"};
{\ar "10";"12"};
{\ar "12";"14"};
{\ar@{-->}_{\eps_{T'}} "14";"16"};
\endxy
\]
This gives an equivalence of additive categories
\[
 \susp: \tc \to \susp\tc
\]
where~$\susp\tc$ is the full subcategory of~$\cat$ whose objects are isomorphic to objects of the form~$\susp T$, for some~$T\in\tc$.
\end{remark}

\begin{lemma}
\label{lem: mphs in T are inflations}
Every morphism with domain in~$\tc$ is an inflation.
\end{lemma}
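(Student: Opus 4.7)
The plan is to exhibit any $f:T\to X$ with $T\in\tc$ as a composition of two inflations, using only closure properties of inflations and the key assumption~(2) of \cref{setting:extricat}. The essential inputs are the extriangle $T\overset{0}{\infl}0\defl\susp T\overset{\eps_T}{\dashrightarrow}$ provided by assumption~(2), together with the trivially split conflation $X\overset{1_X}{\infl}X\overset{0}{\defl}0$. Since direct sums of conflations are conflations (by additivity of the realization $\mathfrak{s}$), summing these two and identifying $0\oplus X$ with $X$ and $\susp T\oplus 0$ with $\susp T$ yields the conflation
\[
T\oplus X\overset{(0,\,1_X)}{\infl}X\overset{0}{\defl}\susp T\dashrightarrow,
\]
so that the projection $\pi_X\eqdef(0,1_X):T\oplus X\to X$ is an inflation.

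Next, I would twist $\pi_X$ using the automorphism $\phi\eqdef\bsm 1_T & 0\\f & 1_X\esm$ of $T\oplus X$, whose inverse is $\bsm 1_T & 0\\-f & 1_X\esm$. A direct matrix computation gives $\pi_X\circ\phi=(f,1_X)$. Since inflations are stable under precomposition with isomorphisms (a standard consequence of axiom~(ET3) together with the fact that a realization depends only on the equivalence class in the sense of \cref{DefSqEquiv}: the conflation $A'\overset{x}{\infl}B'\defl C$ realizing $\phi^{-1}_\ast\delta$ is carried, via the isomorphism comparing it to $\delta$, onto a conflation with first map $a\phi$), the morphism $(f,1_X):T\oplus X\to X$ is also an inflation.

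Finally, I factor $f=(f,1_X)\circ\binom{1_T}{0}$, where $\binom{1_T}{0}:T\to T\oplus X$ is the canonical split inflation (a retraction being furnished by the projection $T\oplus X\to T$). In any extriangulated category, the composition of two inflations is again an inflation: this is precisely the content of axiom~(ET4) applied to two successive conflations, since the second row of the resulting $3\times3$ diagram is a conflation whose inflation is the composition $gf$. Hence $f$ itself is an inflation.

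The argument consists of formal manipulations built on top of three closure properties of the class of inflations---closure under direct sum, under composition with isomorphisms, and under mutual composition---each of which follows directly from the extriangulated axioms recalled in \cref{sec:recollections extricats}. Aside from setting up the direct sum and the correct change of basis, there is no real obstacle; the only point to treat with some care is the stability under composition with isomorphisms, since realizations are defined up to the equivalence of \cref{DefSqEquiv} rather than strict equality.
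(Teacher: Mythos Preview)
Your proof is correct and takes a genuinely different route from the paper's.

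The paper proceeds by pushing $\eps_T$ forward along $t:T\to X$ to obtain an extriangle $X\infl Y\defl\susp T\overset{t_\ast\eps_T}{\dashrightarrow}$, and then invokes \cite[Prop.~3.17]{NakaokaPalu} (a form of the octahedron) to produce a $3\times3$ diagram whose leftmost column is a conflation $T\overset{t'}{\infl}X\infl Y$; one then checks $t'=t$ via the injectivity of $f\mapsto f_\ast\eps_T$ (coming from the long exact sequence of $T\infl0\defl\susp T$). In contrast, you avoid the $3\times3$ machinery entirely and instead factor $f$ through a direct-sum conflation, using only the closure of inflations under direct sum, isomorphism, and composition. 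Your approach is arguably more elementary---it never leaves the basic axioms (additivity of~$\mathfrak{s}$, the five-lemma type statement that $(f,g,h)$ with $f,h$ iso forces $g$ iso, and (ET4))---but the paper's argument has the advantage of exhibiting the cone of $t$ concretely as the middle term of the extriangle realizing $t_\ast\eps_T$, which is sometimes useful in practice.
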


\begin{proof}
Let~$T \overset{t}{\to} X$ be a morphism with~$T \in \tc$.
Using the notation of \cref{rem:equivalence Sigma}, there is an extriangle~$T\infl 0\defl \susp T \overset{\eps_T}{\dashrightarrow}$.
Consider the extriangle~$X\overset{g}{\infl} Y \overset{h}{\defl} \susp T \overset{t_\ast\eps_T}{\dashrightarrow}$.
By~\cite[Prop.~3.17]{NakaokaPalu}, there is a commutative diagram of extriangles
\[
\xy
(-7,7)*+{T}="2";
(7,7)*+{0}="4";
(21,7)*+{\susp T}="6";
(35,7)*+{}="8";
(-7,-7)*+{X}="12";
(7,-7)*+{Y}="14";
(21,-7)*+{\susp T}="16";
(35,-7)*+{}="18";
(-7,-21)*+{Y}="22";
(7,-21)*+{Y}="24";
(-7,-32)*+{}="32";
(7,-32)*+{}="34";
{\ar@{->} "2";"4"};
{\ar@{->} "4";"6"};
{\ar@{-->}^{\eps_T} "6";"8"};
{\ar@{->}_{t'} "2";"12"};
{\ar "4";"14"};
{\ar@{=} "6";"16"};
{\ar^{g} "12";"14"};
{\ar^{h} "14";"16"};
{\ar@{-->}^{t_\ast\eps_T} "16";"18"};
{\ar_{g} "12";"22"};
{\ar^{1} "14";"24"};
{\ar@{=} "22";"24"};
{\ar@{-->}^{h^\ast\eps_T} "22";"32"};
{\ar@{-->} "24";"34"};
{\ar@{}|\circlearrowright "2";"14"};
{\ar@{}|\circlearrowright "4";"16"};
{\ar@{}|\circlearrowright "12";"24"};
\endxy
\]
satisfying~$(t')_\ast\eps_T=t_\ast\eps_T$.
We thus have~$t'=t$.
\end{proof}

\begin{remark}
It follows from the proof of \cref{lem: mphs in T are inflations} that conditions (1), (2), (3) in \cref{setting:extricat} are equivalent to conditions (1), (2), (3'), where
\[
{\rm (3')} \quad \cat = \tc\ast\susp\tc.
\]
\end{remark}

\begin{proof}[Proof of \cref{lem:finitely presented}]
Applying the functor~$F$ to the extriangle~$T_1^X\infl T_0^X\defl X\overset{\delta_X}{\dashrightarrow}$ induces an exact sequence~$FT_1^X \to FT_0^X \to FX \to \mathbb{E}(T,T_1^X)=0$, showing that~$FX$ is finitely presented.
\end{proof}

\begin{proof}[Proof of \cref{prop:KRextricat}]
Recall that~$F:\cat\to\modt$ denotes the functor sending an object~$X$ to the module~$\cat(-,X)|_\tc$.
By \cref{lem:first consequences}\,(v), the ideal~$(\susp\tc)$ of morphisms factoring through an object in~$\susp\tc$ is included in the kernel of~$F$.
Conversely, let~$X\xrightarrow{f}Y$ be such that~$Ff=0$.
By \cref{lem:shifts of extriangles}, there is a conflation~$T_0\overset{g}{\infl} X\overset{h}{\defl} \susp T_1$ with~$T_0,T_1\in\tc$.
By assumption, the composition~$f\circ g$ vanishes, which implies that~$f$ factors through~$\susp T_1$.
The functor~$F$ thus induces a faithful functor, still denoted~$F$, from~$\cat/(\susp\tc)$ to~$\modt$.
It is well-known (see for instance~\cite[Lem.~3.1]{AssemSimsonSkowronski}) that~$F$ restricts to an equivalence of categories~$\tc\to\proj\tc$.
Because~$\tc$ consists of projective objects in~$\cat$, the image under~$F$ of every conflation is a right exact sequence.
The fact that~$F$ is full and dense can thus be proven as in the case of triangulated categories, by using \cref{lem: mphs in T are inflations}.
\end{proof}

\begin{corollary}
\label{coro:Sigma T injectives}
The objects in~$\susp\tc$ are precisely the injective objects in~$\cat$.
\end{corollary}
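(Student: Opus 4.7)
The plan is to prove the two inclusions separately, each being a short application of the tools set up in the section.

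For the forward inclusion, namely that every object of $\susp\tc$ is injective in $\cat$: fix $T \in \tc$ and an arbitrary $X \in \cat$, and pick an extriangle $T_1^X \infl T_0^X \defl X \overset{\delta_X}{\dashrightarrow}$ with $T_0^X, T_1^X \in \tc$ (which exists by assumption~(3) of \cref{setting:extricat}). Applying the contravariant long exact sequence of \cref{prop:extricat long exact sequences} at $\susp T$ yields the exact segment
\[
\cat(T_1^X, \susp T) \longrightarrow \mathbb{E}(X, \susp T) \longrightarrow \mathbb{E}(T_0^X, \susp T).
\]
The rightmost term vanishes because $T_0^X \in \tc$ is projective in $\cat$, while the leftmost term vanishes by \cref{lem:first consequences}(ii). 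Hence $\mathbb{E}(X, \susp T) = 0$ for all $X \in \cat$, so $\susp T$ is injective.

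For the reverse inclusion, let $I \in \cat$ be injective. By \cref{lem:shifts of extriangles} applied to $X = I$, there is an extriangle
\[
I \infl \susp T_1^I \defl \susp T_0^I \overset{(g_I)_\ast \delta_0}{\dashrightarrow},
\]
with $T_0^I, T_1^I \in \tc$. Since $I$ is injective, $\mathbb{E}(\susp T_0^I, I) = 0$, so the realizing $\mathbb{E}$-extension vanishes and the conflation splits. Consequently $I$ is a direct summand of $\susp T_1^I$.

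It remains to observe that $\susp\tc$ is closed under direct summands in $\cat$. Indeed, a summand of $\susp T_1^I$ corresponds to an idempotent of $\End_\cat(\susp T_1^I)$; via the additive equivalence $\susp : \tc \to \susp\tc$ of \cref{rem:equivalence Sigma}, together with the fullness of $\tc$ in $\cat$, this idempotent lifts to an idempotent of $\End_\cat(T_1^I)$, which splits in $\tc$ because $\tc$ is assumed closed under summands. Applying $\susp$ to the resulting decomposition of $T_1^I$ exhibits $I$ as isomorphic to $\susp T'$ for some $T' \in \tc$, so $I \in \susp\tc$.

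The argument is essentially routine given the material already developed; no step is genuinely difficult, though the mildest subtlety is the Karoubi-closure check for $\susp\tc$ in the second half, which is what lets us promote "summand of $\susp T_1^I$" to "object of $\susp\tc$".
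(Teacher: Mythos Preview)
Your proof is correct. The reverse inclusion (injective implies $\susp\tc$) matches the paper's approach via \cref{lem:shifts of extriangles}, only with more detail: the paper simply cites that lemma, while you spell out the splitting and the Karoubi-closure step. Note that your claim ``splits in $\tc$ because $\tc$ is assumed closed under summands'' conflates closure under summands with idempotent completeness; strictly speaking, you need the idempotent on $T_1^I$ to split in $\cat$, which is not assumed in this section. The paper elides the very same point, and in every application $\cat$ is Krull--Schmidt, so this is a minor issue.

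The forward inclusion (objects of $\susp\tc$ are injective) is where you genuinely diverge from the paper. The paper takes an arbitrary conflation $\susp T \infl X \defl Y$ and shows it splits by invoking \cref{prop:KRextricat}: since $F\susp T = 0$, the deflation becomes an isomorphism in $\modt$, so one gets a section modulo $(\susp\tc)$ and then upgrades it to an honest section using rigidity of $\susp\tc$. Your argument bypasses \cref{prop:KRextricat} entirely and instead reads off $\mathbb{E}(X,\susp T)=0$ from the long exact sequence attached to the presentation $T_1^X \infl T_0^X \defl X$, using only that $\cat(T_1^X,\susp T)=0$ and that $T_0^X$ is projective. This is shorter, more elementary, and logically independent of the equivalence $\cat/(\susp\tc)\simeq\modt$; the paper's route, on the other hand, illustrates how that equivalence can be leveraged.
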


\begin{proof}
First note that any injective object belongs to~$\susp \tc$, by \cref{lem:shifts of extriangles}.
Indeed, if~$X$ is injective, then the extriangle
\[
X \overset{h_X}{\infl} \susp T_1^X \overset{\susp f_X}{\defl} \susp T_0^X \overset{(g_X)_\ast\delta_0}{\dashrightarrow}
\]
splits, showing that~$X$ is a summand of~$\susp T_1^X$.
Let us prove that the converse holds.
Let~$\susp T \infl X \overset{f}{\defl} Y\overset{\delta}{\dashrightarrow}$ be an extriangle with~$T\in\tc$.
Since~$\cat(-,\susp T)|_\tc = 0$ and
%~$f$ is~$\tc$-epic,
$\mathbb{E}(-,\susp T)|_\tc =0$ (by \cref{lem:first consequences}),
the morphism~$\cat(-,f)|_\tc$ is an isomorphism in~$\modt$.
By \cref{prop:KRextricat}, there are morphisms~$Y\overset{s}{\to} X$,~$Y\xrightarrow{h}\susp T'$,~$\susp T'\xrightarrow{t} Y$, for some~$T'\in\tc$ satisfying
\[
fs = id_Y + th.
\]
By \cref{lem:first consequences}\,(v), $\susp\tc$ is a rigid subcategory, so that~$t^\ast\delta=0$.
Equivalently, there is a morphism~$\susp T'\xrightarrow{k} X$ such that~$t=fk$.
We thus have~$f(s-kh)=id_Y$, showing that the extriangle splits.
\end{proof}

\begin{lemma}
\label{lem:index well-defined extricat}
 The index is well-defined. More precisely, for any object~$X\in\cat$ and any two conflations~$T_1\overset{f}{\infl} T_0 \overset{g}{\defl} X$,~$T'_1\overset{f'}{\infl} T'_0 \overset{g'}{\defl} X$ with~$T_0,T_1,T'_0,T'_1\in\tc$, we have~$[T_0]-[T_1]=[T'_0]-[T'_1]$ in~$\kzero{\tc}$.
\end{lemma}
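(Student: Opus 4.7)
The plan is to prove the equivalent statement $T_0 \oplus T_1' \cong T_1 \oplus T_0'$ in $\cat$. Once this is established, passing to isomorphism classes yields the desired equality in $\kzero{\tc}$, since $\kzero{\tc}$ coincides with the split Grothendieck group as $\tc$ consists of projective objects.

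First I would exploit the projectivity of $T_0'$ to relate the two resolutions. By \cref{lem:first consequences}, $\mathbb{E}(T_0',-)=0$, so the long exact sequence of \cref{prop:extricat long exact sequences} applied to the first extriangle yields surjectivity of the map $\cat(T_0',g)\colon \cat(T_0',T_0) \to \cat(T_0',X)$. I choose a lift $h \colon T_0' \to T_0$ with $gh = g'$, and apply the axiom (ET3) to the commutative square with horizontal maps $g'$, $g$ and vertical maps $h$, $\mathrm{id}_X$ to obtain a morphism of extriangles $(s,h,\mathrm{id}_X)\colon \delta' \to \delta$ for some $s \colon T_1' \to T_1$. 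By definition of a morphism of $\mathbb{E}$-extensions, this produces the equality $s_* \delta' = \delta$ and in particular the commutativity $fs = hf'$.

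The core step is to splice the two extriangles into a single one. I would invoke the extriangulated analogue of the homotopy pullback, which is a consequence of the axioms (ET4) and (ET4)$^{\mathrm{op}}$ from~\cite{NakaokaPalu}: applied to the morphism of extriangles with identity on the right obtained above, it produces a conflation
\[
T_1' \xrightarrow{\;\left[\bsm s \\ -f' \esm\right]\;} T_1 \oplus T_0' \xrightarrow{\;\left[\bsm f & h \esm\right]\;} T_0,
\]
whose composition vanishes precisely because $fs = hf'$. Since $T_0 \in \tc$ is projective in $\cat$, this conflation splits, yielding the isomorphism $T_1 \oplus T_0' \cong T_1' \oplus T_0$, and hence $[T_0] - [T_1] = [T_0'] - [T_1']$ in $\kzero{\tc}$.

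The main obstacle I anticipate is the careful justification of the spliced conflation in the core step. In a triangulated category this is simply the standard mapping-cone construction for two triangles sharing a common right vertex; in the extriangulated setting the analogue must be deduced from the octahedral-type axioms, with particular attention to sign conventions and to the fact that the morphisms between objects of $\tc$ used in the construction are inflations, as guaranteed by \cref{lem: mphs in T are inflations}.
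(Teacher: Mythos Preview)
Your approach is essentially the same Schanuel-type argument as the paper's, but the paper's execution is more direct and avoids the obstacle you flag. Rather than first constructing a morphism of extriangles and then trying to justify the spliced conflation $T_1' \infl T_1 \oplus T_0' \defl T_0$, the paper applies \cite[Prop.~3.15]{NakaokaPalu} directly to the two conflations with common deflation target~$X$. This produces a commutative diagram of conflations with a pullback object~$Y$ sitting in two conflations $T_1' \infl Y \defl T_0$ and $T_1 \infl Y \defl T_0'$. Since both $T_0$ and $T_0'$ are projective, \emph{both} conflations split, giving $T_1' \oplus T_0 \cong Y \cong T_1 \oplus T_0'$ immediately. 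Your route would recover this by first splitting $T_1 \infl Y \defl T_0'$ to identify $Y \cong T_1 \oplus T_0'$, and then reading off the other conflation as your spliced one; so the content is the same, but the paper's formulation bypasses the need to build the morphism of extriangles by hand.

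Two small corrections to your write-up: the axiom you need to complete the square on the \emph{right} (with $h$ and $\mathrm{id}_X$) is (ET3)$^{\mathrm{op}}$, not (ET3); and the homotopy pullback does not directly yield the middle term $T_1 \oplus T_0'$ --- it yields an object $Y$, and you need the projectivity of $T_0'$ to identify it.
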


\begin{proof}
Applying~\cite[Prop.~3.15]{NakaokaPalu} gives a commutative diagram made of conflations
\[
\xy
(-7,21)*+{T_1}="-12";
(7,21)*+{T_1}="-14";
(-21,7)*+{T'_1}="0";
(-7,7)*+{Y}="2";
(7,7)*+{T_0}="4";
(-21,-7)*+{T'_1}="10";
(-7,-7)*+{T'_0}="12";
(7,-7)*+{X}="14";
{\ar@{=} "-12";"-14"};
{\ar "-12";"2"};
{\ar "-14";"4"};
{\ar "0";"2"};
{\ar "2";"4"};
{\ar@{=} "0";"10"};
{\ar "2";"12"};
{\ar "4";"14"};
{\ar "10";"12"};
{\ar "12";"14"};
{\ar@{}|\circlearrowright "-12";"4"};
{\ar@{}|\circlearrowright "0";"12"};
{\ar@{}|\circlearrowright "2";"14"};
\endxy
\]
Since the objects in~$\tc$ are projective, the two conflations with middle term~$Y$ split and there are isomorphisms~$T'_1\oplus T_0 \cong Y \cong T_1\oplus T'_0$.
The equality~$[T'_1]+[T_0]=[T_1]+[T'_0]$ thus holds in~$\kzero{\tc}$.
\end{proof}

\begin{remark}
We note that \cref{lem:index well-defined extricat} only requires the subcategory~$\tc$ to be rigid, and not made of projective objects.
However, this stronger assumption is used in the proof of \cref{lem:index kzero extricat} below.
\end{remark}

\begin{lemma}
\label{lem:index kzero extricat}
The index descends to~$\kzero{\cat}$: for any conflation~$X\infl Y\defl Z$ in~$\cat$, we have~$\ind_\tc Y = \ind_\tc X + \ind_\tc Z$ in~$\kzero{\tc}$.
\end{lemma}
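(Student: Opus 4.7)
The plan is to produce a ``horseshoe-style'' conflation $T_1^X \oplus T_1^Z \infl T_0^X \oplus T_0^Z \defl Y$ whose outer terms lie in $\tc$. Once such a conflation is built, \cref{lem:index well-defined extricat} (independence of the choice of conflation in the definition of the index) will give
\[
\ind_\tc Y = [T_0^X \oplus T_0^Z] - [T_1^X \oplus T_1^Z] = \ind_\tc X + \ind_\tc Z
\]
in $\kzero{\tc}$, which is exactly the claim. The entire proof thus reduces to constructing that single conflation from the given data $X \overset{\alpha}{\infl} Y \overset{\beta}{\defl} Z \overset{\delta}{\dashrightarrow}$ together with fixed conflations $T_1^X \overset{f_X}{\infl} T_0^X \overset{g_X}{\defl} X$ and $T_1^Z \overset{f_Z}{\infl} T_0^Z \overset{g_Z}{\defl} Z$ whose terms belong to $\tc$.

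The first step will be a pullback. The two deflations $\beta\colon Y \defl Z$ and $g_Z\colon T_0^Z \defl Z$ share the same target, with respective kernels $X$ and $T_1^Z$. Applying Proposition~3.15 of~\cite{NakaokaPalu} --- the same $3\times 3$ assembly already invoked in the proof of \cref{lem:index well-defined extricat} --- will produce an object $W$ fitting into two conflations $X \infl W \defl T_0^Z$ and $T_1^Z \infl W \defl Y$. The extension class of the first conflation is the pullback $g_Z^\ast\delta \in \mathbb{E}(T_0^Z, X)$, which vanishes because $T_0^Z$ is projective. That conflation therefore splits, yielding $W \cong X \oplus T_0^Z$ and, as a byproduct, a deflation $\pi\colon X \oplus T_0^Z \defl Y$ with kernel $T_1^Z$.

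The second step composes deflations. Consider the pair of consecutive deflations
\[
T_0^X \oplus T_0^Z \xrightarrow{\;g_X \oplus 1_{T_0^Z}\;} X \oplus T_0^Z \xrightarrow{\;\pi\;} Y,
\]
with respective kernels $T_1^X$ and $T_1^Z$. Axiom (ET4)$^{\mathrm{op}}$ applied to this pair will guarantee that the composition $\pi \circ (g_X \oplus 1_{T_0^Z})$ is itself a deflation, and that its kernel $K$ sits in a conflation $T_1^X \infl K \defl T_1^Z$. Since $T_1^Z \in \tc$ is projective, $\mathbb{E}(T_1^Z, T_1^X) = 0$, so this conflation splits, giving $K \cong T_1^X \oplus T_1^Z$. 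This delivers the desired horseshoe conflation $T_1^X \oplus T_1^Z \infl T_0^X \oplus T_0^Z \defl Y$ and closes the argument.

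The only delicate point will be to verify the precise forms of the two cited axioms (the shape of the $3\times 3$ diagram from~\cite[Prop.~3.15]{NakaokaPalu} and the kernel decomposition in (ET4)$^{\mathrm{op}}$) in the present setup; these are formal consequences of the extriangulated axioms, entirely parallel to the classical horseshoe lemma in exact categories and the octahedron axiom in triangulated categories, and both tools are already employed in neighboring results of this section.
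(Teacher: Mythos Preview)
Your proof is correct and follows essentially the same route as the paper's: the paper likewise invokes \cite[Prop.~3.15]{NakaokaPalu} to obtain the split conflation $X\infl X\oplus T_0^Z\defl T_0^Z$ (using projectivity of $T_0^Z$) together with $T_1^Z\infl X\oplus T_0^Z\defl Y$, and then applies (ET4)$^{\mathrm{op}}$ to the composite deflation $T_0^X\oplus T_0^Z\defl X\oplus T_0^Z\defl Y$ to produce a kernel $B$ fitting in $T_1^X\infl B\defl T_1^Z$, which splits since $T_1^Z$ is projective. Your write-up is in fact slightly more explicit about why each splitting occurs.
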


\begin{proof}
Let~$X\infl Y\defl Z$ be a conflation in~$\cat$.
Fix conflations~$T_1^X \infl T_0^X \defl X$,~$T_1^Z\infl T_0^Z \defl Z$ with~$T_0^X, T_1^X, T_0^Z, T_1^Z\in\tc$.
By~\cite[Prop.~3.15]{NakaokaPalu} and axiom (ET4)$^{\mathrm{op}}$, there are commutative diagrams of conflations
\[
\xy
(-7,21)*+{T^Z_1}="-12";
(10,21)*+{T^Z_1}="-14";
(-24,7)*+{X}="0";
(-7,7)*+{X\oplus T^Z_0}="2";
(10,7)*+{T^Z_0}="4";
(-24,-7)*+{X}="10";
(-7,-7)*+{Y}="12";
(10,-7)*+{Z}="14";
{\ar@{=} "-12";"-14"};
{\ar "-12";"2"};
{\ar "-14";"4"};
{\ar "0";"2"};
{\ar "2";"4"};
{\ar@{=} "0";"10"};
{\ar "2";"12"};
{\ar "4";"14"};
{\ar "10";"12"};
{\ar "12";"14"};
{\ar@{}|\circlearrowright "-12";"4"};
{\ar@{}|\circlearrowright "0";"12"};
{\ar@{}|\circlearrowright "2";"14"};
\endxy
\quad \quad \ \
\xy
(-24,21)*+{T^X_1}="0";
(-7,21)*+{B}="2";
(14,21)*+{T^Z_1}="4";
(-24,7)*+{T^X_1}="10";
(-7,7)*+{T^X_0\oplus T^Z_0}="12";
(14,7)*+{X\oplus T^Z_0}="14";
(-7,-7)*+{Y}="22";
(14,-7)*+{Y}="24";
{\ar "0";"2"};
{\ar "2";"4"};
{\ar@{=} "0";"10"};
{\ar "2";"12"};
{\ar "4";"14"};
{\ar "10";"12"};
{\ar "12";"14"};
{\ar "12";"22"};
{\ar "14";"24"};
{\ar@{=} "22";"24"};
{\ar@{}|\circlearrowright "0";"12"};
{\ar@{}|\circlearrowright "2";"14"};
{\ar@{}|\circlearrowright "12";"24"};
\endxy
\]
where we have used the fact that~$T_0^Z$ is projective in~$\cat$.
Moreover, the conflation with middle term~$B$ splits, which implies that~$B$ is isomorphic to~$T_1^X\oplus T_1^Z$.
\end{proof}

\begin{proof}[Proof of \cref{prop:index iso extricat}]
By \cref{lem:index well-defined extricat} and \cref{lem:index kzero extricat}, the index induces an well-defined morphism of abelian groups~$\ind_\tc:\kzero{\cat}\to\kzero{\tc}$.
It remains to prove that it is an isomorphism.
Let~$\pi:\kzero{\tc}\to\kzero{\cat}$ send a class~$[T]\in\kzero{\tc}$ to the class~$[T]\in\kzero{\cat}$.
The existence, for any~$T\in\tc$, of a conflation~$0\infl T\defl T$ shows that~$\pi$ is right inverse to~$\ind_\tc$.
To see that it is also a left inverse, note that if~$T_1\infl T_0\defl X$ is a conflation in~$\cat$, then the equality~$[X]=[T_0]-[T_1]$ holds in~$\kzero{\cat}$.
\end{proof}

\begin{proof}[Proof of \cref{thm:extricats}]
The proof is now similar to the proof of \cref{thm:relations-g-vecteurs}, by using \cref{prop:KRextricat} instead of \cref{prop:functor-F}, \cref{prop:index iso extricat} instead of \cref{coro:grothendieck-g-vectors}, and the fact that~$\tc$ is made of projective objects instead of \cref{lem:almost-split}.
\end{proof}

\begin{proof}[Proof of \cref{coro:meshes positively generate extricats}]
The proof is similar to that of \cref{coro:meshes positively generate cluster cats}.
\end{proof}

%%%%%%%%%%%

\subsection{Application to 2-term complexes of projectives and to gentle algebras}
\label{sec:Kbproj}

Let~$\Lambda$ be an Artin algebra.
Then the homotopy category~$K^b(\proj \Lambda)$ is a triangulated category, and we consider its full subcategory~$K^{[-1,0]}(\proj \Lambda)$ of complexes concentrated in degrees~$-1$ and~$0$ (with cohomological conventions).

\begin{proposition}
\label{prop:2kbproj-is-extriangulated}
Let~$\Lambda$ be any Artin algebra. Then the category~$K^{[-1,0]}(\proj \Lambda)$ is an extriangulated category satisfying the assumptions (1) to (3) of \cref{setting:extricat}, wiht~$\tc = \add(\Lambda)$. If, moreover,~$\Lambda$ is finite-dimensional over a field~$\field$, then~$K^{[-1,0]}(\proj \Lambda)$ satisfies all assumptions of \cref{setting:extricat}. 
\end{proposition}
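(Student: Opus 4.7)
The plan is to realize $K^{[-1,0]}(\proj \Lambda)$ as an extension-closed subcategory of the triangulated category $K^b(\proj \Lambda)$ and then invoke the general principle (from~\cite{NakaokaPalu}) that such subcategories inherit a canonical extriangulated structure. First I would verify extension-closure: given a triangle $X \to Y \to Z \to X[1]$ in $K^b(\proj \Lambda)$ with $X, Z \in K^{[-1,0]}(\proj \Lambda)$, the mapping cone construction shows that $Y$ is isomorphic in $K^b(\proj \Lambda)$ to the explicit two-term complex obtained by taking termwise direct sums of the projectives of $X$ and $Z$ in degrees $-1$ and $0$. Hence $Y \in K^{[-1,0]}(\proj \Lambda)$.

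Next, taking $\tc = \add(\Lambda)$ (viewed as stalk complexes concentrated in degree $0$), I would check the three conditions of \cref{setting:extricat} one by one. For condition (1), the bifunctor $\mathbb{E}$ is simply the restriction of $\Hom_{K^b(\proj\Lambda)}(-, -[1])$. For $T \in \add(\Lambda)$ concentrated in degree $0$ and any $X \in K^{[-1,0]}(\proj \Lambda)$, the shift $X[1]$ is concentrated in degrees $-2$ and $-1$, so there are no non-zero chain maps $T \to X[1]$ and hence $\mathbb{E}(T, X) = 0$. For condition (2), the cone in $K^b(\proj \Lambda)$ of the morphism $T \to 0$ is $T[1]$, which is concentrated in degree $-1$ and thus lies in $K^{[-1,0]}(\proj \Lambda)$; so the triangle $T \to 0 \to T[1]$ is an extriangle and $T \infl 0$ is an inflation. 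Condition (3) is the key computation: any object $X \in K^{[-1,0]}(\proj \Lambda)$ is of the form $(P^{-1} \overset{d}{\to} P^0)$ with $P^{-1}, P^0 \in \add(\Lambda)$, and the standard triangle $P^{-1} \overset{d}{\to} P^0 \to X \to P^{-1}[1]$ in $K^b(\proj \Lambda)$ has all terms in $K^{[-1,0]}(\proj \Lambda)$, giving a conflation $P^{-1} \infl P^0 \defl X$ with $T_0^X = P^0$, $T_1^X = P^{-1}$ both in $\tc$.

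Assume now that $\Lambda$ is finite-dimensional over $\field$. The category $K^b(\proj \Lambda)$ is $\field$-linear and Krull--Schmidt, and both properties pass to the extension-closed, summand-closed subcategory $K^{[-1,0]}(\proj \Lambda)$. Ext-finiteness follows since $\mathbb{E}(X, Y) = \Hom_{K^b(\proj\Lambda)}(X, Y[1])$ is finite-dimensional for any two bounded complexes of finitely generated projective $\Lambda$-modules. It remains to verify Auslander--Reiten--Serre duality, i.e.\ the existence of almost-split extriangles starting (resp.\ ending) at any endo-local non-injective (resp.\ non-projective) object of $K^{[-1,0]}(\proj \Lambda)$. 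This is precisely the content of~\cite[Prop.~8.1]{IyamaNakaokaPalu}, recalled at the end of \cref{sec:recollections extricats}.

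I expect the main subtlety to lie in making the extension-closure argument both clean and compatible with the inherited extriangulated structure from~\cite{NakaokaPalu}, since one has to keep track of exactly which realization of the ambient triangle lies in $K^{[-1,0]}(\proj \Lambda)$. The remaining verifications are essentially direct consequences of well-known properties of two-term complexes of projectives and of the standard facts about $K^b(\proj \Lambda)$ over a finite-dimensional algebra.
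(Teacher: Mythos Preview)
Your proposal is correct and follows essentially the same approach as the paper: realize $K^{[-1,0]}(\proj\Lambda)$ as extension-closed in $K^b(\proj\Lambda)$ (invoking \cite{NakaokaPalu}), then verify conditions (1)--(3) via the obvious degree considerations and the canonical triangle $P^{-1}\to P^0\to X\to P^{-1}[1]$, and finally appeal to \cite{IyamaNakaokaPalu} for Auslander--Reiten--Serre duality. The only cosmetic differences are that the paper phrases projectivity of $\Lambda$ via the long exact sequence rather than directly computing $\mathbb{E}(T,X)$, and cites \cite[Prop.~6.1]{IyamaNakaokaPalu} rather than Prop.~8.1 for the duality statement.
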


\begin{proof}
The category~$K^{[-1,0]}(\proj \Lambda)$ is closed under extensions in~$K^b(\proj \Lambda)$, so it is an extriangulated category whose conflations are induced by triangles in~$K^b(\proj\Lambda)$ by~\cite[Rem.~2.18]{NakaokaPalu}. Now, the object~$\Lambda$ is projective in~$K^{[-1,0]}(\proj \Lambda)$, since if~$X\infl Y\defl Z$ is a conflation, then we get a long exact sequence
\[
\Hom{K^b}(\Lambda,Y)\to \Hom{K^b}(\Lambda,Z) \to  \Hom{K^b}(\Lambda, \susp X),
\]
and the last term vanishes, since~$\susp X$ is a complex concentrated in degrees~$-2$ and~$-1$, while~$\Lambda$ is in degree~$0$. This proves that~$\Lambda$ is projective.
 
The morphism~$\Lambda \to 0$ is an inflation, since~$\Lambda\infl 0 \defl \susp \Lambda$ is a conflation. Moreover, if~$X= P_1\xrightarrow{f}P_0$ is an object of~$K^{[-1,0]}(\proj\Lambda)$, then it lies in a conflation~$P_1\stackrel{f}{\infl} P_0 \defl X$ by definition. We have proved that properties~(1) to (3) of \cref{setting:extricat} are satisfied.
 
Finally, assume that~$\Lambda$ is finite-dimensional over a field~$\field$. Then~$K^{[-1,0]}(\proj\Lambda)$ is Krull--Schmidt,~$\field$-linear and Ext-finite, since this is true for~$K^b(\proj\Lambda)$ for any finite-dimensional algebra~$\Lambda$. Moreover, it has Auslander--Reiten--Serre duality by~\cite[Prop.~6.1]{IyamaNakaokaPalu}.
\end{proof}

The extriangulated category~$K^{[-1,0]}(\proj \Lambda)$ is the setting in which we can finally prove point~(ii) of \cref{prop:exchangeablePairsNKC}, using results of~\cite{DemonetIyamaJasso} on~$\b{g}$-vectors. 
Before doing so, we introduce the notion of mutation conflation for two-term silting objects.

\begin{definition}
Recall that a two-term silting object is a complex~$T$ in~$K^{[-1,0]}(\proj \Lambda)$ such that~$\Hom{K^b}(T,\susp T)=0$ and the number of isomorphism classes of indecomposable summands of~$T$ is the same as that of~$\Lambda$.
\end{definition}

\begin{remark}
The definition given above is only equivalent to the usual definition of a silting object for two-term complexes of projectives.
\end{remark}

\begin{definition}
\label{def:mutation conflation}
A conflation $X\infl E\defl Y$ of~$K^{[-1,0]}(\proj \Lambda)$ is called a \defn{mutation conflation} if there are basic, two-term, silting objects $X\oplus R$, $Y\oplus R$, with~$X$ and~$Y$ indecomposable, such that the inflation $X\infl E$ is a left ($\add R$)-approximation (\ie any morphism from~$X$ to an object in~$\add R$ factors through~$X\infl E$).
\end{definition}

\begin{remark}
In \cref{def:mutation conflation}, the requirement that~$X$ and~$Y$ are indecomposable implies the map~$E\defl Y$ is a right~($\add R$)-approximation, and that both approximations are minimal.
\end{remark}

\begin{proposition}
\label{prop:exchange conflations are unique}
Let~$\Lambda$ be a finite-dimensional~$\field$-algebra, where~$\field$ is a field. Let~$X$ and~$Y$ be objects of~$K^{[-1,0]}(\proj \Lambda)$. 
\begin{enumerate}
\item If there is a mutation conflation (see \cref{def:mutation conflation}) of the form~$X\infl E\defl Y$, then there can be no mutation conflation of the form~$Y\infl E'\defl X$.
\item Assume that~$X\infl E\defl Y$ and~$X\infl E'\defl Y$ are two mutation conflations. 
Then~$E$ and~$E'$ are isomorphic. 
\end{enumerate} 
\end{proposition}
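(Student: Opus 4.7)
The plan is to treat the two parts by rather different arguments: part~(1) by an elementary Hom-vanishing calculation in the ambient triangulated category $K^b(\proj \Lambda)$, and part~(2) by invoking the classification of rigid two-term complexes in $K^{[-1,0]}(\proj \Lambda)$ by their $\b{g}$-vectors established in~\cite{DemonetIyamaJasso}.

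For part~(1), I would lift the putative mutation conflations to triangles $X \to E \to Y \xrightarrow{\alpha} \susp X$ and $Y \to E' \to X \xrightarrow{\alpha'} \susp Y$ in $K^b(\proj \Lambda)$, observing first that the first triangle is non-split: since $E \in \add R$ while $X$ is indecomposable and $X \notin \add R$, the class $\alpha$ is non-zero in $\Hom{K^b}(Y, \susp X)$. Applying the cohomological functor $\Hom{K^b}(X, -)$ to the first triangle, and using on the one hand the silting identity $\Hom{K^b}(X, \susp R) = 0$, which yields $\Hom{K^b}(X, \susp E) = 0$ since $E \in \add R$, and on the other hand the degree-theoretic vanishing $\Hom{K^b}(X, \susp^2 X) = 0$ (since $X$ is supported in degrees $\{-1, 0\}$ while $\susp^2 X$ is supported in degrees $\{-3, -2\}$, leaving no room for non-zero chain maps), the long exact sequence sandwiches $\Hom{K^b}(X, \susp Y)$ between two zero terms and forces it to vanish. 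The symmetric argument applied to the hypothetical second conflation would then yield $\Hom{K^b}(Y, \susp X) = 0$, contradicting $\alpha \ne 0$.

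For part~(2), by~(1) both conflations point in the same direction, so we may lift them to triangles $X \to E \to Y \xrightarrow{\alpha} \susp X$ and $X \to E' \to Y \xrightarrow{\alpha'} \susp X$ in $K^b(\proj \Lambda)$. The plan is to observe that both $E$ and $E'$ are \emph{rigid} two-term complexes: since $E \in \add R$ for some silting $X \oplus R$, the vanishing $\Hom{K^b}(R, \susp R) = 0$ forces $\Hom{K^b}(E, \susp E) = 0$, and likewise for $E'$. Moreover, $[E] = [X] + [Y] = [E']$ in $K_0 \big( K^b(\proj \Lambda) \big)$, so their $\b{g}$-vectors coincide. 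The conclusion $E \cong E'$ then follows from the classification result of~\cite{DemonetIyamaJasso}, according to which a rigid two-term complex in $K^{[-1,0]}(\proj \Lambda)$ is determined up to isomorphism by its $\b{g}$-vector.

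The main subtlety in part~(1) is recognizing that the degree-theoretic vanishing $\Hom{K^b}(X, \susp^2 X) = 0$, specific to two-term complexes, is what collapses the long exact sequence; it is this input that distinguishes the two-term silting setting from the general silting case. In part~(2), the $\b{g}$-vector classification is essential; an alternative avoiding this external input would be to establish directly the one-dimensionality of $\Hom{K^b}(Y, \susp X)$ over the residue field $\operatorname{End}(X)/\operatorname{rad} \operatorname{End}(X)$ using the minimality of the left approximation together with AR--Serre duality in the extriangulated category, and then to deduce $E \cong E'$ from the proportionality of the connecting morphisms via axiom~(TR3) in $K^b(\proj \Lambda)$.
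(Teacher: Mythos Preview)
Your proof is correct and follows essentially the same approach as the paper. In part~(1), the paper applies $\Hom{K^b}(\susp^{-1}Y,-)$ to the second conflation directly, obtaining $\Hom{K^b}(Y,\susp X)=0$ without first deriving $\Hom{K^b}(X,\susp Y)=0$ from the first conflation; your extra step is harmless but unused in the contradiction. Part~(2) is identical to the paper's argument, invoking the same $\b{g}$-vector classification of rigid two-term complexes from~\cite{DemonetIyamaJasso} (the paper also cites the underlying result of~\cite{DehyKeller}). Your opening remark in part~(2) that ``by~(1) both conflations point in the same direction'' is superfluous, since the hypothesis already stipulates both conflations are of the form $X\infl - \defl Y$.
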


\begin{proof}
We first prove~(1). Assume that~$X\infl E\defl Y$ and~$Y\infl E'\defl X$ are two mutation conflations. Applying the functor~$\Hom{K^b}(\susp^{-1} Y, -)$ to the second sequence, we get an exact sequence of abelian groups
\[
\Hom{K^b}(\susp^{-1} Y, E') \to \Hom{K^b}(\susp^{-1} Y, X) \to \Hom{K^b}(\susp^{-1} Y, \susp Y).
\]
By definition of a mutation conflation, we have that~$\Hom{K^b}(\susp^{-1} Y, E') = 0$. Moreover, since~$Y$ is concentrated in homological degrees~$-1$ and~$0$, we get that~$\Hom{K^b}(\susp^{-1} Y, \susp Y) = 0$. Therefore,~$\Hom{K^b}(\susp^{-1} Y, X) = 0$. 
However, since the conflation~$X\infl E\defl Y$ is not split, we have that~$\Hom{K^b}(\susp^{-1} Y, X) \neq 0$, a contradiction. This proves~(1). 

\smallskip
We now prove~(2). Let~$X\infl E\defl Y$ and~$X\infl E'\defl Y$ be two mutation conflations. By~\cite[Thm.~6.5]{DemonetIyamaJasso} (based on~\cite[Thm~2.3]{DehyKeller}), rigid objects in~$K^{[-1,0]}(\proj \Lambda)$ are determined by their~$\b{g}$-vector. The datum of the~$\b{g}$-vector of an object of~$K^{[-1,0]}(\proj \Lambda)$ is equivalent to that of its class in the Grothendieck group~$K_0 \big( K^{[-1,0]}(\proj \Lambda) \big)$, where~$K^{[-1,0]}(\proj \Lambda)$ is viewed as an extriangulated category (see \cref{prop:2kbproj-is-extriangulated}). In this Grothendieck group, we have that~$[E] = [E'] = [X]+[Y]$. Thus~$E$ and~$E'$ are isomorphic.
\end{proof}

\begin{corollary}
\label{coro:exchangeablePairsNKC}
Point~(ii) of \cref{prop:exchangeablePairsNKC} holds.
\end{corollary}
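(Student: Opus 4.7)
The plan is to transport the statement from the combinatorial setting of walks and facets to the representation-theoretic setting of $2$-term silting objects in $K^{[-1,0]}(\proj \Lambda)$, where $\Lambda = kQ/I$ is the gentle algebra associated to $\quiver$, and then apply the uniqueness result of \cref{prop:exchange conflations are unique}(2).

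First I would recall from~\cite{PaluPilaudPlamondon-nonkissing, BrustleDouvilleMousavandThomasYildirim} the bijection between proper walks of $\quiver$ and isomorphism classes of indecomposable $2$-term presilting objects in $K^{[-1,0]}(\proj \Lambda)$, under which non-kissing facets correspond to basic $2$-term silting objects and flips correspond to silting mutations. Under this dictionary, two exchangeable walks $\omega, \omega'$ correspond to indecomposables $X, Y$, and two facets $F, F'$ with $F \ssm \{\omega\} = F' \ssm \{\omega'\}$ correspond to silting objects of the form $X \oplus R$ and $Y \oplus R$ for some common complement $R$, connected by a silting mutation. The key point is that the walks in $F \cap F'$ distinct from those at the endpoints of the flipped arrow correspond precisely to the indecomposable summands of the middle term $E$ of a mutation conflation $X \infl E \defl Y$ in the sense of \cref{def:mutation conflation}; these are the walks $\mu$ and $\nu$ of \cref{prop:exchangeablePairsNKC}(i).

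Next I would apply \cref{prop:exchange conflations are unique}(1)--(2): by (1), at most one of the orientations $X \infl E \defl Y$ or $Y \infl E' \defl X$ is a mutation conflation, so there is no ambiguity in which direction to flip; by (2), the middle term $E$ is uniquely determined (up to isomorphism) by the pair $(X, Y)$ alone, independently of which common complement $R$ (and hence which pair of facets $F, F'$) one starts from. Translating back via the walk-indecomposable dictionary, the multiset $\{\mu, \nu\}$ of walks obtained by the flip depends only on the pair $\{\omega, \omega'\}$, and not on $F, F'$.

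Finally, the substring $\sigma$ is recovered from the quadruple $\{\omega, \omega', \mu, \nu\}$ via the decomposition of \cref{prop:exchangeablePairsNKC}(i): writing $\omega = \rho \sigma \tau$, $\omega' = \rho' \sigma \tau'$, $\mu = \rho' \sigma \tau$ and $\nu = \rho \sigma \tau'$, one sees that $\sigma$ is the maximal common factor at the unique position where $\omega$ and $\mu$ (equivalently $\omega'$ and $\nu$) agree but where $\omega$ and $\omega'$ are forced to separate, so $\sigma$ is uniquely determined by the four walks, and therefore only by $\omega$ and $\omega'$. The main obstacle I anticipate is not the uniqueness argument itself, which is immediate from \cref{prop:exchange conflations are unique}, but rather carefully invoking the categorification so as to identify the middle term of the mutation conflation with the walks $\mu, \nu$ arising from the combinatorial flip; this step requires citing the precise form of the dictionary from~\cite{PaluPilaudPlamondon-nonkissing, BrustleDouvilleMousavandThomasYildirim} that matches the $\b{g}$-vectors of walks with classes in $\kzero{K^{[-1,0]}(\proj \Lambda)}$ so that \cref{lem:index kzero extricat} applies.
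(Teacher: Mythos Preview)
Your proposal is correct and follows essentially the same route as the paper: transport to $K^{[-1,0]}(\proj\Lambda)$ via the walk/presilting dictionary of~\cite{PaluPilaudPlamondon-nonkissing, BrustleDouvilleMousavandThomasYildirim}, then invoke \cref{prop:exchange conflations are unique} to conclude that the orientation and the middle term of the mutation conflation, and hence the multiset $\{\mu,\nu\}$, depend only on $\{\omega,\omega'\}$.

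The one place where the paper is slightly more careful than your sketch is the final step, recovering $\sigma$ from the data. You argue combinatorially that $\sigma$ can be read off from the four walks $\omega,\omega',\mu,\nu$; this is plausible but, as \cref{rem:exchangeablePairsNKC} warns, walks can share several common substrings, so knowing $\mu$ and $\nu$ only up to isomorphism of the corresponding objects does not immediately pin down positions. The paper instead observes that the inflation $P(\omega)\infl P(\mu)\oplus P(\nu)$ is a \emph{minimal} $\add\big(P(\mu)\oplus P(\nu)\big)$-approximation, hence determined up to isomorphism \emph{as a morphism}, not merely as a target object; and it is this morphism that encodes the relevant common substrings of $\omega$ with $\mu$ and with $\nu$, from which $\sigma$ is read off unambiguously. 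This is a small refinement of your argument rather than a different strategy.
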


\begin{proof}
Let~$\omega$ and~$\omega'$ be in facets~$F$ and~$F'$ as in the statement of \cref{prop:exchangeablePairsNKC}. For every walk~$\omega''$, let~$P(\omega'')$ be the object of~$K^{[-1,0]}(\proj \Lambda)$ corresponding to~$\omega''$. Then the walks~$\mu$ and~$\nu$ defined in the same statement yield a mutation conflation
\[
P(\omega) \infl P(\mu) \oplus P(\nu) \defl P(\omega') \quad \textrm{or} \quad P(\omega') \infl P(\mu) \oplus P(\nu) \defl P(\omega).
\]
Assume that there are other facets~$F_0$ and~$F_0'$ also satisfying the hypotheses of \cref{prop:exchangeablePairsNKC}. Then they give rise to walks~$\mu'$ and~$\nu'$ and an exchange conflation
\[
P(\omega) \infl P(\mu') \oplus P(\nu') \defl P(\omega') \quad \textrm{or} \quad P(\omega') \infl P(\mu') \oplus P(\nu') \defl P(\omega).
\]
By \cref{prop:exchange conflations are unique}, we get that~$P(\mu)\oplus P(\nu)$ and~$P(\mu')\oplus P(\nu')$ are isomorphic, and that the mutation conflations either both have the form~$P(\omega) \infl P(\mu) \oplus P(\nu) \defl P(\omega')$ or both have the form~$P(\omega') \infl P(\mu) \oplus P(\nu) \defl P(\omega)$. Without loss of generality, assume that they have the first form.
 
In particular, the leftmost morphisms of both mutation conflations are minimal ${\add \big( P(\mu) \oplus P(\nu) \big)}$-approximations of~$P(\omega)$. As such, they are isomorphic as morphisms. They determine common substrings of~$\omega$ and~$\mu$ and of~$\omega$ and~$\nu$, which in turn determine~$\sigma$. Point (ii) of \cref{prop:exchangeablePairsNKC} is proved.
\end{proof}

We now come to the main result of this section.

\begin{theorem}
\label{thm:brick-algebra-condition}
Let~$\Lambda$ be an Artin algebra all of whose indecomposable objects are ~$\tau$-rigid bricks. Then the almost-split conflations of~$K^{[-1,0]}(\proj \Lambda)$ are mutation conflations if and only if for any non-projective indecomposable~$\Lambda$-module~$M$, the space~$\Hom{\Lambda}(M, \tau^2 M)$ vanishes.
\end{theorem}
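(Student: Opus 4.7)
The plan is, for each non-projective indecomposable $\Lambda$-module $M$, to reduce the question of whether the almost-split conflation with third term $P(M)$ (the minimal projective presentation of $M$) is a mutation conflation to the single module-theoretic condition $\Hom{\Lambda}(M,\tau^2 M)=0$. Since every indecomposable is a $\tau$-rigid brick by hypothesis, the Auslander--Reiten sequence $0\to\tau M\to E\to M\to 0$ exists in $\MOD \Lambda$ and lifts to the almost-split conflation $P(\tau M)\infl P(E)\defl P(M)$ in $K^{[-1,0]}(\proj \Lambda)$.

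First, I would show that this almost-split conflation is a mutation conflation if and only if $P(M)\oplus P(E)$ is $2$-term presilting in $K^{[-1,0]}(\proj \Lambda)$. The easy direction is immediate: in a mutation conflation $P(\tau M)\infl P(E)\defl P(M)$ with silting complement $R$, one has $P(E)\in\add R$ (as the target of the left $\add R$-approximation $P(\tau M)\to P(E)$), so $P(M)\oplus P(E)$ is a summand of the silting object $P(M)\oplus R$ and hence presilting. For the converse, I would apply Bongartz completion to $P(M)\oplus P(E)$ to produce a silting object $T=P(M)\oplus R$ with $P(E)\in\add R$. Standard $2$-term silting mutation then yields another silting object $Y\oplus R$ with exchange conflation $Y\infl F\defl P(M)$ in which $F\to P(M)$ is a minimal right $\add R$-approximation. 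Since $P(E)\to P(M)$ from the almost-split conflation is right almost-split with $P(E)\in\add R$, it is itself such an approximation; by \cref{prop:exchange conflations are unique} this forces $F\cong P(E)$ and $Y\cong P(\tau M)$, exhibiting the almost-split conflation as a mutation conflation with complement $R$.

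Second, I would identify the presilting condition on $P(M)\oplus P(E)$ with the vanishing $\Hom{\Lambda}(M,\tau^2 M)=0$. Using the standard translation of $\Hom{K^{[-1,0]}(\proj \Lambda)}(P(X),P(Y)[1])$ into module extensions, together with the Auslander--Reiten formula $D\operatorname{Ext}^1_\Lambda(X,Y)\cong \overline{\operatorname{Hom}}_\Lambda(Y,\tau X)$, each component of the presilting condition translates into the vanishing of a Hom space in $\MOD \Lambda$. Under the $\tau$-rigid brick hypothesis, all auxiliary vanishings---such as $\Hom{\Lambda}(E_i,\tau E_j)=0$ for distinct summands $E_i,E_j$ of $E$, and $\operatorname{Ext}^1_\Lambda(M,E_i)=\operatorname{Ext}^1_\Lambda(E_i,M)=0$ for each summand $E_i$---should follow automatically by applying $\Hom{\Lambda}(-,\tau M)$, $\Hom{\Lambda}(-,\tau E_i)$ and their duals to the Auslander--Reiten sequence and combining with $\tau$-rigidity of the modules involved. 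The unique surviving condition reduces, via the Auslander--Reiten formula, to $\Hom{\Lambda}(M,\tau^2 M)=0$. The main obstacle is this second step: showing that precisely one module-theoretic condition survives requires careful analysis of the long exact sequences derived from the Auslander--Reiten sequence, combined with dimension bookkeeping using the brick property to control overline corrections arising from projective or injective factors of $\tau M$ or the $E_i$.
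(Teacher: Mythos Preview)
Your strategy—reduce the mutation-conflation property to a presilting condition and translate via the Auslander--Reiten formula—is close in spirit to the paper's, but there are two concrete gaps.

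First, you only treat almost-split conflations ending at $P(M)$ for $M$ a non-projective indecomposable module. The non-projective indecomposables of $K^{[-1,0]}(\proj\Lambda)$ also include the shifted projectives $\susp P_i$, and the theorem concerns \emph{all} almost-split conflations. The conflations ending at $\susp P_i$ are not covered by your argument; the paper handles them separately in \cref{lem:initial meshes}, using that the left-hand term is then a preimage of an indecomposable injective (hence a brick by hypothesis).

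Second, your claim that the almost-split sequence $0\to\tau M\to E\to M\to 0$ lifts to $P(\tau M)\infl P(E)\defl P(M)$ is not correct in general: the middle term of the almost-split conflation in $K^{[-1,0]}(\proj\Lambda)$ may acquire extra summands from $\add\susp\Lambda$. This already occurs in the paper's gentle example, where the mesh from $(P_1\to P_3)$ to $(P_2\to P_4)$ has $\susp P_1=(P_1\to 0)$ as a middle-term summand. \cref{lem:AR-conflations-and-sequences} only says that $F$ sends the conflation to the almost-split sequence, and $F$ annihilates $\susp\Lambda$. Your presilting bookkeeping in Step~2 therefore misses conditions of the form $\Hom{K^b}(\susp Q,\susp P(M))\cong\Hom{\Lambda}(Q,M)$, and Step~1 needs the \emph{actual} middle term (not just $P(E)$) to lie in $\add R$ after Bongartz completion—otherwise the right-almost-split map $E'\to P(M)$ is not an $\add R$-approximation.

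The paper avoids both issues by working directly with the conflation $\tau X\infl E\defl X$ in $K^{[-1,0]}(\proj\Lambda)$, never attempting to identify $E$ as $P(\text{something})$. It checks the five rigidity vanishings $\Hom{K^b}(E,\susp X)$, $\Hom{K^b}(\tau X,\susp E)$, $\Hom{K^b}(X,\susp E)$, $\Hom{K^b}(E,\susp E)$, $\Hom{K^b}(E,\susp\tau X)$ one by one via the long exact sequences attached to the conflation (\cref{lem:then-AR-seq-are-mutation}); possible $\susp\Lambda$-summands of $E$ are harmless there because $\Hom{K^b}(-,\susp^2 Q)$ vanishes on two-term complexes for degree reasons. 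The converse direction is a two-line exact-sequence argument (\cref{lem:if AR are mutations}). This is more direct than your Bongartz-completion reduction and sidesteps the delicate question of which mutation direction stays two-term.
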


\begin{remark}
Assume that~$\Lambda$ is a finite-dimensional brick algebra which is representation-finite.
Then~\cite[Thm.~6.2]{DemonetIyamaJasso} implies that the assumptions of \cref{thm:brick-algebra-condition} are satisfied.
\end{remark}

\begin{corollary}
\label{coro:silting simplicial}
Let~$\Lambda$ be a finite dimensional brick algebra of finite representation type.
Assume moreover that, for any indecomposable~$\Lambda$-module~$M$, the space~$\Hom{\Lambda}(M, \tau^2 M)$ vanishes.
Then the support~$\tau$-tilting fan of~$\Lambda$ has the unique exchange relation property and its type cone is simplicial.
\end{corollary}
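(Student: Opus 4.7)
The plan is to apply the extriangulated-category framework developed in \cref{sec:extricats} to the category $\cat = K^{[-1,0]}(\proj\Lambda)$, which by \cref{prop:2kbproj-is-extriangulated} satisfies all the hypotheses of \cref{setting:extricat} with $T = \Lambda$. Under this identification, the support $\tau$-tilting fan of $\Lambda$ is precisely the $\b{g}$-vector fan associated to $\cat$ in $\kzero{\cat} \cong \kzero{\add T}$, whose rays are indexed by $\ind(\cat)\ssm \add(\susp T)$ and whose maximal cones correspond to basic two-term silting objects.

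First I would establish the unique exchange relation property. The hypothesis $\Hom{\Lambda}(M,\tau^2 M)=0$ combined with \cref{thm:brick-algebra-condition} ensures that every almost-split conflation in $\cat$ is a mutation conflation in the sense of \cref{def:mutation conflation}. Now for any exchangeable pair of indecomposables $X,Y$, mutation produces some conflation $X\infl E\defl Y$ or $Y\infl E'\defl X$; by \cref{prop:exchange conflations are unique}\,(1) exactly one of these two directions occurs, and by \cref{prop:exchange conflations are unique}\,(2) the middle term is uniquely determined by $X$ and $Y$. Hence the linear dependence $[X]+[Y]-[E]=0$ in $\kzero{\cat}$ depends only on the pair $\{X,Y\}$, which is the unique exchange relation property.

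For the simpliciality of the type cone, I would invoke \cref{thm:extricats}: since $\Lambda$ is representation-finite, so is $\cat$ (which has finitely many two-term complexes of indecomposable projectives), so the set $L_\cat = \set{\ell_X}{X \in \ind(\cat) \ssm \add(\susp T)}$ is a basis of $\ker(\b{g})$, of cardinality $N-n$, where $N$ is the number of rays of the fan and $n$ is the number of simple $\Lambda$-modules. By \cref{thm:brick-algebra-condition} each $\ell_X$ arises from an almost-split conflation which is also a mutation conflation, and therefore corresponds to an exchangeable pair $\{X,\tau^{-1}X\}$ of the fan. By \cref{coro:meshes positively generate extricats} the linear dependence corresponding to any other exchangeable pair is a non-negative combination of the $\ell_X$, so the inequalities indexed by $L_\cat$ suffice to describe $\typeCone(\Fan)$. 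Since $|L_\cat|=N-n$ is the minimum possible number of facets by \cref{rem:dimTypeCone}, the type cone must be simplicial.

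The core work has already been done in \cref{thm:brick-algebra-condition} and \cref{thm:extricats}; the only remaining subtlety is to match the bookkeeping between the extriangulated picture (indecomposables of $\cat$ modulo $\add(\susp T)$, almost-split conflations, the bilinear form $\langle-,-\rangle$) and the fan-theoretic picture (rays, exchangeable pairs, coefficients $\coefficient$), which is routine once the identification $\kzero{\cat}\cong\kzero{\add T}$ from \cref{prop:index iso extricat} is in place.
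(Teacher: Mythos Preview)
Your proposal is correct and follows essentially the same approach as the paper: the unique exchange relation property comes from \cref{prop:exchange conflations are unique}\,(2), and simpliciality follows by combining \cref{thm:brick-algebra-condition} (almost-split conflations are mutation conflations, hence yield genuine exchangeable pairs) with \cref{thm:extricats} and \cref{coro:meshes positively generate extricats} (the $N-n$ mesh relations positively generate all exchange relations). The paper does not spell out a separate proof of \cref{coro:silting simplicial}, but the proof it gives for the gentle-algebra specialization \cref{coro:simplicialTypeConeNKCalgebraic} is exactly the argument you describe.
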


As a consequence of \cref{coro:silting simplicial}, the method of \cref{part:geometry} apply and give an explicit description of all realizations of the support~$\tau$-tilting fan of~$\Lambda$.
Another consequence is an algebraic proof of \cref{coro:simplicialTypeConeNKC}, that we restate below.

\begin{corollary}[\cref{coro:simplicialTypeConeNKC}]
\label{coro:simplicialTypeConeNKCalgebraic}
For any brick and $2$-acyclic gentle algebra, the type cone of its support~$\tau$-tilting fan is simplicial.
\end{corollary}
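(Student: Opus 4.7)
The plan is to apply \cref{coro:silting simplicial} to the gentle algebra $\Lambda \eqdef kQ/I$ associated with the brick and $2$-acyclic gentle quiver $\quiver$. This reduces the problem to verifying three hypotheses: (i) $\Lambda$ is a finite-dimensional brick algebra; (ii) $\Lambda$ is of finite representation type; and (iii) $\Hom_\Lambda(M, \tau^2 M) = 0$ for every indecomposable $\Lambda$-module $M$. If all three hold, the conclusion is immediate.

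Hypothesis (i) is essentially \cref{prop:noSelfKissing}: indecomposable $\Lambda$-modules are string (and possibly band) modules, and the standard graph-map computation shows that $\End_\Lambda(M(\sigma))$ is local (equivalently, equal to $k$) if and only if the walk arising from $\sigma$ is not self-kissing, which is one of the equivalent formulations of brickness. For hypothesis (ii), the support $\tau$-tilting fan of $\Lambda$ being well-defined as a complete finite fan forces the non-kissing complex of $\quiver$ to be finite, so by the identification of the non-kissing complex with the support $\tau$-tilting complex, $\Lambda$ is $\tau$-tilting finite. Combined with (i) and the Demonet--Iyama--Jasso characterisation of $\tau$-tilting finiteness in terms of finiteness of the set of bricks, this forces $\Lambda$ to be representation finite. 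In particular, every indecomposable is a $\tau$-rigid brick, so the hypotheses of \cref{thm:brick-algebra-condition} apply and $(iii)$ is equivalent to all almost-split conflations in $K^{[-1,0]}(\proj\Lambda)$ being mutation conflations.

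The heart of the argument, and the main obstacle, is establishing hypothesis (iii). I would use the Butler--Ringel combinatorial description of the Auslander--Reiten translation on string modules: for $M = M(\sigma)$, one has $\tau M = M(\sigma')$, where $\sigma'$ is obtained from $\sigma$ by simultaneously replacing the cohooks at both endpoints of $\sigma$ by hooks (and adding cohooks where there was nothing to remove); iterating yields an explicit string $\sigma''$ with $\tau^2 M = M(\sigma'')$. Homomorphisms between string modules are generated by graph maps, each parametrised by a common substring that appears as a factor of the source and as a submodule of the target. The task then reduces to showing that $\sigma$ and $\sigma''$ admit no such graph-map configuration. Here the hypotheses on $\quiver$ are used in a complementary way: the $2$-acyclicity rules out the short loops through which a cohook added in passing to $\sigma'$ or $\sigma''$ could fold back onto $\sigma$ to create a common factor-sub substring, while brickness (equivalently, every cycle of $\quiver$ containing at least two relations) rules out the longer cyclic configurations where a hook/cohook could bend back via a single relation into a matching piece of $\sigma$. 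A finite case analysis, keyed on where the potential common substring sits relative to the two endpoints of $\sigma$ and $\sigma''$, shows that each surviving configuration is excluded by one of these two conditions, yielding $\Hom_\Lambda(M, \tau^2 M) = 0$ and completing the proof.
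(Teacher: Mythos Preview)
Your proposal is correct and follows the same overall strategy as the paper: reduce to \cref{coro:silting simplicial} by checking that $\Lambda$ is a representation-finite brick algebra with $\Hom{\Lambda}(M,\tau^2 M)=0$ for every indecomposable $M$, the last point being the content of \cref{lem:characterization Gentle 2-acyclic}.

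The one tactical difference worth noting is in how you establish hypothesis~(iii). You propose to compute the string of $\tau^2 M$ explicitly via two iterations of the Butler--Ringel hook/cohook recipe and then exclude graph maps $M\to\tau^2 M$ by a case analysis on the position of the common substring. The paper instead writes $\sigma$ for the string of $\tau M$, so that the walks of $M$ and $\tau M$ are $\omega_M=\hh{\sigma}$ and $\omega_{\tau M}=\cc{\sigma}$, and uses the dictionary between $\Hom{\Lambda}(M,\tau N)$ and kisses to translate $\Hom{\Lambda}(M,\tau^2 M)\neq 0$ into ``$\omega_M$ kisses $\omega_{\tau M}$''. This avoids iterating $\tau$ twice and makes the case analysis on the kissing substring $\rho$ short and symmetric (contained in $\sigma$ on one side forces a self-kiss, hence non-brickness; overlapping both hook and cohook ends forces a cycle with at most one relation or a $2$-cycle). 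Your graph-map argument reaches the same contradictions, just with a longer bookkeeping of what $\sigma''$ looks like; the two proofs use the brick and $2$-acyclic hypotheses at exactly the same points.
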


The remaining part of this section is devoted to proving \cref{thm:brick-algebra-condition,coro:simplicialTypeConeNKCalgebraic}.

\begin{lemma}
\label{lem:AR-conflations-and-sequences}
Let~$\Lambda$ be any Artin algebra. Let~$X\overset{a}{\infl} Y \overset{b}{\defl} Z$ be an almost-split conflation in~$K^{[-1,0]}(\proj \Lambda)$ such that~$Z$ is not in~$\add(\susp \Lambda)$. Denote by~$F$ the functor~$\Hom{K^b}(\Lambda, -):K^{[-1,0]}(\proj \Lambda) \to \MOD \Lambda$. Then there is an almost-split sequence in~$\MOD\Lambda$:
\[
0\to FX\to FY \to FZ \to 0
\]
\end{lemma}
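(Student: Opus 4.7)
The plan is to directly apply the functor $F=\cat(\Lambda,-)$ to the almost-split extriangle $X\overset{a}{\infl} Y\overset{b}{\defl} Z\overset{\delta}{\dashrightarrow}$ and verify that the resulting sequence in $\MOD\Lambda$ satisfies the three defining requirements of an almost-split sequence: short exactness with indecomposable end terms, non-splitness, and the lifting/factorization axiom. The main tools are Proposition~\ref{prop:extricat long exact sequences} (the long exact sequence associated to $F$), Proposition~\ref{prop:KRextricat} (the equivalence $F:\cat/(\susp\Lambda)\overset{\sim}{\to}\MOD\Lambda$, whose kernel on objects is exactly $\add(\susp\Lambda)$ by Corollary~\ref{coro:Sigma T injectives}), together with the Auslander--Reiten--Serre duality valid in our setting (Proposition~\ref{prop:2kbproj-is-extriangulated}).

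First, since $\Lambda$ is projective in $K^{[-1,0]}(\proj\Lambda)$, we have $\mathbb{E}(\Lambda,X)=0$, so the long exact sequence of Proposition~\ref{prop:extricat long exact sequences} collapses to a short exact sequence $0\to FX\to FY\to FZ\to 0$ in $\MOD\Lambda$. Next, $FZ\neq 0$ is immediate from the hypothesis $Z\notin\add(\susp\Lambda)$. To argue $FX\neq 0$, I would invoke Auslander--Reiten--Serre duality: the existence of an almost-split conflation with third term $Z$ forces $Z$ to be non-projective in $\cat$, and hence $X=\tau Z$ is non-injective, i.e.\ $X\notin\add(\susp\Lambda)$, so $FX\neq 0$. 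Indecomposability of $FX$ and $FZ$ then follows from the equivalence $\cat/(\susp\Lambda)\simeq\MOD\Lambda$ together with the indecomposability of $X$ and $Z$ in the Krull--Schmidt category $\cat$.

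The core of the argument is the verification of the almost-split properties of $0\to FX\to FY\to FZ\to 0$. For non-splitness, any splitting $s:FZ\to FY$ would lift through the equivalence to some $\bar s:Z\to Y$ in $\cat$ with $b\bar s-1_Z$ factoring through an object of $\add(\susp\Lambda)$; since $Z\notin\add(\susp\Lambda)$, this composite cannot be an automorphism of $Z$ (else $Z$ would be a summand of that injective), so it lies in the maximal ideal of the local ring $\End_\cat(Z)$, and $b\bar s$ is then a unit, yielding a right inverse to $b$ in $\cat$ and contradicting $\delta\neq 0$. For axiom (AS2) transferred to $\MOD\Lambda$, any non-retraction $g:M\to FZ$ can be written as $g=F\bar g$ for some $\bar g:Z'\to Z$ (using essential surjectivity and fullness of $F$ on the quotient), and $\bar g$ itself must be a non-retraction (a section would image to a section of $g$); applying (AS2) to $\delta$ gives $\bar g^*\delta=0$, hence a factorization of $\bar g$ through $b$, which descends via $F$ to a factorization of $g$ through $Fb$. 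The main obstacle I anticipate is this non-splitness step: it is the only place where one must carefully combine the identification of $\add(\susp\Lambda)$ as the kernel of $F$ with the localness of $\End_\cat(Z)$, and the secondary subtle point that $X\notin\add(\susp\Lambda)$ requires an explicit appeal to Auslander--Reiten--Serre duality rather than to the hypothesis on $Z$ alone.
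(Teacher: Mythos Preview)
There is a genuine gap in your short-exactness step. The covariant long exact sequence of Proposition~\ref{prop:extricat long exact sequences} applied to $\Lambda$ reads
\[
\cat(\Lambda,X)\xrightarrow{Fa}\cat(\Lambda,Y)\xrightarrow{Fb}\cat(\Lambda,Z)\to\mathbb{E}(\Lambda,X)=0,
\]
so the projectivity of $\Lambda$ only gives you right exactness $FX\to FY\to FZ\to 0$. Nothing in that proposition provides a $0$ on the left: in an extriangulated category inflations need not be monomorphisms (indeed in the ambient triangulated category $K^b(\proj\Lambda)$ the map $a$ is \emph{not} a mono unless $H^{-1}(Z)=0$). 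So your sentence ``the long exact sequence \dots\ collapses to a short exact sequence $0\to FX\to FY\to FZ\to 0$'' is simply false as written.

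The paper fills exactly this gap, and does so using the hypotheses you have not yet touched at this point. It passes to the full triangulated category $K^b(\proj\Lambda)$, where the conflation sits in a genuine triangle and the long exact sequence extends one step further to the left:
\[
F\susp^{-1}Y\xrightarrow{F\susp^{-1}b} F\susp^{-1}Z\to FX\xrightarrow{Fa} FY\xrightarrow{Fb} FZ\to F\susp X=0.
\]
Injectivity of $Fa$ is then equivalent to surjectivity of $F\susp^{-1}b$, and \emph{this} is where the right almost-split property of $b$ together with $Z\notin\add(\susp\Lambda)$ are used: any morphism $\Lambda\to\susp^{-1}Z$ corresponds to a morphism $\susp\Lambda\to Z$, which cannot be a retraction (since $Z$ is indecomposable and not a summand of $\susp\Lambda$), and therefore factors through $b$. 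Once short exactness is in place, the rest of your outline (indecomposability of the end terms, non-splitness, transfer of the almost-split property via the equivalence of Proposition~\ref{prop:KRextricat}) is essentially the same as what the paper does, though the paper dispatches these points in a single sentence.
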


\begin{proof}
The almost-split conflation induces an exact sequence
\[
F\susp^{-1}Y\to F\susp^{-1}Z \to FX \to FY\to FZ\to F\susp X.
\]
Because~$X$ is concentrated in non-positive degrees,~$F\susp X=0$.
Let us verify that the map ${F\susp^{-1}Y\to F\susp^{-1}Z}$ is surjective.
Let~$\Lambda \xrightarrow{f} \susp^{-1} Z$ be a morphism in~$K^b(\proj \Lambda)$.
Since~$Y\overset{b}{\to} Z$ is right almost-split, and~$Z$ is indecomposable not in~$\add(\susp\Lambda)$, the morphism~$\susp f$ factors through~$b$.
This implies the required surjectivity, and we have a short exact sequence
\[
 0 \to FX \xrightarrow{Fa} FY\xrightarrow{Fb} FZ\to 0.
\]
Because~$\MOD\Lambda$ and~$K^b(\proj \Lambda)$ are Krull--Schmidt, it is immediate to check that~$Fb$ (resp.~$Fa$) inherits from~$b$ (resp.~$a$) the property of being right (resp. left) almost-split.
\end{proof}

\begin{lemma}
\label{lem:then-AR-seq-are-mutation}
Let~$\Lambda$ be an Artin algebra, let~$\tau X\infl E\defl X$ be an Auslander--Reiten conflation in~$K^{[-1,0]}(\proj\Lambda)$ and denote~$FX=M$.
\begin{enumerate}
\item $X\notin\add(\susp\Lambda)$;
\item $M$ and~$\tau M$ are~$\tau$-rigid;
\item $\Hom{\Lambda}(M,\tau^2 M)=0$;
\item $\tau M$ is a brick.
\end{enumerate}
Then~$\tau X\infl E\defl X$ is a mutation conflation.
\end{lemma}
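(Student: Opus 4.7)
The plan is to produce the mutation complement~$R$ directly from the middle term~$E$ of the AR conflation, and to verify the three required properties in turn. By hypothesis~(i) and \cref{lem:AR-conflations-and-sequences}, applying the functor~$F$ to the AR conflation yields an almost split sequence~$0\to F(\tau X)\to FE\to M\to 0$ in~$\MOD\Lambda$, and uniqueness of almost split sequences identifies~$F(\tau X)$ with the module-theoretic Auslander--Reiten translate~$\tau M$. I would take~$R$ to be a basic additive generator of~$\add E$ in~$K^{[-1,0]}(\proj\Lambda)$, so that~$\add R=\add E$. Minimality of the AR conflation guarantees that neither~$\tau X$ nor~$X$ is a summand of~$E$, hence neither lies in~$\add R$.

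The left approximation property is then almost formal: any morphism~$\tau X\to R'$ with~$R'\in\add R$ cannot be a split monomorphism (since~$\tau X\notin\add R$), so the left almost split property of~$\tau X\infl E$ forces it to factor through~$\tau X\infl E$; the remaining map~$E\to R'$ exists since~$R'\in\add R=\add E$. The bulk of the work is to prove that both~$\tau X\oplus R$ and~$X\oplus R$ are basic two-term silting. For the presilting condition, I would split the verification of~$\Hom{K^b}(A,\susp B)=0$ into cases according to whether~$A,B\in\{\tau X,X\}$ or are indecomposable summands of~$R$. The cases involving only~$\tau X$ and~$X$ follow from assumption~(ii), which after applying~$F$ becomes the~$\tau$-rigidity of~$M$ and of~$\tau M$. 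The mixed cases and the cases between summands of~$R$ come from applying~$\Hom{K^b}(-,\susp -)$ to the conflation~$\tau X\infl E\defl X$ and exploiting the resulting long exact sequences; this is where assumption~(iii) enters, since by the Auslander--Reiten formula it amounts to~$\operatorname{Ext}^1_{\Lambda}(\tau M,M)=0$.

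For the cardinality condition, I would invoke the standard fact (see~\cite{DemonetIyamaJasso}) that a basic two-term presilting object is silting if and only if its indecomposable summands have linearly independent~$\b{g}$-vectors spanning~$K_0(K^b(\proj\Lambda))\cong\mathbb{Z}^n$. Here assumption~(iv) is essential: the brick condition on~$\tau M$ prevents~$\tau M$ from occurring as a summand of~$FE$, which both forces the indecomposable summands of~$R$ to be pairwise non-isomorphic to~$\tau X$ and makes the counting~$|R|+1=n$ go through for both completions. The main obstacle will be precisely this interplay of the four hypotheses: assumption~(i) licenses the use of~\cref{lem:AR-conflations-and-sequences}, assumption~(ii) supplies the elementary presilting conditions, assumption~(iii) handles the crucial~$\operatorname{Ext}$ between~$\tau X$- and~$X$-related summands, and assumption~(iv) controls multiplicities so as to yield the correct silting size; each is indispensable, and the verification must be orchestrated so that no case falls through the cracks.
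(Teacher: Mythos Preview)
There are two genuine gaps in your plan.

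\textbf{The choice of $R$ is too small.} Taking $R$ to be a basic additive generator of $\add E$ almost never produces a silting object: the middle term of an Auslander--Reiten conflation has no reason to have $n-1$ non-isomorphic indecomposable summands. Already for a linear $A_3$ quiver there are almost-split sequences whose middle term is indecomposable, so your $R$ has one summand and $\tau X\oplus R$ has two, while $n=3$. Consequently the $\b{g}$-vectors cannot span $K_0(K^b(\proj\Lambda))\cong\mathbb{Z}^n$, and your counting argument collapses. The paper does not attempt this: it only proves that $E\oplus X$ and $E\oplus\tau X$ are \emph{rigid}, after which one may Bongartz-complete $\tau X\oplus E$ to a two-term silting object $\tau X\oplus R$ with $\add E\subseteq\add R$; the left almost-split property then makes $\tau X\infl E$ a left $\add R$-approximation, and silting mutation theory identifies the mutated object as $X\oplus R$.

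\textbf{Hypothesis (iv) is misplaced.} You assign the brick hypothesis on $\tau M$ to the counting step, but in fact it is indispensable in the \emph{presilting} verification, namely in showing $\Hom{K^b}(X,\susp E)=0$ (equivalently $\Hom{\Lambda}(FE,\tau M)=0$). The long exact sequences coming from the conflation together with hypotheses (ii) and (iii) do not suffice here: from $0\to\tau M\xrightarrow{a}FE\to M\to 0$ one gets that $\Hom{\Lambda}(FE,\tau M)$ sits as the kernel of the connecting map $\End{\Lambda}(\tau M)\to\operatorname{Ext}^1_\Lambda(M,\tau M)$, and without knowing that $\End{\Lambda}(\tau M)$ is a division ring you cannot conclude this kernel vanishes. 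The paper's argument runs: for any $f\colon FE\to\tau M$, the endomorphism $fa$ of the brick $\tau M$ is zero or an isomorphism; the latter would split $a$, contradicting non-splitness of the AR-sequence; hence $fa=0$, so $f$ factors through $M$, and $\tau$-rigidity of $M$ forces $f=0$. This is the step where (iv) genuinely enters.
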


\begin{proof}
First note that the results~\cite[Lems.~3.4~\&~3.5]{AdachiIyamaReiten}, even though stated for finite dimensional algebras, hold with the same proofs for any Artin algebra.
For convenience of the reader, we recall those two results: Let~$Y, Z \in K^{[-1,0]}(\proj\Lambda)$ be indecomposable and not in~$\add(\susp\Lambda)$, and let~$P\in\proj\Lambda$. Then~$\Hom{K^b}(Y,\susp Z)=0$ if and only if~$\Hom{\Lambda}(FZ,\tau FY)=0$ and~${\Hom{K^b}(P,Y)=0}$ if and only if~$\Hom{\Lambda}(P,FY)=0$.

Note also that \cref{lem:AR-conflations-and-sequences} implies that~$F\tau X$ is isomorphic to $\tau M$.

By assumption,~$X$ is not in~$\add(\susp\Lambda)$.
The same is true of~$\tau X$ since objects in~$\add(\susp\Lambda)$ are injective in~$K^{[-1,0]}(\proj\Lambda)$ and an Auslander--Reiten conflation does not split.
It thus follows from~\cite[Lem.~3.4 ]{AdachiIyamaReiten} that~$X$ and~$\tau X$ are rigid in $K^{[-1,0]}(\proj\Lambda)$.
We have to prove that both~$E\oplus X$ and~$E\oplus\tau X$ are rigid.

\vspace{7pt}
\noindent (1) $\Hom{K^b}(E,\susp X) = 0$.
\vspace{4pt}

The Auslander--Reiten conflation induces an exact sequence
\[
\Hom{K^b}(X,\susp X) \to \Hom{K^b}(E,\susp X) \to \Hom{K^b}(\tau X,\susp X).
\]
As already remarked,~$X$ is rigid so that the left-most term vanishes.
By assumption
\[
\Hom{\Lambda}(FX,\tau F\tau X) = \Hom{\Lambda}(M,\tau^2 M)=0,
\]
so that the right-most term also vanishes by~\cite[Lem.~3.4 ]{AdachiIyamaReiten}.

\vspace{7pt}
\noindent (2) $\Hom{K^b}(\tau X,\susp E) = 0$.
\vspace{4pt}

Since~$\Lambda$ is projective in~$K^{[-1,0]}(\proj\Lambda)$, the sequence~$\tau M \to FE \to M$ is short exact.
It induces a sequence
\[
\Hom{\Lambda}(M,\tau^2 M) \to \Hom{\Lambda}(FE,\tau^2 M) \to \Hom{\Lambda}(\tau M,\tau^2 M)
\]
from which we deduce that~$\Hom{\Lambda}(FE,\tau^2 M)=0$.
This implies~$\Hom{K^b}(\tau X,\susp E) = 0$ by~\cite[Lem.~3.4 ]{AdachiIyamaReiten}.

\vspace{7pt}
\noindent (3) $\Hom{K^b}(X,\susp E) = 0$.
\vspace{4pt}

Let us show that~$\Hom{\Lambda}(FE,\tau M)=0$.
Let~$FE \xrightarrow{f} \tau M$ be any morphism, and consider the short exact sequence~$0\to \tau M \xrightarrow{a} FE \xrightarrow{b} M \to 0$ induced by the Auslander--Reiten conflation.
Since~$\tau M$ is a brick, the composition~$fa$ is zero.
The morphism~$f$ then factors through the cokernel~$b$, which implies~$f=0$ since~$M$ is~$\tau$-rigid.

\vspace{7pt}
\noindent (4) $\Hom{K^b}(E,\susp E) = 0$.
\vspace{4pt}

The Auslander--Reiten conflation induces an exact sequence
\[
\Hom{K^b}(X,\susp E) \to \Hom{K^b}(E,\susp E) \to \Hom{K^b}(\tau X,\susp E).
\]
We can conclude by (2) and (3).

\vspace{7pt}
\noindent (5) $\Hom{K^b}(E,\susp \tau X) = 0$.
\vspace{4pt}

We have an exact sequence
\[
\Hom{K^b}(E,E) \to \Hom{K^b}(E,X) \to \Hom{K^b}(E,\susp \tau X) \to \Hom{K^b}(E,\susp E).
\]
The first map is surjective since~$E\to X$ is right almost-split (note that~$E$ cannot contain any summand isomorphic to~$X$).
We can conclude by (4).
\end{proof}

\begin{lemma}
\label{lem:if AR are mutations}
Let~$\Lambda$ be an Artin algebra, and let~$Y\infl E\defl X$ be a conflation in~$K^{[-1,0]}(\proj\Lambda)$. Then:
\begin{enumerate}
\item If~$\Hom{K^b}(Y,\susp E)=0$, we have~$\Hom{\Lambda}(FX,\tau FY)=0$.
\item If moreover~$X\notin\add(\susp\Lambda)$ and the conflation is almost-split, then~$\Hom{\Lambda}(FX,\tau^2 FX)=0$.
\end{enumerate}
\end{lemma}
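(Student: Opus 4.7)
The plan is to prove (1) by applying $F$ to the given conflation and reducing to a vanishing that will follow from the Auslander--Reiten-type duality in~\cite[Lem.~3.4]{AdachiIyamaReiten}, and then to deduce (2) by combining (1) with \cref{lem:AR-conflations-and-sequences}.

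For~(1), the first step is to apply $F=\Hom{K^b}(\Lambda,-)$ to the conflation $Y\infl E\defl X$. Since $\Lambda$ is projective in the extriangulated category $K^{[-1,0]}(\proj\Lambda)$ (see \cref{prop:2kbproj-is-extriangulated}), this produces a short exact sequence $0\to FY\to FE\to FX\to 0$ in $\MOD\Lambda$. Applying $\Hom{\Lambda}(-,\tau FY)$ to that sequence yields an inclusion $\Hom{\Lambda}(FX,\tau FY)\hookrightarrow\Hom{\Lambda}(FE,\tau FY)$, so it is enough to prove the reduced vanishing $\Hom{\Lambda}(FE,\tau FY)=0$.

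To establish this reduced vanishing, I would decompose $Y=Y'\oplus\susp Q_Y$ and $E=E'\oplus\susp Q_E$, where $Y',E'$ have no indecomposable summand in $\add(\susp\Lambda)$ and $Q_Y,Q_E\in\proj\Lambda$. Since $F$ vanishes on $\add(\susp\Lambda)$, one has $FY=FY'$ and $FE=FE'$. A short chain-level computation, based on the fact that a two-term complex in $K^{[-1,0]}(\proj\Lambda)$ admits no nonzero morphism in $K^b(\proj\Lambda)$ to a complex concentrated in degree $-2$, produces the decomposition
\[
\Hom{K^b}(Y,\susp E)\;\cong\;\Hom{K^b}(Y',\susp E')\,\oplus\,\Hom{\Lambda}(Q_Y,FE').
\]
Therefore the hypothesis $\Hom{K^b}(Y,\susp E)=0$ forces $\Hom{K^b}(Y',\susp E')=0$ in particular. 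Applying \cite[Lem.~3.4]{AdachiIyamaReiten} (already used in the proof of \cref{lem:then-AR-seq-are-mutation}) pairwise to the indecomposable summands of $Y'$ and $E'$, which all lie outside $\add(\susp\Lambda)$, then gives $\Hom{\Lambda}(FE,\tau FY)=\Hom{\Lambda}(FE',\tau FY')=0$, completing~(1).

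For~(2), the added hypotheses make it possible to invoke \cref{lem:AR-conflations-and-sequences}, which upgrades $0\to FY\to FE\to FX\to 0$ to an almost-split sequence in $\MOD\Lambda$; in particular $FY\cong\tau FX$. Substituting this isomorphism into the conclusion of~(1) yields $\Hom{\Lambda}(FX,\tau^2 FX)=\Hom{\Lambda}(FX,\tau FY)=0$, as desired. The only technical point throughout is the bookkeeping that strips off summands in $\add(\susp\Lambda)$ before \cite[Lem.~3.4]{AdachiIyamaReiten} is applicable; all the substantive input is isolated in that reference together with \cref{lem:AR-conflations-and-sequences}.
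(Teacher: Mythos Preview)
Your proof is correct, but it takes a more circuitous route than the paper's. For (1), the paper works directly with the triangle in $K^b(\proj\Lambda)$: the conflation yields an exact sequence
\[
\Hom_{K^b}(Y,\susp E)\to\Hom_{K^b}(Y,\susp X)\to\Hom_{K^b}(Y,\susp^2 Y),
\]
the last term vanishes since $Y$ is a two-term complex, hence $\Hom_{K^b}(Y,\susp X)=0$, and \cite[Lem.~3.4]{AdachiIyamaReiten} applied to $Y$ and $X$ gives the conclusion immediately. You instead pass through $E$: you use the same lemma on $Y$ and $E$ to get $\Hom_\Lambda(FE,\tau FY)=0$, then use the surjection $FE\twoheadrightarrow FX$ to embed $\Hom_\Lambda(FX,\tau FY)$ into it. Both routes require the same summand-stripping before \cite[Lem.~3.4]{AdachiIyamaReiten} applies, so neither is cleaner on that front; the paper's version simply avoids the detour through $E$ and the intermediate module-category computation. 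Your deduction of (2) from (1) via \cref{lem:AR-conflations-and-sequences} matches the paper exactly.

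One minor correction: for a general conflation $Y\infl E\defl X$ in $K^{[-1,0]}(\proj\Lambda)$, the sequence $0\to FY\to FE\to FX\to 0$ need not be short exact. Projectivity of $\Lambda$ only gives the surjectivity of $FE\to FX$; left exactness can fail because the connecting map $H^{-1}(X)\to H^0(Y)$ in the cohomology long exact sequence is generally nonzero. Your argument only uses that surjectivity (to obtain the inclusion $\Hom_\Lambda(FX,\tau FY)\hookrightarrow\Hom_\Lambda(FE,\tau FY)$), so this overstatement does not affect the validity of the proof, but the claim as written is too strong.
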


\begin{proof}
By \cref{lem:AR-conflations-and-sequences}, \emph{(2)} follows from \emph{(1)}.
Let us prove \emph{(1)}.
The conflation induces an exact sequence~$\Hom{K^b}(Y,\susp E) \to \Hom{K^b}(Y,\susp X) \to \Hom{K^b}(Y,\susp^2,Y)$.
The complex~$Y$ being two-term, we have~$\Hom{K^b}(Y,\susp^2,Y)=0$.
Assuming that~$\Hom{K^b}(Y,\susp E)=0$, we deduce that~$\Hom{K^b}(Y,\susp X)=0$, and hence that~$\Hom{\Lambda}(FX,\tau FY) = 0$ by~\cite[Lem.~3.4]{AdachiIyamaReiten}.
\end{proof}

% \begin{lemma}
% Let~$\Lambda$ be an Artin algebra, and let~$P$ be an indecomposable projective~$\Lambda$-module which is a brick.
% Let~$I$ be indecomposable injective with socle~$P/\operatorname{rad}P$, and~$\alpha$ be the composition~$P\to\operatorname{soc} I \to I$.
% Then, composition with~$\alpha$ induces a surjection~$\Hom{\Lambda}(P,I)\to\Hom{\Lambda}(P,I)$.
% \end{lemma}
% 
% \begin{proof}
% {\bf Small gap}
% \end{proof}

\begin{lemma}
\label{lem:initial meshes}
Let~$\Lambda$ be an Artin algebra whose indecomposable injective modules are~$\tau$-rigid, and let~$P$ be an indecomposable projective~$\Lambda$-module.
Then the Auslander--Reiten conflation~${Y\infl E\defl \susp P}$ in~$K^{[-1,0]}(\proj\Lambda)$ is a mutation conflation if~$FY$ is a brick.
\end{lemma}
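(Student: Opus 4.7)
The plan is to identify the module~$FY$, find a suitable silting complement~$R$ using~$\tau$-tilting theory, and then compare the AR and mutation conflations via a one-dimensional extension argument.

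First, I would identify~$FY \cong I(\top P)$, the indecomposable injective envelope of the simple~$S \eqdef \top P$. Indeed, the Auslander--Reiten translate in the triangulated category~$K^b(\proj\Lambda)$ is~$\nu \circ [-1]$, where~$\nu$ denotes the (derived) Nakayama functor, so~$\tau^{K^b}(\susp P) = \nu(\susp P)[-1] = \nu(P) = I(\top P)$; since all terms of this triangle lie in~$K^{[-1,0]}(\proj\Lambda)$, it also realises the AR conflation in the extriangulated category. Using the standard identity~$\dim_\field \End_\Lambda(I(S)) = [I(S):S]$, the hypothesis that~$FY$ is a brick forces~$[I:S] = 1$, so~$S$ is not a composition factor of~$I/S$ and~$\Hom_\Lambda(P, I/S) = 0$. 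Applying~$F$ to the rotated triangle~$P \to Y \to E \to \susp P$, using~$F\susp P = 0$ together with the projectivity of~$\Lambda$ in~$K^{[-1,0]}(\proj\Lambda)$, and observing that the connecting morphism induces the unique (up to scalar) nonzero composite~$P \twoheadrightarrow S \hookrightarrow I$, I would conclude that~$FE \cong I/S$.

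Next, I would construct the silting complement. Since~$I$ is~$\tau$-rigid by hypothesis, I would apply Bongartz completion inside the Serre subcategory of~$\MOD\Lambda$ consisting of modules annihilated by the idempotent~$e_P$ (equivalently, modules over the quotient algebra~$\Lambda/\langle e_P\rangle$), yielding a~$\tau$-rigid module~$T'$ with~$n-1$ indecomposable summands, none of which has~$S$ as composition factor, and such that~$I \oplus T'$ is a~$\tau$-tilting~$\Lambda$-module. Via the Adachi--Iyama--Reiten correspondence, the~$\tau$-tilting pair~$(I \oplus T', \emptyset)$ corresponds to a silting object~$Y \oplus R$ in~$K^{[-1,0]}(\proj\Lambda)$; the equality~$\Hom_\Lambda(P, T') = 0$ implies that~$(T', P)$ is also a support~$\tau$-tilting pair and corresponds to a silting object~$\susp P \oplus R$.

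Finally, silting mutation of~$Y \oplus R$ at~$Y$ produces a non-split exchange extriangle~$Y \to E'' \to \susp P$. A direct computation gives
\[
\mathbb{E}_{K^{[-1,0]}(\proj\Lambda)}(\susp P, Y) \cong \Hom_{K^b}(\susp P, \susp Y) \cong \Hom_\Lambda(P, I) \cong \field,
\]
which is one-dimensional by the brick hypothesis. Hence any two non-split extriangles with endpoints~$\susp P$ and~$Y$ represent scalar multiples of the same generator, and in particular have isomorphic middle terms. Since both the AR conflation~$Y \to E \to \susp P$ and the mutation conflation~$Y \to E'' \to \susp P$ are non-split, I conclude~$E \cong E''$, so the AR conflation is a mutation conflation. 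The main obstacle is the Bongartz completion in the second step: one must verify that~$I$ extends to a~$\tau$-tilting module within the Serre subcategory avoiding~$S$ as a composition factor, which rests crucially on the~$\tau$-rigidity of~$I$ and on the brick condition ensuring~$\Hom_\Lambda(P, I/S) = 0$.
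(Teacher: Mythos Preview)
Your overall strategy---identify $FY$ as the injective $I=I(\top P)$, produce a silting complement $R$ making $Y\oplus R$ and $\susp P\oplus R$ adjacent, then use one-dimensionality of $\mathbb{E}(\susp P,Y)$ to pin down the AR conflation as the mutation conflation---is sensible, and steps~1 and~5 are essentially right (with the caveat that over a non-algebraically-closed field, $\Hom_\Lambda(P,I)$ is one-dimensional over the division ring $\End_\Lambda(I)$ rather than over~$\field$; this is still enough to conclude that any two non-split extriangles with these endpoints are isomorphic).

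The genuine gap is step~3. You propose to ``apply Bongartz completion inside the Serre subcategory'' $\MOD(\Lambda/\langle e_P\rangle)$, but $I$ does not lie in that subcategory---its socle is $S=\top P$. Bongartz-completing instead the pair $(0,P)$ gives some $\tau$-tilting $T'$ over $\Lambda/\langle e_P\rangle$, but there is no reason that $I\oplus T'$ should be $\tau$-rigid over $\Lambda$: you would need $\Hom_\Lambda(I,\tau T')=0$ and $\Hom_\Lambda(T',\tau I)=0$, neither of which follows. Conversely, Bongartz-completing $I$ in $\MOD\Lambda$ gives no control over whether the complement avoids $S$. So you have not produced the required $R$, and without it step~5 has nothing to compare against: one-dimensionality of $\mathbb{E}(\susp P,Y)$ alone does not imply that $Y$ and $\susp P$ are exchangeable in the silting complex.

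The paper avoids this difficulty by working with $E$ directly rather than hunting for a complement via $\tau$-tilting theory. It checks, through five explicit $\Hom$-vanishing computations, that $Y\oplus E$ and $\susp P\oplus E$ are both rigid in $K^{[-1,0]}(\proj\Lambda)$. Once $Y\oplus E$ is rigid, one Bongartz-completes it to a silting object $Y\oplus R$ with $E\in\add R$; because $Y\infl E$ is left almost split and $Y$ is not a summand of $R$, every morphism $Y\to R'$ with $R'\in\add R$ factors through $Y\infl E$, so $Y\infl E$ is automatically a left $\add R$-approximation. The cone $\susp P$ being indecomposable forces this approximation to be minimal, whence $\susp P\oplus R$ is the silting mutation of $Y\oplus R$. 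Thus the almost-split property of the AR conflation does the work that your one-dimensionality argument was meant to do, but without needing to build $R$ by hand. The brick hypothesis on $FY$ enters only in the verification that $\Hom_{K^b}(E,\susp Y)=0$ (the paper's step (3)), via the one-dimensionality of $\Hom_\Lambda(P,I)$ over $\End_\Lambda(I)$.
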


\begin{proof}
In this proof, we implicitly identify the projective indecomposable~$\Lambda$-module~$P$ with the complex~$P$ concentrated in degree~$0$.

We check that the conflation~$Y\infl E\defl\susp P$ is a mutation conflation.

(0) Since~$P$ is concentrated in only one degree, it is rigid in~~$K^{[-1,0]}(\proj\Lambda)$ and so is~$\susp P$.
Moreover,~$FY$ is indecomposable and injective.
By assumption, it is~$\tau$-rigid, hence~$Y$ is rigid in~$K^{[-1,0]}(\proj\Lambda)$ by~\cite[Lem.~3.4]{AdachiIyamaReiten}.

(1) We have~$\Hom{K^b}(E,\susp^2 P)=0$ since~$E$ is concentrated in degrees~$-1$ and~$0$.

(2) $\Hom{K^b}(Y,\susp E)=0$: There is an exact sequence
\[
\Hom{K^b}(Y,\susp Y) \to \Hom{K^b}(Y,\susp E) \to \Hom{K^b}(Y,\susp^2 P),
\]
where the first term vanishes by (0) and the last term vanishes since~$Y$ is in degrees~$-1, 0$ while~$\susp^2 P$ is concentrated in degree~$-2$.

(3) $\Hom{K^b}(E,\susp Y) = 0$: Consider the exact sequence
\[
 \Hom{K^b}(\susp Y,\susp Y) \to \Hom{K^b}(\susp P,\susp Y) \to \Hom{K^b}(E,\susp Y) \to \Hom{K^b}(Y,\susp Y).
\]
Its last term vanishes by (0).
Let us prove that its first morphism is surjective.
Composition with~$\alpha$ and~$F\alpha$ yields a commutative square
\[
\xy
(-20,7)*+{\Hom{K^b}(Y,Y)}="0";
(20,7)*+{\Hom{K^b}(P,Y)}="2";
(-20,-7)*+{\Hom{\Lambda}(FY,FY)}="10";
(20,-7)*+{\Hom{\Lambda}(FP,FY)}="12";
{\ar@{->>} "0";"10"};
{\ar^{\alpha^\ast} "0";"2"};
{\ar@{->>} "2";"12"};
{\ar_{(F\alpha)^\ast} "10";"12"};
{\ar@{}|\circlearrowright "0";"12"};
\endxy
\]
where the vertical maps are surjective.
Since~$\Lambda$ is rigid and~$Y$ is two-term, we can apply~\cite[Prop.~6.2]{IyamaYoshino} to the triangulated category~$K^b(\proj\Lambda)$ (or \cref{prop:KRextricat} to the extriangulated $K^{[-1,0]}(\proj\Lambda)$).
As a consequence, the right-most vertical map is bijective.
Because~$FY$ is a brick,~$\End{\Lambda}(FY)$ is a division ring.
Moreover,~$FY$ is the indecomposable injective with simple socle the simple top of~$P$, so that the~$\End{\Lambda}(FY)$-module~$\Hom{\Lambda}(FP,FY)$ is one-dimensional over~$\End{\Lambda}(FY)$.
It follows that the vertical map~$(F\alpha)^\ast$ is either zero or bijective.
But~$\alpha$ is non-zero with domain in~$\add\Lambda$ so that~$F\alpha$ is non-zero ; and~$(F\alpha)^\ast(1_{FY})=F\alpha$ so that~$(F\alpha)^\ast$ is non-zero.
By commutativity, this implies the surjectivity of the top vertical morphism~$\alpha^\ast$.
Since the shift functor is an automorphism, the morphism~$\Hom{K^b}(\susp Y,\susp Y) \to \Hom{K^b}(\susp P,\susp Y)$ is also surjective, and~$\Hom{K^b}(E,\susp Y) = 0$.

(4) $\Hom{K^b}(E,\susp E)=0$: In the exact sequence
\[
 \Hom{K^b}(E,\susp Y) \to \Hom{K^b}(E, \susp E) \to \Hom{K^b}(E,\susp^2 P),
\]
the first term is zero by (3) and the last term vanishes since~$P$ is concentrated in degree zero.

(5) $\Hom{K^b}(\susp P,\susp E)= 0$:
Let~$P\xrightarrow{f} E$ be any morphism.
\[
\xy
(7,14)*+{P}="P";
(-21,0)*+{P}="0";
(-7,0)*+{Y}="2";
(7,0)*+{E}="4";
(21,0)*+{\susp P}="6";
(-21,-14)*+{Y}="Y";
(-7,-14)*+{E}="E";
{\ar^f "P";"4"};
{\ar^0 "P";"6"};
{\ar@{-->}_g "P";"2"};
{\ar^\alpha "0";"2"};
{\ar^\beta "2";"4"};
{\ar "4";"6"};
{\ar_g "0";"Y"};
{\ar@{-->}_h "2";"Y"};
{\ar_\beta "Y";"E"};
{\ar_{\beta h\!} "2";"E"};
{\ar@{-->}^{h'} "4";"E"};
\endxy
\]
Since~$P$ is rigid,~$f$ factors through the inflation~$\beta$ in the conflation~$Y\infl E\defl \susp P$, and there is some morphism~$g$ such that~$f=\beta g$.
By (3), the morphism~$g$ factors through~$P \xrightarrow{\alpha} Y$: there is some~$h:Y\to Y$ such that~$g= h \alpha$.
We then have~$f = \beta g = \beta (g-\alpha) = \beta (h-1) \alpha$.
Since~$Y$ is endo-local, the morphism~$h$ or~$h-1$ is an automorphism, and we may assume that~$h$ is an automorphism.
The morphism~$\beta h$ is not a section (otherwise~$\beta$ would be a section) and~$\beta$ is left-almost split: there is an endomorphism~$h'$ of~$E$ such that~$\beta h = h'\beta$.
Therefore, we have~$f=\beta h \alpha = h'\beta\alpha = 0$.
\end{proof}

\begin{proof}[Proof of \cref{thm:brick-algebra-condition}]
The theorem follows from the slightly more general statements of \cref{lem:then-AR-seq-are-mutation,lem:if AR are mutations,lem:initial meshes}.
\end{proof}

%%%

\begin{lemma}
\label{lem:characterization Gentle 2-acyclic}
Let~$\Lambda$ be a brick and $2$-acyclic gentle algebra.
Then, for any indecomposable~$\Lambda$-module~$M$ we have~$\Hom{\Lambda}(M,\tau^2 M)=0$.
\end{lemma}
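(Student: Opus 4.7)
The approach is to combine Butler--Ringel's description of the module category of a gentle algebra with an explicit analysis of the strings obtained after two applications of the Auslander--Reiten translate. Since $\tau$ vanishes on injective modules, we may restrict attention to a non-injective indecomposable $M$; and since $\Lambda$ is brick we may reduce, after handling the (thin) band modules by a small separate argument, to the case of a string module $M = M(\sigma)$.

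The first step will be to invoke the Butler--Ringel formula for the Auslander--Reiten translate on string modules: $\tau M(\sigma) = M(\sigma')$, where $\sigma'$ is obtained from $\sigma$ by a purely local rule at each of its two endpoints (remove a hook whenever present, then adjoin a cohook whenever possible, with the usual modifications when the endpoint is forced by the boundary). Iterating once more, we obtain an explicit string $\sigma''$ with $\tau^2 M = M(\sigma'')$, with a similarly local description at each end.

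The second step will be to use the Crawley-Boevey--Schröer parametrization of morphisms between string modules by graph maps: a non-zero element of $\Hom{\Lambda}(M(\sigma), M(\sigma''))$ corresponds to a common substring $\mu$ realised simultaneously as a factor substring of $\sigma$ and as a sub-substring of $\sigma''$, subject to the standard compatibility on the adjacent arrows. Supposing such a $\mu$ exists, the hooks and cohooks (and, where relevant, short boundary segments) that were removed or adjoined in passing from $\sigma$ to $\sigma''$, together with $\mu$ itself, assemble into a closed walk in the underlying graph of $Q$. A direct inspection shows this closed walk is an honest cycle of $Q$ traversing at most one relation in $I$, contradicting the brick hypothesis. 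The only degenerate configuration in which the walk so produced has length two is precisely ruled out by the $2$-acyclic hypothesis, which is where that second assumption enters.

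The main obstacle will be the case analysis. The recipe defining $\sigma'$ from $\sigma$ has several subcases at each endpoint (hook present, hook absent, cohook adjoinable, blossom-like endpoint), and two iterations multiply these cases; on top of this, the position of $\mu$ inside $\sigma$ and inside $\sigma''$, together with possible interactions between the two ends of $\sigma$ when $\sigma$ is short, must be treated separately. A clean bookkeeping of which arrows and which relations of $(Q,I)$ are used at each step will be essential in order to exhibit, in every configuration, the forbidden cycle that contradicts either brickness or $2$-acyclicity.
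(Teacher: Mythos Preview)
Your approach is valid and would lead to a correct proof, but it takes a more laborious route than the paper's argument. The paper avoids computing $\tau^2 M$ altogether: instead of iterating Butler--Ringel twice and then invoking the Crawley-Boevey description of $\Hom{\Lambda}(M,\tau^2 M)$ via graph maps, it uses the criterion (from \cite[Thm.~2.46]{PaluPilaudPlamondon-nonkissing} or \cite[Thm.~5.1]{BrustleDouvilleMousavandThomasYildirim}) that $\Hom{\Lambda}(M,\tau N)\neq 0$ if and only if the walk $\omega_M$ kisses the walk $\omega_N$. Taking $N=\tau M$ and letting $\sigma$ be the string of $\tau M$, one has $\omega_M=\hh{\sigma}$ and $\omega_{\tau M}=\cc{\sigma}$: both walks are written in terms of the \emph{same} string $\sigma$, differing only by hooks versus cohooks at the ends. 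The case analysis then reduces to locating the kissing substring $\rho$ relative to $\sigma$ (contained in $\sigma$, overlapping one end, or containing $\sigma$), and each case quickly yields a self-kiss (contradicting brickness) or a cycle with at most one relation or a $2$-cycle.

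What the paper's approach buys is precisely the collapse of your case explosion: by centering everything on the string of $\tau M$ rather than on those of $M$ and $\tau^2 M$, the two walks become symmetric variants of a single object, and there is no need to track the separate hook/cohook modifications at each of two consecutive $\tau$-steps. Your approach is more self-contained in that it does not rely on the kissing dictionary, but the price is exactly the bookkeeping you flag as the main obstacle. Note also that the paper's proof tacitly assumes $\tau M$ is a string module (it opens by taking $\Lambda$ of finite representation type), so your remark about handling band modules separately is not idle; in the brick setting this is a genuine but minor side case.
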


\begin{proof}
Let~$\Lambda=\field Q / I$ be any gentle algebra of finite representation type, and let~$M$ be an indecomposable~$\Lambda$-module.
Write~$\sigma$ for the string corresponding to~$\tau M$.
Then the walks (\ie the maximal strings in the blossoming bound quiver of~$(Q,I)$) associated with~$M$ and~$\tau M$ are respectively~$\omega_M = \hh\sigma = p^{-1}\alpha\sigma\beta^{-1}q$ and~$\omega_{\tau M}=\cc\sigma=p'\alpha'^{-1}\sigma\beta'q'^{-1}$, where~$p,p',q,q'$ are paths and~$\alpha,\alpha',\beta,\beta'$ are arrows in the blossoming quiver.

Assume that~$\Hom{\Lambda}(M,\tau^2 M)\neq0$.
By~\cite[Theorem 2.46]{PaluPilaudPlamondon-nonkissing} or~\cite[Theorem 5.1]{BrustleDouvilleMousavandThomasYildirim}, the walk~$\omega_M$ kisses the walk~$\omega_{\tau M}$: there is a substring~$\rho$ of~$\omega_M$ and~$\omega_{\tau M}$ that is strictly on top of~$\omega_M$ and strictly at the bottom of~$\omega_{\tau M}$. Remark that~$\rho$ cannot start nor end at some endpoint of~$\sigma$ (because~$\rho$ is a top substring of~$\omega_M$).
 
If the substring~$\rho$ of~$\omega_M$ is contained in~$\sigma$, by the previous remark it is strictly below~$\sigma$. This implies that~$\omega_M$ kisses itself, hence that~$M$ is not~$\tau$-rigid. Similarly, if~$\rho$ is contained in~$\sigma$ as a substring of~$\omega_{\tau M}$, then~$\tau M$ is not~$\tau$-rigid. In both cases, the brick-$\tau$-rigid correspondence of~\cite{DemonetIyamaJasso} implies that~$\Lambda$ is not a brick algebra.
 
Note that~$\rho$ cannot contain~$\sigma$, because~$\sigma$ is the only common substring of~$\omega_M$ and~$\omega_{\tau M}$ that contains~$\sigma$.
Assume now that~$\rho$, viewed either as a substring of~$\omega_M$ or of~$\omega_{\tau M}$ is not contained in~$\sigma$.
Since~$\rho$ is strictly on top of~$\omega_M$, it contains the source of~$\alpha$ or the source of~$\beta$.
Similarly,~$\rho$ contains the target of~$\alpha'$ or the target of~$\beta'$.
Because the target of~$\alpha$ is the source of~$\alpha'$ (and the target of~$\beta$ is the source of~$\beta'$), this implies that~$Q$ contains either a non-oriented cycle with at most one relation, in which case~$\Lambda$ is not a brick algebra, or a~$2$-cycle, in which case~$\Lambda$ is not~$2$-acyclic.
\end{proof}

\begin{proof}[Proof of \cref{coro:simplicialTypeConeNKCalgebraic}]
The unique exchange relation property follows from \cref{prop:exchange conflations are unique} (2).
By \cref{lem:characterization Gentle 2-acyclic}, the assumptions of \cref{thm:brick-algebra-condition} are satisfied.
We can thus apply \cref{thm:extricats} and its corollary, which shows that there is the same number of extremal exchange pairs of the support~$\tau$-tilting fan (\ie the non-kissing fan) as there are meshes in the Auslander--Reiten quiver of~$\Lambda$.
Hence the type cone of the support~$\tau$-tilting fan is simplicial.
\end{proof}

%%%%%%%%%%%

\subsection{Application to other examples of categories}

In this section, we give two more examples of families of categories to which \cref{thm:extricats,coro:meshes positively generate extricats} apply.

%%%

\subsubsection{$2$-Calabi--Yau triangulated categories}
\label{sec:2CYTriangulated}

Let~$\cat$ be a triangulated category with shift functor~$\susp$.
We assume moreover that~$\cat$ is~$\field$-linear, Krull--Schmidt, Hom-finite, 2-Calabi--Yau, with a cluster-tilting object~$T$.
Then~$\cat$ satisfies all the assumptions of \cref{setting:extricat} for \cref{sec:Statment extricats} but, possibly, assumption (1).

We fix this issue by considering a relative extriangulated structure.
Let~$\mathbb{E}_T$ be the additive sub-bifunctor of~$\cat(-,\susp -)$ formed by those morphisms that factor through~$\add\susp T$.
Then~$(\cat,\mathbb{E}_T)$ becomes extriangulated by either~\cite[Prop.~3.16 or Prop.3.19]{HerschendLiuNakaokaI} (this can also be checked directly).
The extriangles of~$\cat$ are precisely the triangles~$X\overset{f}{\rightarrow}Y\overset{g}{\rightarrow} Z \overset{h}{\dashrightarrow}$, for which the morphism~$h$ factors through~$\add\susp T$.
By construction, Condition (1) is now satisfied.
Moreover, the triangles appearing in Conditions (2) and (3) are extriangles for this relative structure, which shows that those two conditions are still satisfied for the extriangulated structure~$(\cat,\mathbb{E}_T)$.

\begin{remark}
The index of an object~$X$ in the triangulated category~$\cat$, coincide with its class in the Grothendieck group of~$\cat$ for the relative extriangulated structure given by~$\mathbb{E}_T$.
\end{remark}

\begin{remark}
The main results of \cref{subsec:Statement cluster cats} can thus be viewed as specific cases of the results of \cref{sec:extricats}: \cref{thm:relations-g-vecteurs} and \cref{coro:meshes positively generate cluster cats} are consequences of \cref{thm:extricats} and \cref{coro:meshes positively generate extricats}.
\end{remark}

\subsubsection{Objects presented by a subcategory}
\label{sec:prT}

We now describe a setup satisfying the assumptions of \cref{setting:extricat} that, at the same time, subsumes all examples given so far, while being easily checked in practice.

Let~$(\ec,\mathbb{E},\mathfrak{s})$ be an extriangulated category (for example, a small exact category or a triangulated category), and let~$\tc$ be an essentially small subcategory of~$\ec$.

For any~$X,Y\in\ec$, let~$\mathbb{E}_\tc(X,Y)$ be the subset of~$\mathbb{E}(X,Y)$ consisting of those~$\delta$ that satisfy~$f^\ast\delta = 0$, for any~$T\in\tc$ and any~$T\xrightarrow{f}X$.
It is easily seen that~$\mathbb{E}_\tc$ is an additive subfunctor of~$\mathbb{E}$.
Consider $(\ec,\mathbb{E}_\tc,\mathfrak{s}_\tc)$ endowed with the resctriction of the additive realization~$\mathfrak{s}$.
Since the deflations in~$(\ec,\mathbb{E}_\tc,\mathfrak{s}_\tc)$ are precisely the deflations in~$(\ec,\mathbb{E},\mathfrak{s})$ that are~$\tc$-epic, they are closed under composition.
We can thus apply~\cite[Prop.~3.16]{HerschendLiuNakaokaI} to obtain that~$(\ec,\mathbb{E}_\tc,\mathfrak{s}_\tc)$ is an extriangulated category.
With that relative structure, the objects in~$\tc$ become projective
(alternatively, one can directly apply~\cite[Prop.~3.19]{HerschendLiuNakaokaI}).
Define~$\cat$ to be the full subcategory of~$\ec$ whose objects~$X$ admit an~$\mathfrak{s}_\tc$-conflation~$T_1\infl T_0\defl X$.

\begin{lemma}
\label{lem: prT is extriangulated}
The full subcategory~$\cat$ is extension-closed in~$(\ec,\mathbb{E}_\tc,\mathfrak{s}_\tc)$, and thus inherits an extriangulated structure.
\end{lemma}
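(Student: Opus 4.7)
The plan is to show that for any $\mathfrak{s}_\tc$-conflation $X \overset{a}{\infl} Y \overset{b}{\defl} Z$ with $X,Z \in \cat$, the middle term $Y$ also belongs to $\cat$, by producing an $\mathfrak{s}_\tc$-conflation $T_1 \infl T_0 \defl Y$ with $T_0,T_1 \in \tc$. Fix witnessing presentations $T_1^X \overset{i}{\infl} T_0^X \overset{p}{\defl} X$ and $T_1^Z \overset{j}{\infl} T_0^Z \overset{q}{\defl} Z$. The strategy is the usual horseshoe construction, transported to the extriangulated setting.

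First, since $T_0^Z \in \tc$ is projective in $(\ec,\mathbb{E}_\tc,\mathfrak{s}_\tc)$ and $b$ is an $\mathfrak{s}_\tc$-deflation, the long exact sequence of \cref{prop:extricat long exact sequences} yields a lift $\tilde q : T_0^Z \to Y$ with $b\tilde q = q$. Second, I would apply a horseshoe-type construction obtained from iterating axioms (ET3)/(ET4) and their duals, in the spirit of the $3 \times 3$ diagrams constructed in the proofs of \cref{lem:index well-defined extricat,lem:index kzero extricat}, to combine the two presentations via the morphism $\pi \eqdef [\,ap,\;\tilde q\,] : T_0^X \oplus T_0^Z \to Y$ into an $\mathfrak{s}$-extriangle
\[
T_1^X \oplus T_1^Z \infl T_0^X \oplus T_0^Z \overset{\pi}{\defl} Y \dashrightarrow.
\]

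It then remains to verify that this extriangle actually lives in the relative structure $\mathbb{E}_\tc$, i.e.\ that the deflation $\pi$ is $\tc$-epic. Given any $t : T \to Y$ with $T \in \tc$, the composition $bt : T \to Z$ lifts through $q$ by projectivity of $T$ in $(\ec,\mathbb{E}_\tc,\mathfrak{s}_\tc)$, giving $s : T \to T_0^Z$ with $qs = bt$. Then $b(t - \tilde q s) = 0$, so by the exact sequence of \cref{prop:extricat long exact sequences} applied to the extriangle $X \infl Y \defl Z \dashrightarrow$ there exists $u : T \to X$ with $au = t - \tilde q s$; again by projectivity of $T$, $u$ lifts to $v : T \to T_0^X$ with $pv = u$. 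Hence $t$ factors as $\pi \circ \bigl[\begin{smallmatrix} v \\ s \end{smallmatrix}\bigr]$, establishing the $\tc$-epicity of $\pi$. This exhibits $Y$ as an object of $\cat$ and proves extension-closure; the inherited extriangulated structure then follows from \cite[Rem.~2.18]{NakaokaPalu}.

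The main obstacle is the rigorous construction of the horseshoe extriangle in the second step. The formal argument parallels the classical one in abelian or exact categories, but has to be carried out through the axioms of extriangulated categories rather than through element chasing; this is exactly the type of manipulation performed in \cref{lem:index well-defined extricat,lem:index kzero extricat} via \cite[Prop.~3.15]{NakaokaPalu}, and once it is in place, the lifting step and the $\tc$-epicity verification are essentially formal.
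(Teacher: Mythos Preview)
Your proposal is correct and follows essentially the same approach as the paper: the paper's proof is the one-liner ``This follows from the proof of \cref{lem:index kzero extricat}'', which is exactly the horseshoe construction you outline. One minor simplification: since the horseshoe diagram in \cref{lem:index kzero extricat} is built entirely from the axioms of the extriangulated category $(\ec,\mathbb{E}_\tc,\mathfrak{s}_\tc)$ (via \cite[Prop.~3.15]{NakaokaPalu} and (ET4)$^{\mathrm{op}}$), the resulting conflation $T_1^X\oplus T_1^Z \infl T_0^X\oplus T_0^Z \defl Y$ is automatically an $\mathfrak{s}_\tc$-conflation, so your explicit $\tc$-epicity verification in the third step is not needed.
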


\begin{proof}
This follows from the proof of \cref{lem:index kzero extricat}.
\end{proof}

\begin{notation}
Write~$\tc^\perp$ for the full subcategory of~$\ec$ whose objects are those objects~$X\in\ec$ that satisfy~$\ec(T,X)=0$, for any~$T\in\tc$.
\end{notation}

\begin{proposition}
\label{prop:prT}
Assume that, for any~$T\in\tc$, there is an~$\mathfrak{s}$-conflation~$T\infl 0\defl X_T$ in~$\ec$ with~$X_T\in\tc^\perp$.
Then the extriangulated category~$\cat$ satisfies the assumptions (1) to (3) of \cref{setting:extricat}.
\end{proposition}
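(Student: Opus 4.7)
The plan is to observe that condition (3) of \cref{setting:extricat} is built into the very definition of $\cat$, so only conditions (1) and (2) require work, and the hypothesis on $\tc^\perp$ is designed precisely to ensure that the conflation exhibiting $T \to 0$ as an inflation stays inside $\cat$. The two key inputs I will lean on are: the construction of $\mathbb{E}_\tc$, which makes objects of $\tc$ automatically projective in $(\ec,\mathbb{E}_\tc,\mathfrak{s}_\tc)$; and \cref{lem: prT is extriangulated}, which says $\cat$ inherits an extriangulated structure as an extension-closed subcategory.

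First I would check that $\tc \subseteq \cat$: for each $T \in \tc$, the split conflation $0 \infl T \defl T$ is an $\mathfrak{s}_\tc$-conflation (its realization class is zero, so every pullback vanishes), which shows $T \in \cat$. Since an object projective in the ambient extriangulated category $(\ec,\mathbb{E}_\tc,\mathfrak{s}_\tc)$ stays projective in any extension-closed subcategory containing it, this gives condition (1).

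For condition (2), fix $T \in \tc$ and consider the assumed $\mathfrak{s}$-conflation $T \infl 0 \defl X_T$ with $X_T \in \tc^\perp$, realizing some class $\delta \in \mathbb{E}(X_T,T)$. The decisive observation is that $\delta$ lies in $\mathbb{E}_\tc(X_T,T)$: for any $T' \in \tc$ and any $f \colon T' \to X_T$, the assumption $X_T \in \tc^\perp$ forces $f=0$ and hence $f^*\delta = 0$. Thus the sequence is already an $\mathfrak{s}_\tc$-conflation, and to conclude that $T \to 0$ is an inflation in $\cat$ I only need $X_T \in \cat$; but that very same conflation, whose first two terms $T$ and $0$ both lie in $\tc$, is a two-term $\mathfrak{s}_\tc$-presentation of $X_T$ by $\tc$, witnessing $X_T \in \cat$.

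The only subtle interplay — and the spot a careful reader might pause at — is that the inflation datum required by condition (2) must live in $\cat$, not merely in $(\ec,\mathbb{E}_\tc,\mathfrak{s}_\tc)$; the $\tc^\perp$ hypothesis discharges both tasks at once, converting the ambient $\mathfrak{s}$-conflation into an $\mathfrak{s}_\tc$-conflation and simultaneously providing the two-term $\tc$-presentation of its third term. Everything else is routine.
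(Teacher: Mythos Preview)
Your proof is correct and follows essentially the same approach as the paper's: both use the split extriangle $0 \infl T \defl T \overset{0}{\dashrightarrow}$ to place $\tc$ inside $\cat$, invoke the definition of $\mathbb{E}_\tc$ for projectivity, and observe that condition (3) is the defining property of $\cat$. Your treatment of condition (2) is actually more explicit than the paper's, which simply asserts that it is ``equivalent to that of the proposition''; you spell out why $X_T \in \tc^\perp$ forces $\delta \in \mathbb{E}_\tc(X_T,T)$ and why the same conflation witnesses $X_T \in \cat$.
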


\begin{proof}
For any~$T\in\tc$ the existence of an extriangle~$0\infl T\defl T\overset{0}{\dashrightarrow}$ shows that~$\tc$ is a full subcategory of~$\cat$.
By \cref{lem: prT is extriangulated}, the category~$\cat$ is extriangulated and, by definition of this extriangulated structure, the objects in~$\tc$ are projective in~$\cat$.
Assumption (3) is satisfied by the definition of~$\cat$, and assumption (2) is equivalent to that of the proposition.
\end{proof}

\begin{remark}
 The categories considered in \cref{sec:2CYTriangulated,sec:Kbproj} all fall into the setting of \cref{prop:prT}.
\end{remark}

%%%%%%%%%%%

\subsection{Examples}

We conclude with a couple of examples for two associative algebras.

%%%

%\subsubsection{Associative algebras}

\begin{example}[A gentle algebra]
Consider the algebra~$\Lambda$ given the the path algebra of the quiver
\[
\begin{tikzcd}
	1 \arrow[r, "\alpha"] & 2\arrow[r, "\beta"] & 3\arrow[r, "\gamma"] & 4
\end{tikzcd}
\]
modulo the relation~$\gamma\beta\alpha = 0$. The Auslander--Reiten quiver of the extriangulated category $K^{[-1, 0]}(\proj\Lambda)$ is depicted below.
\[
\hspace*{-1cm}
{\tiny
\begin{tikzcd}
	&&&& (0\to P_4)\arrow[ddr] && (P_2\to 0)\arrow[ddr] && \\
	&& (0\to P_3)\arrow[dr] && (P_1\to 0)\arrow[dr] &&&& \\
	& (0\to P_2)\arrow[dr]\arrow[ur] && (P_1\to P_3)\arrow[uur]\arrow[ur]\arrow[dr] && (P_2\to P_4)\arrow[uur]\arrow[dr] && (P_3\to 0)\arrow[dr] & \\
	(0\to P_1) \arrow[ur] && (P_1\to P_2)\arrow[ur] && (P_2\to P_3)\arrow[ur]  && (P_3\to P_4)\arrow[ur] && (P_4\to 0)
\end{tikzcd}
}
\]
The Auslander--Reiten quiver of the module category is obtained by taking the quotient by the ideal generated by the shifted projective modules.
\[
\hspace*{-1cm}
{\footnotesize
\begin{tikzcd}
    &&&& \begin{matrix} 4\\3\\2 \end{matrix}\arrow[ddr] && \\
    && \begin{matrix} 3\\2\\1 \end{matrix}\arrow[dr] &&&& \\
    & \begin{matrix} 2\\1 \end{matrix}\arrow[dr]\arrow[ur] && \begin{matrix} 3\\2 \end{matrix}\arrow[uur]\arrow[dr] && \begin{matrix} 4\\3 \end{matrix}\arrow[dr] & \\
    \begin{matrix} 1 \end{matrix} \arrow[ur] && \begin{matrix} 2 \end{matrix}\arrow[ur] && \begin{matrix} 3 \end{matrix}\arrow[ur] && \begin{matrix}4 \end{matrix}
\end{tikzcd}
}
\]
The conditions of \cref{thm:brick-algebra-condition} are satisfied.
\end{example}

\begin{example}[Preprojective algebra of type~$A_3$]\label{exm:preprojective silting objects}
Let~$\Lambda$ be the preprojective algebra of type~$A_3$: it is the path algebra of the quiver
\[
\begin{tikzcd}
	1\arrow[r, shift left = 2pt, "\alpha"] & 2\arrow[l, shift left = 2pt, "\bar \alpha"] \arrow[r, shift left = 2pt, "\beta"]& 3\arrow[l, shift left = 2pt, "\bar \beta"]
\end{tikzcd}
\] 
modulo the relations~$\alpha \bar \alpha = 0$, $\bar \beta \beta = 0$ and~$\bar \alpha \alpha + \beta \bar \beta = 0$. 
The Auslander--Reiten quiver of the extriangulated category~$K^{[-1,0]}(\proj \Lambda)$ is depicted below. Note that it is periodic.
 
\[
\hspace*{-2cm}
{\tiny
\begin{tikzcd}
    (P_1 \to P_3)\arrow[dr] && (P_3\to P_2)\arrow[r]\arrow[dr] & (0\to P_3)\arrow[r]& (P_1\to P_3)\arrow[r]\arrow[dr] & (P_1\to 0)\arrow[r] & (P_2\to P_1)\arrow[dr]&& (P_1\to P_2) \\
    (0\to P_2)\ar[r] & (P_2\to P_2)\arrow[ur]\arrow[dr]\arrow[r] & (P_2\to 0)\arrow[r] & (P_1\oplus P_3\to P_2)\arrow[ur]\arrow[dr] && (P_2\to P_1\oplus P_3)\arrow[ur]\arrow[dr]\arrow[r] & (0\to P_2)\arrow[r] & (P_2\to P_2)\arrow[ur]\arrow[dr]\arrow[r] & (P_2\to 0) \\
    (P_2\to P_1)\arrow[ur] && (P_1\to P_2)\arrow[ur]\arrow[r] & (0\to P_1)\arrow[r] & (P_3\to P_1)\arrow[r]\arrow[ur] & (P_3\to 0)\arrow[r] & (P_2\to P_3)\arrow[ur] && (P_3\to P_2)
\end{tikzcd}
}
\]

\medskip
The Auslander--Reiten quiver of the module category~$\MOD(\Lambda)$ can be obtained by applying the functor~$\Hom{}(\Lambda, -)$.
 
\[
\hspace*{-2cm}
{\footnotesize
\begin{tikzcd}
    \begin{matrix} 3 \end{matrix}\arrow[dr] && \begin{matrix} 2 \\ 1 \end{matrix}\arrow[r]\arrow[dr] & \begin{matrix} 3 \\ 2 \\ 1 \end{matrix}\arrow[r]& \begin{matrix} 3 \\ 2 \end{matrix}\arrow[dr] && \begin{matrix} 1 \end{matrix}\arrow[dr]&& \begin{matrix} 2 \\ 3 \end{matrix} \\
    \begin{matrix} 2 \\ 13 \\ 2 \end{matrix}\ar[r] & \begin{matrix} 2\\ 13 \end{matrix}\arrow[ur]\arrow[dr] && \begin{matrix} 2 \end{matrix}\arrow[ur]\arrow[dr] && \begin{matrix} 13\\2 \end{matrix}\arrow[ur]\arrow[dr]\arrow[r] & \begin{matrix} 2\\13\\2 \end{matrix}\arrow[r] & \begin{matrix} 2\\13 \end{matrix}\arrow[ur]\arrow[dr] &  \\
    \begin{matrix} 1 \end{matrix}\arrow[ur] && \begin{matrix} 2\\3 \end{matrix}\arrow[ur]\arrow[r] & \begin{matrix} 1\\2\\3 \end{matrix}\arrow[r] & \begin{matrix} 1\\2 \end{matrix}\arrow[ur] && \begin{matrix} 3\\1 \end{matrix}\arrow[ur] && \begin{matrix} 2\\1 \end{matrix}
\end{tikzcd}
}
\]
 
\medskip
One can check directly that the conditions of \cref{thm:brick-algebra-condition} is not satisfied.
For example, the module~$\bsm2\\1\esm$ is~$\tau$-rigid since~$\tau(\bsm2\\1\esm) = \bsm3\\1\esm$, while~$\Hom{\Lambda}\left(\bsm2\\1\esm,\tau^2(\bsm2\\1\esm)\right) = \Hom{\Lambda}(\bsm2\\1\esm,\bsm1\\2\esm) \ne 0$.
\end{example}

\addtocontents{toc}{\vspace{.1cm}}
\section*{Acknowledgments}

We thank H.~Thomas for comments on a preliminary version of the paper and for informing us about the upcoming extended version of~\cite{BazierMatteDouvilleMousavandThomasYildirim}.
We are grateful to C.~Stump for reporting a typo in \cref{exm:typeConeCA}.

%%%%%%%%%%%%%%%%%%%%%%%%%%%%%%%%%%%%%%%

%\newpage
\bibliographystyle{alpha}
\bibliography{typeConeAssociahedra}

\newcommand{\etalchar}[1]{$^{#1}$}
\begin{thebibliography}{BMDM{\etalchar{+}}18}

\bibitem[ABD10]{ArdilaBenedettiDoker}
Federico Ardila, Carolina Benedetti, and Jeffrey Doker.
\newblock Matroid polytopes and their volumes.
\newblock {\em Discrete Comput.~Geom.}, 43(4):841--854, 2010.

\bibitem[ACEP20]{ArdilaCastilloEurPostnikov}
Federico Ardila, Federico Castillo, Christopher Eur, and Alexander Postnikov.
\newblock Coxeter submodular functions and deformations of {C}oxeter
  permutahedra.
\newblock {\em Adv. Math.}, 365:107039, 36, 2020.

\bibitem[AHBC{\etalchar{+}}16]{ArkaniHamedBourjailyCachazoGoncharovPostnikovTrnka-GrassmannianGeometryScatteringAmplitudes}
Nima Arkani-Hamed, Jacob Bourjaily, Freddy Cachazo, Alexander Goncharov,
  Alexander Postnikov, and Jaroslav Trnka.
\newblock {\em Grassmannian geometry of scattering amplitudes}.
\newblock Cambridge University Press, Cambridge, 2016.

\bibitem[AHBHY18]{ArkaniHamedBaiHeYan}
Nima Arkani-Hamed, Yuntao Bai, Song He, and Gongwang Yan.
\newblock Scattering forms and the positive geometry of kinematics, color and
  the worldsheet.
\newblock {\em J. High Energy Phys.}, (5):096, front matter+75, 2018.

\bibitem[AHBL17]{ArkaniHamedBaiLam-PositiveGeometries}
Nima Arkani-Hamed, Yuntao Bai, and Thomas Lam.
\newblock Positive geometries and canonical forms.
\newblock {\em J. High Energy Phys.}, (11):039, front matter+121, 2017.

\bibitem[AHT14]{ArkaniHamedTrnka-Amplituhedron}
Nima Arkani-Hamed and Jaroslav Trnka.
\newblock The amplituhedron.
\newblock {\em J. High Energy Phys.}, (10):30, 2014.

\bibitem[AIR14]{AdachiIyamaReiten}
Takahide Adachi, Osamu Iyama, and Idun Reiten.
\newblock {$\tau$}-tilting theory.
\newblock {\em Compos. Math.}, 150(3):415--452, 2014.

\bibitem[ASS06]{AssemSimsonSkowronski}
Ibrahim Assem, Daniel Simson, and Andrzej Skowro\'nski.
\newblock {\em Elements of the representation theory of associative algebras.
  {V}ol. 1}, volume~65 of {\em London Mathematical Society Student Texts}.
\newblock Cambridge University Press, Cambridge, 2006.
\newblock Techniques of representation theory.

\bibitem[Aus84]{Auslander1984}
Maurice Auslander.
\newblock Relations for {G}rothendieck groups of {A}rtin algebras.
\newblock {\em Proc. Amer. Math. Soc.}, 91(3):336--340, 1984.

\bibitem[Bar01]{Baryshnikov}
Yuliy Baryshnikov.
\newblock On {S}tokes sets.
\newblock In {\em New developments in singularity theory ({C}ambridge, 2000)},
  volume~21 of {\em NATO Sci. Ser. II Math. Phys. Chem.}, pages 65--86. Kluwer
  Acad. Publ., Dordrecht, 2001.

\bibitem[BDM{\etalchar{+}}20]{BrustleDouvilleMousavandThomasYildirim}
Thomas Br\"{u}stle, Guillaume Douville, Kaveh Mousavand, Hugh Thomas, and Emine
  Y\i ld\i~r\i m.
\newblock On the combinatorics of gentle algebras.
\newblock {\em Canad. J. Math.}, 72(6):1551--1580, 2020.

\bibitem[BFS90]{BilleraFillimanSturmfels}
Louis~J. Billera, Paul Filliman, and Bernd Sturmfels.
\newblock Constructions and complexity of secondary polytopes.
\newblock {\em Adv.~Math.}, 83(2):155--179, 1990.

\bibitem[BIRS09]{BuanIyamaReitenScott}
A.~B. Buan, O.~Iyama, I.~Reiten, and J.~Scott.
\newblock Cluster structures for 2-{C}alabi-{Y}au categories and unipotent
  groups.
\newblock {\em Compos. Math.}, 145(4):1035--1079, 2009.

\bibitem[BLR19]{BanerjeeLaddhaRaman}
Pinaki Banerjee, Alok Laddha, and Prashanth Raman.
\newblock Stokes polytopes: the positive geometry for {$\phi^4$} interactions.
\newblock {\em J. High Energy Phys.}, (8):067, 34, 2019.

\bibitem[BMDM{\etalchar{+}}18]{BazierMatteDouvilleMousavandThomasYildirim}
V\'eronique Bazier-Matte, Guillaume Douville, Kaveh Mousavand, Hugh Thomas, and
  Emine Y\i{}ld\i{}r\i{}m.
\newblock {ABHY} {A}ssociahedra and {N}ewton polytopes of ${F}$-polynomials for
  finite type cluster algebras.
\newblock Preprint,
  \href{http://arxiv.org/abs/1808.09986}{\texttt{arXiv:1808.09986}}, 2018.

\bibitem[BMR{\etalchar{+}}06]{BuanMarshReinekeReitenTodorov}
Aslak~Bakke Buan, Robert Marsh, Markus Reineke, Idun Reiten, and Gordana
  Todorov.
\newblock Tilting theory and cluster combinatorics.
\newblock {\em Adv. Math.}, 204(2):572--618, 2006.

\bibitem[BMR07]{BuanMarshReiten}
Aslak~Bakke Buan, Robert~J. Marsh, and Idun Reiten.
\newblock Cluster-tilted algebras.
\newblock {\em Trans. Amer. Math. Soc.}, 359(1):323--332, 2007.

\bibitem[BMR08]{BuanMarshReiten-mutation}
Aslak~Bakke Buan, Robert~J. Marsh, and Idun Reiten.
\newblock Cluster mutation via quiver representations.
\newblock {\em Comment. Math. Helv.}, 83(1):143--177, 2008.

\bibitem[BPR12]{BergeronPrevilleRatelle}
Fran\c{c}ois Bergeron and Louis-Fran\c{c}ois Pr\'eville-Ratelle.
\newblock Higher trivariate diagonal harmonics via generalized {Tamari} posets.
\newblock {\em Journal of Combinatorics}, 3(3):317--341, 2012.

\bibitem[BR87]{ButlerRingel}
M.~C.~R. Butler and Claus~Michael Ringel.
\newblock Auslander-{R}eiten sequences with few middle terms and applications
  to string algebras.
\newblock {\em Comm. Algebra}, 15(1-2):145--179, 1987.

\bibitem[But81]{Butler}
M.~C.~R. Butler.
\newblock Grothendieck groups and almost split sequences.
\newblock In {\em Integral representations and applications ({O}berwolfach,
  1980)}, volume 882 of {\em Lecture Notes in Math.}, pages 357--368. 1981.

\bibitem[CD06]{CarrDevadoss}
Michael~P. Carr and Satyan~L. Devadoss.
\newblock Coxeter complexes and graph-associahedra.
\newblock {\em Topology Appl.}, 153(12):2155--2168, 2006.

\bibitem[CFZ02]{ChapotonFominZelevinsky}
Fr{\'e}d{\'e}ric Chapoton, Sergey Fomin, and Andrei Zelevinsky.
\newblock Polytopal realizations of generalized associahedra.
\newblock {\em Canad. Math. Bull.}, 45(4):537--566, 2002.

\bibitem[Cha16]{Chapoton-quadrangulations}
Fr\'ed\'eric Chapoton.
\newblock Stokes posets and serpent nests.
\newblock {\em Discrete Math. Theor. Comput. Sci.}, 18(3), 2016.

\bibitem[CP17]{ChatelPilaud}
Gr\'egory Chatel and Vincent Pilaud.
\newblock {C}ambrian {H}opf {Algebras}.
\newblock {\em Adv. Math.}, 311:598--633, 2017.

\bibitem[CSZ15]{CeballosSantosZiegler}
Cesar Ceballos, Francisco Santos, and G\"unter~M. Ziegler.
\newblock Many non-equivalent realizations of the associahedron.
\newblock {\em Combinatorica}, 35(5):513--551, 2015.

\bibitem[CZZ18]{ChangZhouZhu-ClusterSubalgebras}
Wen Chang, Panyue Zhou, and Bin Zhu.
\newblock Cluster subalgebras and cotorsion pairs in frobenius extriangulated
  categories.
\newblock {\em Algebr. Represent. Theory}, 2018.

\bibitem[DIJ19]{DemonetIyamaJasso}
Laurent Demonet, Osamu Iyama, and Gustavo Jasso.
\newblock {$\tau$}-tilting finite algebras, bricks, and {$g$}-vectors.
\newblock {\em Int. Math. Res. Not. IMRN}, (3):852--892, 2019.

\bibitem[DK08]{DehyKeller}
Raika Dehy and Bernhard Keller.
\newblock On the combinatorics of rigid objects in 2-{C}alabi-{Y}au categories.
\newblock {\em Int. Math. Res. Not. IMRN}, (11):Art. ID rnn029, 17, 2008.

\bibitem[DRS10]{DeLoeraRambauSantos}
Jesus~A. {De Loera}, J\"org Rambau, and Francisco Santos.
\newblock {\em Triangulations: Structures for Algorithms and Applications},
  volume~25 of {\em Algorithms and {C}omputation in Mathematics}.
\newblock Springer Verlag, 2010.

\bibitem[Eno19]{Enomoto}
Haruhisa Enomoto.
\newblock Relations for {G}rothendieck groups and representation-finiteness.
\newblock {\em J. Algebra}, 539:152--176, 2019.

\bibitem[Fed21]{Fedele}
Francesca Fedele.
\newblock Grothendieck groups of triangulated categories via cluster tilting
  subcategories.
\newblock {\em Nagoya Math. J.}, 244:204--231, 2021.

\bibitem[FK10]{FuKeller}
Changjian Fu and Bernhard Keller.
\newblock On cluster algebras with coefficients and 2-{C}alabi-{Y}au
  categories.
\newblock {\em Trans. Amer. Math. Soc.}, 362(2):859--895, 2010.

\bibitem[FS05]{FeichtnerSturmfels}
Eva~Maria Feichtner and Bernd Sturmfels.
\newblock Matroid polytopes, nested sets and {B}ergman fans.
\newblock {\em Port. Math. (N.S.)}, 62(4):437--468, 2005.

\bibitem[FZ02]{FominZelevinsky-ClusterAlgebrasI}
Sergey Fomin and Andrei Zelevinsky.
\newblock Cluster algebras. {I}. {F}oundations.
\newblock {\em J. Amer. Math. Soc.}, 15(2):497--529, 2002.

\bibitem[FZ03a]{FominZelevinsky-ClusterAlgebrasII}
Sergey Fomin and Andrei Zelevinsky.
\newblock Cluster algebras. {II}. {F}inite type classification.
\newblock {\em Invent. Math.}, 154(1):63--121, 2003.

\bibitem[FZ03b]{FominZelevinsky-YSystems}
Sergey Fomin and Andrei Zelevinsky.
\newblock {$Y$}-systems and generalized associahedra.
\newblock {\em Ann. of Math. (2)}, 158(3):977--1018, 2003.

\bibitem[FZ07]{FominZelevinsky-ClusterAlgebrasIV}
Sergey Fomin and Andrei Zelevinsky.
\newblock Cluster algebras. {IV}. {C}oefficients.
\newblock {\em Compos. Math.}, 143(1):112--164, 2007.

\bibitem[GKZ08]{GelfandKapranovZelevinsky}
Israel Gelfand, Mikhail Kapranov, and Andrei Zelevinsky.
\newblock {\em Discriminants, resultants and multidimensional determinants}.
\newblock Modern Birkh\"auser Classics. Birkh\"auser Boston Inc., Boston, MA,
  2008.
\newblock Reprint of the 1994 edition.

\bibitem[GM18]{GarverMcConville}
Alexander Garver and Thomas McConville.
\newblock Oriented flip graphs of polygonal subdivisions and noncrossing tree
  partitions.
\newblock {\em J. Combin. Theory Ser. A}, 158:126--175, 2018.

\bibitem[GMM20]{GarverMcConvilleMousavand}
Alexander Garver, Thomas McConville, and Kaveh Mousavand.
\newblock A categorification of biclosed sets of strings.
\newblock {\em J. Algebra}, 546:390--431, 2020.

\bibitem[Hai84]{Haiman}
Mark Haiman.
\newblock Constructing the associahedron.
\newblock Unpublished manuscript, 11 pages, available at
  \url{http://www.math.berkeley.edu/~mhaiman/ftp/assoc/manuscript.pdf}, 1984.

\bibitem[Hau19]{Haugland}
Johanne Haugland.
\newblock Auslander-{R}eiten triangles and {G}rothendieck groups of
  triangulated categories.
\newblock Preprint,
  \href{http://arxiv.org/abs/1904.02506}{\texttt{arxiv.org/abs/1904.02506}},
  2019.

\bibitem[HL07]{HohlwegLange}
Christophe Hohlweg and Carsten Lange.
\newblock Realizations of the associahedron and cyclohedron.
\newblock {\em Discrete Comput.~Geom.}, 37(4):517--543, 2007.

\bibitem[HLN21]{HerschendLiuNakaokaI}
Martin Herschend, Yu~Liu, and Hiroyuki Nakaoka.
\newblock {$n$}-exangulated categories ({I}): {D}efinitions and fundamental
  properties.
\newblock {\em J. Algebra}, 570:531--586, 2021.

\bibitem[HLN22]{HerschendLiuNakaokaII}
Martin Herschend, Yu~Liu, and Hiroyuki Nakaoka.
\newblock {$n$}-exangulated categories ({II}): {C}onstructions from
  {$n$}-cluster tilting subcategories.
\newblock {\em J. Algebra}, 594:636--684, 2022.

\bibitem[HLT11]{HohlwegLangeThomas}
Christophe Hohlweg, Carsten Lange, and Hugh Thomas.
\newblock Permutahedra and generalized associahedra.
\newblock {\em Adv. Math.}, 226(1):608--640, 2011.

\bibitem[HPS18]{HohlwegPilaudStella}
Christophe Hohlweg, Vincent Pilaud, and Salvatore Stella.
\newblock Polytopal realizations of finite type $\mathbf{g}$-vector fans.
\newblock {\em Adv. Math.}, 328:713--749, 2018.

\bibitem[INP18]{IyamaNakaokaPalu}
Osamu Iyama, Hiroyuki Nakaoka, and Yann Palu.
\newblock Auslander--{R}eiten theory in extriangulated categories.
\newblock Preprint,
  \href{https://arxiv.org/abs/1805.03776}{\texttt{arXiv:1805.03776}}, 2018.

\bibitem[IY08]{IyamaYoshino}
Osamu Iyama and Yuji Yoshino.
\newblock Mutation in triangulated categories and rigid {C}ohen-{M}acaulay
  modules.
\newblock {\em Invent. Math.}, 172(1):117--168, 2008.

\bibitem[Jin20]{Jin}
Haibo Jin.
\newblock Cohen-{M}acaulay differential graded modules and negative
  {C}alabi-{Y}au configurations.
\newblock {\em Adv. Math.}, 374:107338, 59, 2020.

\bibitem[Kel01]{Keller-AinfinityAlgebras}
Bernhard Keller.
\newblock Introduction to $a$-infinity algebras and modules.
\newblock {\em Homology Homotopy Appl.}, 3(1):1--35., 2001.

\bibitem[KR07]{KellerReiten}
Bernhard Keller and Idun Reiten.
\newblock Cluster-tilted algebras are {G}orenstein and stably {C}alabi-{Y}au.
\newblock {\em Adv. Math.}, 211(1):123--151, 2007.

\bibitem[KZ08]{KoenigZhu}
Steffen Koenig and Bin Zhu.
\newblock From triangulated categories to abelian categories: cluster tilting
  in a general framework.
\newblock {\em Math. Z.}, 258(1):143--160, 2008.

\bibitem[Lee89]{Lee}
Carl~W. Lee.
\newblock The associahedron and triangulations of the {$n$}-gon.
\newblock {\em European J.~Combin.}, 10(6):551--560, 1989.

\bibitem[Liu20]{Liu-LocalizationsHearts}
Yu~Liu.
\newblock Localizations of the hearts of cotorsion pairs.
\newblock {\em Glasg. Math. J.}, 62(3):564--583, 2020.

\bibitem[LN19]{LiuNakaoka-Hearts}
Yu~Liu and Hiroyuki Nakaoka.
\newblock Hearts of twin cotorsion pairs on extriangulated categories.
\newblock {\em J. Algebra}, 528:96--149, 2019.

\bibitem[Lod04]{Loday}
Jean-Louis Loday.
\newblock Realization of the {S}tasheff polytope.
\newblock {\em Arch.~Math.~(Basel)}, 83(3):267--278, 2004.

\bibitem[LP18]{LangePilaud}
Carsten Lange and Vincent Pilaud.
\newblock Associahedra via spines.
\newblock {\em Combinatorica}, 38(2):443--486, 2018.

\bibitem[LR98]{LodayRonco}
Jean-Louis Loday and Mar{\'{\i}}a~O. Ronco.
\newblock Hopf algebra of the planar binary trees.
\newblock {\em Adv. Math.}, 139(2):293--309, 1998.

\bibitem[LZ20a]{LiuZhou-Abelian}
Yu~Liu and Panyue Zhou.
\newblock Abelian categories arising from cluster tilting subcategories.
\newblock {\em Appl. Categ. Structures}, 28(4):575--594, 2020.

\bibitem[LZ20b]{LiuZhou-AbelianII}
Yu~Liu and Panyue Zhou.
\newblock Abelian categories arising from cluster tilting subcategories {II}:
  quotient functors.
\newblock {\em Proc. Roy. Soc. Edinburgh Sect. A}, 150(6):2721--2756, 2020.

\bibitem[LZ21]{LiuZhou-Gorenstein}
Yu~Liu and Panyue Zhou.
\newblock Gorenstein dimension of abelian categories arising from cluster
  tilting subcategories.
\newblock {\em Czechoslovak Math. J.}, 71(146)(2):435--453, 2021.

\bibitem[McC17]{McConville}
Thomas McConville.
\newblock Lattice structure of {G}rid-{T}amari orders.
\newblock {\em J. Combin. Theory Ser. A}, 148:27--56, 2017.

\bibitem[McM73]{McMullen-typeCone}
P.~McMullen.
\newblock Representations of polytopes and polyhedral sets.
\newblock {\em Geometriae Dedicata}, 2:83--99, 1973.

\bibitem[McM77]{McMullen-Valuations}
P.~McMullen.
\newblock Valuations and {E}uler-type relations on certain classes of convex
  polytopes.
\newblock {\em Proc. London Math. Soc. (3)}, 35(1):113--135, 1977.

\bibitem[Mey74]{Meyer}
Walter Meyer.
\newblock Indecomposable polytopes.
\newblock {\em Trans. Amer. Math. Soc.}, 190:77--86, 1974.

\bibitem[MP19]{MannevillePilaud-accordion}
Thibault Manneville and Vincent Pilaud.
\newblock Geometric realizations of the accordion complex of a dissection.
\newblock {\em Discrete Comput. Geom.}, 61(3):507--540, 2019.

\bibitem[MTTV21]{MasudaThomasTonksVallette}
Naruki Masuda, Hugh Thomas, Andy Tonks, and Bruno Vallette.
\newblock The diagonal of the associahedra.
\newblock {\em J. \'{E}c. polytech. Math.}, 8:121--146, 2021.

\bibitem[NP19]{NakaokaPalu}
Hiroyuki Nakaoka and Yann Palu.
\newblock Extriangulated categories, {H}ovey twin cotorsion pairs and model
  structures.
\newblock {\em Cah. Topol. G\'{e}om. Diff\'{e}r. Cat\'{e}g.}, {L}{X}, 2019.

\bibitem[NZ12]{NakanishiZelevinsky}
Tomoki Nakanishi and Andrei Zelevinsky.
\newblock On tropical dualities in cluster algebras.
\newblock In {\em Algebraic groups and quantum groups}, volume 565 of {\em
  Contemp. Math.}, pages 217--226. Amer. Math. Soc., Providence, RI, 2012.

\bibitem[Pal08]{Palu}
Yann Palu.
\newblock Cluster characters for 2-{C}alabi-{Y}au triangulated categories.
\newblock {\em Ann. Inst. Fourier (Grenoble)}, 58(6):2221--2248, 2008.

\bibitem[Pal09]{Palu-grothendieckGroup}
Yann Palu.
\newblock Grothendieck group and generalized mutation rule for 2-{C}alabi-{Y}au
  triangulated categories.
\newblock {\em J. Pure Appl. Algebra}, 213(7):1438--1449, 2009.

\bibitem[Pil18]{Pilaud-brickAlgebra}
Vincent Pilaud.
\newblock Brick polytopes, lattice quotients, and {H}opf algebras.
\newblock {\em J. Combin. Theory Ser. A}, 155:418--457, 2018.

\bibitem[PK92]{PukhlikovKhovanskii}
A.~V. Pukhlikov and A.~G. Khovanski\u{\i}.
\newblock Finitely additive measures of virtual polyhedra.
\newblock {\em Algebra i Analiz}, 4(2):161--185, 1992.

\bibitem[Pos09]{Postnikov}
Alexander Postnikov.
\newblock Permutohedra, associahedra, and beyond.
\newblock {\em Int. Math. Res. Not. IMRN}, (6):1026--1106, 2009.

\bibitem[PPP19]{PaluPilaudPlamondon-surfaces}
Yann Palu, Vincent Pilaud, and Pierre-Guy Plamondon.
\newblock Non-kissing and non-crossing complexes for locally gentle algebras.
\newblock {\em J. Comb. Algebra}, 3(4):401--438, 2019.

\bibitem[PPP21]{PaluPilaudPlamondon-nonkissing}
Yann Palu, Vincent Pilaud, and Pierre-Guy Plamondon.
\newblock Non-kissing complexes and tau-tilting for gentle algebras.
\newblock {\em Mem. Amer. Math. Soc.}, 274(1343), 2021.

\bibitem[PPS10]{PetersenPylyavskyySpeyer}
T.~Kyle Petersen, Pavlo Pylyavskyy, and David~E. Speyer.
\newblock A non-crossing standard monomial theory.
\newblock {\em J. Algebra}, 324(5):951--969, 2010.

\bibitem[Pre17]{Pressland}
Matthew Pressland.
\newblock A categorification of acyclic principal coefficient cluster algebras.
\newblock Preprint,
  \href{https://arxiv.org/abs/1702.05352}{\texttt{arXiv:1702.05352}}, 2017.

\bibitem[PRV17]{PrevilleRatelleViennot}
Louis-Fran\c{c}ois Pr\'eville-Ratelle and Xavier Viennot.
\newblock The enumeration of generalized {T}amari intervals.
\newblock {\em Trans. Amer. Math. Soc.}, 369(7):5219--5239, 2017.

\bibitem[PRW08]{PostnikovReinerWilliams}
Alexander Postnikov, Victor Reiner, and Lauren~K. Williams.
\newblock Faces of generalized permutohedra.
\newblock {\em Doc.~Math.}, 13:207--273, 2008.

\bibitem[PS12]{PilaudSantos-brickPolytope}
Vincent Pilaud and Francisco Santos.
\newblock The brick polytope of a sorting network.
\newblock {\em European~J.~Combin.}, 33(4):632--662, 2012.

\bibitem[PS15]{PilaudStump-brickPolytope}
Vincent Pilaud and Christian Stump.
\newblock Brick polytopes of spherical subword complexes and generalized
  associahedra.
\newblock {\em Adv. Math.}, 276:1--61, 2015.

\bibitem[PS19]{PilaudSantos-quotientopes}
Vincent Pilaud and Francisco Santos.
\newblock Quotientopes.
\newblock {\em Bull. Lond. Math. Soc.}, 51(3):406--420, 2019.

\bibitem[Ram19]{Raman}
Prashanth Raman.
\newblock The positive geometry for {$\phi^p$} interactions.
\newblock {\em J. High Energy Phys.}, (10):271, 33, 2019.

\bibitem[Rea04]{Reading-latticeCongruences}
Nathan Reading.
\newblock Lattice congruences of the weak order.
\newblock {\em Order}, 21(4):315--344, 2004.

\bibitem[Rea06]{Reading-CambrianLattices}
Nathan Reading.
\newblock Cambrian lattices.
\newblock {\em Adv.~Math.}, 205(2):313--353, 2006.

\bibitem[RS09]{ReadingSpeyer}
Nathan Reading and David~E. Speyer.
\newblock Cambrian fans.
\newblock {\em J.~Eur.~Math.~Soc.}, 11(2):407--447, 2009.

\bibitem[RVdB02]{ReitenVandenbergh}
I.~Reiten and M.~Van~den Bergh.
\newblock Noetherian hereditary abelian categories satisfying {S}erre duality.
\newblock {\em J. Amer. Math. Soc.}, 15(2):295--366, 2002.

\bibitem[SS93]{ShniderSternberg}
Steve Shnider and Shlomo Sternberg.
\newblock {\em Quantum groups: From coalgebras to {D}rinfeld algebras}.
\newblock Series in Mathematical Physics. International Press, Cambridge, MA,
  1993.

\bibitem[SSW17]{SantosStumpWelker}
Francisco Santos, Christian Stump, and Volkmar Welker.
\newblock Noncrossing sets and a {G}rassmann associahedron.
\newblock {\em Forum Math. Sigma}, 5:e5, 49, 2017.

\bibitem[Sta63]{Stasheff}
Jim Stasheff.
\newblock Homotopy associativity of {H}-spaces {I} \& {II}.
\newblock {\em Trans. Amer. Math. Soc.}, 108(2):275--312, 1963.

\bibitem[Ste13]{Stella}
Salvatore Stella.
\newblock Polyhedral models for generalized associahedra via {C}oxeter
  elements.
\newblock {\em J. Algebraic Combin.}, 38(1):121--158, 2013.

\bibitem[Str12]{Street}
Ross Street.
\newblock Parenthetic remarks.
\newblock In {\em Associahedra, {T}amari lattices and related structures},
  volume 299 of {\em Prog. Math. Phys.}, pages 251--268. Birkh\"auser/Springer,
  Basel, 2012.

\bibitem[Tam51]{Tamari}
Dov Tamari.
\newblock {\em Monoides pr\'eordonn\'es et cha\^ines de Malcev}.
\newblock PhD thesis, Universit\'e Paris Sorbonne, 1951.

\bibitem[XZ02]{XiaoZhu}
Jie Xiao and Bin Zhu.
\newblock Relations for the {G}rothendieck groups of triangulated categories.
\newblock {\em J. Algebra}, 257(1):37--50, 2002.

\bibitem[Zel06]{Zelevinsky}
Andrei Zelevinsky.
\newblock Nested complexes and their polyhedral realizations.
\newblock {\em Pure Appl. Math. Q.}, 2(3):655--671, 2006.

\bibitem[ZH19]{ZhaoHuang-Phantom}
Tiwei Zhao and Zhaoyong Huang.
\newblock Phantom ideals and cotorsion pairs in extriangulated categories.
\newblock {\em Taiwanese J. Math.}, 23(1):29--61, 2019.

\bibitem[Zie98]{Ziegler-polytopes}
G{\"u}nter~M. Ziegler.
\newblock {\em Lectures on Polytopes}, volume 152 of {\em Graduate texts in
  Mathematics}.
\newblock Springer-Verlag, New York, 1998.

\bibitem[ZZ18]{ZhouZhu-TriangulatedQuotient}
Panyue Zhou and Bin Zhu.
\newblock Triangulated quotient categories revisited.
\newblock {\em J. Algebra}, 502:196--232, 2018.

\end{thebibliography}
\label{sec:biblio}

\end{document}